\title{Open $2$D TFTs admit initial open-closed extensions}
\author{Shaul Barkan}
\address{Hebrew University of Jerusalem, Israel}
\email{shaul.barkan@mail.huji.ac.il}
\author{Jan Steinebrunner}
\address{Gonville \& Caius College, Cambridge, UK}
\email{js2675@cam.ac.uk}
\author{Adela YiYu Zhang}
\address{Department of Mathematical Sciences, University of Copenhagen, Denmark}
\email{yz@math.ku.dk}
\begin{document}
\date{}
\maketitle
\begin{abstract}
    We show that any open $2$-dimensional topological field theory valued in a symmetric monoidal \category{} (with suitable colimits)
    extends canonically to an open-closed field theory whose value at the circle is the Hochschild homology object of its value at the disk. 
    As a corollary, we obtain an action of the moduli spaces of surfaces on the Hochschild homology object of $\Erm_1$-Calabi--Yau algebras. 
    This provides a space level refinement of previous work of Costello over $\mathbb{Q}$, and Wahl--Westerland and Wahl over $\mathbb{Z}$, and serves as a crucial ingredient to Lurie's ``non-compact cobordism hypothesis'' in dimension $2$.
    As part of the proof we also give a description of slice categories of the $d$-dimensional bordism category with boundary, which may be of independent interest.
\end{abstract}


\setcounter{tocdepth}{1}
\tableofcontents

\section{Introduction}
Let $(\Vcal,\otimes, \mathbf{1})$ be a symmetric monoidal \category{} with geometric realizations, such as the derived category of some commutative ring, or more generally any presentably symmetric monoidal \category{}.
For $A$ an $\Erm_1$-algebra in $\Vcal$, we can form the \hldef{Hochschild homology object $\mrm{HH}(A)$} by taking the geometric realization of the cyclic bar construction, which is equivalent to computing the factorization homology $\int_{S^1}A$. For example, when $\Vcal = \mrm{Sp}$ is the category of spectra, this gives the topological Hochschild homology of a ring spectrum.

In this paper, we determine natural operations on $\int_{S^1}A$ when $A$ has the structure of a higher-categorical analogue of symmetric Frobenius algebras. 
More precisely, we say that an \hldef{$\Erm_1$-Calabi--Yau algebra} in $\Vcal$ is a pair $(A, \tau)$ of an $\Erm_1$-algebra $A$ and an $\SO(2)$-invariant map $\tau\colon \int_{S^1}A\to \mathbf{1}$, called the \hldef{cyclic trace}, such that the pairing
\[
    A\otimes A\xto{\simeq} \int_{D^1\sqcup D^1}A\to \int_{S^1}A\xto{\tau}\mathbf{1}
\]
is nondegenerate and thus exhibits a self-duality of $A$.\footnote{This definition appears as a variant of the notion of a Frobenius algebra (in a symmetric monoidal \category{}) in the sense of Lurie \cite[Remark 4.6.5.9]{HA}.}

We will construct an action of the moduli spaces of surfaces on $\int_{S^1} A$.
For $n,g,k \ge 0$, let $\Sigma_{g,k}^n$ be the genus $g$ surfaces with $k$ boundaries and $n$ punctures,
and let $\Diff_\partial(\Sigma_{g,k}^n)$ be the topological group of those diffeomorphisms that fix the boundary pointwise.

\begin{thm}\label{thm:operation}
    Suppose that the symmetric monoidal product in $\Vcal$ preserves geometric realization in each variable. Let $A$ be an $\Erm_1$-Calabi--Yau algebra in $\Vcal$. 
    There are maps of spaces
    \[
        B\Diff_\partial(\Sigma^n_{g,i+j}) \to \Map_{\Vcal}\left(\Big(\int_{S^1}A\Big)^{\otimes i},\Big(\int_{S^1}A\Big)^{\otimes j}\right)
    \]
    for any $i>0$ and $j\geq 0$, which assemble into a symmetric monoidal $\infty$-functor
    \[
        \Bord_2^{\partial_+} \too \Vcal
    \]
    from the positive-boundary surface bordism category that sends $S^1$ to $\int_{S^1} A$.
    In particular, $\int_{S^1} A$ is a non-unital $\Erm_2^{\rm fr}$-algebra and an $\Erm_2^{\rm fr}$-algebra in $\Vcal$, and it is a non-unital commutative Frobenius algebra in the homotopy category $h(\Vcal)$.
\end{thm}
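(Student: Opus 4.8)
The plan is to realize the functor $\Bord_2^{\partial_+}\to\Vcal$ as the closed sector of the initial open--closed extension of the open $2$-dimensional TFT attached to $(A,\tau)$, and then to read off the maps of spaces by inspecting the morphism spaces of $\Bord_2^{\partial_+}$. First I would repackage the datum $(A,\tau)$ as a symmetric monoidal functor $Z_A^{\mathrm{op}}\colon\Bord_2^{\mathrm{op},\partial_+}\to\Vcal$ out of the open (positive-boundary) surface bordism category with $Z_A^{\mathrm{op}}(D^1)=A$. This is a form of the classical statement --- in the spirit of Costello and of Lurie --- that open $2$D TFTs ``are'' Calabi--Yau algebras: the open pair of pants gives the multiplication on $A$, the pairing $A\otimes A\to\mathbf 1$ induced by $\tau$ furnishes the values at the relevant surfaces with corners, and nondegeneracy --- i.e.\ the self-duality of $A$ --- together with $\SO(2)$-invariance of $\tau$ is exactly what makes these assemble into a functor. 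If the body of the paper records such an identification I would simply invoke it; otherwise this is a routine unwinding of a generators-and-relations presentation of $\Bord_2^{\mathrm{op},\partial_+}$.

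Next I would invoke the main theorem of the paper: restriction along the inclusion $\Bord_2^{\mathrm{op},\partial_+}\hookrightarrow\Bord_2^{\mathrm{oc},\partial_+}$ of the open sector into the open--closed bordism category admits a left adjoint, so $Z_A^{\mathrm{op}}$ extends canonically to a symmetric monoidal functor $Z_A\colon\Bord_2^{\mathrm{oc},\partial_+}\to\Vcal$, namely its symmetric monoidal left Kan extension. The hypothesis that $\otimes$ preserves geometric realizations in each variable is precisely what makes this left Kan extension exist, be symmetric monoidal, and be computable pointwise. To identify $Z_A(S^1)$ I would then use the description of the slice categories of the bordism category with boundary: the slice of the open sector over $S^1$ --- ``open strings sitting inside the circle'' --- should be cofinal in the category indexing the cyclic bar construction, whence
\[
    Z_A(S^1)\;\simeq\;\operatorname*{colim}_{\bigsqcup_k D^1\to S^1}A^{\otimes k}\;\simeq\;\bigl|\,[k]\mapsto A^{\otimes(k+1)}\,\bigr|\;\simeq\;\int_{S^1}A ,
\]
i.e.\ $\mathrm{HH}(A)$ with its $\SO(2)$-action; and of course $Z_A(D^1)\simeq A$, compatibly with disjoint unions.

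It then remains to restrict $Z_A$ along the inclusion of the closed sector $\Bord_2^{\partial_+}\hookrightarrow\Bord_2^{\mathrm{oc},\partial_+}$, which yields the asserted symmetric monoidal functor $\Bord_2^{\partial_+}\to\Vcal$ with $S^1\mapsto\int_{S^1}A$. The maps of spaces fall out by evaluating this functor on morphism spaces, using that for $i>0$, $j\ge 0$ the space $\Map_{\Bord_2^{\partial_+}}\bigl((S^1)^{\sqcup i},(S^1)^{\sqcup j}\bigr)$ has $B\Diff_\partial(\Sigma^n_{g,i+j})$ as the path component of surfaces of that topological type with $i$ incoming and $j$ outgoing boundary circles --- a description of these mapping spaces that the paper supplies along the way. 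The ``in particular'' assertions are then specializations: restricting $Z_A$ to the sub-operad of genus-zero surfaces inside $\Bord_2^{\partial_+}$, which is a model for the (non-unital) framed little $2$-disks operad, equips $\int_{S^1}A$ with its $\Erm_2^{\mathrm{fr}}$-algebra structure, with the unital refinement induced from the unit of $A$; and passing to homotopy categories and invoking the classical $2$-dimensional dictionary $h(\Bord_2^{\partial_+})\simeq\{\text{(non-unital) commutative Frobenius algebras}\}$ transports a commutative Frobenius structure onto $\int_{S^1}A$ in $h(\Vcal)$.

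The formal parts --- forming the left Kan extension, restricting, extracting $\pi_0$ of mapping spaces --- are bookkeeping once the geometric input is in hand; the step I expect to be the real obstacle is the analysis of the slice categories of the bordism category with boundary, precisely enough both to show that restriction along the open--closed inclusion admits a left adjoint of symmetric monoidal $\infty$-categories and to identify the slice over $S^1$ (cofinally) with the cyclic bar category. That combinatorial--geometric input is the technical heart of the argument; everything else is formal.
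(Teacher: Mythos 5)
Your proposal follows the paper's own route: \cref{thm:O->V} (the classification of open TFTs by $\Erm_1$-Calabi--Yau algebras, which you rightly say you would simply invoke) packages $(A,\tau)$ as a symmetric monoidal functor $\Ocal\to\Vcal$, \cref{thm:main} provides the initial symmetric monoidal open--closed extension with value $\int_{S^1}A$ at the circle via the cyclic bar construction, and restricting to the circles-only positive-boundary sector and reading off path components of mapping spaces (free boundary circles corresponding to punctures) gives the asserted maps out of $B\Diff_\partial(\Sigma^n_{g,i+j})$. The one caveat is your fallback claim that the open-sector classification is a ``routine unwinding of a generators-and-relations presentation'': at the level of $\infty$-categories, with the required $\SO(2)$-coherences, this is exactly the nontrivial content of \cite{cyclic}, not something to be redone in passing.
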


This formulation of the theorem relies on work in preparation of the first two authors \cite{cyclic}, but our main theorem, \cref{thm:main}, does not.
We will in fact show in \cref{cor:formal-operations} that the moduli spaces appearing in \cref{thm:operation} parametrize universal natural operations, i.e., the space of ``formal operations'' of the form $(\int_{S^1} A)^{\otimes i} \to (\int_{S^1} A)^{\otimes j}$ 
is equivalent to $\coprod_{n,g\ge 0} B\Diff_\partial(\Sigma^n_{g,i+j})$ for $i>0$.

\begin{rem}
    \cref{thm:operation} was proved by Costello when $\Vcal=D(k)\simeq\mathrm{Mod}_k$ is the derived category of chain complexes over a field $k$ of characteristic 0 \cite{costello2007topological} and by Wahl--Westerland for $\Vcal=D(\mathbb{Z})$ \cite{Wahl2016}. 
    Wahl also computed the chain complex of formal operations in the $\mathbb{Z}$-linear case \cite{wahl2016universal}.
    Our result can be viewed as space-level refinement of their results, albeit through a more categorical and conceptual route.
\end{rem}

\cref{thm:operation} follows from a qualitative understanding of the relation between certain variants of the 2-dimensional bordism category as well as symmetric monoidal functors out of them. We will give precise statements in \cref{thm:main} and \cref{thm:O-dense-in-OC-intro}. For now, we first record some examples of $\Erm_1$-Calabi--Yau algebras and explicitly identify their Hochschild homology object when possible. We will provide more detailed explanations in \cref{sec:app}.
\begin{example}
\begin{enumerate}[(a)]
    \item For $\Vcal=\mathrm{Mod}_R$, the $R$-valued cochains $C^*(M;R)$ on $M$ has the structure of an $\mrm{E}_1$-Calabi--Yau algebra, where $R$ is an even-periodic ring spectrum and $M$ an $R$-oriented, even-dimensional, closed manifold. The cyclic trace is provided by a lift of the Poincar\'{e} duality pairing. 
    \item  As a variant of (a), suppose that $R$ is the Eilenberg--MacLane spectrum of a commutative ring and $M$ is an $R$-oriented even-dimensional, simply connected, closed manifold.
   We adjoin an invertible generator $t$ in degree 2 to $R$ and obtain an $\Erm_\infty$-ring $R[t^{\pm1}]$. 
   Then $C^*(M;R)[t^{\pm1}] \simeq C^*(M;R[t^{\pm1}])$  has the structure of an $\Erm_1$-Calabi--Yau algebra in $\mathrm{Mod}_{R[t^{\pm1}]}$ (and in fact an $\Erm_\infty$-Calabi--Yau algebra) and
    \[
      F'(S^1)=\int_{S^1}C^*(M;R)[t^{\pm1}]=C^*(\mathcal{L}M;R)[t^{\pm1}]
     \]
     is the cochain algebra on the free loop space on $M$, see \cite[Proposition 5.3]{ayala2015factorization} and \cite{jones1987cyclic,ungheretti2017free}. 
     We expect that in this case the operations from \cref{thm:operation} recover the classical string operations as in \cite{Wahl2016}.
    \item Suppose that $A\to B$ is a $G$-Galois extension of $\Erm_\infty$-ring spectra for a finite group $G$. Then $B$ is an $\mrm{E}_1$-Calabi--Yau algebra in $\Vcal=\mathrm{Mod}_A$, with the cyclic trace given by a trace pairing that exhibits $B$ as its $A$-linear self dual {\cite{rognes2008galois}}.
    Here $\int_{S^1} B \simeq B$.
    \item Consider $\Vcal=\mathrm{Lex}^f$, the 2-category of finite $k$-linear 1-categories over an algebraically closed field $k$ with 1-morphisms left exact functors and the 2-morphisms linear natural transformations. It was shown in \cite{muller2024categorified} that the groupoid of $\Erm_1$-Calabi--Yau algebras in $\mathrm{Lex}^f$ is equivalent to the $2$-groupoid of pivotal Grothendieck--Verdier categories (cf. \cite{Boyarchenko2013}, which 
    expands work of Barr \cite{Barr1979} on $*$-autonomous categories).
    In the case when the $\Erm_1$-Calabi--Yau algebra is a pivotal finite tensor category $\Pcal$ \cite{etingof2004finite}, there is a canonical identification of $\int_{S^1}\Pcal$ with the Drinfeld center $Z(\Pcal)$ of $\Pcal$, see e.g.~\cite[Theorem 5.9]{muller2023lyubashenko}.
\end{enumerate}
\end{example}

\subsubsection{Bordism categories and field theories}

In order to state the main theorem of this paper, we
recall that \hldef{$\Bord_2^\partial$} is the symmetric monodial $(\infty,1)$-category whose objects are compact oriented 1-manifolds with boundary. 
A morphism from $M$ to $N$ is a 2-bordism $W$ with corners. In particular, the boundary $\partial W$ is the union of the incoming boundary $M$, the outgoing boundary $N$, and possibly nonempty free boundary \hldef{$\pf W$}$=\overline{\partial W-M\sqcup N}$. Furthermore, the corners of $W$ are precisely the intersection $\pf W\cap(M\sqcup N)$. 
The higher morphisms in this $\infty$-category encode diffeomorphisms, isotopies between them, and so on.
The symmetric monoidal structure is given by the disjoint union of manifolds and bordisms. 
A precise definition of $\Bord_2^\partial$ can be found in \cref{sec:bord}.

In particular, the oriented 2-bordism category $\Bord^{\mrm{or}}_2$ is a non-full subcategory of $\Bord_2^\partial$ on disjoint union of circles.  Below is a morphism $W$ from $D^1$ to $S^1$. The free boundary $\pf W$ is the interval colored in red, and the corners of $W$ are the green points that lie in the intersection of the incoming $D^1$ boundary and $\pf W$.
 \begin{figure}[ht]
\centering
\includegraphics[width=4cm]{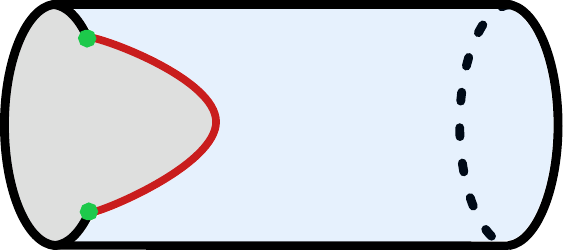}
\caption{The ``whistle'' bordism $D^1 \to S^1$ in $\Bord_2^\partial$.}
\label{fig:bordex}
\end{figure}

\begin{defn}\label{defn:OC}
    We let $\hldef{\OC} \subset \Bord_2^\partial$ denote the subcategory, called the \hldef{open-closed bordism category} that has all objects but only those bordisms $W\colon M \to N$ for which the subspace $M \cup \partial_{\rm free} W \subset W$ intersects all connected components of $W$.
    We let $\hldef{\Ocal} \subset \OC$ denote the full subcategory, called the \hldef{open bordism category}, on objects of the form $\sqcup_k D^1$ for $k\ge 0$.
\end{defn}
Geometrically, the boundary condition on the bordisms in $\OC$ is equivalent to the requirement that a bordism has handle dimension at most $1$ relative to its outgoing boundary.

In particular, there is a (faithful) symmetric monoidal functor $\Disk_1\hookrightarrow\Ocal$ that sends $D^1$ to $D^1$ and embeddings of disks to flat pairs of pants, therefore equipping $D^1\in\Ocal$ with the structure of an $\Erm_1$-algebra.

In forthcoming work, the first and second-named authors establish the following classification of symmetric monoidal functors $\Ocal \to \Vcal$, also know as open topological field theories valued in $\Vcal$, by describing them as algebras over the free modular \operad{} on the cyclic \operad{} $\Erm_1$, generalizing Costello's result for $\Vcal=D(\mathbb{Q})$ {\cite[Theorem A.1]{costello2007topological}}.
The analogue of this result in the case where $\Vcal$ is a symmetric monoidal bicategory already follows from the work of M\"uller--Woike \cite[Proof of Theorem 2.2]{muller2024categorified}.

\begin{thm}[\cite{cyclic}]\label{thm:O->V}
    There is an equivalence between the $\infty$-groupoids of symmetric monoidal functors $\Ocal \to \Vcal$ and $\Erm_1$-Calabi--Yau algebras in $\Vcal$. The equivalence is implemented by evaluating at $D^1\in\Ocal$.
\end{thm}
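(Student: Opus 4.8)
The plan is to present the source category $\Ocal$ by generators and relations --- equivalently, following the route indicated above, to identify symmetric monoidal functors out of it with algebras over the free modular \operad{} on the cyclic \operad{} $\Erm_1$ (its \emph{modular envelope}) --- and then to unwind that operadic data into the definition of an $\Erm_1$-Calabi--Yau algebra. The comparison map itself is the evaluation at $D^1$, and producing it is the ``easy'' half. Given a symmetric monoidal functor $F\colon\Ocal\to\Vcal$, set $A:=F(D^1)$; restriction along the faithful symmetric monoidal functor $\Disk_1\hookrightarrow\Ocal$ makes $A$ an $\Erm_1$-algebra. The ``cup'' and ``cap'' half-disks $\emptyset\to D^1$ and $D^1\to\emptyset$ give a unit and a trace, the bigon $D^1\sqcup D^1\to\emptyset$ gives a pairing $A\otimes A\to\mathbf{1}$, and the fact that the ``S-shaped'' strip $D^1\to D^1$ is isotopic rel boundary to the identity cylinder --- an isotopy that is part of the data of $\Ocal$ --- forces the triangle identities, so $F$ automatically exhibits $A$ as self-dual with nondegenerate pairing. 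The more delicate point is producing the $\SO(2)$-invariant cyclic trace $\tau\colon\int_{S^1}A\to\mathbf{1}$, since the circle is not an object of $\Ocal$: one builds a functor from the opposite of Connes' cyclic category $\Lambda$ into $\Ocal$ out of the ``flat annuli with $n+1$ disjoint incoming intervals on one boundary circle and the other boundary circle free'', together with a natural transformation to the constant diagram at $\emptyset$ given by filling these annuli in. Applying $F$ and passing to colimits realizes the cyclic bar construction of $A$ equipped with an augmentation to $\mathbf{1}$, and the rotational symmetry of the annuli promotes this augmentation to an $\SO(2)$-equivariant map $\int_{S^1}A\to\mathbf{1}$ via $B\Lambda\simeq B\SO(2)$. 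These assignments are natural in $F$, so they assemble into a map between the relevant $\infty$-groupoids.

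The content of the theorem is that this map is an equivalence, and for this the key input is the presentation of $\Ocal$ as the modular envelope of $\Erm_1$: the $\infty$-categorical refinement of Costello's generators-and-relations description for $\Vcal=D(\mathbb{Q})$ \cite{costello2007topological}, whose bicategorical analogue is due to M\"uller--Woike \cite{muller2024categorified}. Concretely, every bordism in $\Ocal$ has, as noted after \cref{defn:OC}, a handle decomposition with handles of index $\le 1$ relative to its outgoing boundary, and so is built by disjoint union and composition from half-disks, pairs of pants and cylinders; the space of such decompositions is controlled by a complex of arc systems on the underlying surface-with-corners, and the essential geometric fact is that this complex is contractible (of Harer--Penner--Strebel type, as exploited by Costello). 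A Segal-type / modular-$\infty$-operadic recognition principle then identifies $\Ocal$ with the modular envelope, and unwinding the generating operations and relations of a modular operad --- multiplication, unit, and self-contraction --- shows that an algebra over the modular envelope of $\Erm_1$ in $\Vcal$ is precisely an $\Erm_1$-algebra together with an $\SO(2)$-invariant nondegenerate trace, i.e.\ an $\Erm_1$-Calabi--Yau algebra. Tracking generators through these identifications shows the resulting equivalence is implemented by evaluation at $D^1$.

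I expect the main obstacle to be precisely the contractibility statement underlying the presentation: one must show that the space of ways to cut an arbitrary open surface-with-corners into elementary pieces is contractible, so that no ``exotic'' higher relations arise beyond those already visible in $\Disk_1$ and the single zigzag identity. This is the homotopy-level upgrade of Costello's computation, and it is where the bulk of the work lies; a careful argument must also track $\SO(2)$-equivariance throughout, in particular checking that the rotation action on the family of capping annuli inside $\Ocal$ matches, $\SO(2)$-equivariantly, the circle action on $\int_{S^1}A$ coming from factorization homology (equivalently, from the cyclic structure on the bar construction). Once these are in hand, the remaining points --- symmetric monoidality of the constructed functors, naturality, and that the two constructions are mutually inverse --- are formal.
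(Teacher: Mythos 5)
This theorem is not proved in the present paper: it is cited to the forthcoming work \cite{cyclic} of the first two authors and used here as a black box (the contribution of this paper is \cref{thm:main} and \cref{thm:O-dense-in-OC-intro}, which combined with \cref{thm:O->V} yield \cref{thm:operation}). There is accordingly no in-paper proof against which to check your sketch.

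Taking the sketch on its own terms, it is consistent with the one-line description the authors give of the route in \cite{cyclic} --- present $\Ocal$ as the modular envelope of the cyclic $\Erm_1$-operad, with arc-complex contractibility as the geometric engine --- and with Costello's rational precursor and M\"uller--Woike's bicategorical version. Two places where you compress serious work into a phrase: (i) matching the $\SO(2)$-action on $\int_{S^1}A$ coming from the cyclic bar construction with the rotation action on the capping annuli inside $\Ocal$ --- you invoke the weak equivalence between the classifying space of the cyclic category and $B\SO(2)$, but the hard part is proving the two cyclic structures agree, not computing that homotopy type; and (ii) the ``Segal-type/modular-$\infty$-operadic recognition principle'' that would upgrade pointwise arc-complex contractibility to a presentation of the symmetric monoidal $\infty$-category $\Ocal$ itself; such a recognition theorem is a substantial ingredient in its own right and is presumably a large part of \cite{cyclic}. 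You correctly flag both as the main obstacles, so the outline is honest even though the load-bearing steps remain at the level of names.
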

It is then natural to ask when and how a symmetric monoidal functor $\Ocal \to \Vcal$ can be extended to a symmetric monoidal functor $\OC\to \Vcal$, also known as an open-closed topological field theory or a topological conformal field theory. The main theorem of our paper gives an affirmative answer, which generalizes Costello's result for $\Vcal=D(\mathbb{Q})$ \cite[Theorem A.II]{costello2007topological} and Wahl-Westerland for $\Vcal=D(\mathbb{Z})$ \cite[Theorem 6.2]{Wahl2016}.
In the case of $\Vcal = \mathrm{Lex}^f$, such extensions to $\OC$ (and in fact to $\Bord_2^\partial$) have been constructed in \cite{muller2024categorified} under the additional assumption that the value at the disk is a pivotal finite tensor category in the sense of \cite{etingof2004finite} (which in particular implies rigidity), although they did not show that such extension is initial and unique.

\begin{thm}[Main theorem]\label{thm:main}
    Let $\Vcal$ be a symmetric monoidal \category{}  that admits geometric realizations and where the tensor product preserves geometric realizations separately in each variable. Then there is an adjunction
    \[
        i_!\colon \Fun^\otimes(\Ocal, \Vcal) \adj \Fun^\otimes(\OC, \Vcal) \cocolon i^\ast,
    \]
    where $i_!$ is fully faithful, i.e., every symmetric monoidal functor $F\colon \Ocal \to \Vcal$ can be extended to a symmetric monoidal functor $\OC \to \Vcal$ and $i_!F$ is initial among such extensions. Furthermore, $i_!F$ can be characterized as the \emph{unique} symmetric monoidal extension  $F'\colon \OC \to \Vcal$ such that the canonical map
    \[
        \int_{S^1} F(D^1) \too F'(S^1)
    \]
    is an equivalence.
\end{thm}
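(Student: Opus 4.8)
The plan is to construct $i_!$ as the pointwise left Kan extension along the inclusion $i\colon\Ocal\hookrightarrow\OC$. Since $i$ is symmetric monoidal, restriction $i^\ast$ automatically preserves symmetric monoidal functors and symmetric monoidal natural transformations, so it will suffice to establish three facts: that the pointwise left Kan extension $i_!F$ of a symmetric monoidal $F\colon\Ocal\to\Vcal$ exists in $\Vcal$; that $i_!F$ is again symmetric monoidal; and that the unit $F\to i^\ast i_!F$ is an equivalence. Granting these, $i_!$ restricts to a functor $\Fun^\otimes(\Ocal,\Vcal)\to\Fun^\otimes(\OC,\Vcal)$ which is fully faithful, the Kan-extension adjunction on $\Fun(-,\Vcal)$ restricts to the claimed adjunction $i_!\dashv i^\ast$, and $i_!F$ is then formally the initial object of the $\infty$-category of symmetric monoidal extensions of $F$.

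Write $\Ocal_{/c}$ for the comma category $\Ocal\times_{\OC}\OC_{/c}$ of objects of $\Ocal$ equipped with a map to $c$ in $\OC$; all three facts are governed by these categories, and the technical heart of the argument is their explicit description, which is the content of \cref{thm:O-dense-in-OC-intro}. An object $c\in\OC$ is a disjoint union of $k$ intervals and $\ell$ circles, and we need two things. \emph{(i)} There is a cofinal functor into $\Ocal_{/c}$ out of the product of a terminal category for each interval factor — witnessed by the identity bordism, using that $\Ocal\subseteq\OC$ is full on disjoint unions of intervals — and a copy of $\Delta^{\mathrm{op}}$ for each circle factor, where the object indexed by $n$ arcs corresponds to the subdivision bordism $\coprod_n D^1\to S^1$: an annulus whose outgoing boundary is the circle and whose inner circle is cut into $n$ cyclically ordered incoming intervals separated by arcs of free boundary. \emph{(ii)} The canonical comparison functor $\Ocal_{/c}\times\Ocal_{/c'}\to\Ocal_{/(c\sqcup c')}$ sending $(o\to c,\,o'\to c')$ to $o\sqcup o'\to c\sqcup c'$ is cofinal. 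Finally, under the classification \cref{thm:O->V}, post-composing the $\Delta^{\mathrm{op}}$-diagram over a circle with a symmetric monoidal $F$ recovers exactly the cyclic bar construction on the $\Erm_1$-algebra $F(D^1)$.

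With this description in hand the three facts follow with little effort. Since $\otimes$ on $\Vcal$ preserves geometric realizations separately in each variable and a colimit over a finite product of copies of $\Delta^{\mathrm{op}}$ is an iterated geometric realization, all the colimits computing $i_!F$ exist in $\Vcal$; the terminal objects in (i) give the unit equivalence; and the left Kan extension carries a canonical lax symmetric monoidal structure whose comparison maps $i_!F(c)\otimes i_!F(c')\to i_!F(c\sqcup c')$ are objectwise the composites
\[
    \operatorname*{colim}_{\Ocal_{/c}}F\ \otimes\ \operatorname*{colim}_{\Ocal_{/c'}}F\ \simeq\ \operatorname*{colim}_{\Ocal_{/c}\times\Ocal_{/c'}}F(-\sqcup-)\ \too\ \operatorname*{colim}_{\Ocal_{/(c\sqcup c')}}F,
\]
where the first equivalence uses that $F$ is symmetric monoidal and that $\otimes$ distributes over these colimits, and the second map is cofinal by (ii) and hence an equivalence; thus $i_!F$ is symmetric monoidal. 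Evaluating at the circle and using the cofinal $\Delta^{\mathrm{op}}$ gives $(i_!F)(S^1)\simeq\int_{S^1}F(D^1)$, and by naturality in the subdivision bordisms the component at $S^1$ of the counit $i_!F=i_!i^\ast F'\to F'$ associated to any symmetric monoidal extension $F'$ of $F$ is precisely the canonical map $\int_{S^1}F(D^1)\to F'(S^1)$. Consequently, if this map is an equivalence then the counit is an equivalence at $S^1$; being already an equivalence at $D^1$ and at $\emptyset$, and both functors being symmetric monoidal, it is then an equivalence at every object of $\OC$, hence an equivalence of symmetric monoidal functors $F'\simeq i_!F$. The converse is immediate from full faithfulness of $i_!$, so $i_!F$ is the unique symmetric monoidal extension with the stated property.

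I expect the main obstacle to be \cref{thm:O-dense-in-OC-intro}, that is, items (i) and (ii): one must set up a model of $\Bord_2^\partial$ (and of the relative handle-dimension condition carving out $\OC$) in which these comma categories can genuinely be computed, and then run a cofinality argument showing that an arbitrary bordism of handle dimension $\le 1$ relative to its outgoing boundary factors through a subdivision bordism in an essentially unique, contractible way, compatibly with disjoint union. By comparison the remaining inputs — the classification \cref{thm:O->V} and the identification of the cyclic bar diagram with factorization homology over $S^1$ — are formal; note in particular that the $\SO(2)$-action on $\int_{S^1}F(D^1)$ appearing in \cref{thm:operation} plays no role here, since it only materializes after realizing the underlying cyclic object.
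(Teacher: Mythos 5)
Your reduction is structurally the same as the paper's: build $i_!F$ as a (pointwise/operadic) left Kan extension, identify a final $\Dop$-shaped diagram of subdivision bordisms in each comma category $\Ocal\times_{\OC}\OC_{/c}$, deduce strong monoidality from compatibility of these comma categories with disjoint union, and run the uniqueness argument by checking the counit at $S^1$, $D^1$ and $\emptyset$. The gap is in where you source your items (i) and (ii). You attribute them to \cref{thm:O-dense-in-OC-intro}, but density — full faithfulness of $\Yo_\Ocal\colon\OC\to\PSh(\Ocal)$ — only says that each object is the colimit of the canonical diagram over the \emph{entire} comma category; it gives no control over comma categories under a fixed object of the slice, and hence does not imply that your $\Dop$-diagram of subdivision bordisms is final in $\Ocal\times_{\OC}\OC_{/S^1}$, nor that $\sqcup\colon(\Ocal\times_{\OC}\OC_{/c})\times(\Ocal\times_{\OC}\OC_{/c'})\to\Ocal\times_{\OC}\OC_{/c\sqcup c'}$ is final. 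In the paper the logical order is the reverse: finality (your (i)) is deduced from \cref{proposition B} — that $\Yo_\Ocal$ preserves the cyclic-bar colimit, proved by translating mapping spaces into arc systems and invoking contractibility of arc complexes (\cref{prop:arc-cpx-comparison}), together with the identification $\Cyc\simeq\Lambda_\infty$ and finality of $\Dop\to\Lambda_\infty$ (\cref{thm:Ns2018cofinal}) — via a Quillen-A-type argument (\cref{lem:Quillen-A-variant}, \cref{prop:Disk-final-in-slice}, \cref{cor:Dopfinalinslice}); your (ii) then follows from (i) plus $\otimes$-disjunctivity of $\Mfd_1^\partial$ (\cref{cor:Disk1tensordisjunct}); and density itself is a \emph{corollary} of Propositions A and B, not an input from which they follow. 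So precisely the step you flag as the main obstacle is neither proved nor correctly reduced to a quoted result, and it is not formal: it is the geometric heart of the paper.

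Two smaller remarks. The adjunction between $\Fun^\otimes(\Ocal,\Vcal)$ and $\Fun^\otimes(\OC,\Vcal)$ does not simply ``restrict'' from the Kan-extension adjunction on plain functor categories, since the mapping spaces (symmetric monoidal transformations) differ; the paper passes through operadic left Kan extensions and lax monoidal functors (\cref{prop:Kan-extension-existence}) and then restricts using the strong-monoidality criterion (\cref{prop:disjunctive-Kan-extension}), which is exactly the displayed comparison-map argument you sketch — so this is repairable, but it is where the hypothesis that the colimits are preserved by $-\otimes v$ is genuinely used, also for the existence of the iterated colimits over several circle factors (as in the proof of \cref{thm:extending-from-O-to-OC}). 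Finally, your identification of the $\Dop$-diagram with the subdivision bordisms and with the cyclic bar construction on $F(D^1)$ is the content of \cref{lem: comparediagram} in the paper and is stated correctly; note also that \cref{thm:O->V} is not needed anywhere in this argument, only the $\Erm_1$-structure on $D^1$ coming from $\col'$.
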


The assumption that $\Vcal$ has all geometric realizations can be weakened to having the specific colimit over the diagram indexing the cyclic bar construction. We provide in \cref{thm:extending-from-O-to-OC} a formulation of the main theorem in its full generality.

The left Kan extension of a symmetric monoidal functor (if it exists) always inherits a natural lax symmetric monoidal structure. 
To show that it is in fact (strong) symmetric monoidal requires inputs that are non-formal. In this case, we will prove the strong symmetric monoidality geometrically, with inputs the convergence of embedding calculus in dimension one \cite{krannich2024embedding} and the contractibility of certain arc complexes \cite{Wahl2016,hatcherwahl}.

\begin{figure}[ht]
\centering
\def\svgwidth{.8\linewidth}
\begingroup%
  \makeatletter%
  \providecommand\color[2][]{%
    \errmessage{(Inkscape) Color is used for the text in Inkscape, but the package 'color.sty' is not loaded}%
    \renewcommand\color[2][]{}%
  }%
  \providecommand\transparent[1]{%
    \errmessage{(Inkscape) Transparency is used (non-zero) for the text in Inkscape, but the package 'transparent.sty' is not loaded}%
    \renewcommand\transparent[1]{}%
  }%
  \providecommand\rotatebox[2]{#2}%
  \newcommand*\fsize{\dimexpr\f@size pt\relax}%
  \newcommand*\lineheight[1]{\fontsize{\fsize}{#1\fsize}\selectfont}%
  \ifx\svgwidth\undefined%
    \setlength{\unitlength}{1134.08162755bp}%
    \ifx\svgscale\undefined%
      \relax%
    \else%
      \setlength{\unitlength}{\unitlength * \real{\svgscale}}%
    \fi%
  \else%
    \setlength{\unitlength}{\svgwidth}%
  \fi%
  \global\let\svgwidth\undefined%
  \global\let\svgscale\undefined%
  \makeatother%
  \begin{picture}(1,0.52501552)%
    \lineheight{1}%
    \setlength\tabcolsep{0pt}%
    \put(0.4761104,0.38761236){\color[rgb]{0,0,0}\makebox(0,0)[lt]{\lineheight{1.25}\smash{\begin{tabular}[t]{l}$\cong$\end{tabular}}}}%
    \put(0.47206143,0.10598907){\color[rgb]{0,0,0}\makebox(0,0)[lt]{\lineheight{1.25}\smash{\begin{tabular}[t]{l}$\cong$\end{tabular}}}}%
    \put(0,0){\includegraphics[width=\unitlength,page=1]{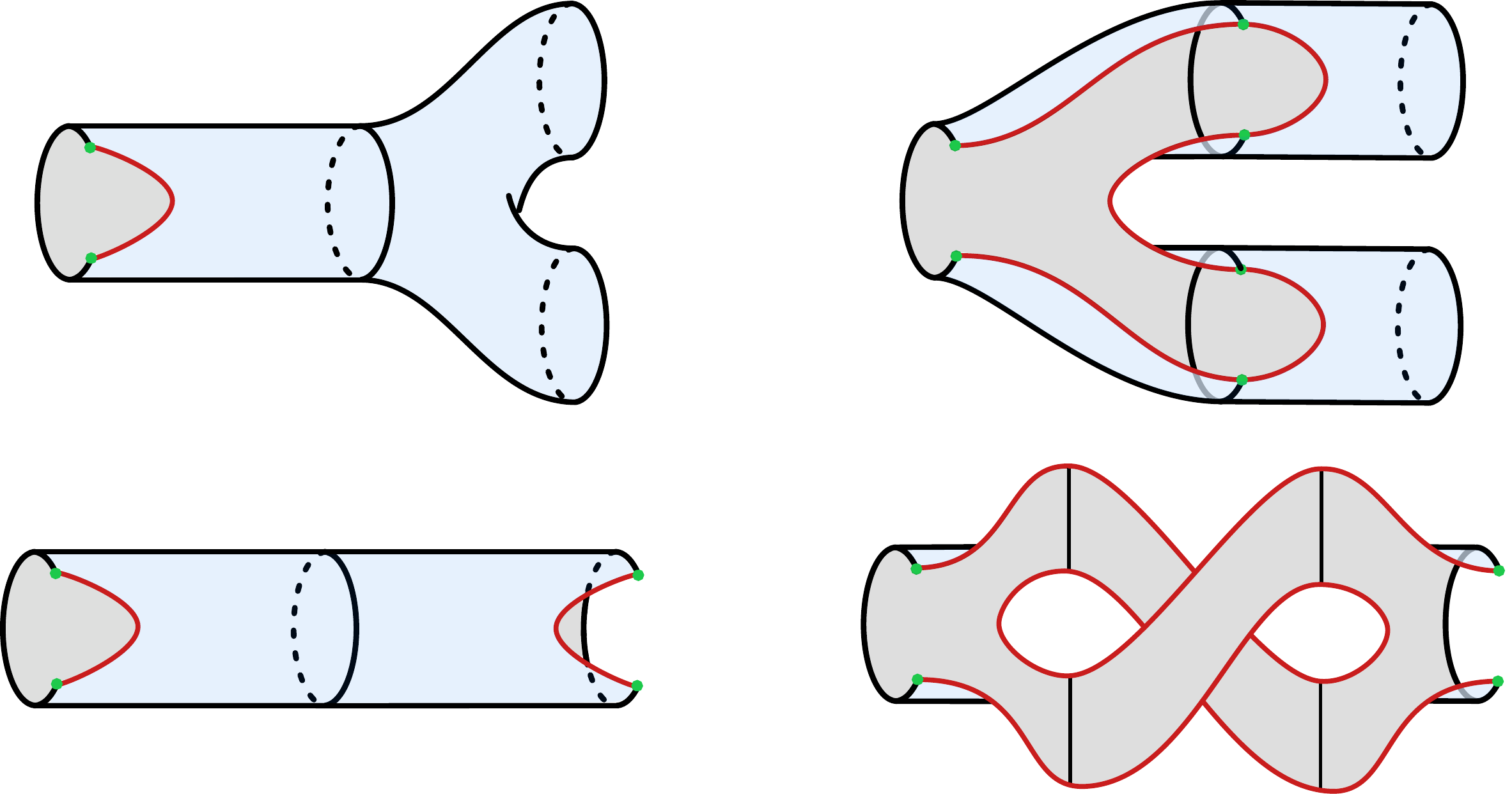}}%
  \end{picture}%
\endgroup%

\caption{The first isomorphism shows that the ``whistle'' bordism $W\colon D^1 \to S^1$ from \cref{fig:bordex} is a coalgebra map, and the second isomorphism is the ``Cardy condition'' expressing $W^\vee \circ W\colon D^1 \to D^1$ purely in terms of the multiplication and comultiplication on $D^1$.}
\label{fig:cardy}
\end{figure}

Combining \cref{thm:main} with \cref{thm:O->V} we obtain \cref{thm:operation}.
In fact, this proves the stronger statement that in the setting of \cref{thm:operation} we obtain a symmetric monoidal functor $F\colon \OC \to \Vcal$ that sends $D^1$ to $A$ and $S^1$ to $\int_{S^1} A$.
In particular, the ``whistle'' morphism from \cref{fig:bordex} is sent to a map $F(W)\colon A \to \int_{S^1} A$ relating the algebraic structures on $A$ and $\int_{S^1} A$ in various ways, for example as indicated in \cref{fig:cardy}.

\begin{rem}
    One might ask if a similar result holds for extensions along the (non-full) inclusion $\OC\hookrightarrow\Bord_2^\partial$. 
    The answer is no in general, since $S^1$ is a dualisable object in $\Bord_2^\partial$ (but not in $\OC$), while Hochschild homology objects are usually not dualisable.
    As we will see in \cref{thm:O-dense-in-OC-intro}, $\OC$ is in some sense the ''maximal'' subcategory of $\Bord_2^\partial$ for \cref{thm:main} to hold.
\end{rem}

\subsubsection{An embedding calculus perspective}
To put \cref{thm:main} in context, we take inspiration from Goodwillie--Weiss embedding calculus where one studies $d$-manifolds by studying embeddings of disks into them.
Let $\Mfd_d^\partial$ denote the \category{} of (compact, oriented) $d$-manifolds and (orientation preserving) embeddings between them and let $\Disk_d \subset \Mfd_d^\partial$ denote the full subcategory on the objects of the form $\sqcup_k D^d$.
(Here the embeddings do not need to send boundary to boundary.)
Then embedding calculus is concerned with the restricted Yoneda embedding
\[
    \Yo_{\Disk}\colon \Mfd_d \xtoo{\Yo} \PSh(\Mfd_d) \xtoo{\text{restrict}} \PSh(\Disk_d),
\]
which sends a $d$-manifold $M$ to the presheaf of $\Disk_d$ given by $\Emb(-, M)$. 
On mapping spaces this functor induces the comparison map
\[
    \Yo_\Disk\colon \Emb(M, N) \too \Map_{\PSh(\Disk_d)}(E_M, E_N) =: T_\infty\Emb(M, N)
\]
between the space of embeddings and the limit of the embedding tower.\footnote{To study the tower, one also needs to restrict to the full subcategories $\Disk_d^{\le k} \subset \Disk_d$ where one limits the numbers of disks.}
Embedding calculus is said to \emph{converge for all manifolds in dimension $d$} if and only if the restricted Yoneda embedding $\Yo_\Disk$ is fully faithful.
This is indeed the case when $d \le 2$ by Krannich-Kupers \cite[Theorem A]{krannich2024embedding}. 
(For $d\geq3$ this is generally false and one needs to restrict to specific types of manifolds to guarantee convergence.)

In analogy, the major ingredient to our main theorem (\cref{thm:main}) can be stated as:  
\begin{thm}\label{thm:O-dense-in-OC-intro}
    The open surface \category{} $\Ocal$ is \hldef{dense}\footnote{
        Originally introduced by Isbel as ``adequate subcategories'' \cite{Isbell1960-adequate}.
    } in the open-closed surface \category{} $\OC$, i.e.~the restricted Yoneda embedding
    \[
        \Yo_\Ocal\colon \OC \hookrightarrow \Psh(\Ocal) 
    \]
    is fully faithful.
\end{thm}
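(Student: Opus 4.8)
The plan is to prove that the restricted Yoneda embedding $\Yo_\Ocal\colon \OC \hookrightarrow \PSh(\Ocal)$ is fully faithful by showing that for any two objects $X, Y \in \OC$ the comparison map
\[
    \Map_{\OC}(X, Y) \too \Map_{\PSh(\Ocal)}\big(\Yo_\Ocal(X), \Yo_\Ocal(Y)\big)
\]
is an equivalence of spaces. Since $\OC$ is symmetric monoidal and generated under disjoint union by $D^1$ and $S^1$, and since both sides take disjoint unions to products in the target variable, it suffices to treat the case $Y = S^1$ (the case $Y = \sqcup_k D^1$ being a consequence of $\Ocal$ being a full subcategory of $\OC$, where the statement is tautological). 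So the crux is to compute the mapping space out of an arbitrary object $X$ into $S^1$, both in $\OC$ and in the presheaf category, and compare.

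\textbf{Step 1: Identify the presheaf side via a colimit/bar resolution.} First I would observe that in $\PSh(\Ocal)$ the presheaf $\Yo_\Ocal(S^1) = \Map_{\OC}(-, S^1)$ can be resolved by representables coming from $\Ocal$, using that a circle is built from intervals: concretely, the ``whistle'' bordism $D^1 \to S^1$ and more generally the bordisms $\sqcup_k D^1 \to S^1$ that exhibit $S^1$ as glued from $k$ arcs assemble into a simplicial (or cyclic) object in $\Ocal$ whose colimit in $\PSh(\Ocal)$ is $\Yo_\Ocal(S^1)$. This is exactly the cyclic bar construction computing $\int_{S^1}(-)$ at the level of bordism categories, so that $\Yo_\Ocal(S^1) \simeq \colim_{\Lambda^{\op}} \Yo_\Ocal(D^1)^{\otimes \bullet}$ (or a $\Delta^{\op}$-version thereof). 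Then $\Map_{\PSh(\Ocal)}(\Yo_\Ocal(X), \Yo_\Ocal(S^1))$ becomes a limit over $\Delta$ (or $\Lambda$) of spaces $\Map_{\OC}(X, \sqcup_k D^1)$, by co-Yoneda. The key technical input here is that the relevant colimit diagram in $\PSh(\Ocal)$ can be computed geometrically, and this is where the \emph{contractibility of arc complexes} (\cite{Wahl2016, hatcherwahl}) enters: the space of ways of cutting a surface mapping to $S^1$ along arcs down to its open part is contractible, so the bar resolution is ``effective''.

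\textbf{Step 2: Identify the $\OC$ side via embedding calculus in dimension $1$.} On the other side, I would describe $\Map_{\OC}(X, S^1)$ as a space of $2$-bordisms with the handle-dimension-$\le 1$-rel-outgoing-boundary condition. The point is that such a bordism deformation retracts onto a $1$-complex sitting over its outgoing boundary, and when the outgoing boundary is $S^1$, the complement data is governed precisely by embeddings of (collections of) intervals. Using the convergence of embedding calculus in dimension $1$ (Krannich--Kupers \cite{krannich2024embedding}, i.e.\ $\Yo_{\Disk}$ is fully faithful for $d \le 2$, applied in the $1$-dimensional fibers), one rewrites the bordism space $\Map_{\OC}(X, S^1)$ as the same limit over $\Delta$ (or $\Lambda$) of bordism spaces $\Map_{\OC}(X, \sqcup_k D^1)$ that appeared on the presheaf side. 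Comparing Steps 1 and 2 termwise — which is immediate since the terms are literally $\Map_{\OC}(X, \sqcup_k D^1)$ on both sides, with matching structure maps induced by the gluing bordisms — and checking that the two limit diagrams agree, yields the equivalence. A Segal-type / descent argument packages the termwise equivalences into an equivalence of the totalizations.

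\textbf{Main obstacle.} The hard part will be \textbf{Step 1 and the matching in Step 2}: making precise that the cyclic bar resolution of $S^1$ by open pieces is a colimit diagram \emph{in the bordism category} (not just after applying a TFT), and that the ``excision'' identifying a bordism into $S^1$ with its cut-up pieces along arcs is an equivalence of spaces rather than just a $\pi_0$-statement. This is genuinely non-formal — it is the geometric heart of the paper — and requires the contractibility of the arc complexes to see that the space of cuttings is contractible, together with convergence of $1$-dimensional embedding calculus to identify the automorphisms/gluing data correctly. Once both sides are expressed as the same limit, full faithfulness is forced; the remark in the introduction that $\OC$ is ``maximal'' for this to hold reflects that if one enlarged $\OC$ to include, e.g., the cap $S^1 \to \emptyset$, the handle-dimension bound would fail and the arc-complex argument would break down.
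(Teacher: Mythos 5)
There is a genuine gap: the variance of your resolution is backwards, and the statement your argument hinges on is false in $\OC$. You propose to resolve the \emph{target}: in Step 1 you use $\Yo_\Ocal(S^1) \simeq \colim_k \Yo_\Ocal(\sqcup_k D^1)$ (this colimit is indeed correct -- it is \cref{proposition B}) and then claim that $\Map_{\PSh(\Ocal)}(\Yo_\Ocal(X), \Yo_\Ocal(S^1))$ is a limit of $\Map_{\OC}(X,\sqcup_k D^1)$ ``by co-Yoneda''. But mapping \emph{into} a colimit is not a limit, and even termwise $\Map_{\PSh(\Ocal)}(\Yo_\Ocal(X),\Yo_\Ocal(\sqcup_k D^1))$ cannot be identified with $\Map_{\OC}(X,\sqcup_k D^1)$ via the Yoneda lemma, since $\Yo_\Ocal(X)$ is not representable when $X\notin\Ocal$ -- that identification is an instance of the full faithfulness you are trying to prove. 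In Step 2 the claim $\Map_{\OC}(X,S^1)\simeq \lim \Map_{\OC}(X,\sqcup_k D^1)$ asserts that $S^1$ behaves like a limit of disks in $\OC$, and this is explicitly false: as noted in the remark following the proof of \cref{proposition A}, composing a connected bordism with a single outgoing circle and no free boundary (e.g.\ the capping disk $\emptyset \to S^1$, which is \emph{not} in $\OC$) with the cutting bordisms $S^1 \to \sqcup_k D^1$ produces a compatible family in the limit with no preimage in $\Map_{\OC}(X,S^1)$. This failure is exactly why $S^1$ is dualisable in $\Bord_2^\partial$ but the density statement does not extend there. Your opening reduction is also unsound: disjoint union is not a categorical product in $\OC$ (a pair of pants is a connected bordism into $S^1\sqcup S^1$ that does not split), so neither $\Map_{\OC}(X,-)$ nor $\Yo_\Ocal$ takes $\sqcup$ to products, and one cannot reduce to $Y=S^1$ this way.

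The correct assembly, which is what the paper does, resolves the \emph{source}. For $M,N\in\OC$, \cref{proposition A} says $M\simeq \colim_{D\in\Disk_{/M}} \col'(D)$ in $\OC$ (this is where convergence of embedding calculus in dimension $1$ enters), giving $\Map_{\OC}(M,N)\simeq \lim_{D}\Map_{\OC}(D,N)$; and \cref{proposition B} says $\Yo_\Ocal$ preserves this colimit (this is where contractibility of arc complexes enters), giving $\Map_{\PSh(\Ocal)}(\Yo_\Ocal(M),\Yo_\Ocal(N))\simeq \lim_D \Map_{\PSh(\Ocal)}(\Yo_\Ocal(D),\Yo_\Ocal(N))$, whose terms \emph{are} $\Map_{\OC}(D,N)$ by the honest Yoneda lemma because $D\in\Ocal$. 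Comparing the two limits yields full faithfulness for all $M,N$ at once, with no reduction on the target needed. So you have identified the right geometric inputs, but they must feed a colimit decomposition of the first (contravariant) variable, not a limit decomposition of the second.
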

Roughly speaking, this means that every object in $\OC$, in particular $S^1$, can be obtained as formally adjoining a certain canonical colimit to $\Ocal$. To prove \cref{thm:main}, we need to further show that the canonical colimit diagram for $S^1$ admits a final subdiagram that is precisely the diagram indexing the cyclic bar construction. We will provide an outline of the proof in \cref{subsec:outline} after some preliminary remarks and recollections about the bordism category. In particular, we make use of the following geometric description of the slice category of the bordism category, the proof of which can be found in \cref{subsec:Bord-pullback-proof}.
\begin{thm}
    For $d \ge 0$ and $M \in \Bord_d^\partial$ a compact $(d-1)$-manifold with boundary there is an equivalence
    \[
        \boxMfd_{d,M} \simeq (\Bord_d^\partial)_{M/}
    \]
    where $\boxMfd_{d,M}$ is a topologically enriched category whose objects are bordisms $W\colon M \to N$ starting at $M$, 
    and where the mapping space from $W$ to $W'$ is the space of embeddings $i\colon W \hookrightarrow W'$ that restrict to the identity on $M$ and that satisfy $\pf W = i^{-1}(\pf W')$.
\end{thm}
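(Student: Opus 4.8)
The plan is to construct an explicit comparison functor $\Phi\colon \boxMfd_{d,M}\to(\Bord_d^\partial)_{M/}$ and then show it is an equivalence by reducing, through the standard slice/nerve formulas, to one genuinely geometric statement about cutting bordisms. On objects $\Phi$ is essentially the identity: an object on either side is a bordism $W\colon M\to N$ starting at $M$. On a special embedding $i\colon W\hookrightarrow W'$ — one with $i|_M=\mathrm{id}_M$ and $\pf W=i^{-1}(\pf W')$ — these two conditions force $i(W)\subseteq W'$ to be a codimension-zero submanifold-with-corners whose frontier in $W'$ is exactly $i(N)$ (the images of $M$ and of $\pf W$ land in $\partial W'$, so no other part of $\partial(i(W))$ can), and hence $V_i:=\overline{W'\setminus i(W)}$ is a genuine bordism $N\to N'$ with $W'$ canonically the composite $V_i\circ W$ glued along $N$. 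We set $\Phi(i)=(V_i,h_i)$ with $h_i$ this gluing equivalence. Functoriality — that cutting $W''$ successively along $j(N)$ and $(j\circ i)(N)$ decomposes it compatibly into $W$, $V_i$, $V_j$ — holds on the nose up to reparametrising collars, hence coherently in $\Bord_d^\partial$; packaging this into an honest functor out of the homotopy-coherent nerve of the (strictly associative, topologically enriched) category $\boxMfd_{d,M}$ is the first, routine, piece of bookkeeping.

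To see $\Phi$ is an equivalence I would work in a Segal-space model of $\Bord_d^\partial$ (as set up in \cref{sec:bord}), so that $(\Bord_d^\partial)_{M/}$ is presented by the simplicial space $n\mapsto(\Bord_d^\partial)_{n+1}\times_{(\Bord_d^\partial)_0}\{M\}$ (first-vertex face maps) and $N(\boxMfd_{d,M})$ by the usual nerve; then $\Phi$ is a map of Segal spaces, and since both satisfy the Segal condition it is enough to check it is an equivalence on $0$- and $1$-simplices. The $0$-simplices agree by the object-level remark above, so all the content is in the $1$-simplices: one must show $i\mapsto(W,V_i)$ is an equivalence from the space $\Emb^{\mathrm{sp}}(W,W')$ of special embeddings onto the space of composable pairs $M\to N\to N'$ with composite $W'$. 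Combining this with the undercategory mapping-space fibre sequence
\[
    \Map_{(\Bord_d^\partial)_{M/}}(W,W')\;\simeq\;\operatorname{fib}_{W'}\!\Big(\Map_{\Bord_d^\partial}(N,N')\xtoo{-\circ W}\Map_{\Bord_d^\partial}(M,N')\Big),
\]
the claim becomes: the space of pairs $(V\colon N\to N',\,h\colon V\circ W\xto{\simeq}W')$ is equivalent, via $(V,h)\mapsto\big(W\hookrightarrow V\circ W\xto{h}W'\big)$, to $\Emb^{\mathrm{sp}}(W,W')$, with inverse $i\mapsto(V_i,i)$.

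The geometric heart is upgrading the evident $\pi_0$-bijection here — cut $W'$ along $i(N)$ versus glue $V$ onto $W$ along $N$ — to an equivalence of spaces. The cleanest route I would attempt is to show that the map $\Psi\colon\{(V,h)\}\to\Emb^{\mathrm{sp}}(W,W')$, $(V,h)\mapsto(W\hookrightarrow V\circ W\xto{h}W')$, has contractible homotopy fibres: the fibre over $i$ consists of a bordism $V$ together with an identification of it with the fixed submanifold $\overline{W'\setminus i(W)}$ compatibly with the gluing along $i(N)$, which is a contractible space of collar data, since spaces of collars (of $N$ in $W'$, and of $M\sqcup\pf W'$ in $W'$) are contractible and $\overline{W'\setminus i(W)}$ is then determined. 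The hypothesis $\pf W=i^{-1}(\pf W')$ is precisely what makes this cut a legitimate bordism-cut with matching corners, so that $\overline{W'\setminus i(W)}$ is a bordism $N\to N'$ rather than a more degenerate manifold-with-corners. I expect the main obstacle to be exactly this coherent collar-and-corner bookkeeping — in particular matching the gluing-along-collars composition of $\Bord_d^\partial$ with honest inclusion of embedded bordisms, carried out over parameter spaces (using parametrised isotopy extension) so as to yield an equivalence of spaces and not merely a bijection on components; once the model of $\Bord_d^\partial$ is fixed, the rest is formal. As a sanity check the framework reproduces the known mapping spaces: a special self-embedding $W\hookrightarrow W$ is a collar-shrinking and hence isotopic through special embeddings to a diffeomorphism fixing $M$ and preserving $\pf W$, consistently with the $B\Diff_\partial$'s that govern mapping spaces in $\Bord_d^\partial$.
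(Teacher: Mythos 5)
There is a genuine gap in the reduction step. You claim that, since both sides are Segal spaces, it suffices to check that your comparison map is an equivalence on $0$- and $1$-simplices, and that the $0$-simplices "agree by the object-level remark". This is false as stated: the $0$-simplices of the enriched nerve of $\boxMfd_{d,M}$ form a \emph{discrete} set of objects, whereas the $0$-simplices of the d\'ecalage model $n\mapsto \Bord_d^\partial[1+n]\times_{\Bord_d^\partial[0]}\{M\}$ form a moduli space whose components are (roughly) classifying spaces of diffeomorphism groups of bordisms rel the incoming wall; similarly in degree $1$ your left-hand side is a disjoint union of embedding spaces over a discrete set of pairs of objects, while the right-hand side is a moduli space of composable pairs. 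So the map is not a level-wise equivalence in degrees $\le 1$, and the level-wise criterion cannot be applied. What is true, and what you actually need, is that the comparison is a Dwyer--Kan equivalence (essentially surjective plus an equivalence on mapping spaces), which is then inverted by the associated-category functor $\ac(-)$; your undercategory fibre-sequence formula is the right tool for the mapping-space half, but producing a point-set model of the composition map $-\circ W$ and of the "space of pairs $(V,h)$" in which the homotopy fibre can be computed by collar contractibility and parametrised isotopy extension is exactly where the substance lies. This is what the paper's appendix does, via a zigzag of Dwyer--Kan equivalences through an auxiliary semi-simplicial topological groupoid together with Serre-fibration (locally retractile) arguments, and it is not "formal once the model is fixed". (The paper in fact proves a stronger global statement, a pullback square identifying $\boxMfd_d$ with $\Mfd_{d-1}^{\partial,\cong}\times_{\Bord_d^\partial}\Ar(\Bord_d^\partial)$ via the d\'ecalage, and only then restricts to $M$ using the fibration $\Emb_M^\square(W,V)\to\Emb^\square(W,V)\to\Diff(M)$.)

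A second, related gap is in the construction of your functor $\Phi$ itself. With only the conditions "$i|_M=\mathrm{id}_M$" and "$\pf W=i^{-1}(\pf W')$", the image $i(\partial_- W)$ may meet the outgoing boundary $\partial_- W'$ --- for instance $i=\mathrm{id}_W$ is allowed --- and then your claim that the frontier of $i(W)$ in $W'$ is exactly $i(N)$ fails: $\overline{W'\setminus i(W)}$ is empty or a manifold with corners in the wrong places, not a bordism $N\to N'$. This is precisely why in the body of the paper a morphism of $\boxMfd_d$ is additionally required to send $\partial_- W$ into $W'\setminus\partial_- W'$, transversally to $\pf W'$; that condition (not the free-boundary condition you cite) is what makes the cut a genuine bordism, and it has the structural consequence that $\boxMfd_{d,M}$ is only a \emph{non-unital} topologically enriched category with quasi-units, so that one must invoke the quasi-unital (semi-)Segal machinery to build the comparison at all. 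Your proposal would either have to impose this condition and then handle the non-unitality, or supply an extra argument that the subspace of such "pushed-in" embeddings is a deformation retract of your $\Emb^{\mathrm{sp}}(W,W')$; as written, $\Phi$ is not defined on all the morphisms of the category in the statement.
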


\subsubsection{Related work} We close this section by briefly mentioning some related results in the literature and directions of further work.
\begin{enumerate}[a.]
    \item \emph{$D$-branes and Calabi--Yau categories.}
    In \cite{costello2007topological}, Costello studied symmetric monoidal functors out of the bordism categories $\Ocal_{\Lambda}$ and $\OC_{\Lambda}$, where the free boundaries of the 2-bordisms in $\Ocal$ and $\OC$ are further labeled by elements of a set $\Lambda$ (which are called ''$D$-branes'').
    Then by \cite[Theorem A]{costello2007topological} symmetric monoidal functors from $\Ocal_\Lambda$ to the derived category of $\Qbb$ are classified by (proper) $A_\infty$-Calabi--Yau categories with $\Lambda$ the set of objects, and extensions to $\OC_{\Lambda}$ are canonical in the same way as \cref{thm:main}.  
    \cref{thm:main} (and \cref{thm:O->V}) concern the case where $\Lambda$ contains a single element. 
    We believe that both results and our proof strategies can be generalized to the case where $\Lambda$ is any set, but we will not spell out the details here.
    The resulting action of the surface operad on the Hochschild homology of the Calabi--Yau-category should then generalise the $\Erm_2^{\rm fr}$-action obtained by Brav--Rozenblyum \cite{BravRozenblyum} (though they work in the more general ``relative Calabi--Yau'' case).
    
    \item \emph{Non-compact cobordism hypothesis.}
    In \cite[Section 4.2]{lurie2008classification}, Lurie proposed a ``non-compact'' variant of the oriented cobordism hypothesis in dimension 2, which classifies symmetric monoidal functors from a wide but non-full subcategory of the $(\infty,2)$-category of fully-extended oriented $2$-dimensional bordisms to an $(\infty,2)$-category in terms of ``Calabi--Yau objects'' therein. 
    We explain how \cref{thm:main} is related to this hypothesis in \cref{subsec:noncompact}. 
\end{enumerate}

\subsubsection{Acknowledgments}
We would like to thank Søren Galatius, Manuel Krannich, Maxime Ramzi, Oscar Randal-Williams, and Lukas Woike for helpful conversations;
and Robert Burklund and Nathalie Wahl for helpful conversations and feedback on an earlier draft.

JS was supported by the Independent Research Fund Denmark (grant no.~10.46540/3103-00099B).
AZ was supported by the European Union via the Marie Skłodowska--Curie postdoctoral fellowship (project 101150469) and the DNRF through the Copenhagen Centre for Geometry and Topology (``GeoTop'' -- DRNF151).
SB and JS would like to thank the GeoTop Centre for its hospitality during the early stages of this project.
AZ would like to thank the Isaac Newton Institute for Mathematical Sciences, Cambridge for hospitality during the programme \emph{Equivariant homotopy theory in context} (EPSRC grant EP/Z000580/1), where some of the writing of this paper was undertaken.

\section{The bordism category and its slice}

In this section, we give a geometric definition of the \category{} of bordisms $\Bord_d^\partial$, where objects are compact oriented $(d-1)$-manifolds and the mapping spaces are moduli-spaces of compact oriented $d$-bordisms with corners.
All our manifolds are oriented and we will leave this implicit.
Then we state a theorem (proven in \cref{subsec:Bord-pullback-proof}) that describes the slice \categories{} $(\Bord_d^\partial)_{M/}$ in terms of certain topologically enriched categories $\boxMfd_{d,M}$ of $d$-manifolds and embeddings.

\subsection{Constructing the bordism category}\label{sec:bord}
When working with the bordism category as an \category{}, it is often convenient to construct it as a Segal space \cite{lurie2008classification, CS22, KrannichKupers-Disc}.
We will briefly recall this story here, and our approach will be close to \cite{KrannichKupers-Disc} in two ways: 
we use quasi-unital Segal spaces, and we will use topologically enriched groupoids of bordisms rather than topological spaces of (embedded) bordisms.

There is an adjunction
\[
    \ac\colon \Fun(\Dop, \Scal) \adj \Cat \cocolon \xN^r
\]
where $\xN^r$ is the Rezk nerve defined by $\xN^r_n(\Ccal) \coloneq \Fun([n], \Ccal)^\simeq$ and $\ac$ is the associated \category{} functor.
The Rezk nerve functor $\xN^r$ is fully faithful, and its essential image are the complete Segal spaces, see e.g.~\cite{Rezk-nerve-25}.
Thus, we can construct \categories{} by constructing complete Segal spaces.
In practice, it turns out to be more convenient to only construct Segal spaces and to then formally complete them (this is for example done in \cite[\S5]{CS22}), or equivalently to directly apply $\ac(-)$.

In constructing these Segal spaces, it can often be challenging to specify the degeneracy maps ``on the nose'', as for example in the case of $\Bord_d^\partial$ they result in length $0$ bordisms.
Luckily, a theorem of Haugseng \cite{Haugseng-semicategories} tells us that it suffices to specify face maps and to then check the condition that degeneracies exist up to homotopy.
More precisely, he proves that for $\Seg(\Dop; \Scal) \subset \Fun(\Dop, \Scal)$ the full subcategory of Segal spaces, the restriction functor
\[
    \Seg(\Dop; \Scal) \too \Fun(\Dop, \Scal) \too \Fun(\Dop_\inj, \Scal)
\]
to semi-simplicial spaces is faithful and replete (i.e.~fully faithful on maximal subgroupoids), and it is an equivalence onto the subcategory of those semi-simplicial spaces that satisfy the Segal condition $X_n \simeq X_1 \times_{X_0} \dots \times_{X_0} X_1$ and an additional ``quasi-unital'' condition.
(Note that while this is not a full subcategory, as not all morphisms preserve quasi-units, it is full on maximal subgroupoids, so to prove that two Segal spaces are equivalent it suffices to exhibit an equivalence of semi-simplicial spaces.)

Finally, the ``spaces'' (i.e.~\groupoids) that appear as part of the (semi-)Segal space $\Bord_d^\partial$ are moduli spaces of manifolds.
This means that, broadly speaking, we have two choices for how to model them:
either as topological spaces of (unparametrized) submanifolds of $\Rbb^\infty$ (as is common when using scanning methods \cite{GMTW09}),
or as topologically enriched groupoids, where objects are manifolds and mapping spaces are the spaces of diffeomorphisms with their usual Whitney $\mcal{C}^\infty$-topology (as in \cite{KrannichKupers-Disc}).
The latter approach seems to be more convenient here.
This means that we will construct a semi-simplicial object in the $1$-category of topologically enriched groupoids $\mrm{TopGpd}$ and then use the functor
\[
    \mrm{TopGpd} \xtoo{|-|} \Top \too \Scal
\]
that sends a topologically enriched groupoid to the geometric realization of its (topological) nerve.
When applied to the topologically enriched groupoid of $d$-manifolds and diffeomorphisms, this exactly results in the moduli space of $d$-manifolds.

In the following we will consider submanifolds $W \subset \Rbb \times \Rbb^\infty$ and when $A \subset \Rbb$ we let $W_A \coloneq W \cap (A \times \Rbb^\infty)$.
(When $A$ is a single point $t\in \Rbb$ we think of this as a submanifold of $\Rbb^\infty$, rather than $\{t\} \times \Rbb$.)
\begin{defn}
    For all $[n] \in \Dop_\inj$, an \hldef{$[n]$-walled $d$-bordism} is a pair of a strictly monotone map $\mu\colon [n] \hookrightarrow \Rbb$ and an oriented submanifold with boundary $W \subset \Rbb \times \Rbb^\infty$ such that
    \begin{enumerate}
        \item for all $i \in [n]$, $\mu(i)$ is a regular value of the first coordinate projection $\pr_1\colon W \to \Rbb$, and 
        \item $W_{[\mu(0), \mu(n)]}$ is compact.
    \end{enumerate}
    A diffeomorphism between two $[n]$-walled $d$-bordisms $(W, \mu)$ and $(W', \mu')$ is 
    an orientation-preserving diffeomorphism $\varphi\colon W_{[\mu(0),\mu(n)]} \to W'_{[\mu'(0), \mu'(n)]}$ that satisfies $\varphi(W_{[\mu(i),\mu(i+1)]}) = W'_{[\mu'(i), \mu'(i+1)]}$ for all $0\le i <n$.
   We let $\Bord_d^\partial[n]$ denote the topologically enriched groupoid whose objects are the $[n]$-walled $d$-bordisms and whose mapping spaces are the spaces of diffeomorphisms between $[n]$-walled bordisms, with the usual Whitney $\Ccal^\infty$-topology.
   We can think of this as a topological groupoid by equipping the set of objects with the discrete topology.
\end{defn}

\begin{rem}
    Unlike the definition of \cite{GMTW09} our space of bordisms is not topologised: the objects of $\Bord_d^\partial[n]$ are a discrete set, and only the morphisms (which are diffeomorphisms) are topologised.
    In particular, the manifolds we use are only embedded into $\Rbb \times \Rbb^\infty$ for convenience and there is no topology that allows us to isotope an embedded bordism.
    Therefore, we could equivalently work with submanifolds of $\Rbb \times S^{2d}$ (since by the Whitney embedding theorem every bordism admits at least one embedding into this), or we could work with abstract manifolds $W$ equipped with a proper smooth function $W \to \Rbb$ that replaces $\mrm{pr}_1$.
    (Though we would need to ensure that there is a set of such manifolds.)
\end{rem}

The following lemma was proved for a slightly more complicated version of $\Bord_d^\partial$ in \cite{KrannichKupers-Disc}, and for this specific definition it will appear in \cite{modular}.
\begin{lem}
    $\Bord_d^\partial[\bullet]$ defines a quasi-unital semi-Segal space.
\end{lem}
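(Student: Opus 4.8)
The plan is to first pin down the semi-simplicial structure of $\Bord_d^\partial[\bullet]$, then verify the Segal condition, and finally exhibit the weak units making it quasi-unital; the only non-formal ingredient is a standard collar-gluing analysis, which I would import from \cite{KrannichKupers-Disc,modular} rather than reprove. To begin, a monotone injection $f\colon[m]\hookrightarrow[n]$ acts on an $[n]$-walled $d$-bordism $(W,\mu)$ by $(W,\mu)\mapsto(W,\mu\circ f)$, and all the conditions in the definition are inherited because $\mu\circ f$ is again strictly monotone with regular values for $\pr_1|_W$ and $W_{[\mu(f(0)),\mu(f(m))]}$ is closed in the compact set $W_{[\mu(0),\mu(n)]}$; on morphisms one restricts diffeomorphisms along $W_{[\mu(f(0)),\mu(f(m))]}\subset W_{[\mu(0),\mu(n)]}$, continuously, and the assignment is strictly functorial, so applying $\mathrm{TopGpd}\xtoo{|-|}\Top\to\Scal$ yields a semi-simplicial space. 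Degeneracies cannot be defined on the nose --- a degenerate $1$-simplex over $M$ would force $\mu'(0)=\mu'(1)$, violating strict monotonicity, i.e.\ it would be a ``length-zero'' bordism --- which is precisely why one can only hope for a \emph{quasi}-unital semi-Segal space and must then invoke Haugseng's theorem \cite{Haugseng-semicategories}.

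Next, for the Segal condition one must show that for each $n$ the map
\[
    \Bord_d^\partial[n]\too \Bord_d^\partial[1]\times_{\Bord_d^\partial[0]}\cdots\times_{\Bord_d^\partial[0]}\Bord_d^\partial[1],
\]
induced by the edge inclusions $\{i-1<i\}\hookrightarrow[n]$ and the vertex inclusions $\{i\}\hookrightarrow[n]$, is an equivalence. I would argue this in two steps. First, the wall-restriction functors $\Bord_d^\partial[1]\to\Bord_d^\partial[0]$ (and more generally $\Bord_d^\partial[n']\to\Bord_d^\partial[m']$) are isofibrations of topologically enriched groupoids: a path of diffeomorphisms of a wall $M=W_{\mu(0)}$ extends, through a collar of $M$ in $W$, to a path of diffeomorphisms of $W$ stationary away from the collar, so after realization the strict fibre product above computes the homotopy fibre product. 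Second, the comparison functor is an equivalence of topological groupoids: it is essentially surjective because a tuple of composable $[1]$-walled bordisms glues along collars of the shared wall manifolds (smoothing corners at $\pf W\cap W_{\mu(i)}$) to an $[n]$-walled bordism that restricts to the given tuple up to diffeomorphism, and it is an equivalence on morphism spaces because a diffeomorphism of a glued bordism respecting the wall decomposition is precisely a compatible tuple of diffeomorphisms of the individual slabs, up to the contractible ambiguity of how the gluing interpolates the product structures across the walls. (Perhaps cleanest is to run this argument on an auxiliary variant in which walled bordisms additionally carry germs of product structures at the walls, where restriction and gluing become strict and the Segal condition is immediate, and then to observe that forgetting the germs is a levelwise equivalence since the space of such germs is contractible.)

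Finally, given the Segal condition, Haugseng's criterion \cite{Haugseng-semicategories} reduces quasi-unitality to exhibiting, for each object $M$, a $1$-simplex over $M$ with which pre- and post-composition are equivalences; the cylinder $[0,1]\times M$, embedded with walls at its two ends, does this, since gluing it onto either end of a bordism $W$ recovers $W$ up to a contractible space of collar choices. The main obstacle throughout is the collar analysis: that wall-restriction is an isofibration, and that the space of gluings of two bordisms along a common wall is contractible, both with due care at the corners $\pf W\cap W_{\mu(i)}$, where one needs collar-neighbourhood and isotopy-extension theorems for manifolds with corners. These are exactly the technical facts established (for a slightly more elaborate version of $\Bord_d^\partial$) in \cite{KrannichKupers-Disc}, and for the present definition in \cite{modular}; as our bordism category is a mild simplification, the arguments carry over essentially verbatim, and I would cite them rather than reproduce them.
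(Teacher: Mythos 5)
Your proposal is sound and is consistent with the paper's treatment: the paper does not actually prove this lemma, but defers it entirely to \cite{KrannichKupers-Disc} (where a slightly more elaborate version of $\Bord_d^\partial$ is handled) and to \cite{modular} for this exact definition, which is precisely where you send the collar-neighbourhood and isotopy-extension technicalities. Your surrounding sketch --- face maps by reindexing walls, the Segal condition via collar gluing (or germs of product structures near the walls) together with the fibrancy of wall restriction, and cylinders as quasi-units in the sense of \cite{Haugseng-semicategories} --- is the standard argument those references carry out, so there is nothing to flag beyond the fact that the real content is, in both cases, outsourced to the cited work.
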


It thus follows from \cite{Haugseng-semicategories} that $\Bord_d^\partial[\bullet]$ uniquely extends to a simplicial space that is a Segal space.
By abuse of notation, we will also denote the simplicial space by $\Bord_d^\partial[\bullet]$.
We then define the bordism category as
\[
    \Bord_d^\partial \coloneq \ac(\Bord_d^\partial[\bullet]).
\]

We will also need the symmetric monoidal structure on $\Bord_d^\partial$ that is given by disjoint union. 
A symmetric monoidal \category{} is a commutative monoid in the \category{} $\Cat$, i.e.~a functor $F\colon \Fin_* \to \Cat$ satisfying the Segal condition $F(A_+) \simeq \prod_{a \in A} F(\{a\}_+)$, where $A_+ = A \sqcup \{\infty\}$ is a finite set with added base-point.
Our construction of the bordism category can easily be improved to be symmetric monoidal, by defining a functor
\[
    \Bord_d^\partial \colon \Dop_\inj \times \Fin_* \too \mrm{TopGpd}
\]
that sends $([n], A_+)$ to a version of $\Bord_d^\partial[n]$ where the $[n]$-walled bordisms $(W,\mu)$ additionally come with a continuous map $l\colon W_{[\mu(0),\mu(n)]} \to A$ that decomposes the bordism into an $A$-indexed disjoint union.
The functoriality in $\alpha\colon A_+ \to B_+$ is by composing $l$ with $\alpha$ and then discarding any components that are mapped to the base point $\infty \in B_+$.
We refer the reader to \cite{modular} (in preparation) for more details.

\subsection{Slices of the bordism category}
We now want to describe the slice categories $(\Bord_d^\partial)_{M/}$.
An object of this \category{} is a bordism $W\colon M \to N$ and a morphism $W_1 \to W_2$ is a bordism $V\colon N_1 \to N_2$ together with an identification $\varphi\colon W_1 \cup V \cong W_2$. 
This identification in particular gives an embedding $i = \varphi_{|W_1} \colon W_1 \hookrightarrow W_2$, and we can in fact recover the bordism $V$ as the complement $W_2 \setminus i(W_1)$ of the embedding.
The goal of this section is to make this precise and to show that the \category{} $(\Bord_d^\partial)_{M/}$ is equivalently modelled by a topologically enriched category $\boxMfd_{d,M}$ whose objects are bordisms starting at $M$ and whose morphisms are embeddings that fix $M$.
As bordisms in $\Bord_d^\partial$ are manifolds with corners, we will have to allow them in $\boxMfd_{d,M}$.
In fact, it will be more convenient to first consider a version where the incoming boundary is only fixed set-wise.

\begin{defn}
    We define the non-unital topologically enriched category \hldef{$\boxMfd_d$} of marked $d$-manifolds.
    Objects are $d$-manifolds $W$ with corners and a decomposition of their boundary as $\partial W = \partial_+ W \cup \pf W \cup \partial_- W$ such that each of $\partial_+ W$, $\partial_- W$, $\pf W$ is a compact $(d-1)$-manifold with boundary, $\partial_+ W \cap \partial_- W = \emptyset$, and the corners of $W$ are precisely $(\partial_+ W \sqcup \partial_- W) \cap \pf W$.
    (In particular this means that the corners are $\partial(\pf W) = \partial( \partial_+ W \sqcup \partial_- W)$.)
    A morphism is a (smooth) embedding $i\colon W \hookrightarrow V$ satisfying
    \begin{enumerate}[(1)]
        \item $i(\partial_+ W) = \partial_+ V$,
        \item $i(\pf W) \subseteq \pf V$, and 
        \item $i_{|\partial_- W}\colon \partial_- W \to V$ lands in $V \setminus \partial_- V$ and is transverse to $\pf V$.
    \end{enumerate}
    We topologise the mapping spaces as subspaces
    \[
        \Emb^\square(W, V) \subset \Emb(W, V).
    \]
\end{defn}

This is only a \emph{non-unital} topologically enriched category as the identity maps do not satisfy (3).
However, it does have quasi-units  that can be obtained by taking the identity embedding $\id_W\colon W \to W$ and pushing it away from $\partial_- W$ using a collar.
(Equivalently, these are ``weak units'' in the sense of \cite[Definition 3.12]{ERW-semi-simplicial}.)
We also have a variant where we fix the $\partial_+$-boundary.

 \begin{figure}[h]
\centering
\def\svgwidth{.8\linewidth}
\begingroup%
  \makeatletter%
  \providecommand\color[2][]{%
    \errmessage{(Inkscape) Color is used for the text in Inkscape, but the package 'color.sty' is not loaded}%
    \renewcommand\color[2][]{}%
  }%
  \providecommand\transparent[1]{%
    \errmessage{(Inkscape) Transparency is used (non-zero) for the text in Inkscape, but the package 'transparent.sty' is not loaded}%
    \renewcommand\transparent[1]{}%
  }%
  \providecommand\rotatebox[2]{#2}%
  \newcommand*\fsize{\dimexpr\f@size pt\relax}%
  \newcommand*\lineheight[1]{\fontsize{\fsize}{#1\fsize}\selectfont}%
  \ifx\svgwidth\undefined%
    \setlength{\unitlength}{1109.94103139bp}%
    \ifx\svgscale\undefined%
      \relax%
    \else%
      \setlength{\unitlength}{\unitlength * \real{\svgscale}}%
    \fi%
  \else%
    \setlength{\unitlength}{\svgwidth}%
  \fi%
  \global\let\svgwidth\undefined%
  \global\let\svgscale\undefined%
  \makeatother%
  \begin{picture}(1,0.28653224)%
    \lineheight{1}%
    \setlength\tabcolsep{0pt}%
    \put(0,0){\includegraphics[width=\unitlength,page=1]{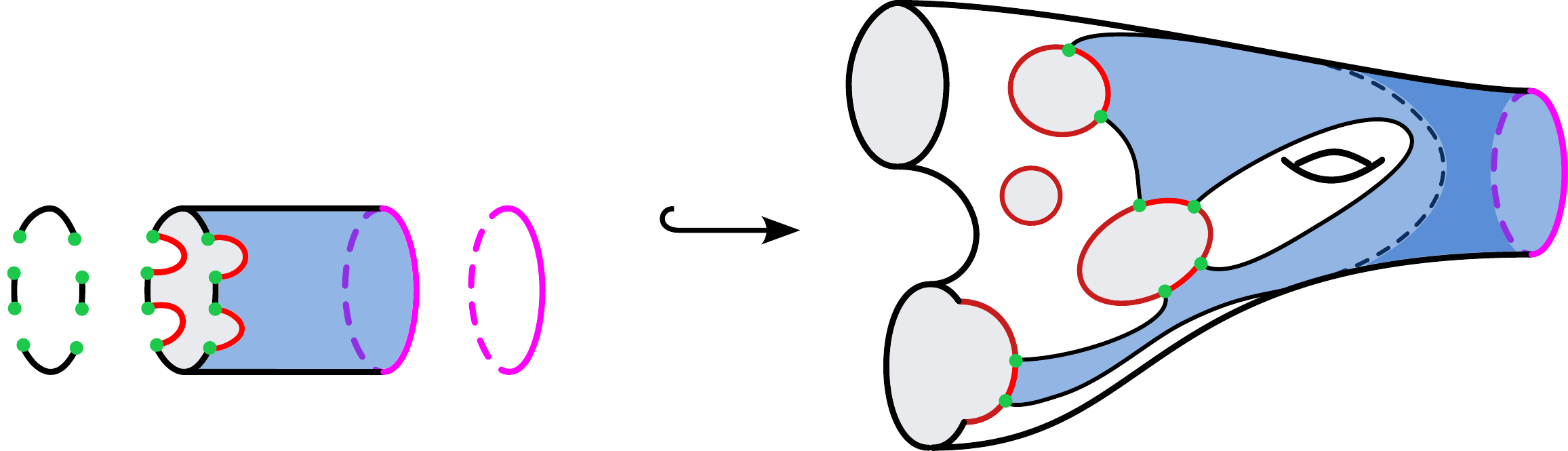}}%
    \put(0.30136112,0.00630655){\color[rgb]{1,0,1}\makebox(0,0)[lt]{\lineheight{1.25}\smash{\begin{tabular}[t]{l}$\partial_+ W$\end{tabular}}}}%
    \put(-0.00067031,0.00630655){\color[rgb]{0,0,0}\makebox(0,0)[lt]{\lineheight{1.25}\smash{\begin{tabular}[t]{l}$\partial_- W$\end{tabular}}}}%
    \put(0.17275085,0.00630655){\color[rgb]{0,0,0}\makebox(0,0)[lt]{\lineheight{1.25}\smash{\begin{tabular}[t]{l}$W$\end{tabular}}}}%
    \put(0,0){\includegraphics[width=\unitlength,page=2]{boxemb.pdf}}%
    \put(0.21311086,0.223874){\color[rgb]{1,0,0}\makebox(0,0)[lt]{\lineheight{1.25}\smash{\begin{tabular}[t]{l}$\pf W$\end{tabular}}}}%
  \end{picture}%
\endgroup%

\caption{A morphism in $\Emb_{S^1}^\square(W, V)$.}
\end{figure}

\begin{defn}
    For a fixed compact oriented $(d-1)$-manifold $M$ we let \hldef{$\boxMfd_{d,M}$} denote the (non-replete) non-unital subcategory of $\boxMfd_d$ where we require the objects to satisfy $\partial_+ W = M$ and the morphisms $i\colon W \to V$ to satisfy $i_{|\partial_+ W} = \id_M$.
    We denote these embedding spaces by 
    \[
        \Emb_M^\square(W, V) = \{\id_M\} \times_{\Diff(M)} \Emb^\square(W, V) \subset \Emb(W, V).
    \]
\end{defn}

To these quasi-unital topologically enriched categories we can assign \categories{} by first taking their topologically enriched nerve to get a semi-simplicial Segal space, which can then uniquely be promoted to a simplicial Segal space (by \cite{Haugseng-semicategories} as before), and then taking $\ac(-)$.
This chain of operations preserves the mapping spaces.
By abuse of notation we will denote by $\boxMfd_d$ and $\boxMfd_{d,M}$ both the quasi-unital topologically enriched categories and the \categories{} obtained from them.
Note that both of these are in fact symmetric monoidal \categories{} under the disjoint union operation $\sqcup$ in the same way that $\Bord_d^\partial$ is.

We can now state the key result that relates the slices of the bordism category to embeddings.
This will be proven as \cref{thm:Phi-and-Psi} in \cref{subsec:Bord-pullback-proof}, and in fact we will prove a slightly stronger result (\cref{prop:ArBord-pullback}).

\begin{cor}\label{cor:Phi-and-Psi} 
    For every compact oriented $(d-1)$-manifold $M$ there are equivalences
    \[
        \hldef{\widetilde{\Psi}_S} \colon \boxMfd_{d,S} \xtoo{\simeq} (\Bord_d^\partial)_{S/}
        \qquad \text{and} \qquad
        \hldef{\widetilde{\Phi}_S} \colon (\boxMfd_{d,S})^\op \xtoo{\simeq} (\Bord_d^\partial)_{/S}
    \] 
    and for $S = \emptyset$ these equivalences are symmetric monoidal.
\end{cor}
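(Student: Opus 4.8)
The plan is to deduce both equivalences from a single statement about the arrow \category{} $\operatorname{Ar}(\Bord_d^\partial)\coloneq\Fun([1],\Bord_d^\partial)$: namely that the commuting square of \categories{}
\[
\begin{array}{ccc}
\boxMfd_d & \longrightarrow & \operatorname{Ar}(\Bord_d^\partial)\\
\downarrow & & \downarrow\\
\mathcal{M}_{d-1} & \longrightarrow & \Bord_d^\partial
\end{array}
\]
is cartesian. Here $\mathcal{M}_{d-1}$ is the \groupoid{} of compact oriented $(d-1)$-manifolds and diffeomorphisms, the bottom map sends $N\mapsto N$ and a diffeomorphism to its mapping cylinder (an equivalence in $\Bord_d^\partial$), the right-hand map is evaluation at the source, the left-hand map sends $W\mapsto\partial_+ W$ and an embedding $i\colon W\hookrightarrow V$ to the diffeomorphism $i|_{\partial_+W}$, and the top map sends $W$ to the bordism $\partial_+W\to\partial_-W$ with free boundary $\pf W$, and $i$ to the square filled by the complementary bordism $\overline{V\setminus i(W)}\colon\partial_-W\to\partial_-V$ together with the tautological identification $W\cup\overline{V\setminus i(W)}\cong V$. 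Granting that the square is cartesian, taking vertical fibres over a fixed $M$ yields the first equivalence $\widetilde{\Psi}_M$, since $(\Bord_d^\partial)_{M/}$ is the fibre of evaluation at the source over $M$. The equivalence $\widetilde{\Phi}_M$ then follows formally from $\mathcal{C}_{/x}\simeq((\mathcal{C}^\op)_{x/})^\op$ together with the orientation-reversing equivalences $(\Bord_d^\partial)^\op\simeq\Bord_d^\partial$ and $\boxMfd_{d,\overline{M}}\simeq\boxMfd_{d,M}$. Finally, when $M=\emptyset$ every functor in the square is symmetric monoidal for disjoint union and $\emptyset$ is the monoidal unit of $\mathcal{M}_{d-1}$, so the induced equivalence of fibres is symmetric monoidal; for $M\neq\emptyset$ the constraint $\partial_+W=M$ is not stable under $\sqcup$, so there is no monoidal structure to compare.

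To set up the top functor one works with the explicit semi-simplicial models: a chain $W_0\hookrightarrow\dots\hookrightarrow W_n$ of $\square$-embeddings is sent to the $[n+1]$-walled bordism obtained by placing $W_0$ in the first slot and each complement $\overline{W_k\setminus W_{k-1}}$ in the $(k+1)$-st slot; realising this concatenation as a submanifold of $\Rbb\times\Rbb^\infty$ with the walls as regular values requires collars, but the space of such choices is contractible, so this is done coherently via a functorial collar. As the identities of $\boxMfd_d$ fail transversality condition~(3), both sides are only quasi-unital, and \cite{Haugseng-semicategories} is invoked both to pass from the semi-simplicial Segal spaces to \categories{} and to reduce the sought equivalence to one of underlying semi-simplicial spaces. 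Essential surjectivity of $\widetilde{\Psi}_M$ is then immediate, since the objects of $\boxMfd_{d,M}$ are exactly the underlying marked manifolds of bordisms out of $M$.

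The crux is the map on mapping spaces: for $W,V\in\boxMfd_{d,M}$ one must show that
\[
\Emb_M^\square(W,V)\longrightarrow\Map_{(\Bord_d^\partial)_{M/}}(W,V)
\]
is a weak equivalence. The target is modelled by the space of pairs of a bordism $U\colon\partial_-W\to\partial_-V$ and a diffeomorphism $\varphi\colon W\cup_{\partial_-W}U\xto{\simeq}V$ fixing $M$; restricting $\varphi$ to $W$, after pushing it off a collar of $\partial_-W$ so that its image is a genuine $\square$-submanifold, defines a forgetful map back to $\Emb_M^\square(W,V)$, and the plan is to show this map is a weak equivalence by computing its homotopy fibre over a fixed embedding $i\colon W\hookrightarrow V$. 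That fibre is the space of ways to present $V$ as the gluing of $i(W)$ with $\overline{V\setminus i(W)}$ compatibly with the corner and transversality data, and the isotopy extension theorem together with the contractibility of spaces of collars (of $\partial_-W\subset W$ and of $i(\partial_-W)\subset V$) contracts it to a point. The delicate step, which I expect to be the main obstacle, is controlling the corners where $\pf W$ meets $\partial_+W\sqcup\partial_-W$ — conditions (1)--(3) defining $\boxMfd_d$ are precisely calibrated for this — and, more prosaically, matching the coslice of the Segal-space model of $\Bord_d^\partial$ with the semi-simplicial object of chains of $\square$-embeddings.
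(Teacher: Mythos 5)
Your deduction of the stated corollary is essentially the paper's: everything is extracted from the cartesian square $\boxMfd_d \simeq \Mfd_{d-1}^{\partial,\cong}\times_{\Bord_d^\partial}\Ar(\Bord_d^\partial)$ by passing to fibres over $M$ (the paper phrases this as pasting with the pullback $\boxMfd_{d,M}\simeq\{M\}\times_{\Mfd_{d-1}^{\partial,\cong}}\boxMfd_d$, which it justifies by noting that $\Emb^\square(W,V)\to\Diff(\partial_+W)$ is a fibration, so the enriched pullback is also an $\infty$-categorical one — a point you should make explicit), then $\widetilde{\Phi}_S$ via the bordism-reversal anti-equivalence, and monoidality only at $S=\emptyset$; all of that matches. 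Where you genuinely diverge is in how you propose to prove the cartesian square. You build the comparison by gluing a chain of $\square$-embeddings into a walled bordism, which requires choosing embeddings into $\Rbb\times\Rbb^\infty$ and collars ``functorially''; strictly functorial collars do not exist, and coherently organizing the contractible spaces of choices is exactly the strictness problem the paper engineers around by constructing the strictly functorial map in the \emph{opposite} direction — restricting a $[1+n]$-walled bordism to its initial segments — and running a zigzag of Dwyer--Kan equivalences $\xN(\boxMfd_d)\to X_\bullet\leftarrow\Bord_d^\partial[1+\bullet]$ through a topological Rezk nerve, combined with a general d\'ecalage lemma identifying $\ac(X_{1+\bullet})$ with $X_0\times_{\ac(X)}\Ar(\ac(X))$ for any Segal space $X$. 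Your direct model of $\Map_{(\Bord_d^\partial)_{M/}}(W,V)$ as pairs $(U,\varphi\colon W\cup U\cong V)$ and the contraction of the fibre of ``restrict $\varphi$ to $W$'' via isotopy extension and collar contractibility is in substance what the paper's Claims 1 and 2 plus the d\'ecalage lemma deliver, but the identification of the coslice of the Segal-space model with your strict point-set mapping space is not automatic — you rightly flag it, and it is there (together with the functorial-collar step) that your route needs either the paper's zigzag/d\'ecalage technology or an equivalent rectification argument; your approach is more geometrically transparent, the paper's buys strict functoriality and hence avoids any rectification.
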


In the rest of the paper, we will be mostly interested in computing mapping spaces in slice categories of $\OC$ instead of $\Bord_2^\partial$. 
Recall from \cref{defn:OC} that $\OC \subset \Bord_2^\partial$ is the subcategory that has all objects but only those bordisms $W\colon M \to N$ for which the subspace $M \cup \partial_{\rm free} W \subset W$ intersects all connected components of $W$. For example, consider two morphisms $\emptyset\to S^1$ in $\Bord_2^\partial$ as in \cref{fig:Oc}. The left one is not a morphism in $\OC$, while the right one is.

 \begin{figure}[h]
\centering
\def\svgwidth{.3\linewidth}
\import{figures/}{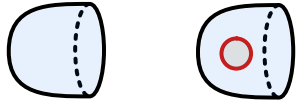}
\caption{Non-example and example of morphism in $\OC$.}
\label{fig:Oc}
\end{figure}

It turns out that slicing under an object behaves well with respect to the inclusion of subcategory $\OC$ of $\Bord_2^{\partial}$.
\begin{prop}\label{prop: sliceunderff}
    For any $M\in \OC$, the inclusion $\OC_{M/}\rightarrow(\Bord_2^\partial)_{M/}$ is fully faithful.
\end{prop}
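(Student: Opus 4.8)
The plan is to reduce the statement to a question about the mapping spaces, using the already-established model for slices of $\Bord_2^\partial$. By \cref{cor:Phi-and-Psi}, we have an equivalence $\widetilde\Psi_M\colon \boxMfd_{2,M} \xrightarrow{\simeq} (\Bord_2^\partial)_{M/}$, so an object of $(\Bord_2^\partial)_{M/}$ is a bordism $W\colon M \to N$ (with marked boundary $\partial_+ W = M$), and the mapping space from $W_1$ to $W_2$ is the embedding space $\Emb_M^\square(W_1, W_2)$ of embeddings $i\colon W_1 \hookrightarrow W_2$ restricting to $\id_M$ on $\partial_+$, sending $\pf W_1$ into $\pf W_2$, and with $i|_{\partial_- W_1}$ transverse to $\pf W_2$ and landing in the interior relative to $\partial_- W_2$. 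First I would observe that an object $W\colon M \to N$ of $(\Bord_2^\partial)_{M/}$ lies in the essential image of $\OC_{M/}$ if and only if the bordism $W$, viewed as a morphism $M \to N$, lies in $\OC$, i.e.\ $M \cup \pf W$ meets every component of $W$; this identifies the subcategory $\OC_{M/}$ with the full-on-objects-of-this-form, not-necessarily-full subcategory of $\boxMfd_{2,M}$ cut out by the $\OC$-condition on objects. So the content of the proposition is that this subcategory inclusion is \emph{full}: given two such $\OC$-objects $W_1, W_2$, every embedding $i \in \Emb_M^\square(W_1, W_2)$, when reinterpreted as a morphism in the slice, is automatically a morphism in $\OC_{M/}$.

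The key point is therefore to understand what a morphism $i\colon W_1 \to W_2$ in the slice corresponds to on the level of underlying bordisms: it exhibits $W_2 \simeq W_1 \cup_N V$ where $V = \overline{W_2 \setminus i(W_1)}$ is the "complementary" bordism $N \to N'$, and the morphism in the slice is $V$ itself. Concretely, being a morphism in $\OC_{M/}$ means that $V$ is a morphism in $\OC$, i.e.\ that $N \cup \pf V$ meets every component of $V$. So the second step is the geometric heart: assuming $W_1$ and $W_2$ both lie in $\OC$ (as bordisms from $M$), show that the complement $V = \overline{W_2 \setminus i(W_1)}$ automatically lies in $\OC$. Here one uses that $\pf V = \pf W_2 \setminus \pf W_1$ (up to the transversality collar) together with the incoming boundary of $V$ being $N = \partial_- W_1 = i(\partial_- W_1) \subset W_2$. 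Given a component $C$ of $V$, I would argue: $C$ is a component of $W_2$ with a subsurface (namely part of $i(W_1)$) removed; since $W_2 \in \OC$, the ambient component of $W_2$ containing $C$ meets $M \cup \pf W_2$. If it meets $M$, then since $M \subset i(W_1)$, following a path from that point of $M$ towards $C$ it must cross $N = i(\partial_- W_1)$, so $C$ meets its incoming boundary $N$. If instead it meets $\pf W_2$, either that piece of $\pf W_2$ already lies in $C$ (hence $C$ meets $\pf V$), or it lies in $i(\pf W_1) \subset i(W_1)$, in which case again a path from there to $C$ must exit $i(W_1)$ through $N$. Either way $N \cup \pf V$ meets $C$, so $V \in \OC$.

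A subtlety I would be careful about is that this argument must be carried out uniformly in families, not just pointwise, since we are comparing mapping \emph{spaces} and not just sets of morphisms; but this is automatic here because the $\OC$-condition on $V$ is an open-and-closed (in fact locally constant) condition on the space $\Emb_M^\square(W_1,W_2)$: the number of components of $V$ not meeting $N \cup \pf V$ is locally constant in $i$, and we have just shown it is $0$. Hence $\Emb^\square_M(W_1,W_2)$ computed in $\boxMfd_{2,M}$ agrees on the nose with the mapping space in $\OC_{M/}$. The remaining bookkeeping is to translate this through the two adjunctions/nerve constructions: both $\OC$ and $\Bord_2^\partial$ are obtained from (quasi-unital) Segal spaces, the slice constructions commute with $\ac(-)$, and mapping spaces in slices are computed by the same formula; so fullness on mapping spaces lifts to fully-faithfulness of $\OC_{M/} \to (\Bord_2^\partial)_{M/}$ as stated. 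I expect the main obstacle to be making the "a path must cross $N$" argument airtight in the presence of corners and of the transversality/collar conventions built into $\boxMfd_{2,M}$ — in particular being precise about the identification $\pf V = \pf W_2 \setminus \pf W_1$ and that $i(W_1)$ is a codimension-$0$ submanifold-with-corners whose topological frontier inside the relevant component of $W_2$ is exactly $N$ together with a bit of $\pf W_2$. Once that local picture is set up correctly, the connectivity argument is short.
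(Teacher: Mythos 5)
Your proposal is correct in substance and hits the same geometric heart as the paper: fullness reduces to showing that the complementary bordism between two objects of $\OC_{M/}$ automatically lies in $\OC$. The execution differs, though. You route through the equivalence $\boxMfd_{2,M}\simeq(\Bord_2^\partial)_{M/}$ of \cref{cor:Phi-and-Psi} and then run a path-chasing (``last exit from $i(W_1)$'') argument with a case analysis on whether the ambient component of $W_2$ meets $M$ or $\pf W_2$; the paper never leaves the bordism picture (a slice morphism is $W\colon N\to N'$ with $W\circ U\cong V$, viewed as a codimension-$0$ submanifold of $V$ with $U=\overline{V\setminus W}$ and $N=U\cap W$) and argues by contradiction on a component $W'$ of the complement not in $\OC$: either $W'$ is disjoint from $U$, hence a whole (clopen) component of $V$ missing $M\cup\pf V$, contradicting $V\in\OC$; or it meets $U$, and the intersection lies in $N$, contradicting that $W'$ has empty incoming boundary. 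That component argument needs no paths, no corner/collar bookkeeping, and no appeal to \cref{cor:Phi-and-Psi}, so it is noticeably shorter. Two small points on your version: your dichotomy in the $\pf W_2$ case omits the possibility that the chosen point of $\pf W_2$ lies in $V$ but in a component different from $C$ --- this is patched by the same last-exit argument (a path from it into $C$ either avoids $i(W_1)$, forcing that point into $C$ after all, or exits $i(W_1)$ through a point of $N\cap C$); and the ``uniformly in families'' worry is automatic, since $\OC\subset\Bord_2^\partial$ being a subcategory already makes the map on slice mapping spaces an inclusion of path components, so surjectivity on $\pi_0$ is all that needs checking.
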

\begin{proof}
    Because $\OC$ is defined as a subcategory of $\Bord_2^\partial$, the functor is automatically faithful (i.e.~a monomorphism on mapping spaces).
    We need to show that for all $(U\colon M \to N), (V\colon M \to N') \in \OC_{M/}$ and all $W\colon N \to N'$ in $\Bord_2^\partial$ with $W \circ U \cong V$, the morphism $W$ is automatically in $\OC$.
    We can think of $W$ as a codimension $0$ submanifold of $V$ (not intersecting $M$) and $U = \overline{V \setminus W}$ as the closure of its complement. Then $N = U \cap W$.
    Suppose that $W$ contains a path component $W'\subset W$ that does not lie in $\OC$. 
    Then there exists a path component $U'$ of $U$ such that $U'\cap W'$ is non-empty, or $W'$ would be a path component of $V\in\OC$, a contradiction. But $U'\cap W'\subseteq U\cap W = N$ is a submanifold of the outgoing boundary of $U$, which implies that the incoming boundary of $W'$ is non-empty, a contradiction. This concludes the proof.
\end{proof}

In contrast, this is in general not true if we take the slice over an object in $\OC$.
\begin{rem}\label{rem: OC_/S^1 not ff}
    The inclusion $\OC_{/S^1}\rightarrow (\Bord_2^\partial)_{/S^1}$ is not fully faithful. 
    As an example, consider the two objects of $\OC_{/S^1}$ given by the bordism $V\colon S^1 \to S^1$ that is a cylinder with an open disk removed (thus $\pf V$ is a circle) and $W\colon \emptyset \to S^1$ that is a cylinder with one of its boundaries being free.
    The bordism $U\colon \emptyset\rightarrow S^1$ that is a $2$-disk whose boundary is outgoing boundary is not in $\OC$, but $U \cup V \cong W$, so $U$ defines a morphism $W \to V$ in $(\Bord_2^\partial)_{/S^1}$ that is not in $\OC_{/S^1}$.
\end{rem}

\section{Preliminaries and outline of the proof}
In this section, we will provide an outline of the proof for \cref{thm:main} and  \cref{thm:O-dense-in-OC-intro}. We will also formalize how \cref{thm:operation} generalizes the determination of formal operations of $\Ocal$-algebras in the sense \cite{wahl2016universal}.
\subsection{Factorization homology of $\mrm{E}_1$-algebras}
We start with some recollections on $\mrm{E}_1$-algebras in a symmetric monoidal \category{} $\Ccal$ and their factorization homology.

Recall that the $1$-category $\Assoc$ is defined to have as objects finite sets and as morphisms maps $f\colon A \to B$ together with a total order on $f^{-1}(a)$ for all $a \in A$.
Compositing of morphisms is defined by composing the maps and inducing the lexicographic order on each $f^{-1}(g^{-1}(c))$.
This is a symmetric monoidal category under disjoint union.
A more geometric model of this category is given as follows:
\begin{lem}
    Let $\Disk_1 \subset \Mfd_1^\partial$ denote the full subcategory where objects are $1$-manifolds of the form $\sqcup_k D^1$ for $k\ge0$.
    Then the functor
    \[
        \pi_0 \colon \Disk_1 \too \Assoc
    \]
    is an equivalence of symmetric monoidal categories.
\end{lem}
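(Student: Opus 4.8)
The plan is to verify the three conditions for $\pi_0$ — which sends a $1$-manifold to its set of path components and an embedding to the induced map of finite sets, equipped on each fibre with the linear order of the images — to be an equivalence: essential surjectivity, homotopy-discreteness of the mapping spaces of $\Disk_1$, and bijectivity on path components of mapping spaces; and then to check compatibility with composition and with the disjoint-union symmetric monoidal structures. Essential surjectivity is immediate: every object of $\Disk_1$ is of the form $\sqcup_k D^1$ with $\pi_0(\sqcup_k D^1) = \{1,\dots,k\}$, and every finite set is abstractly of this form.

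The main point is to analyze the space $E = \Emb(\sqcup_k D^1, \sqcup_l D^1)$ of orientation-preserving embeddings, which — crucially — need \emph{not} send boundary to boundary. Since the source is a disjoint union of connected manifolds, recording which target interval each source interval lands in defines a locally constant map from $E$ to the discrete set of functions $\{1,\dots,k\}\to\{1,\dots,l\}$, so $E=\coprod_f E_f$. Restricting an embedding in $E_f$ to the preimages of the target intervals identifies $E_f$ with $\prod_{j=1}^{l}\Emb\big(\bigsqcup_{f^{-1}(j)}D^1, D^1\big)$, so everything reduces to $\Emb(\sqcup_m D^1, D^1)$, the space of $m$-tuples of strictly increasing smooth maps $[0,1]\to[0,1]$ with pairwise disjoint images. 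Such images are automatically linearly ordered; recording this order is locally constant onto the discrete set of total orders on an $m$-element set, and the preimage of a fixed order is convex (a convex combination of two such tuples is still coordinatewise increasing, with images in $[0,1]$ and in the prescribed order), hence contractible. Therefore $E$ is homotopy discrete and $\pi_0 E \cong \coprod_f \prod_j \{\text{total orders on } f^{-1}(j)\} = \operatorname{Hom}_{\Assoc}(\{1,\dots,k\},\{1,\dots,l\})$; unwinding the identifications shows this is precisely the map induced by $\pi_0$. In particular $\Disk_1$ is equivalent to its homotopy category, and $\pi_0\colon \Disk_1\to\Assoc$ is an equivalence of $1$-categories.

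It remains to check that $\pi_0$ respects composition and is symmetric monoidal, which is bookkeeping. Given embeddings $\sqcup_k D^1\hookrightarrow\sqcup_l D^1\hookrightarrow\sqcup_m D^1$ with underlying maps $g$ and $h$, the composite has underlying map $h\circ g$, and the induced order on a fibre $(h\circ g)^{-1}(j)$ compares two source intervals first by the positions of their $g$-images within $h^{-1}(j)$ and, when those agree, by their positions within the relevant fibre of $g$ — which is exactly the lexicographic order defining composition in $\Assoc$. For monoidality, both symmetric monoidal structures are disjoint union (of $1$-manifolds, resp.\ of finite sets), and $\pi_0$ visibly takes one to the other with the evident coherence isomorphisms, so it is a strong symmetric monoidal functor.

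The only genuinely geometric ingredient is the convexity, hence contractibility, of the path components of $\Emb(\sqcup_m D^1, D^1)$, and this is elementary; if one prefers, one can instead invoke the standard deformation retraction of this space onto the ordered configuration space $\operatorname{Conf}_m$ of the interior of the interval, whose components are convex. So I do not expect a real obstacle — the content lies entirely in setting up the combinatorial correspondence and checking its functoriality correctly.
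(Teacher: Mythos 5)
Your proof is correct, but there is nothing in the paper to compare it against: the paper states this lemma as a recollection without proof, implicitly deferring to the standard fact that $\Disk_1$ (equivalently $\Assoc$) is the symmetric monoidal envelope of the $\Erm_1$-operad, with a pointer to \cite[\S 2.1.1, \S 4.1.1, Proposition 2.2.4.9]{HA}. Your argument supplies a complete elementary verification instead: the decomposition of $\Emb(\sqcup_k D^1,\sqcup_l D^1)$ by the component map, the reduction to $\Emb(\sqcup_m D^1,D^1)$, the convexity (hence contractibility) of the subspace of tuples with a prescribed order, and the lexicographic bookkeeping for composition are all right, and since the target is a $1$-category and you have shown the source is equivalent to its homotopy category, no further coherence data is needed to produce the functor or to check strong symmetric monoidality. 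Two small points of hygiene: an embedding $D^1\hookrightarrow D^1$ is a strictly increasing smooth map with \emph{nowhere-vanishing} derivative, not merely a strictly increasing one, but this only helps you --- a convex combination of maps with positive derivative again has positive derivative, so the convexity argument goes through verbatim; and one should say explicitly that the order-recording map is locally constant because disjointness of the compact images is an open condition, which is what makes the fixed-order subspaces unions of path components. With those remarks your write-up would serve as a proof of the lemma that the paper chose to omit.
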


In fact, $\Assoc$ (or equivalently $\Disk_1 $) is the symmetric monoidal envelope of the operad $\Erm_1$.
Therefore, the category of $\Erm_1$-algebras in a symmetric monoidal \category{} $\Ccal$ is given by the category of symmetric monoidal functors from $\Assoc$ to $\Ccal$.
\[
    \Alg_{\Erm_1}(\Ccal) = \Fun^{\otimes}(\Assoc, \Ccal)
\]

A definition of \operads{} and a proof of this fact can be found in \cite[\S 2.1.1, \S 4.1.1, Proposition 2.2.4.9]{HA},
but for the purpose of this paper we might as well take this as our definition of $\Erm_1$-algebras and thus avoid \operads{} and the envelope construction all together.
(Under this definition the value of the symmetric monoidal functor at the one element set $*$ is the underlying object of the $\Erm_1$-algebra.)
\begin{defn}
    For a $1$-manifold $M \in \Mfd_1^\partial$ we let 
    \[ 
        \Disk_{/M} \coloneq \Disk_1 \times_{\Mfd_1^\partial} (\Mfd_1^\partial)_{/M}
    \]
    be the \category{} of ``disks in $M$''.
    Given an $\Erm_1$-algebra $A \in \Alg_{\Erm_1}(\Ccal)$, which we can write as a symmetric monoidal functor $A\colon \Disk_1 \to \Ccal$, its \hldef{factorization homology over $M$}, if exists, is defined as the colimit
    \[
        \hldef{\int_M A} \coloneq \colim\left( \Disk_{/M} \too \Disk_1 \xtoo{A} \Ccal \right).
    \]
\end{defn}
When $M=S^1$, the factorization homology $\int_{S^1}A$ is also called the Hochschild homology object of $A$. It is well-known that $\int_{S^1}A$ is equivalent to the geometric realization of the cyclic bar construction $  \mrm{Bar}^{\rm cyc}_\bullet(A) \colon \Dop \to \Ccal 
$, see for instance \cite[Example 1.3.9]{afmgr}. We provide an independent proof of this equivalence as a corollary of \cref{lem: comparediagram}.

Because $\Disk_1$ (and more generally $\Mfd_1^\partial$) is the symmetric monoidal envelope of an \operad{}, the slice categories satisfy the following relation, which we will need later. (See \cite[Example 2.3.17]{properads}.)

\begin{lem}\label{cor:Disk1tensordisjunct}
    $\Mfd_1^\partial$ is $\otimes$-disjunctive, i.e.~the functor
    \[
    \sqcup: (\Mfd_1)_{/M}\times (\Mfd_1)_{/N}\to (\Mfd_1)_{/M\sqcup N}
    \]
    is an equivalence of \categories{}, and similarly for $\Disk_{/M}$.
\end{lem}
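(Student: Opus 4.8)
The plan is to deduce the statement from the general fact that the symmetric monoidal envelope of an \operad{} is $\otimes$-disjunctive, see \cite[Example 2.3.17]{properads}; the only thing that then needs an argument is that $\Mfd_1^\partial$ (and $\Disk_1$) is such an envelope. First I would introduce the \operad{} $\mathcal{M}_1$ whose colors are the connected compact oriented $1$-manifolds with boundary --- that is, $D^1$ and $S^1$ --- and whose multimapping space $\mathrm{Mul}((c_i)_{i\in I};c)$ is the embedding space $\Emb\big(\bigsqcup_{i\in I}c_i,\,c\big)$, with operadic composition induced by composing embeddings. Since every compact oriented $1$-manifold is, uniquely up to reordering its components, a finite disjoint union of connected ones, and since an embedding of a connected $1$-manifold into $V=\bigsqcup_{j}V_j$ has image contained in a single component $V_j$, one obtains a natural homeomorphism
\[
    \Emb\Big(\bigsqcup_{i\in I}W_i,\ \bigsqcup_{j\in J}V_j\Big)\;\cong\;\coprod_{f\colon I\to J}\ \prod_{j\in J}\Emb\Big(\bigsqcup_{i\in f^{-1}(j)}W_i,\ V_j\Big),
\]
which is precisely the formula for the mapping spaces of $\mathrm{Env}(\mathcal{M}_1)$. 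Hence $\Mfd_1^\partial\simeq\mathrm{Env}(\mathcal{M}_1)$ as symmetric monoidal \categories{}, and, since $\otimes$-disjunctivity is invariant under symmetric monoidal equivalence, the first claim follows.

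For a hands-on perspective --- and to locate the real obstacle --- I would also write the inverse of $\sqcup$ explicitly. As $M\sqcup N$ is a coproduct, an embedding $e\colon R\hookrightarrow M\sqcup N$ induces a decomposition $R=R_M\sqcup R_N$ into unions of connected components, with $R_M\coloneq e^{-1}(M)$ and $R_N\coloneq e^{-1}(N)$; these are objects of $\Mfd_1^\partial$, and the candidate inverse sends $(R,e)$ to $\big(e|_{R_M}\colon R_M\hookrightarrow M,\ e|_{R_N}\colon R_N\hookrightarrow N\big)$. On objects this is strictly inverse to $\sqcup$, so all the content is on mapping spaces: the mapping space in $(\Mfd_1^\partial)_{/M\sqcup N}$ from $(R,e)$ to $(R',e')$ is the homotopy fiber over $e$ of postcomposition $\Emb(R,R')\to\Emb(R,M\sqcup N)$, and --- writing $R'_M,R'_N$ for the analogous pieces of $R'$ --- both of these spaces break up into clopen summands indexed by which components of $R$ land over $M$ versus over $N$, compatibly with the postcomposition map. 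The summand containing $e$ is the product of the two postcomposition maps $\Emb(R_M,R'_M)\to\Emb(R_M,M)$ and $\Emb(R_N,R'_N)\to\Emb(R_N,N)$, so taking homotopy fibers splits the mapping space as the desired product. I expect this clopen-summand bookkeeping to be the main obstacle: it cannot be shortcut by simply taking the strict slice of the topologically enriched category, since the postcomposition maps are not fibrations (the image of an embedding into a manifold need not be open), so it is genuinely the clopen decomposition --- rather than any fibrancy --- that produces the product splitting.

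Finally, for $\Disk_{/M}$ I would observe that the equivalence just described restricts to the full subcategories $\Disk_{/-}\subseteq(\Mfd_1^\partial)_{/-}$: if $R$ is a disjoint union of disks then $R_M=e^{-1}(M)$ and $R_N=e^{-1}(N)$ are again disjoint unions of disks, being unions of components of $R$, and conversely the disjoint union of two disjoint unions of disks is a disjoint union of disks; hence both $\sqcup$ and its inverse preserve the disk-objects and the restricted functors remain mutually inverse equivalences. (When $M$ and $N$ are themselves disjoint unions of disks one may instead apply the envelope fact directly to $\Disk_1\simeq\mathrm{Env}(\Erm_1)$.)
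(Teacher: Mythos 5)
Your proposal is correct and follows essentially the same route as the paper: the paper's proof consists precisely of the observation that $\Mfd_1^\partial$ (and $\Disk_1$) is the symmetric monoidal envelope of an \operad{}, together with the citation of \cite[Example 2.3.17]{properads} that envelopes are $\otimes$-disjunctive. Your additional hands-on verification via the clopen decomposition of embedding spaces and the homotopy-fiber description of slice mapping spaces is a sound (and correct) elaboration of why the envelope formula gives the product splitting, but it is not needed beyond what the paper records.
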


\subsection{Outline of the proof}\label{subsec:outline}

Now we summarize the strategy and the main ingredients that go into the proofs of \cref{thm:main} and \cref{thm:O-dense-in-OC-intro}. First we make a few remarks about the notion of denseness that appears in the statement of \cref{thm:O-dense-in-OC-intro}.
\begin{defn}
    A full subcategory $\Dcal \subset \Ccal$ of an \category{} $\Ccal$ is called \hldef{dense}
   if the restricted Yoneda embedding
    \[
       \Yo_\Dcal\colon \Ccal \xtoo{\Yo} \PSh(\Ccal) \xtoo{\text{restrict}} \PSh(\Dcal) 
    \]
    is fully faithful.
\end{defn}
The presheaf category $\PSh(\Dcal)$ can be described as the free cocompletion of $\Dcal$, meaning that it is obtained by freely adjoining all (small) colimits to the \category{} $\Dcal$.
This universal property can be very useful when working with the \category{} $\PSh(\Dcal)$, for example it means that every functor $\Dcal \to \Vcal$ into a \category{} $\Vcal$ with colimits uniquely extends to a colimit preserving functor $\PSh(\Dcal) \to \Vcal$.
Similarly, dense subcategories $\Dcal \subset \Ccal$ can be thought of as exactly those fully faithful functors where $\Ccal$ is obtained from $\Dcal$ by formally adding some, but maybe not all, colimits.
To make this precise, we use an  alternative characterization of dense subcategories:

\begin{lem}[{\cite[03VG]{Kerodon}}]\label{lem:dense}
    A full subcategory $\Dcal \subset \Ccal$ is \emph{dense} if and only if for every object $c \in \Ccal$ the diagram
    \[
        (\Dcal \times_\Ccal \Ccal_{/c})^\rhd \too \Ccal_{/c} \too \Ccal 
    \]
    is a colimit diagram, i.e., if and only if every object in $\Ccal$ is the colimit of all the objects of $\Dcal$ mapping to it.   
\end{lem}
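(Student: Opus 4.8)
The plan is to reduce everything to the standard fact that in a presheaf \category{} every object is the colimit of the representables lying over it, and then to compare that colimit with the one in the statement by unwinding the Yoneda lemma.

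First I would recall that for any $F \in \PSh(\Dcal)$ the tautological cocone exhibits $F$ as the colimit of the representables $\Yo(d)$ indexed by the category of elements $\Dcal \times_{\PSh(\Dcal)} \PSh(\Dcal)_{/F}$, where $\Yo\colon \Dcal \hookrightarrow \PSh(\Dcal)$ is the ordinary Yoneda embedding. Now fix $c \in \Ccal$ and apply this to $F = \Yo_\Dcal(c)$. Since $\Dcal \subseteq \Ccal$ is full, the restriction of $\Yo_\Dcal$ along $\Dcal \hookrightarrow \Ccal$ agrees with $\Yo$, and for $d \in \Dcal$ the Yoneda lemma identifies $\Map_{\PSh(\Dcal)}(\Yo(d), \Yo_\Dcal(c)) \simeq \Yo_\Dcal(c)(d) = \Map_\Ccal(d, c)$ naturally in $d$; in fact the structure map $\Yo(d)\to\Yo_\Dcal(c)$ attached to an element $\eta \in \Map_\Ccal(d,c)$ is just $\Yo_\Dcal(\eta)$. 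This identifies the category of elements $\Dcal \times_{\PSh(\Dcal)} \PSh(\Dcal)_{/\Yo_\Dcal(c)} \simeq \Dcal \times_\Ccal \Ccal_{/c}$, and — carrying the cocone structure through — it shows that $\Yo_\Dcal$ sends the tautological cocone $(\Dcal \times_\Ccal \Ccal_{/c})^\rhd \to \Ccal_{/c} \to \Ccal$ with cone point $c$ to the tautological colimit cocone on $\Yo_\Dcal(c)$ in $\PSh(\Dcal)$. In particular $\Yo_\Dcal(c) \simeq \colim_{(d\to c) \in \Dcal \times_\Ccal \Ccal_{/c}} \Yo(d)$, and the colimiting cocone is the image of the tautological one.

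Next I would test both conditions against mapping spaces. Recall that a cocone with cone point $x$ on a diagram $p\colon K \to \Ccal$ is a colimit cocone if and only if for every $y \in \Ccal$ the induced map $\Map_\Ccal(x, y) \to \lim_{k \in K} \Map_\Ccal(p(k), y)$ is an equivalence. Applied to the cocone of the previous paragraph, this says that $(\Dcal \times_\Ccal \Ccal_{/c})^\rhd \to \Ccal$ is a colimit diagram for every $c$ if and only if, for all $c, c' \in \Ccal$, the canonical map
\[
    \Map_\Ccal(c, c') \too \lim_{(d \to c) \in \Dcal \times_\Ccal \Ccal_{/c}} \Map_\Ccal(d, c')
\]
is an equivalence. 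On the other hand, from the colimit presentation of $\Yo_\Dcal(c)$ and the Yoneda lemma,
\[
    \Map_{\PSh(\Dcal)}\big(\Yo_\Dcal(c), \Yo_\Dcal(c')\big) \;\simeq\; \lim_{(d \to c)} \Map_{\PSh(\Dcal)}\big(\Yo(d), \Yo_\Dcal(c')\big) \;\simeq\; \lim_{(d\to c)} \Map_\Ccal(d, c').
\]
Since the equivalence $\Yo_\Dcal(c) \simeq \colim \Yo(d)$ was identified as a map of cocones, composing the map induced by $\Yo_\Dcal$ with this chain of equivalences gives exactly the canonical map displayed above. Hence that canonical map is an equivalence for all $c, c'$ if and only if $\Yo_\Dcal$ is fully faithful, which is the claim.

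The one point requiring care is the compatibility asserted at the end of the second paragraph and used in the third: that $\Yo_\Dcal$ carries the tautological cocone over $c$ in $\Ccal$ to the tautological colimit cocone over $\Yo_\Dcal(c)$ in $\PSh(\Dcal)$, equivalently that the displayed equivalence intertwines the $\Yo_\Dcal$-induced comparison map with the cocone comparison map. This forces one to work with the cocones themselves rather than with bare equivalences of objects; granting it, the rest is a formal consequence of the Yoneda lemma and the colimit-testing criterion. This is the kind of bookkeeping carried out in detail in \cite[03VG]{Kerodon}.
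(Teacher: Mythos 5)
The paper does not give its own proof of this lemma --- it simply cites \cite[03VG]{Kerodon} --- so there is no internal argument to compare against. Your proof is correct, and it is the standard Yoneda-style argument one finds in Kerodon: you use the tautological colimit decomposition of $\Yo_{\Dcal}(c)$ in $\PSh(\Dcal)$, identify its indexing category with $\Dcal \times_{\Ccal} \Ccal_{/c}$, and then test the candidate colimit cocone in $\Ccal$ against mapping spaces and match the resulting comparison map with the one induced by $\Yo_{\Dcal}$. You also correctly flag the one nontrivial piece of bookkeeping, namely that the identification $\Yo_{\Dcal}(c) \simeq \colim \Yo(d)$ must be taken as an equivalence of \emph{cocones} (with the tautological cocone on the target), not merely of objects; without that, the chain of equivalences would not be known to compose to the map $\Yo_{\Dcal}\colon \Map_{\Ccal}(c,c') \to \Map_{\PSh(\Dcal)}(\Yo_{\Dcal}(c),\Yo_{\Dcal}(c'))$. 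With that noted, the argument is complete.
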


Therefore, \cref{thm:O-dense-in-OC-intro}, which states that the open bordism category $\Ocal$ is dense in the open-closed bordism category $\OC$, in particular says that $S^1 \in \OC$ is (canonically) the colimit of objects $\sqcup_k D^1 \in \Ocal$.
The canonical colimit diagram $\Ocal \times_{\OC} \OC_{/S^1}$ is rather complicated, which is why, in order to prove \cref{thm:O-dense-in-OC-intro}, we will instead express $S^1$ as a simpler colimit given by the cyclic bar construction (\cref{proposition A}), and show that the restricted Yoneda embedding preserves this colimit (\cref{proposition B}).
This then also proves a stronger version of \cref{thm:O-dense-in-OC-intro} and allows us to deduce \cref{thm:main}, which describes the value of the left Kan extension along $\Ocal \to \OC$ in terms of the factorization homology over $S^1$.

More precisely, we will reduce the theorems to the following two computations of colimits of shapes $\Disk_{/M}$ in $\OC$ and $\Psh(\Ocal)$ respectively:
\begin{propA}\label{proposition A}
    For every $1$-manifold $M$ the factorization homology over $M$ of the $\Erm_1$-algebra $D^1$ in $\OC$ is
    \[
        \int_{M} D^1:= \colim\left( \Disk_{/M} \too \Disk_1 \xto{\col'} \OC \right) \simeq  M \; \in \OC.
    \]
\end{propA}

The symmetric monoidal functor $\col'\colon \Disk_1\to\OC$ is constructed in \cref{subsec:E1onD1} and in particular equips $D^1\in\OC$ with the structure of an $\mrm{E}_1$-algebra. We prove \cref{proposition A} in \cref{subsec: proofA} using convergence of embedding calculus in dimension 1, see for instance \cite[Theorem A]{krannich2024embedding}. While the proof is straightforward, \cref{proposition A} is somewhat surprising, given that not many colimits are known to exist in $\OC$, or in bordism categories more generally.
(For example, $\OC$ has no coproducts.)

\begin{propA}\label{proposition B}
    The restricted Yoneda embedding functor $\Yo_\Ocal\colon \OC \to \PSh(\Ocal)$ preserves the colimit in \cref{proposition A}. That is, for any $M\in\OC$, the factorization homology over $M$ of the $\Erm_1$-algebra $\Yo_{\Ocal}(D^1) \in \Alg_{\Erm_1}(\PSh(\Ocal))$ is
    \[
        \int_{M} \Yo_\Ocal(D^1)  \simeq  \Yo_\Ocal(M) \; \in \PSh(\Ocal).
    \]
\end{propA}

In fact, \cref{proposition B} implies that the functor 
\[
    \mrm{Bar}^{\rm cyc}_\bullet(D^1) \colon \Dop \too \Ocal \times_{\OC} \OC_{/S^1} 
\]
is final (\cref{cor:Dopfinalinslice}), and therefore the value at $S^1$ (and in general at any $M$) of the left Kan extension along $\Ocal \hookrightarrow \OC$ may be computed as the colimit of a simplicial object.
This will also allow us to show that the left Kan extension, which a priori is only a lax symmetric monoidal functor, is in fact a symmetric monoidal functor.
As a consequence we also show that the inclusion $\Ocal \hookrightarrow \OC$ is initial and hence induces an equivalence between classifying spaces $|\Ocal| \simeq |\OC|$ (\cref{cor:O-OC-initial}).

To prove \cref{proposition B}, we first unwind the definitions and rewrite the relevant mapping spaces in the under slice category $\OC_{/M}$ in terms of mapping spaces in $\boxMfd_{2,M}$ via \cref{cor:Phi-and-Psi}. Then we show that the latter are equivalent to systems of arcs in the sense of \cite{wahl08}. 
This allows us to further reduce \cref{proposition B} to proving that certain arc complexes are contractible in \cref{sec: proofB}. The contractibility of those arc complexes then follows from \cite{wahl08, hatcherwahl}.

\subsection{Formal operations on Hochschild homology}
Before proceeding with the proofs of the two propositions, we explain how  \cref{thm:O-dense-in-OC-intro} provides a determination of ``formal operations'' on the Hochschild homology of $E_1$-Calabi--Yau algebras, analogous to the formal operations studied in \cite{wahl2016universal}.

We have a symmetric monoidal functor $\col'\colon \Disk_1 \to \Ocal$ that describes the $\Erm_1$-algebra structure on $D^1 \in \OC$.
For every $1$-manifold $M$ consider the functor
\[
    I_M\colon \Fun(\Ocal, \Scal) \xtoo{\col'} \Fun(\Disk, \Scal) \too \Scal
\]
where the second functor takes the colimit over $\Disk_{/M}$.
(Note that here $\Fun(\Ocal,\Scal)$ denotes non-monoidal functors.)
For every $\Erm_1$-Calabi--Yau algebra $A$, evaluating $I_M$ on the corresponding functor $F_A\colon \Ocal \to \Scal$ yields the factorization homology $I_M(F_A) = \int_M A$.
Therefore, natural transformations $I_M \to I_N$ yield operations $\int_M A \to \int_N A$ for $\Erm_1$-Calabi--Yau algebras $A$.
We let $\mrm{Nat}$ denote the \category{} whose objects are $1$-manifolds $M$ and whose mapping spaces from $M$ to $N$ are the natural transformations $I_M \to I_N$.
\cref{thm:O-dense-in-OC-intro} allows us to identify the \category{} of formal operations with the open-closed bordism category, generalizing \cite[Theorem B]{wahl2016universal}.\footnote{Note that we can also replace the \category{} $\Scal$ by any other presentable \category{} $\Wcal$, and we would still get a description of $\Wcal$-linear natural transformations.}

\begin{cor}\label{cor:formal-operations}
    The \category{} $\mrm{Nat}$ of formal operations is equivalent to the open-closed bordism category $\OC$.
\end{cor}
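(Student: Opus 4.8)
The plan is to identify $\mrm{Nat}$ with a presheaf-theoretic model for $\OC$ and then invoke \cref{thm:O-dense-in-OC-intro}. First I would observe that for each $1$-manifold $M$ the functor $I_M\colon \Fun(\Ocal,\Scal)\to\Scal$ is corepresented by an object of $\Psh(\Ocal)$, namely by $\int_M \Yo_\Ocal(D^1)$. Indeed, $I_M$ is by definition the composite of restriction along $\col'\colon\Disk_1\to\Ocal$ with $\colim$ over $\Disk_{/M}$; since colimits in $\Fun(\Ocal,\Scal)=\Psh(\Ocal)$ are computed pointwise and $\Yo_\Ocal(D^1)$ corepresents evaluation at $D^1$, the functor $I_M$ is corepresented by the colimit of the diagram $\Disk_{/M}\to\Disk_1\xto{\Yo_\Ocal\circ\col'}\Psh(\Ocal)$, i.e.\ by $\int_M\Yo_\Ocal(D^1)$. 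Consequently, by the (enriched) Yoneda lemma, the mapping space $\mrm{Nat}(M,N)=\Map_{\Fun(\Fun(\Ocal,\Scal),\Scal)}(I_M,I_N)$ — interpreting $\mrm{Nat}$ as a full subcategory of the functor category on corepresentable functors — is naturally equivalent to $\Map_{\Psh(\Ocal)}\big(\int_N\Yo_\Ocal(D^1),\,\int_M\Yo_\Ocal(D^1)\big)$. In other words, $M\mapsto \int_M\Yo_\Ocal(D^1)$ exhibits $\mrm{Nat}^\op$ as (equivalent to) the full subcategory of $\Psh(\Ocal)$ spanned by the objects $\int_M\Yo_\Ocal(D^1)$.

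Next I would apply \cref{proposition B}, which computes $\int_M\Yo_\Ocal(D^1)\simeq\Yo_\Ocal(M)$ for every $1$-manifold $M\in\OC$. Substituting this into the previous step identifies the mapping spaces of $\mrm{Nat}$ with $\Map_{\Psh(\Ocal)}(\Yo_\Ocal(N),\Yo_\Ocal(M))$, so $\mrm{Nat}^\op$ is equivalent to the full subcategory of $\Psh(\Ocal)$ on the objects $\Yo_\Ocal(M)$ for $M\in\OC$. Finally, \cref{thm:O-dense-in-OC-intro} says exactly that $\Yo_\Ocal\colon\OC\hookrightarrow\Psh(\Ocal)$ is fully faithful, so this full subcategory is equivalent to $\OC$ itself (with the variance working out because, as indicated in \cref{rem: OC_/S^1 not ff} and the slice discussion, the natural transformations $I_M\to I_N$ correspond to maps of corepresenting objects in the opposite direction, matching morphisms $M\to N$ in $\OC$ rather than $\OC^\op$). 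Chaining the three equivalences gives $\mrm{Nat}\simeq\OC$, and one checks these equivalences are compatible with composition since each is either a Yoneda identification or the functoriality in $M$ of the assignment $M\mapsto\int_M\Yo_\Ocal(D^1)$, which by construction agrees with the functoriality of $M\mapsto\Yo_\Ocal(M)$ under \cref{proposition B}.

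The main obstacle I anticipate is bookkeeping rather than any deep input: one must be careful about the precise definition of $\mrm{Nat}$ as an $\infty$-category (the excerpt only specifies objects and mapping spaces, so one should check that composition is the expected one and that $\mrm{Nat}$ really is a full subcategory of $\Fun(\Fun(\Ocal,\Scal),\Scal)$ on corepresentable functors), and about tracking the op's carefully so the final answer is $\OC$ and not $\OC^\op$. A secondary subtlety is that $I_M$ is a priori only defined on $\Fun(\Ocal,\Scal)$, but its corepresentability lets one extend the comparison to all of $\Psh(\Ocal)$; one should verify that enlarging the source from $\Erm_1$-Calabi--Yau algebras to arbitrary functors $\Ocal\to\Scal$ does not change the natural transformation spaces, which again follows from corepresentability (the corepresenting object $\int_M\Yo_\Ocal(D^1)\in\Psh(\Ocal)$ does not see this enlargement). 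Once these points are settled, the corollary is a formal consequence of \cref{proposition B} and \cref{thm:O-dense-in-OC-intro}, exactly as \cite[Theorem B]{wahl2016universal} follows from the corresponding $\mathbb{Z}$-linear computations.
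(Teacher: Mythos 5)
Your overall skeleton (identify the $I_M$ with objects of $\PSh(\Ocal)$, use \cref{proposition B} to see these objects are the $\Yo_\Ocal(M)$, and conclude via \cref{thm:O-dense-in-OC-intro}) is the same as the paper's, but the central step is mis-justified and, as written, would fail. The functor $I_M\colon\Fun(\Ocal,\Scal)\to\Scal$ is \emph{not} corepresented by $\int_M\Yo_\Ocal(D^1)$: that object lives in $\PSh(\Ocal)$, not in $\Fun(\Ocal,\Scal)=\PSh(\Ocal^\op)$, so it cannot corepresent anything on $\Fun(\Ocal,\Scal)$; and for $M$ containing a circle, $I_M$ is not corepresentable at all, since $I_M=\colim_{D\in\Disk_{/M}}\ev_D$ preserves colimits but not limits. (For $M=\sqcup_k D^1$ the corepresenting object is $\Map_\Ocal(\sqcup_k D^1,-)\in\Fun(\Ocal,\Scal)$, which you are conflating with the presheaf $\Yo_\Ocal(\sqcup_k D^1)\in\PSh(\Ocal)$.) Consequently your "enriched Yoneda" step, which identifies $\Map(I_M,I_N)$ \emph{contravariantly} with $\Map_{\PSh(\Ocal)}\big(\int_N\Yo_\Ocal(D^1),\int_M\Yo_\Ocal(D^1)\big)$, is both unjustified and has the wrong variance; taken at face value your second paragraph proves $\mrm{Nat}\simeq\OC^\op$, and the closing parenthetical claiming the variance "works out" is an assertion rather than an argument (it silently inserts a second op).

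The correct replacement for this step, and what the paper does, is to observe that each $I_M$ \emph{preserves colimits} (evaluation functors do, and $I_M$ is a pointwise colimit of them), so that $\mrm{Nat}$ sits inside $\Fun^L(\Fun(\Ocal,\Scal),\Scal)$; the universal property of the free cocompletion $\Fun(\Ocal,\Scal)=\PSh(\Ocal^\op)$ then gives a \emph{covariant} equivalence $\Fun^L(\Fun(\Ocal,\Scal),\Scal)\simeq\PSh(\Ocal)$ (restriction along the Yoneda embedding of $\Ocal^\op$), under which $\ev_D\mapsto\Yo_\Ocal(D)$ and hence $I_M\mapsto\int_M\Yo_\Ocal(D^1)\simeq\Yo_\Ocal(M)$ by \cref{proposition B}. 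This yields $\Map(I_M,I_N)\simeq\Map_{\PSh(\Ocal)}(\Yo_\Ocal(M),\Yo_\Ocal(N))$ with the correct (covariant) variance — as a sanity check, natural transformations $\ev_D\to\ev_{D'}$ are $\Map_\Ocal(D,D')$, not $\Map_\Ocal(D',D)$ — and then \cref{thm:O-dense-in-OC-intro} finishes the proof, since $\mrm{Nat}$ and $\OC$ become full subcategories of $\PSh(\Ocal)$ with the same essential image. So the needed inputs are the ones you cite, but you must route the identification through colimit-preservation and the cocompletion universal property rather than through a corepresentability claim that is false for the circles.
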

\begin{proof}
    If $M= \sqcup_{k} D^1$ is a disjoint union of disks, then $I_{\sqcup_k D^1}$ is simply the evaluation at $\sqcup_k D^1$, which preserves colimits.
    In general, we can write $I_{M} = \colim_{D \in \Disk_{/M}} I_{D}$, which therefore also preserves colimits.
    Hence, $\mrm{Nat}$ is a full subcategory of the \category{} of colimit preserving functors
    \[
        \mrm{Nat} \subset \Fun^L(\Fun(\Ocal, \Scal), \Scal) \simeq \Fun(\Ocal^\op, \Scal) = \PSh(\Ocal)
    \]
    where the equivalence uses that $\Fun(\Ocal, \Scal) = \PSh(\Ocal^\op)$ is the free colimit completion of $\Ocal^\op$.
    We thus have a fully faithful functor $\mrm{Nat} \hookrightarrow \PSh(\Ocal)$ that sends $I_{\sqcup_k D^1}$ to $\Yo_\Ocal(D^1)$ and $I_M$ to $\colim_{D \in \Disk_{/M}} \Yo_\Ocal(D^1)$, 
    which by \cref{proposition B} below is equivalent to $\Yo_\Ocal(M)$.
    Therefore, the above argument and \cref{thm:O-dense-in-OC-intro} give two fully faithful functors
    \[\mrm{Nat} \hookrightarrow \PSh(\Ocal) \hookleftarrow \OC \cocolon \Yo_\Ocal\]
    and as both have essential image $\{\Yo_\Ocal(M)\}_{M \in \Mfd_1}$, the two full subcategories are equivalent.
\end{proof}
\section{Proposition A: embedding calculus in dimension 1}

The goal of this section is to prove \cref{proposition A}, which says that for every $M \in \OC$
\[
    \int_{M} D^1 \simeq M \in \OC.
\]
We will prove this using that embedding calculus converges in dimension $1$ \cite[Theorem A]{krannich2024embedding}.

\subsection{The \texorpdfstring{$\Erm_1$}{ E1}-algebra structure on \texorpdfstring{$D^1$}{ D1}}\label{subsec:E1onD1}
We start by equipping $D^1 \in \Bord_2^\partial$ with the structure of an $\Erm_1$-algebra by constructing a symmetric monoidal functor $\Assoc \to \Bord_2^\partial$ that sends $A \in \Assoc$ to $A \times D^1 \in \Bord_2^\partial$. 
This $\Erm_1$-algebra structure will be such that the multiplication is given by the bordism $D^1 \sqcup D^1 \to D^1$ that is homeomorphic to the $2$-disk.
 \begin{figure}[h]
\centering
\def\svgwidth{.3\linewidth}
\import{figures/}{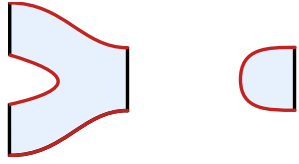}
\caption{Algebra multiplication and unit map of $D^1\in \Bord_2^\partial$.}
\end{figure}

There are multiple ways of implementing this. 
One would be to characterize the image of the functor as a (symmetric monoidal) subcategory of $\Bord_2^\partial$, observe that it is in fact a $1$-category, and show it is equivalent to $\Assoc$ as a symmetric monoidal $1$-category.
(In fact, this show that the space of $\Erm_1$-algebra structures on $D^1 \in \OC$ is equivalent to $S^0$ with one point being the structure we define below and the other its opposite.)
For our purposes it will be more convenient to use a slightly more elaborate construction, via the equivalence $\widetilde{\Psi}_\emptyset\colon \boxMfd_{2,\emptyset} \simeq (\Bord_2^\partial)_{\emptyset/}$ from \Cref{cor:Phi-and-Psi}.

For this construction we would like to use a functor of the form
\[
    -\times [0,1] \colon \Mfd_1^\partial \too \boxMfd_{2,\emptyset}
\]
that sends $M \in \Mfd_1^\partial$ to the collar $\col(M) = M \times [0,1]$ where $\pf \col(M) = M \times \{0\}$ and the remaining boundary is $\partial_-$.
While this would be well-defined as a topological manifold, the corner structure is such that this does not define a valid smooth manifold in $\boxMfd_{2,\emptyset}$.
We let $\col(M)$ be the result of smoothing those corners of $M \times [0,1]$ that are in $M \times \{1\}$.
This is not strictly speaking functorial as a functor of topologically enriched categories, but luckily as a functor of \categories{} it has the universal property of a left adjoint.

\begin{lem}\label{lem:col-empty-left-adjoint}
    The free boundary functor $\pf \colon\ \boxMfd_{2,\emptyset} \too \Mfd_1^\partial$ admits a left adjoint
    \[
        L \colon \Mfd_1^\partial \too \boxMfd_{2,\emptyset}
    \]
    that is fully faithful and symmetric monoidal.
    Its value on $M \in \Mfd_1^\partial$ is obtained from $M \times [0,1]$ by smoothing the corners in $M \times \{1\}$.
    Then $\pf L(M) = M \times \{0\}$ and $\partial_- L(M)$ is the smoothing of $(\partial M \times [0,1]) \cup_{\partial M \times \{1\}} M \times \{1\}$.
\end{lem}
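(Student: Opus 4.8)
The plan is not to construct $L$ directly on morphisms — as observed, smoothing corners is not strictly functorial at the level of topologically enriched categories — but to exhibit $\col(M)$ as the object corepresenting the right copresheaf. Concretely, I would show that for each $M\in\Mfd_1^\partial$ the object $\col(M)\in\boxMfd_{2,\emptyset}$ (the smoothed collar of the statement, with $\pf\col(M)=M\times\{0\}\cong M$ and $\partial_-\col(M)$ the smoothing of $(\partial M\times[0,1])\cup(M\times\{1\})$) corepresents the copresheaf $V\mapsto\Map_{\Mfd_1^\partial}(M,\pf V)$ on $\boxMfd_{2,\emptyset}$; by the standard criterion for the existence of a left adjoint this then produces a functor $L$ left adjoint to $\pf$ with $L(M)\simeq\col(M)$, naturality in $M$ being automatic. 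The comparison map is restriction: a $\square$-embedding $i\colon\col(M)\hookrightarrow V$ restricts on $M\times\{0\}=\pf\col(M)$ to an embedding of $1$-manifolds into $V$, which by condition (2) in the definition of $\boxMfd_d$ lands in $\pf V$, and by conditions (2) and (3) together sends the corners $\partial M\times\{0\}$ (which lie in $\partial_-\col(M)$) into $\pf V\setminus\partial_- V=\mathrm{int}(\pf V)$. Using that mapping spaces in $\boxMfd_{2,\emptyset}$ and $\Mfd_1^\partial$ are the corresponding embedding spaces, this gives a natural map
\[
    r_V\colon \Emb^\square(\col(M),V)\too \Emb(M,\mathrm{int}(\pf V))\subseteq \Emb(M,\pf V)=\Map_{\Mfd_1^\partial}(M,\pf V),
\]
and since a collar of $\pf V$ in $V$ lets one isotope any embedding of $M$ off $\partial(\pf V)$, the displayed inclusion is already a weak equivalence. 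So it suffices to show $r_V$ is a weak equivalence onto $\Emb(M,\mathrm{int}(\pf V))$.

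This is the crux. I would argue that $r_V$ is a fibration onto $\Emb(M,\mathrm{int}(\pf V))$ by a parametrised isotopy extension argument: restriction maps between embedding spaces are fibrations, and this persists after imposing the constraints defining $\Emb^\square$ (landing in the open set $V\setminus\partial_- V$ and transversality to $\pf V$ are open conditions since $\partial_-\col(M)$ is compact, and an isotopy of $M$ inside the stratum $\mathrm{int}(\pf V)$ extends to an ambient isotopy of $V$ preserving the stratification). One is then reduced to proving that the fibre over a fixed $j\colon M\hookrightarrow\mathrm{int}(\pf V)$ — the space of $\square$-embeddings of $\col(M)$ restricting to $j$ on $M\times\{0\}$ — is contractible. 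Such an embedding is precisely a collar of $j(M)$ in $V$ that points into $V$ along $\pf V$ and is transverse to $\pf V$ along $\partial M\times[0,1]$; since $j(\partial M)$ avoids the corners of $V$, there are no corner subtleties and this is a standard stratified-collaring problem, whose solution space is contractible (the space of admissible $1$-jets along $M\times\{0\}$ is the space of sections of a bundle over $M$ with convex, hence contractible, fibres, and the space of collars inducing a prescribed jet is contractible by an Alexander-type shrinking argument). I expect making this collar-contractibility statement fully rigorous in the presence of the ``transverse to $\pf V$ along $\partial M\times[0,1]$'' condition to be the only real obstacle; everything upstream is formal.

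Granting the adjunction $L=\col\dashv\pf$, the remaining assertions are formal. Full faithfulness of $L$ is equivalent to the unit $M\to\pf L(M)=M\times\{0\}$ being an equivalence, which it is, being the canonical identification. For symmetric monoidality, $\pf$ is strong symmetric monoidal because it commutes with disjoint union on the nose; hence $L$ acquires a canonical oplax symmetric monoidal structure, and it is strong because the oplax comparison maps $L(M\sqcup N)\to L(M)\sqcup L(N)$ and $L(\emptyset)\to\emptyset$ are adjunct to the evidently invertible unit maps $M\sqcup N\xto{\simeq}\pf(L(M)\sqcup L(N))$ and $\emptyset\xto{\simeq}\pf(\emptyset)$ — concretely because $\col(M\sqcup N)=\col(M)\sqcup\col(N)$ and $\col(\emptyset)=\emptyset$. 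This yields the claimed fully faithful symmetric monoidal left adjoint $L$, with the stated explicit value on objects.
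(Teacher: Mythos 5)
Your proposal is correct and follows essentially the same route as the paper: both construct $L$ pointwise via corepresentability of $V\mapsto\Map_{\Mfd_1^\partial}(M,\pf V)$ by the smoothed collar, reduce the adjunction to showing that restriction $\Emb^\square(\col(M),V)\to\Emb(M,\pf V)$ is an equivalence, and then deduce full faithfulness from the invertible unit and strong monoidality from $\col(M\sqcup N)=\col(M)\sqcup\col(N)$. The only difference is presentational: where you unpack the key equivalence into a fibration with contractible collar-space fibers (plus the interior-embedding reduction), the paper simply cites Cerf's contractibility of collars.
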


 \begin{figure}[h]
\centering
\def\svgwidth{.3\linewidth}
\import{figures/}{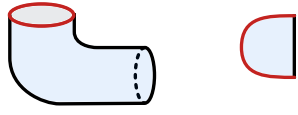}
\caption{The images of $S^1$ and $D^1$ under $L$.}
\end{figure}

\begin{proof}
We prove the existence of the adjoint locally.
For a fixed $N\in \Mfd_1^\partial$ let $L(N) \in \boxMfd_2$ be the manifold obtained by smoothing $N \times [0,1]$ as indicated.
We have a canonical isomorphism $\eta_N \colon N \cong \pf L(N) = N \times \{1\}$, which we use as the unit.
To check that this (locally) defines an adjoint, we need to show that for all $W \in \boxMfd_{2,\emptyset}$ the map
    \[
        \Emb^\square(L(N), W) 
        \xtoo{\pf} \Emb(\pf L(N), \pf W)
        \xrightarrow[\cong]{-\circ \eta_N} \Emb(N, \pf W)
    \]
is an equivalence. This follows from the contractibility of collars \cite[ 5.2.1, Corollaire 1]{cerf1961topologie}.
Therefore, the local adjoints assemble into a left adjoint functor $L$.
By construction the unit transformation is invertible, so $L$ is fully faithful.

Since $\pf$ is symmetric monoidal, its left adjoint $L(-)$ is oplax symmetric monoidal.  
Furthermore, the comparison $L(M\sqcup N)\rightarrow L(M)\sqcup L(N)$ is an isomorphism by inspection.
\end{proof}

\begin{defn}\label{def:col}
    We let 
    \[
        \hldef{\col(-)}\colon \Mfd_{1} \xtoo{L} \boxMfd_{2,\emptyset} \xrightarrow[\simeq]{\Psi_\emptyset} (\Bord_2^\partial)_{\emptyset/}
    \]
    denote the composite of the left adjoint from \cref{lem:col-empty-left-adjoint} with the equivalence from \cref{cor:Phi-and-Psi}.
    This functor lands in $\OC_{\emptyset/}$, since every object in the essential image has nonempty free boundary and $\OC_{\emptyset/}\subset(\Bord_2^\partial)_{\emptyset/}$ is full (\Cref{prop: sliceunderff}). We let
    \[
        \hldef{\col(-)'}\colon \Mfd_1^\partial \xtoo{\Psi_\emptyset \circ L} \OC_{\emptyset/} \too \OC
    \]
    denote the functor obtained by forgetting from the slice. This is symmetric monoidal since it is a composite of symmetric monoidal functors.
\end{defn}

 \begin{figure}[h]
\centering
\def\svgwidth{.5\linewidth}
\import{figures/}{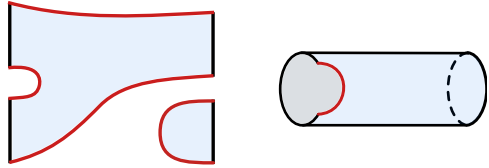}
\caption{
On objects $\col'$ sends $D^1$ to $D^1$ and $S^1$ to $S^1$, and embeddings of disks are sent to flat pairs of pants. For instance, the above are the images of the morphism $D^1\sqcup D^1\to D^1\sqcup D^1$ that embeds both disks into the first copy of disk and the embedding $D^1\to S^1$.
}
\end{figure}

\begin{defn}
    The $\Erm_1$-algebra structure on $D^1 \in \OC$ is the one obtained via the symmetric monoidal functor
    \[
        \Disk_1 \subset \Mfd_1^\partial \xtoo{\col(-)} \OC_{\emptyset/} \too \OC.
    \]
\end{defn}

\subsection{Proof of \cref{proposition A}}\label{subsec: proofA}

The convergence of embedding calculus in dimension $1$ implies, via \cref{lem:dense}, the following.

\begin{lem}\label{lem:caluclus-in-dimension-1}
    The diagram 
    \[
        ((\Disk_1)_{/M})^\rhd \too (\Mfd_1^\partial)_{/M} \too \Mfd_1^\partial
    \]
    is a colimit diagram and thus $\int_{M} D^1 = M$ in $\Mfd_1^\partial$.
\end{lem}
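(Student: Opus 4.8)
The plan is to directly invoke the convergence of embedding calculus in dimension $1$ (Krannich--Kupers \cite[Theorem A]{krannich2024embedding}) together with the characterization of density in \cref{lem:dense}. By \cite[Theorem A]{krannich2024embedding}, for $d \le 2$ the restricted Yoneda embedding $\Yo_{\Disk}\colon \Mfd_d^\partial \to \PSh(\Disk_d)$ is fully faithful, i.e., $\Disk_d$ is a dense subcategory of $\Mfd_d^\partial$. Specializing to $d=1$ and applying \cref{lem:dense}, denseness of $\Disk_1 \subset \Mfd_1^\partial$ says precisely that for every object $M \in \Mfd_1^\partial$ the canonical diagram
\[
    \big((\Disk_1 \times_{\Mfd_1^\partial} (\Mfd_1^\partial)_{/M})\big)^\rhd \too (\Mfd_1^\partial)_{/M} \too \Mfd_1^\partial
\]
is a colimit diagram. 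Unwinding the definition $\Disk_{/M} = \Disk_1 \times_{\Mfd_1^\partial} (\Mfd_1^\partial)_{/M}$, this is exactly the assertion that $((\Disk_1)_{/M})^\rhd \to (\Mfd_1^\partial)_{/M} \to \Mfd_1^\partial$ is a colimit diagram, with cone point $M$.

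First I would spell out that the inclusion of the identity-algebra symmetric monoidal functor $\id\colon \Disk_1 \hookrightarrow \Mfd_1^\partial$ equips $D^1 \in \Mfd_1^\partial$ with its tautological $\Erm_1$-algebra structure (this is the "identity" $\Erm_1$-algebra, restricting the identity functor on $\Mfd_1^\partial$, and is compatible with $\col'$ via the faithful embedding $\Mfd_1^\partial \hookrightarrow \OC$ of \cref{def:col}). Then the definition of factorization homology gives
\[
    \int_M D^1 = \colim\left( \Disk_{/M} \too \Disk_1 \hookrightarrow \Mfd_1^\partial \right),
\]
and the colimit diagram above identifies this colimit with $M$ itself, since the cone point in \cref{lem:dense} is always the object $M$ under consideration. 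This proves the displayed equivalence $\int_M D^1 \simeq M$ in $\Mfd_1^\partial$.

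I do not anticipate a genuine obstacle here: the lemma is essentially a direct translation of \cite[Theorem A]{krannich2024embedding} through \cref{lem:dense}, and the only thing requiring a line of care is matching the index category $\Disk_{/M}$ in the definition of factorization homology with the comma category $\Disk_1 \times_{\Mfd_1^\partial} (\Mfd_1^\partial)_{/M}$ appearing in the statement of denseness, which is immediate from the definitions. The more substantial content — that this colimit in $\Mfd_1^\partial$ is \emph{preserved} by the functor to $\OC$ — is the subject of \cref{proposition A} proper and is not claimed by this lemma.
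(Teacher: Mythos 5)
Your proof is correct and matches the paper's approach exactly: the paper gives no separate proof of this lemma beyond the one-line remark preceding it, namely that it follows from the convergence of embedding calculus in dimension $1$ \cite[Theorem A]{krannich2024embedding} combined with \cref{lem:dense}, which is precisely the argument you spell out. The only unnecessary detour in your write-up is the aside about the compatibility of the tautological $\Erm_1$-structure with $\col'$; that comparison is not needed for this lemma (it only enters in the proof of \cref{proposition A} proper, where one must transport the colimit along $\col'$ to $\OC$).
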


\cref{proposition A} says that this is also a colimit diagram in $\OC$. 
To prove this we establish the following lemma about how weakly contractible colimits in slice categories remain colimits in the underlying category.
Note that the subtlety of this lemma is that we do not a priori assume that $\Ccal$ itself has any colimits.

\begin{lem}\label{lem:contractible-colimit-slice}
    Let $\Ccal \in \Cat$ be an \category{} and $x \in \Ccal$.
    Then the projection $\pi\colon \Ccal_{x/} \to \Ccal$ preserves all weakly contractible colimits that exist in $\Ccal_{x/}$.
\end{lem}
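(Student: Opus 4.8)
The plan is to reduce the statement to a fact about classifying spaces of slice categories. Recall that for a functor $f\colon K^\rhd \to \Ccal_{x/}$, being a colimit diagram is equivalent to the statement that for every object $c \in \Ccal_{x/}$, the space $\Map_{\Ccal_{x/}}(f(\infty), c)$ is the limit of $\Map_{\Ccal_{x/}}(f(k), c)$ over $k \in K^\op$; and similarly after applying $\pi$. So we must compare mapping spaces in $\Ccal_{x/}$ and in $\Ccal$ along the diagram.

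First I would recall the fiber sequence computing mapping spaces out of an object of an undercategory: for $y = (x \to a) \in \Ccal_{x/}$ and $z \in \Ccal$ with a chosen point $x \to z$ giving $z' \in \Ccal_{x/}$, there is a fiber sequence
\[
    \Map_{\Ccal_{x/}}(y, z') \too \Map_{\Ccal}(a, z) \too \Map_{\Ccal}(x, z),
\]
where the fiber is taken over the component of the composite $x \to a \to z$. More precisely, $\Map_{\Ccal_{x/}}(y, z')$ is the fiber of $\Map_\Ccal(a,z) \to \Map_\Ccal(x,z)$ over the point classifying $x \to a \to z$. Now fix the weakly contractible colimit diagram $\overline{p}\colon K^\rhd \to \Ccal_{x/}$ with cone point mapping to $y_\infty = (x \to a_\infty)$, and let $y_k = (x \to a_k)$ for $k \in K$. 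Fix a test object $z \in \Ccal$ together with a map $x \to z$; I want to show $\Map_\Ccal(a_\infty, z) \simeq \lim_{k \in K^\op} \Map_\Ccal(a_k, z)$, knowing the corresponding statement in $\Ccal_{x/}$. The composites $x \to a_k \to z$ and $x \to a_\infty \to z$ are all identified compatibly, so they assemble into a map of diagrams $K^\rhd \to \Ccal$, landing after projection to $\Map_\Ccal(-, z)$ into a single locally constant diagram of points in $\Map_\Ccal(x,z)$ — here is where I use that $K$ is weakly contractible, so that "choosing the basepoint $x \to z$" is consistent across the diagram up to coherent homotopy, and the map $K^\rhd \to \Map_\Ccal(x, z)$ is constant.

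Then the comparison follows by taking limits of fiber sequences: the diagram of fiber sequences
\[
    \Map_{\Ccal_{x/}}(y_k, z') \too \Map_\Ccal(a_k, z) \too \Map_\Ccal(x, z)
\]
over $k \in K^\op$ has constant base $\Map_\Ccal(x,z)$ (using weak contractibility of $K$ as above), so passing to limits over $K^\op$ and using that limits commute with limits (in particular with the fiber), we get a fiber sequence
\[
    \lim_{K^\op}\Map_{\Ccal_{x/}}(y_k, z') \too \lim_{K^\op}\Map_\Ccal(a_k, z) \too \Map_\Ccal(x, z).
\]
Since $\overline{p}$ is a colimit in $\Ccal_{x/}$, the left-hand term agrees with $\Map_{\Ccal_{x/}}(y_\infty, z')$, which by the fiber sequence for $y_\infty$ is the fiber of $\Map_\Ccal(a_\infty, z) \to \Map_\Ccal(x,z)$. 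Comparing the two fiber sequences over the common base $\Map_\Ccal(x,z)$ and on the common total space direction, the induced map $\Map_\Ccal(a_\infty, z) \to \lim_{K^\op}\Map_\Ccal(a_k, z)$ is an equivalence on each fiber over $\Map_\Ccal(x,z)$, hence an equivalence; running this over all components of $\Map_\Ccal(x,z)$ (i.e.\ over all basepoints $x \to z$) covers all components of both total spaces, so $\pi(\overline p)$ is a colimit diagram.

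The main obstacle is the bookkeeping around the weak contractibility hypothesis: one must be careful that $K^\op$ is weakly contractible (it is, since $|K^\op| \simeq |K|$), that the cone point of $K^\rhd$ does not disturb this, and — most delicately — that the natural transformation $\overline{p} \Rightarrow \mathrm{const}_{z}$ of $K^\rhd$-diagrams in $\Ccal$ composes with $\Map_\Ccal(-, z)$ down to a \emph{constant} diagram $K^\rhd \to \Map_\Ccal(x,z)$ (not merely a diagram landing in one component), since this is what lets the fiber be taken "uniformly." A clean way to package all of this is to observe that $\Ccal_{x/} \to \Ccal$ is a left fibration, so it is in particular a cocartesian fibration, and left/cocartesian fibrations preserve colimits indexed by weakly contractible diagrams — this is a standard fact (e.g.\ the dual of \cite[HTT, 4.3.1.5--ish]{HTT}, or can be cited from \cite[HTT]{HTT} on colimits in total spaces of cocartesian fibrations over weakly contractible bases); alternatively, it follows from the identification $\Ccal_{x/} \simeq \Ccal \times_{\PSh(\Ccal)} \PSh(\Ccal)_{\Yo(x)/}$ combined with the observation that on representables the cotensor structure collapses. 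I would present the fiber-sequence argument as the main proof and remark on the left-fibration reformulation as an alternative.
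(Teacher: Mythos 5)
Your proposal takes essentially the same approach as the paper: both compare mapping spaces via the fiber sequence $\Map_{\Ccal_{x/}}(y_k, z') \to \Map_\Ccal(a_k, z) \to \Map_\Ccal(x,z)$, take limits over the (weakly contractible) index category, use that the limit of the constant diagram on $\Map_\Ccal(x,z)$ is $\Map_\Ccal(x,z)$, and then compare fiber sequences over the shared base, running over all basepoints $\alpha \in \Map_\Ccal(x,z)$. The one small thing the paper does that you elide is to isolate the degenerate case $\Map_\Ccal(x,z) = \emptyset$ up front: your phrase ``running over all components of $\Map_\Ccal(x,z)$'' is vacuous there, and one must separately note that precomposition with $x \to a_k$ forces all $\Map_\Ccal(a_k,z)$ (and $\Map_\Ccal(a_\infty,z)$) to be empty, with nonemptiness of $K$ then giving an empty limit. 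This is a one-line fix and does not change the substance of the argument.
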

\begin{proof}
    Suppose $f\colon I^\rhd \to \Ccal_{x/}$ is a colimit diagram where $I$ is some weakly contractible \category{}.
    To show that $\pi \circ f$ is a colimit diagram, we need to show that for all $y \in \Ccal$ the map
    \[
        \Map(f(\infty), y) \too \lim_{i \in I} \Map(f(i), y)
    \]
    is an equivalence.
    First, consider the case where $\Map(x,y) = \emptyset$.
    In this case we must also have $\Map(f(\infty), y) = \emptyset = \Map(f(i), y)$ for all $i \in I$, as otherwise we could construct a map $x \to y$ by composing with $x \to f(i)$.

    Now we may assume that $\Map(x, y) \neq \emptyset$.
    Every $\alpha\colon x \to y$ defines a lift of $y$ to $\Ccal_{x/}$ and because $f$ is a diagram in $\Ccal_{x/}$ we get a map of fiber sequences.
\[\begin{tikzcd}
	{\Map_{\Ccal_{x/}}(f(\infty), \alpha)} & {\Map_\Ccal((\pi\circ f)(\infty), y)} & {\Map_\Ccal(x, y)} \\
	{\lim_{i \in I} \Map_\Ccal(f(i), \alpha)} & {\lim_{i \in I} \Map_\Ccal((\pi\circ f)(i), y)} & {\lim_{i \in I} \Map_\Ccal(x, y)}
	\arrow[from=1-1, to=1-2]
	\arrow["\simeq", from=1-1, to=2-1]
	\arrow[from=1-2, to=1-3]
	\arrow[from=1-2, to=2-2]
	\arrow["\simeq"', from=1-3, to=2-3]
	\arrow[from=2-1, to=2-2]
	\arrow[from=2-2, to=2-3]
\end{tikzcd}\]
    The top fiber sequence comes from the definition of the slice category $\Ccal_{x/}$.
    The bottom fiber sequence is obtained by taking a limit over $I$ of similar fiber sequences.
    (Being a limit of fiber sequence, it is still a fiber sequence.)
    The left vertical map is an equivalence as $f$ was a colimit diagram in $\Ccal_{x/}$.
    The right vertical map is an equivalence as the limit of a constant diagram over a weakly contractible \category{} is equivalent to the value of the diagram at any point.
    As we can show this for any choice of base-point $\alpha \in \Map(x,y)$, we can conclude that the middle map is an equivalence.
    This shows that $\pi \circ f$ is a colimit diagram, as claimed.
\end{proof}

\begin{proof}[Proof of \cref{proposition A}]
    By \cref{lem:col-empty-left-adjoint} (and \cref{cor:Phi-and-Psi}) the functor 
    \[
    \Mfd_1^\partial \to \boxMfd_{2} \simeq (\Bord_2^\partial)_{\emptyset/}
    \]
    is a left-adjoint.
    This remains true if we restrict its codomain to the subcategory $\OC_{\emptyset/}$, because this subcategory is full by \cref{prop: sliceunderff}.
    Now consider the composite functor
    \[
       \col(-)'\colon \Mfd_1^\partial \xtoo{\Psi_\emptyset \circ L} \OC_{\emptyset/} \too \OC.
    \] 
    The first functor preserves colimits because we just showed it is a left adjoint and the second functor preserves weakly contractible colimits by \cref{lem:contractible-colimit-slice}.
    The category $\Disk_{/M}$ is weakly contractible, as we shall see below in \cref{cor:DiskM-weakly-contractible}.
    Hence, $\col(-)'$ preserves the colimit from \cref{lem:caluclus-in-dimension-1}, which yields the desired colimit in $\OC$.
\end{proof}

\begin{rem}
    The above proof also show that colimit $\int_M D^1 = M$ still holds in $\Bord_2^\partial$.
    (Simply omit the step of the proof where we restricted to $\OC$.)
    In contrast, this is not true in $\OC^\op$, i.e.~the map 
    \[
        \Map_{\OC^\op}(S^1, M) \too
        \lim_{\sqcup_n D^1 \to S^1}\Map_{\OC^\op}(\sqcup_n D^1, M)
    \]
    is not an equivalence for all $M\in \OC$. 
    For instance, take $M=\emptyset$ and fix a bordism $W\colon S^1\rightarrow \emptyset$ that is connected and has empty free boundary.
    Under the equivalence $ \Map_{\Bord_2^\partial}(S^1, \emptyset)\xto{\simeq}\lim\Map_{\Bord_2^\partial}(\sqcup_n D^1, \emptyset)$, the collection of bordisms $W\circ C_n\colon \sqcup_n D^1\to\emptyset$ given by composing with the cylinder bordism $\sqcup_n D^1\to S^1$  gives rise to a bordism equivalent to $W\colon S^1\to\emptyset$.
    Note that $W\circ C_n$ for all $n\geq1$ are morphisms in $\OC^\op$, but $W$ is not.
\end{rem}

\section{The cyclic bar construction in \texorpdfstring{$\OC$}{OC}}
In this section, we identify a model of the paracyclic category $\Lambda_\infty$ with a co-cone point as a full subcategory of $\OC$. 
This is the colimit diagram computing the factorization homology over $S^1$ in $\Psh(\Ocal)$, which we will use in proving \cref{proposition B}.
We further identify this colimit diagram with the colimit diagram in \cref{proposition A} that indexes the cyclic bar construction on the $\Erm_1$-algebra $D^1\in\OC$.

\subsection{Comparison with systems of arcs}
\begin{notation}\label{notn: Cn}
 For $1\leq n<\infty$, let \hldef{$C_n$}$\in\boxMfd_{2,S^1}$ be the manifold with corner obtained from the cylinder $S^1\times[0,1]$ by deleting a disjoint union of $n$ half $2$-disks, which are the intersections of  $B_{
\frac{1}{4n}}(\frac{2k-1}{2n})$ and $S^1\times [0,1]$ for $ 1\leq k\leq n$, and then taking closure. 
Here we let the circle have circumference equal to 1 and $B_r(a)$ denotes the $2$-disk of radius $r$ centred at $(a,0)$, so $\partial_-C_n$ is the disjoint union of the intervals $[(\frac{2k-1}{2n}+
\frac{1}{4n},0),(\frac{2k
+1}{2n}-\frac{1}{4n},0)]\subset S^1\times\{0\}$.  Denote by $D_k$ the free boundary arcs from $(\frac{2k-1}{2n}-
\frac{1}{4n},0)$ to $(\frac{2k-1}{2n}+\frac{1}{4n},0)$, so $\pf C_n=\sqcup_{k=1}^n D_k$. Let $x_k=(\frac{2k-1}{2n},
\frac{1}{4n})$ be the centre point of free boundary interval $D_k$, which is the intersection $D_k\cap S^1\times \{\frac{1}{4n}\}$. See \cref{fig:C_3} for an illustration of the case $n=3$. 
 
 \begin{figure}[h]
\centering
\def\svgwidth{.5\linewidth}
\begingroup%
  \makeatletter%
  \providecommand\color[2][]{%
    \errmessage{(Inkscape) Color is used for the text in Inkscape, but the package 'color.sty' is not loaded}%
    \renewcommand\color[2][]{}%
  }%
  \providecommand\transparent[1]{%
    \errmessage{(Inkscape) Transparency is used (non-zero) for the text in Inkscape, but the package 'transparent.sty' is not loaded}%
    \renewcommand\transparent[1]{}%
  }%
  \providecommand\rotatebox[2]{#2}%
  \newcommand*\fsize{\dimexpr\f@size pt\relax}%
  \newcommand*\lineheight[1]{\fontsize{\fsize}{#1\fsize}\selectfont}%
  \ifx\svgwidth\undefined%
    \setlength{\unitlength}{533.95258866bp}%
    \ifx\svgscale\undefined%
      \relax%
    \else%
      \setlength{\unitlength}{\unitlength * \real{\svgscale}}%
    \fi%
  \else%
    \setlength{\unitlength}{\svgwidth}%
  \fi%
  \global\let\svgwidth\undefined%
  \global\let\svgscale\undefined%
  \makeatother%
  \begin{picture}(1,0.22536881)%
    \lineheight{1}%
    \setlength\tabcolsep{0pt}%
    \put(0,0){\includegraphics[width=\unitlength,page=1]{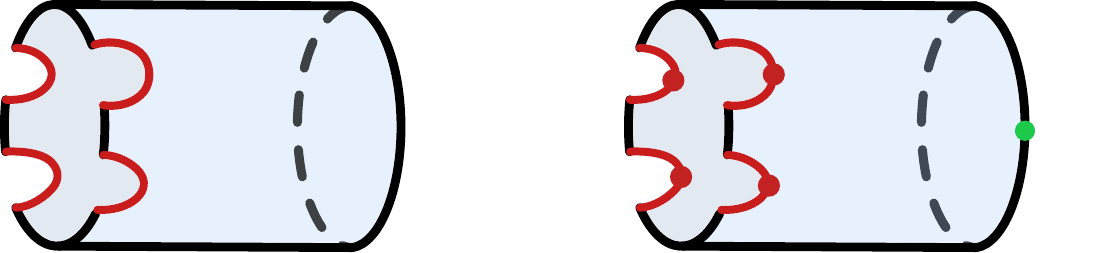}}%
    \put(0.71677008,0.16413794){\color[rgb]{0,0,0}\makebox(0,0)[lt]{\lineheight{1.25}\smash{\begin{tabular}[t]{l}$x_3$\end{tabular}}}}%
    \put(0.52398618,0.154569){\color[rgb]{0,0,0}\makebox(0,0)[lt]{\lineheight{1.25}\smash{\begin{tabular}[t]{l}$x_4$\end{tabular}}}}%
    \put(0.71677008,0.05796199){\color[rgb]{0,0,0}\makebox(0,0)[lt]{\lineheight{1.25}\smash{\begin{tabular}[t]{l}$x_2$\end{tabular}}}}%
    \put(0.52609315,0.05796199){\color[rgb]{0,0,0}\makebox(0,0)[lt]{\lineheight{1.25}\smash{\begin{tabular}[t]{l}$x_1$\end{tabular}}}}%
    \put(0.95162503,0.10280138){\color[rgb]{0,0,0}\makebox(0,0)[lt]{\lineheight{1.25}\smash{\begin{tabular}[t]{l}$s_0$\end{tabular}}}}%
  \end{picture}%
\endgroup%

\caption{$C_4$ and its labelling.}
\label{fig:C_3}
\end{figure}
 The $n$ boundary arcs come with a canonical cyclic ordering extending the linear ordering $x_1< \ldots< x_n$, which we take to be in the counterclockwise direction. We further fix a base point $s_0=(\frac{1}{2},1)\in S^1$.
\end{notation}

The embedding spaces of the $C_n$'s have a classical interpretation as systems of arcs, which we recall below.

\begin{defn}\label{defn:arc}
    Let $W$ be an oriented 2-manifold, $s_0 \in \partial W$ a base-point in the boundary, and $P \subset W\setminus \{s_0\}$ some submanifold of dimension $0$ or $1$.
    An \hldef{arc in $W$ rel.~$P$} a smooth embedding $\gamma\colon [0,1]\hookrightarrow W$ such that $\gamma(0)=s_0$ and $\gamma(1)\in P$.
    For any $k\geq 1$, a \hldef{system of $n$ arcs in $W$ rel.~$P$} is an $n$-tuple $(\gamma_1,\ldots,\gamma_n)$ of arcs in $W$ rel.~$P$ such that:
    \begin{enumerate}[(1)]
        \item the $\gamma_i$'s do not intersect other than the start point and possibly at the end points, and \item the $\gamma_i$'s appear in the order $\gamma_1 \le \dots \le \gamma_n$ at the start point $s_0$.
    \end{enumerate}
    We denote by \hldef{$\Arc_n(W; P)$} the set of systems of $n$ arcs in $W$ rel.~$P$ up to \emph{simultaneous} isotopy.
    For $V \in \boxMfd_{2,S^1}$ we choose the basepoint $s_0 \in S^1 \subset V$ and write $\Arc_n(V; \pf) \coloneq \Arc_n(V; \pf V)$.
\end{defn}

\begin{obs}\label{rem: orderedarcs}
    It is well known that the set of isotopy classes of $n$ disjointly embedded arcs $\alpha_1,\ldots,\alpha_n$ in $W$ is isomorphic to the set of unordered $n$-tuples $([\alpha_1],\ldots, [\alpha_n])$ of isotopy classes of arc in $W$ such that there exists representatives $\alpha_1,\ldots, \alpha_n$ that are pairwise disjoint other than possibly at the endpoints. 
    Moreover, this $n$-tuple of representatives is unique up to \emph{simultaneous} isotopy.
    Furthermore, there is a canonical ordering associated to each isotopy class of $n$ embedded arcs, which is given by the cyclic ordering of the arcs in a small collar of the outgoing circle boundary and independent of the choice of representatives. 
    A proof of those two facts follows from a straightforward adaptation of \cite[p.552-553]{wahl2010survey}.
\end{obs}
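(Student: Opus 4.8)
The plan is to adapt the classical arguments establishing well-definedness of the arc complex, as in \cite[p.~552--553]{wahl2010survey} and \cite{hatcherwahl}, to our situation in which every arc begins at the fixed basepoint $s_0\in\partial W$ and ends on $P$. I would establish the two stated facts — that forgetting the configuration gives a bijection onto unordered tuples of \emph{pairwise} disjointable arc classes, and that each such tuple carries a canonical order — in the steps below, with induction on $n$ doing most of the work.

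First I would prove surjectivity, i.e.\ that a tuple $c_1,\dots,c_n$ of isotopy classes whose members can be disjoined \emph{pairwise} is realized by a \emph{simultaneously} disjoint configuration. Given a disjoint configuration $\alpha_1,\dots,\alpha_{n-1}$ realizing $c_1,\dots,c_{n-1}$ and an arbitrary representative $\beta$ of $c_n$ in general position with respect to the $\alpha_i$, each interior intersection point of $\beta$ with some $\alpha_i$ forces — by the bigon criterion and the hypothesis that $[\beta]$ and $[\alpha_i]$ are disjointable — an innermost bigon or boundary half-bigon bounded by subarcs of $\beta$ and $\alpha_i$; by innermost-ness it meets no other $\alpha_j$, so surgering $\beta$ across it strictly decreases $\lvert\beta\cap\bigcup_i\alpha_i\rvert$ without changing $[\beta]$ or introducing new intersections. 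Iterating produces the missing $\alpha_n$.

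Next I would prove injectivity and uniqueness of the realizing configuration up to simultaneous isotopy: if $\{\alpha_i\}$ and $\{\beta_i\}$ are disjoint configurations with $[\alpha_i]=[\beta_i]$ for all $i$, they are ambient isotopic as ordered tuples, again by induction on $n$. Using the isotopy extension theorem together with the classical fact that individually isotopic embedded arcs in a surface are ambient isotopic, one first arranges $\beta_1=\alpha_1$, which moves the other $\beta_j$ but keeps them disjoint from $\alpha_1$ and in their original isotopy classes. Cutting $W$ along $\alpha_1$ gives a surface $W_1$ into which $\alpha_{\ge 2}$ and $\beta_{\ge 2}$ embed; since $W_1\hookrightarrow W$ is injective on fundamental groupoids and homotopic simple arcs in a surface are isotopic, one gets $[\alpha_j]=[\beta_j]$ already in $W_1$ for $j\ge 2$. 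The inductive hypothesis in $W_1$ then gives an ambient isotopy of $W_1$ carrying $\{\alpha_j\}_{j\ge2}$ to $\{\beta_j\}_{j\ge2}$ rel the two copies of $\alpha_1$, which re-glues to an ambient isotopy of $W$ fixing $\alpha_1$ and taking $\{\alpha_i\}$ to $\{\beta_i\}$. With the previous step this yields the claimed bijection and the uniqueness statement.

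Finally, for the canonical ordering: a neighbourhood of $s_0$ in $W$ is a half-disk, so the arcs of a disjoint configuration, all emanating from $s_0$, appear there as a fan of $n$ disjoint rays and hence inherit a linear order by reading along $\partial W$ (closing up to a cyclic order when $P$ encircles $s_0$, as for the $C_n$ of \cref{notn: Cn}). This order is a locally constant function on the nonempty space of configurations realizing a fixed tuple of classes, hence constant on each simultaneous-isotopy class, and by injectivity depends only on $([\alpha_1],\dots,[\alpha_n])$; matching it with the order built into \cref{defn:arc} identifies $\Arc_n(W;P)$ with the set of unordered tuples of pairwise disjointable arc classes. The step I expect to require real care — where ``adaptation'' is more than cosmetic — is the treatment of the shared basepoint $s_0$ and of arcs with common endpoints on $P$: near such points both the bigon surgery and the cutting-and-$\pi_1$ argument need a preliminary normalization (all arcs emanating from $s_0$ in a standard fan and meeting $P$ transversally), after which the arguments of \cite[p.~552--553]{wahl2010survey} go through; the finitely many surfaces with $\chi(W)\ge 0$ are handled directly. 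One can also bypass the cutting argument by fixing a hyperbolic metric on $W$ with geodesic boundary and using orthogeodesic representatives, which realize disjointable classes disjointly and to which any disjoint realization straightens by a canonical isotopy.
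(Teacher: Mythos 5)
The paper's "proof" of this Observation is only the one-line citation to Wahl's survey; you have fleshed out precisely the innermost-bigon surgery (for realizing pairwise-disjointable tuples simultaneously) and cut-and-induct (for uniqueness up to simultaneous isotopy) that the citation points to, together with the fan-at-$s_0$ description of the canonical ordering, and you correctly flag the shared basepoint $s_0$ as the place needing genuine care. Your sketch is sound and essentially the same approach, just written out where the paper deferred to the reference.
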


From this we in particular deduce the following.
\begin{cor}\label{cor:arc-into-tuple}
    The map
    \begin{align*}
        \Arc_n(W; P) &\too \Arc_1(W;P) \times \dots \times \Arc_1(W;P)  \\
        [\gamma_1,\dots,\gamma_n] & \longmapsto ([\gamma_1], \dots, [\gamma_n])
    \end{align*}
    is injective.
\end{cor}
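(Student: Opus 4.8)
The plan is to deduce the injectivity of the tuple-forgetting map directly from \cref{rem: orderedarcs}, which already packages the two structural facts we need. The statement to prove is that if $[\gamma_1,\dots,\gamma_n]$ and $[\gamma_1',\dots,\gamma_n']$ are two systems of $n$ arcs in $W$ rel.~$P$ with $[\gamma_i] = [\gamma_i']$ in $\Arc_1(W;P)$ for each $i$, then the two systems are simultaneously isotopic. So I would fix such a pair of systems and argue in two steps.

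First, I would observe that since each $\gamma_i$ is individually isotopic to $\gamma_i'$, both systems give the same unordered $n$-tuple of isotopy classes of arcs in $W$. By \cref{rem: orderedarcs}, such an unordered $n$-tuple — provided it admits pairwise-disjoint-at-most-at-endpoints representatives, which it does since both $(\gamma_1,\dots,\gamma_n)$ and $(\gamma_1',\dots,\gamma_n')$ are such representatives — has a representative that is unique up to simultaneous isotopy. Hence $(\gamma_1,\dots,\gamma_n)$ and $(\gamma_1',\dots,\gamma_n')$, being two such representatives of the same unordered tuple, are simultaneously isotopic \emph{as unordered tuples}, i.e.~there is a simultaneous isotopy together with a permutation $\sigma \in \Sigma_n$ matching $\gamma_{\sigma(i)}$ to $\gamma_i'$.

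Second, I would promote this to an \emph{ordered} simultaneous isotopy by pinning down $\sigma$. Both systems satisfy condition (2) of \cref{defn:arc}: the arcs appear in the order $\gamma_1 \le \dots \le \gamma_n$ (resp.~$\gamma_1' \le \dots \le \gamma_n'$) at the basepoint $s_0$. By \cref{rem: orderedarcs}, the cyclic/linear ordering of the arcs read off from a small collar of the outgoing circle boundary near $s_0$ is a well-defined invariant of the isotopy class of the embedded (unordered) system, independent of the representative. Since $[\gamma_i] = [\gamma_i']$ for all $i$ and the orderings agree index-by-index with this canonical ordering on both sides, the permutation $\sigma$ must be the identity. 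Therefore the simultaneous isotopy from the first step is already order-preserving, which says exactly that $[\gamma_1,\dots,\gamma_n] = [\gamma_1',\dots,\gamma_n']$ in $\Arc_n(W;P)$, proving injectivity.

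The only real subtlety — and the step I would be most careful about — is the compatibility between the ``canonical ordering at the outgoing boundary'' used in \cref{rem: orderedarcs} and the ordering condition (2) in \cref{defn:arc}: one must check these refer to the same germ of data at $s_0$ (reading the arcs counterclockwise in a collar of the boundary near the basepoint), so that agreement of the two systems' orderings with the canonical one forces $\sigma = \mathrm{id}$. This is a matter of unwinding conventions rather than a genuine difficulty, since \cref{rem: orderedarcs} is cited from \cite[p.552-553]{wahl2010survey} and already asserts exactly the independence-of-representative statement we need. Everything else is a formal consequence of uniqueness up to simultaneous isotopy.
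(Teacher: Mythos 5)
Your proposal is correct and matches the paper's (implicit) argument: the corollary is stated as a direct consequence of \cref{rem: orderedarcs}, and your two steps — uniqueness of disjoint representatives up to simultaneous isotopy, then using the canonical ordering at $s_0$ to force the matching permutation to be the identity — are exactly the intended deduction. No gaps; the convention-checking caveat you flag is the only point of care and is handled by the cited independence-of-representative statement.
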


  \begin{notation}\label{not: standardcollection}
      We fix a ``standard collection'' of arcs $[\beta_1,\dots, \beta_n] \in \Arc_n(C_n; \pf)$ for all $n$,  where each $\beta_i$ is the minimal geodesic from $s_0$ to $x_i$ in the flat metric. For example, when $n=4$ this is illustrated in \cref{fig:standardarc}.
       \begin{figure}[h]
\centering
\def\svgwidth{.4\linewidth}
\begingroup%
  \makeatletter%
  \providecommand\color[2][]{%
    \errmessage{(Inkscape) Color is used for the text in Inkscape, but the package 'color.sty' is not loaded}%
    \renewcommand\color[2][]{}%
  }%
  \providecommand\transparent[1]{%
    \errmessage{(Inkscape) Transparency is used (non-zero) for the text in Inkscape, but the package 'transparent.sty' is not loaded}%
    \renewcommand\transparent[1]{}%
  }%
  \providecommand\rotatebox[2]{#2}%
  \newcommand*\fsize{\dimexpr\f@size pt\relax}%
  \newcommand*\lineheight[1]{\fontsize{\fsize}{#1\fsize}\selectfont}%
  \ifx\svgwidth\undefined%
    \setlength{\unitlength}{308.78333865bp}%
    \ifx\svgscale\undefined%
      \relax%
    \else%
      \setlength{\unitlength}{\unitlength * \real{\svgscale}}%
    \fi%
  \else%
    \setlength{\unitlength}{\svgwidth}%
  \fi%
  \global\let\svgwidth\undefined%
  \global\let\svgscale\undefined%
  \makeatother%
  \begin{picture}(1,0.3895604)%
    \lineheight{1}%
    \setlength\tabcolsep{0pt}%
    \put(0,0){\includegraphics[width=\unitlength,page=1]{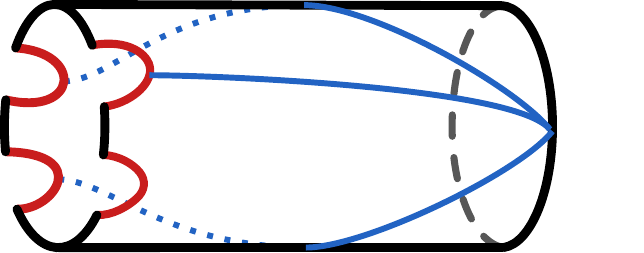}}%
    \put(0.75252143,0.29358927){\color[rgb]{0.03921569,0,0}\makebox(0,0)[lt]{\lineheight{1.25}\smash{\begin{tabular}[t]{l}$\beta_4$\end{tabular}}}}%
    \put(0.90836652,0.1744316){\color[rgb]{0.03921569,0,0}\makebox(0,0)[lt]{\lineheight{1.25}\smash{\begin{tabular}[t]{l}$s_0$\end{tabular}}}}%
    \put(0,0){\includegraphics[width=\unitlength,page=2]{beta.pdf}}%
    \put(0.42915542,0.2955523){\color[rgb]{0.03921569,0,0}\makebox(0,0)[lt]{\lineheight{1.25}\smash{\begin{tabular}[t]{l}$\beta_3$\end{tabular}}}}%
    \put(0.44754757,0.03157187){\color[rgb]{0.03921569,0,0}\makebox(0,0)[lt]{\lineheight{1.25}\smash{\begin{tabular}[t]{l}$\beta_1$\end{tabular}}}}%
    \put(0.39309315,0.1305804){\color[rgb]{0.03921569,0,0}\makebox(0,0)[lt]{\lineheight{1.25}\smash{\begin{tabular}[t]{l}$\beta_2$\end{tabular}}}}%
  \end{picture}%
\endgroup%

\caption{A standard collection of arcs in $C_4$.}
\label{fig:standardarc}
\end{figure}
  \end{notation}  

\begin{lem}\label{lem: map=arc systems}
    For all $M\in\boxMfd_{2,S^1}$ and $n\geq 1$ the map
    \begin{align*}
        \Emb^\square_{S^1}(C_n,M) &\too \Arc_n(M; \pf)\\
        (i\colon C_n \hookrightarrow M)& \longmapsto [i\circ \beta_1, \dots, i \circ \beta_n]
    \end{align*}
    is an equivalence.
    In particular, every path component of $\Emb^\square_{S^1}(C_n,M)$ is contractible.
\end{lem}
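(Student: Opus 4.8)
\textbf{Proof strategy for \cref{lem: map=arc systems}.}
The plan is to prove the statement in two stages: first that the map is a weak equivalence onto its image at the level of $\pi_0$ (i.e.\ it is a bijection on path components), and second that each nonempty path component of $\Emb^\square_{S^1}(C_n, M)$ is contractible, which together with the discreteness of $\Arc_n(M;\pf)$ gives the claimed equivalence. For the $\pi_0$-statement I would argue as follows. A point of $\Emb^\square_{S^1}(C_n, M)$ is an embedding $i\colon C_n \hookrightarrow M$ fixing $S^1$, sending $\pf C_n$ into $\pf M$ and $\partial_- C_n$ transversally into the interior; restricting $i$ to the standard arcs $\beta_1,\dots,\beta_n$ produces a system of arcs, and this is well-defined on isotopy classes since an isotopy of embeddings restricts to a simultaneous isotopy of the arc systems. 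Conversely, given a system of arcs $[\gamma_1,\dots,\gamma_n]$ in $M$ rel.\ $\pf$, one builds an embedding of $C_n$ by taking a regular neighborhood of $S^1 \cup \gamma_1 \cup \dots \cup \gamma_n$ in $M$: because $C_n$ deformation retracts onto $S^1$ together with its $n$ standard arcs, and the combinatorial data of how the $\gamma_i$ meet $s_0$ (the cyclic/linear order) and where their endpoints lie on $\pf M$ matches that of $C_n$, uniqueness of regular neighborhoods gives an embedding well-defined up to isotopy. Checking that these two assignments are mutually inverse on $\pi_0$ is then the combinatorial heart of the matter; here I would lean on \cref{rem: orderedarcs} and \cref{cor:arc-into-tuple} to reduce bookkeeping, since those already package the subtlety about ordering and simultaneous isotopy.

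For the contractibility of path components, the key input is that the embedding space $\Emb^\square_{S^1}(C_n, M)$ of a manifold with the homotopy type of $S^1 \vee (\bigvee_n S^1)$-ish (in fact a surface that deformation retracts onto a graph) has components that are homotopy equivalent, by a standard isotopy-extension / parametrized-neighborhood argument, to the space of embeddings of the spine, which in dimension two and rel.\ the fixed circle is governed by the mapping class group of the complement together with contractible spaces of collars and parametrizations. More concretely, I would exhibit $\Emb^\square_{S^1}(C_n, M)$ as the total space of a fibration whose base is a space of ``arc systems with parametrization'' and whose fiber is a space of collars of $\pf$, tubular-neighborhood data, and reparametrizations of $C_n$ fixing $S^1$ --- all of which are contractible by Cerf's theorem on contractibility of spaces of collars (\cite[5.2.1, Corollaire 1]{cerf1961topologie}), as already invoked in the proof of \cref{lem:col-empty-left-adjoint}. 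Since the base, restricted to a single component, is the discrete set of isotopy classes, each component of the total space is contractible.

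The step I expect to be the main obstacle is the contractibility of components of $\Emb^\square_{S^1}(C_n, M)$, not the $\pi_0$-bijection. The delicate point is to set up the fibration sequence carefully enough that the fiber is genuinely contractible: one must handle simultaneously the transversality condition on $\partial_- C_n$, the constraint $\pf C_n = i^{-1}(\pf M)$ that is part of the definition of $\Emb^\square$, and the fact that $C_n$ has corners. The cleanest route is probably to invoke the general principle --- essentially the parametrized isotopy extension theorem together with the fact that a regular neighborhood of a fixed embedded graph (rel.\ a fixed sub-circle) is unique up to a contractible choice --- to identify $\Emb^\square_{S^1}(C_n, M)$ with a homotopy quotient $\mathrm{Arc}^{\mathrm{param}}_n(M;\pf) /\!\!/ \Diff^\square_\partial(C_n, S^1)$ and then to note that both the space of parametrized arc systems in a fixed isotopy class and the relevant diffeomorphism group of the model $C_n$ are contractible (the latter by Smale's theorem / the contractibility of diffeomorphism groups of disks rel.\ boundary, applied componentwise to $C_n$, which is after rounding corners a disk with handles removed). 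I would flag that this is where the references \cite{wahl08,hatcherwahl} and the Cerf input do the real work, and the rest is unwinding definitions.
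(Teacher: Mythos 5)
Your overall shape — reduce $\Emb^\square_{S^1}(C_n,M)$ to embeddings of a one-dimensional spine plus contractible thickening/collar data, and identify $\pi_0$ with $\Arc_n(M;\pf)$ — agrees with the paper, and your $\pi_0$-argument via regular neighbourhoods of $S^1\cup\gamma_1\cup\dots\cup\gamma_n$ is fine in spirit (it amounts to the paper's identification $\pi_0\Emb_{S^1}(A_n,M)\cong\Arc_n(M;\pf)$ via the standard arcs and \cref{rem: orderedarcs}). However, the step you yourself flag as the crux contains a genuine error. The group of reparametrizations of $C_n$ fixing $S^1$ pointwise and respecting the boundary decomposition is \emph{not} contractible: after smoothing corners $C_n$ is an annulus, and the Dehn twist about its core circle fixes $\partial_+$ pointwise and is the identity near $\pf\cup\partial_-$, yet it is not isotopic to the identity through diffeomorphisms with $i(\pf)\subseteq\pf$ (any isotopy undoing it must rotate the bottom boundary through a full turn, sweeping the free arcs across $\partial_-$). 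Indeed this $\mathbb{Z}$ of twists is exactly why $\Cyc$ models the \emph{para}cyclic category $\Lambda_\infty$ rather than the cyclic category (cf. \cref{const:q} and \cref{lem: Cyc=Lambda infty}), so it cannot be argued away. Consequently, in your proposed presentation of $\Emb^\square_{S^1}(C_n,M)$ as a homotopy quotient $\mathrm{Arc}^{\mathrm{param}}_n(M;\pf)/\!\!/\Diff^\square_\partial(C_n,S^1)$, the fiber is not contractible as claimed (Smale's theorem does not apply: you are not fixing the whole boundary pointwise), and contractibility of components does not follow without reworking the setup. The paper sidesteps this by fibering the other way: it restricts an embedding of $C_n$ to the fixed spine $A_n$ of straight arcs with \emph{prescribed distinct starting points} on $S^1$, shows the restriction maps $\Emb^\square_{S^1}(C_n,M)\to\Emb_{S^1}(B_n,M)\to\Emb_{S^1}(A_n,M)$ are equivalences (tubular neighbourhoods parametrized at one endpoint, plus a Cerf-collar and two-out-of-six argument that also disposes of the corner and transversality issues you mention), so that all the twisting lives in $\pi_0$ of the base rather than in a fiber.

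A second, smaller but substantive misattribution: the contractibility of the components of the spaces of embedded arcs in a surface rel a boundary point is Gramain's theorem \cite{gramain}, and that is the real geometric input to this lemma. The references \cite{wahl08,hatcherwahl} concern contractibility of the \emph{arc complex}, which enters only later, in the proof of \cref{proposition B}; Cerf \cite{cerf1961topologie} only supplies the contractible spaces of collars and tubular-neighbourhood data. As written, your plan leans on the wrong results precisely where the work happens.
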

As an aid to the reading of the proof of this lemma, we provide illustration of the major steps in \cref{fig:toplem}.
\begin{proof}
    Let $A_n \subset C_n$ denote the disjoint union of the $n$ straight lines $\{\tfrac{k}{n}\}\times [\tfrac{1}{4}n,1]$ and let $B_n \subset C_n$ be a tubular neighbourhood of $A$.
    We write $\Emb_{S^1}^\square(A_n, M)$ for the space of smooth embeddings of these arcs such they are the identity on $A_n \cap (S^1 \times \{1\})$ and such that they send the other endpoints of the arcs to the free boundary, and similarly for $B_n$.
    (This is a slight abuse of notation as $A_n$ and $B_n$ do not actually contain all of $S^1 \times \{1\}$.)
    The restriction map $\boxEmb_{S^1}(B_n, M) \to \boxEmb_{S^1}(A_n, M)$ is a fibration and its fiber at some $i\colon A_n \hookrightarrow M$ is the space of (parametrized) tubular neighbourhoods of $i(A_n)$ in $M$ that have a fixed parametrization at $i(A_n \cap S^1 \times \{1\})$.
    Because we fix the parametrization at exactly one endpoint for each arc, this space of tubular neighbourhoods is contractible,
    and therefore the restriction map is an equivalence.
    
    Fix $0<\varepsilon<\tfrac{1}{2}$ and consider the commutative square
    \[
    \begin{tikzcd}
        {\boxEmb_{S^1\times[1-\varepsilon,1]}(C_n, M)} \dar \rar[hook] &
        {\boxEmb_{S^1}(C_n, M)} \dar \\
        {\boxEmb_{S^1\times[1-\varepsilon,1]}(B_n, M)} \rar[hook] \ar[ur, dashed, "H"] &
        {\boxEmb_{S^1}(B_n, M)}
    \end{tikzcd}
    \]
    where the vertical maps are restrictions and the horizontal maps are the inclusion of the subspace of those embeddings $C_n \hookrightarrow M$ that are the identity on $S^1 \times [1-\varepsilon, 1]$, and those embeddings $B_n \hookrightarrow M$ that are the identity on $B_n \cap (S^1 \times [1-\varepsilon, 1])$.
    The horizontal inclusions are equivalences by Cerf's contractibility of the space of collars.
    To construct the dashed map, let $\varphi\colon C_n \hookrightarrow C_n$ be in $\Emb_{S^1}^\square(C_n, C_n)$ such that it is isotopic to the identity, that $\varphi(C_n) \subset B_n \cup (S^1 \times [1-\varepsilon, 1])$, and that $\varphi_{|B_n}$ is isotopic to the identity in $\boxEmb_{S^1}(B_n, B_n)$.
    We can then define the dashed map to be 
    \[
        H\colon (i\colon B_n \hookrightarrow M) \longmapsto (C_n \xhookrightarrow{\varphi} B_n \cup (S^1 \times [1-\varepsilon,1]) \xrightarrow{i \cup \id} C_n)
    \]
    If the original map $i$ was restricted from some $j \in \boxEmb_{S^1 \times [0,\varepsilon]}(C_n, M)$, then $H(i) = j \circ \varphi$.
    Since $\varphi$ is isotopic to the identity, $H(i)$ is isotopic to $j$ and this defines a homotopy for the top triangle in the diagram.
    Similarly, for all $i$ we have an isotopy between $H(i)_{|B_n}$ and $i$ because $\varphi_{|B_n}$ is isotopic to $\id_{B_n}$.
    Therefore, the entire square commutes up to homotopy, and it follows from $2$-out-of-$6$ that all the maps are weak equivalences.

    \begin{figure}
    \centering
    \includegraphics[width=.9\linewidth]{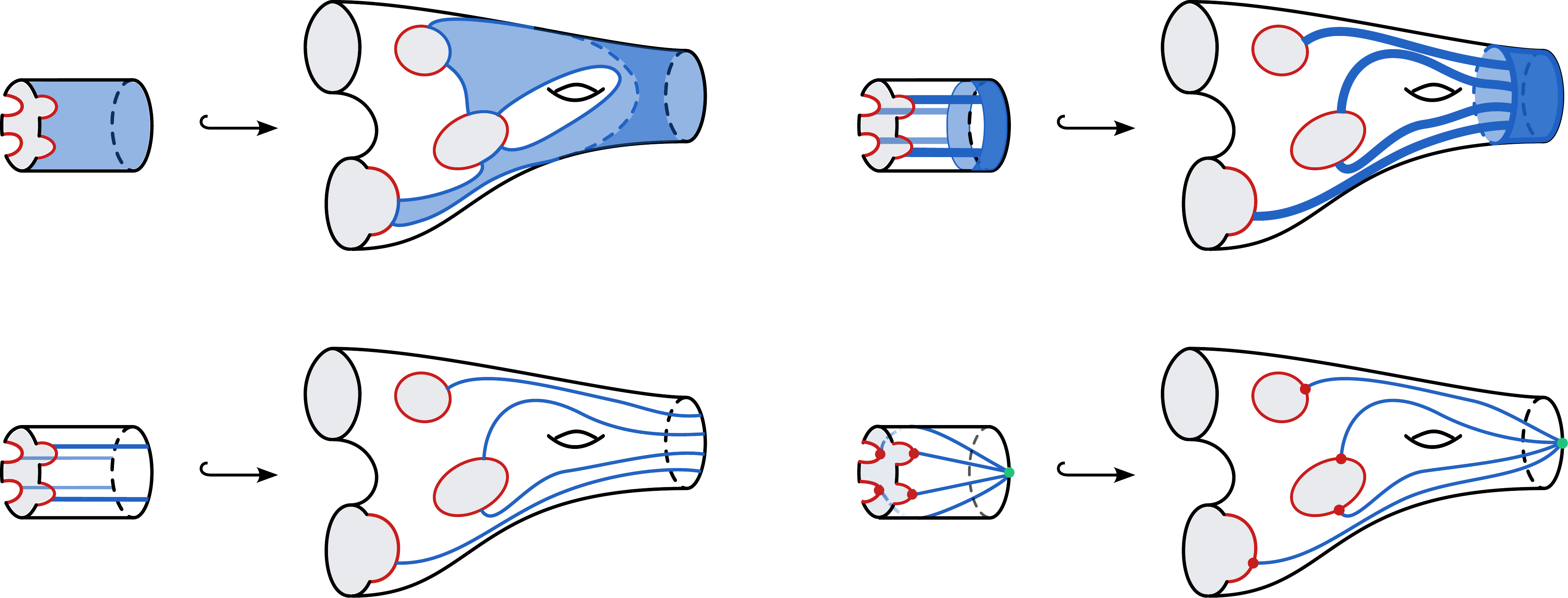}
    \caption{Proof of \cref{lem: map=arc systems} by picture.}
    \label{fig:toplem}
    \end{figure}

    It remains to compare $\boxEmb_{S^1}(A_n, M)$ with $\Arc_n(M; \pf)$.
    By \cite{gramain} the space $\boxEmb_{S^1}(A_1,M)$ has contractible path components and an inductive argument shows that the same is true for $\boxEmb_{S^1}(A_n,M)$.
    The path components are isotopy classes of $n$-tuples of disjoint arcs with prescribed starting points on the fixed boundary $S^1$ and whose endpoints lie on the free boundary.
    As discussed in \cref{rem: orderedarcs} we can equivalently consider $n$-tuples of isotopy classes of arc that have the property that they can be made disjoint.
    Finally, we can use our set of ``standard arcs'' in $C_n$ (\cref{not: standardcollection}) to move all the arcs so that they have the same starting point, resulting in the bijection $\pi_0\boxEmb_{S^1}(A_n, M) \cong \Arc_n(M; \pf)$. 
    (Crucially, this step uses that the arcs appear in the standard order at the base-point.)
\end{proof}

\begin{notation}
    Denote by \hldef{$\Cyc$} the full subcategory of $\boxMfd_{2,S^1}$ generated by $C_n, 1\leq n<\infty$.
\end{notation}

\begin{cor} \label{cor:Cyc-1cat}
    $\Cyc$ is equivalent to a $1$-category.
\end{cor}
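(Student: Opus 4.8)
The plan is to show that $\Cyc$ has the property that all of its mapping spaces have contractible path components, and that the induced maps on $\pi_0$ make $\Cyc$ into an honest $1$-category. The first half follows immediately from \cref{lem: map=arc systems}: for every $m,n$ we have $\operatorname{Map}_{\Cyc}(C_n, C_m) = \boxEmb^\square_{S^1}(C_n, C_m) \simeq \Arc_n(C_m; \pf)$, and the lemma asserts that every path component of this space is contractible. So $\Cyc$ is (equivalent to) the nerve of a topologically enriched category whose mapping spaces are disjoint unions of contractible spaces; taking $\pi_0$ of each mapping space therefore produces a $1$-category $h\Cyc$ together with a functor $\Cyc \to h\Cyc$ that is an equivalence on each mapping space, hence an equivalence of \categories{}.

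Concretely, first I would invoke \cref{cor:Phi-and-Psi} (the equivalence $\boxMfd_{2,S^1} \simeq (\Bord_2^\partial)_{S^1/}$) only to remember that $\Cyc \subset \boxMfd_{2,S^1}$ is a full subcategory and that its mapping spaces are literally the embedding spaces $\boxEmb^\square_{S^1}(C_n, C_m)$ — no further work on the slice is needed. Then I would cite \cref{lem: map=arc systems} to identify each such space with $\Arc_n(C_m;\pf)$, whose components are contractible. Since $\boxMfd_{2,S^1}$ was defined as (the \category{} associated to) a quasi-unital topologically enriched category, composition in $\Cyc$ is modelled by an actual continuous composition of embeddings, so it descends to a strictly associative and unital composition on $\pi_0$; this gives the $1$-category $h\Cyc$ and the comparison functor. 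An \category{} is equivalent to a $1$-category precisely when all its mapping spaces are $0$-truncated (i.e.~have contractible components), so $\Cyc \simeq h\Cyc$ is what we want.

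I do not expect any genuine obstacle here: the entire content has already been extracted in \cref{lem: map=arc systems}, and the remaining argument is the standard fact that an \category{} with $0$-truncated mapping spaces is a $1$-category. The only mild subtlety worth a sentence is ensuring that $\Cyc$, defined as the full subcategory \emph{generated by} the $C_n$ (i.e.~the full sub-\category{} on those objects), really does have $\boxEmb^\square_{S^1}(C_n,C_m)$ as its mapping spaces and not something with higher homotopy introduced by the passage from the topologically enriched category to its associated \category{} — but this is guaranteed because, as recorded in \cref{subsec:outline}, taking the topologically enriched nerve, completing to a Segal space via \cite{Haugseng-semicategories}, and applying $\ac(-)$ all preserve mapping spaces. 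Hence the proof is short: quote \cref{lem: map=arc systems}, observe the mapping spaces are $0$-truncated, and conclude.
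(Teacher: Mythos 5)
Your proposal is correct and is essentially the paper's own argument: the paper's proof of \cref{cor:Cyc-1cat} consists precisely of citing \cref{lem: map=arc systems} to identify each mapping space $\Emb^\square_{S^1}(C_n,C_m)$ with the homotopy-discrete set $\Arc_n(C_m;\pf)$, from which the statement follows. Your extra remarks (passing to $h\Cyc$, and that the nerve/$\ac$ constructions preserve mapping spaces) just spell out standard points the paper leaves implicit.
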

\begin{proof}
    It follows from \Cref{lem: map=arc systems} that all the mapping spaces in $\Cyc$ are equivalent to discrete sets
    \(
        \boxEmb_{S^1}(C_n, C_m) \simeq \Arc_n(C_m; \pf)
    \).
\end{proof}


\subsection{A model for the paracyclic category}
In this subsection, we show that $\Cyc$ is equivalent to the paracyclic category $\Lambda_\infty$, whose definition we recall now.

\begin{defn}[{\cite[Appendix B]{NS2018}}]
    Let $\mathbb{Z}$PoSet be the $1$-category of posets with an action by $\mathbb{Z}$ and non-decreasing maps compatible with the $\mathbb{Z}$-actions.
    Define the \hldef{paracyclic category $\Lambda_\infty$} to be the full subcategory of $\mathbb{Z}$PoSet consisting of objects that are isomorphic to $(\tfrac{1}{n}\mathbb{Z}, \le, +1)$ for some integer $n\geq 1$. 
\end{defn}

\begin{const}\label{const:q}
    We define a functor \hldef{$q\colon \Cyc \to \Lambda_\infty$}.
    Let $p\colon \widetilde{C_n} \to C_n$ denote the universal cover and let
    $\hldef{q(C_n)} \coloneq p^{-1}(\{x_1,\dots,x_n\})$ denote the preimage of the midpoints of the free boundaries.
    If we think of the universal cover of $C_n$ as a subset $\widetilde{C_n} \subset \Rbb \times [0,1]$, then $q(C_n) = \{(\tfrac{2i+1}{2n}, \tfrac{1}{4n})\;|\; i \in \Zbb\}$.
    By recording the first coordinate we can identify this with a subset of $\Rbb$, which induces a total order, and we define the $\Zbb$-action to be addition in the first coordinate.
    (Equivalently, the $\Zbb$-action is by Deck transformations of the universal cover.)
    The $\Zbb$-poset $q(C_n)$ is isomorphic to $(\tfrac{1}{n}\mathbb{Z}, \le, +1)$ and thus an object of $\Lambda_\infty \subset \Zbb\text{PoSet}$.
    A morphism $i\colon C_n \hookrightarrow C_m$ induces an embedding on universal covers (and this is unambiguous as $i$ fixes $S^1 \times \{1\}$), which induces the map $q(i)\colon q(C_n) \to q(C_m)$.
\end{const}

We can define a map of sets \hldef{$\epsilon\colon \Arc_1(C_n;\{x_1,\ldots, x_n\}) \to q(C_n)$} by sending an arc $[\gamma]$ to the endpoint of the lift of $\gamma$ to the universal over $\widetilde{C_n}$.

\begin{lem}\label{lem: q(Cn)=Arc1}
    The map $\epsilon$ is a bijection.
\end{lem}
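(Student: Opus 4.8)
The plan is to construct an explicit inverse to $\epsilon$ and to verify both composites are the identity. Given a point $\tilde x \in q(C_n) = p^{-1}(\{x_1,\dots,x_n\})$, lift the basepoint $s_0 \in S^1 \times \{1\} \subset C_n$ along the universal cover $p\colon \widetilde{C_n} \to C_n$ to the preferred lift $\tilde s_0$ that lies in the fundamental domain $\widetilde{C_n} \subset \Rbb \times [0,1]$ containing $q(C_n)$ as specified in \cref{const:q}. Since $\widetilde{C_n}$ is a (contractible) simply connected surface with boundary, there is an embedded arc $\tilde\gamma$ from $\tilde s_0$ to $\tilde x$, unique up to isotopy rel endpoints; pushing it down via $p$ gives an arc $\gamma = p \circ \tilde\gamma$ in $C_n$ rel $\{x_1,\dots,x_n\}$, and its isotopy class is well-defined. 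Call this assignment $\delta\colon q(C_n) \to \Arc_1(C_n; \{x_1,\dots,x_n\})$.

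The main work is to check $\epsilon \circ \delta = \id$ and $\delta \circ \epsilon = \id$. For $\epsilon \circ \delta = \id$: the lift of $\gamma = p\circ\tilde\gamma$ starting at $\tilde s_0$ is exactly $\tilde\gamma$ by uniqueness of path lifting, so its endpoint is $\tilde x$. For $\delta \circ \epsilon = \id$: given an arc $[\gamma]$ with $\gamma(0) = s_0$, $\gamma(1) \in \{x_1,\dots,x_n\}$, lift it starting at $\tilde s_0$ to get $\tilde\gamma$ with endpoint $\epsilon([\gamma]) = \tilde\gamma(1) =: \tilde x$; then $\delta(\tilde x)$ is represented by $p$ applied to \emph{some} arc from $\tilde s_0$ to $\tilde x$ in $\widetilde{C_n}$, and since $\widetilde{C_n}$ is simply connected any two embedded arcs with the same endpoints are isotopic rel endpoints, so this agrees with $[\gamma]$. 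The one genuine subtlety — and the step I expect to be the main obstacle — is confirming that an embedded arc in $\widetilde{C_n}$ with prescribed endpoints exists and is unique up to isotopy, i.e.\ that $\widetilde{C_n}$ is a surface of a sufficiently simple topological type. This holds because $C_n$ is a cylinder with $n$ half-disks removed, hence has free fundamental group, and its universal cover is a disk-with-(infinitely-many)-boundary-punctures, which is an increasing union of disks; on such a surface the isotopy class of an embedded arc between two fixed boundary-adjacent points is determined by its endpoints (no handles, no genus), an instance of the classification of arcs already invoked via \cref{rem: orderedarcs} and \cite[p.552-553]{wahl2010survey}.

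A few bookkeeping points to settle along the way: one must check $\delta$ indeed lands among arcs with image disjoint from $s_0$ except at the start (automatic, as $\tilde s_0$ is a single boundary point and the arc is embedded), and that the construction is insensitive to the choice of representative $\gamma$ in its isotopy class (covered by homotopy lifting, since isotopic arcs lift to isotopic arcs rel the lifted basepoint). No naturality statement is required for this lemma itself — that $\epsilon$ and $\delta$ are compatible with the maps $q(i)$ induced by embeddings $i\colon C_n \hookrightarrow C_m$ will presumably be taken up separately when assembling the equivalence $\Cyc \simeq \Lambda_\infty$ — so here it suffices to treat each $C_n$ individually.
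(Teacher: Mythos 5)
Your overall strategy is the same as the paper's: define an inverse $\delta$ by choosing, for each $y\in q(C_n)$, an arc in the universal cover $\widetilde{C_n}$ from the fixed lift of $s_0$ to $y$, project it down by $p$, and then verify the two composites (with $\epsilon\circ\delta=\id$ following from uniqueness of path lifting). However, as written there is a gap at the point where you descend from $\widetilde{C_n}$ to $C_n$. First, the pushforward $p\circ\tilde\gamma$ of an \emph{arbitrary} embedded arc $\tilde\gamma\subset\widetilde{C_n}$ need not be embedded in $C_n$: an arc upstairs that wanders horizontally across more than one fundamental domain at comparable heights projects to a self-intersecting curve, so ``pushing it down via $p$ gives an arc'' is false without choosing a good representative. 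Second, and more importantly, your uniqueness argument lives upstairs: two embedded arcs in $\widetilde{C_n}$ with the same endpoints are indeed isotopic rel endpoints there, but projecting such an isotopy by $p$ only yields a \emph{homotopy} rel endpoints in $C_n$ (intermediate arcs may self-intersect after projection). So the steps ``its isotopy class is well-defined'' and ``this agrees with $[\gamma]$'' in the check $\delta\circ\epsilon=\id$ need an extra input, namely the classical fact that embedded arcs in a surface which are homotopic rel endpoints are isotopic (Epstein); the facts you cite (\cref{rem: orderedarcs}, the argument in Wahl's survey) concern simultaneous isotopy of disjoint arc systems and do not supply this.

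The paper sidesteps both issues by making the choices concrete: $\delta(y)$ is defined using the \emph{straight line} $\sigma_y$ from $s_0=(\tfrac12,1)$ to $y$ in $\widetilde{C_n}\subset\Rbb\times[0,1]$, whose height coordinate is strictly monotone, so its projection is automatically embedded; and for $\delta\circ\epsilon=\id$ one takes a geodesic representative of $[\gamma]$, whose lift is then literally $\sigma_y$ with $y=\epsilon(\gamma)$, so the two isotopy classes agree on the nose rather than via a projected isotopy. Your argument becomes correct if you either adopt this normalization or explicitly invoke homotopy-implies-isotopy for arcs downstairs; without one of these, the descent step is where the proof as stated would fail.
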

\begin{proof}
    We can construct an inverse $\delta\colon q(C_n) \to \Arc_1(C_n; \{x_1,\dots,x_n\})$ as follows.
    For a point $y \in q(C_n) = p^{-1}(\{x_1,\dots,x_n\})$ we let $\sigma_y\colon [0,1] \to \widetilde{C_n} \subset \Rbb \times [0,1]$ be the straight line from the base point $s_0 = (\tfrac{1}{2},1)$ to $y$.
    Then we set $\delta(y) \coloneq [p \circ \sigma_y]$ to be the isotopy class of the path obtained by projecting $\sigma_y$ back to $C_n$.
    This is by construction a path from $s_0$ to $p(y) \in \{x_1,\dots, x_n\}$, and it does not self-intersect.
    The composite $\epsilon \circ \delta$ is the identity by construction.
    To see $\delta \circ \epsilon = \id$, consider some $[\gamma] \in \Arc_1(C_n; \{x_1,\dots,x_n\})$.
    Without loss of generality, we can assume that $\gamma$ is a geodesic.
    Then the lift of $\gamma$ to the universal cover must be exactly the straight line $\sigma_y$ with $y = \epsilon(\gamma)$, so after composing with $p$ we get $[\gamma] = \delta(y)$.
\end{proof}

This endows $\Arc_1(C_n;\{x_1,\ldots, x_n\})$ with the structure of a $\mathbb{Z}$Poset, with $\mathbb{Z}$-action given by the Dehn twists 
and total ordering the unique one extending $[\beta_1]<\ldots<[\beta_n]<[\beta_1]+1$.

\begin{lem}\label{prop: orderedarc}
    Consider the map
    \[
        (\epsilon_1,\dots,\epsilon_k)\colon \Arc_k(C_n; \{x_1,\dots,x_n\}) \too q(C_n)^{\times k}
    \]
    that records the end-points of the $k$ arcs after lifting them to the universal cover.
    This map is injective and a $k$-tuple $(y_1,\dots,y_k)$ is in the image if%
    \footnote{In fact, this is an ``if and only if'', but we will not need the other implication here.}
    it satisfies
    \( y_1 \le y_2 \le \dots \le y_k \le y_1 + 1 \).
\end{lem}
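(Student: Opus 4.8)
The plan is to prove the two assertions separately: injectivity is formal and follows from results already established, whereas the description of the image is geometric and I would carry it out in the universal cover $p\colon\widetilde{C_n}\to C_n$ by building an explicit system of arcs out of straight segments. For injectivity, observe that $(\epsilon_1,\dots,\epsilon_k)$ is the composite of two maps we already have: by \cref{cor:arc-into-tuple} the map $\Arc_k(C_n;\{x_1,\dots,x_n\})\to\Arc_1(C_n;\{x_1,\dots,x_n\})^{\times k}$ recording the individual arcs is injective, and by \cref{lem: q(Cn)=Arc1} the map $\epsilon\colon\Arc_1(C_n;\{x_1,\dots,x_n\})\to q(C_n)$ is a bijection. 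Their composite is $(\epsilon_1,\dots,\epsilon_k)$, because the lift of an arc to $\widetilde{C_n}$ starting at the fixed basepoint $\widetilde{s}_0$ does not depend on whether we regard the arc on its own or as a member of a system; hence $(\epsilon_1,\dots,\epsilon_k)$ is injective.

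For the image, suppose $(y_1,\dots,y_k)$ satisfies $y_1\le\cdots\le y_k\le y_1+1$ and write $a_i$ for the first coordinate of $y_i\in\widetilde{C_n}\subset\Rbb\times[0,1]$, so $a_1\le\cdots\le a_k\le a_1+1$. The first move is a normalization. The $\Zbb$-action on $q(C_n)$ is realized by a Dehn twist $\tau$ of $C_n$ about a core circle disjoint from $s_0$ and from $x_1,\dots,x_n$, which fixes those points, carries arc systems to arc systems by twisting all the arcs simultaneously, and induces the diagonal $\Zbb$-action on $(\epsilon_1,\dots,\epsilon_k)$. Since a system realizing $(y_1+m,\dots,y_k+m)$ exists if and only if one realizing $(y_1,\dots,y_k)$ does, after applying a suitable power $\tau^m$ I may assume $\tfrac12-1<a_1\le\tfrac12$; together with $a_k\le a_1+1$ this places the basepoint $\widetilde{s}_0=(\tfrac12,1)$ and all of $y_1,\dots,y_k$ inside the closed strip $F:=\bigl([a_1,a_1+1]\times[0,1]\bigr)\cap\widetilde{C_n}$, which is a closed fundamental domain for the deck group of $p$.

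Next I would take $\sigma_{y_i}\colon[0,1]\to\widetilde{C_n}$ to be the straight segment from $\widetilde{s}_0$ to $y_i$, exactly as in the proof of \cref{lem: q(Cn)=Arc1}: it is an embedded arc, it lies in $F$, and $p\circ\sigma_{y_i}$ represents the class $\epsilon^{-1}(y_i)$. When $a_i\ne a_j$, the segments $\sigma_{y_i}$ and $\sigma_{y_j}$ meet only at $\widetilde{s}_0$, since equating their (affine, strictly decreasing) height coordinates forces equal parameters and then equating first coordinates forces the parameter $0$; if some of the $a_i$ coincide, replace the corresponding segments by small parallel pushoffs to the larger-$a$ side, kept inside $F$, disjoint away from $\widetilde{s}_0$, and still ending at the prescribed points. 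Setting $\gamma_i:=p\circ\sigma_{y_i}$, the crucial point is that since all $\sigma_{y_i}$ lie in $F$ and $p$ is injective on $F$ away from its two vertical edges — which it glues by the deck translation — every intersection of $\gamma_i$ with $\gamma_j$ ($i\ne j$) is the image either of the intersection $\{\widetilde{s}_0\}$ inside $F$, which maps to the common startpoint $s_0$, or of a point on one vertical edge glued to a point on the other; but the only points of the $\sigma_{y_i}$ lying on those edges are $\widetilde{s}_0$ (if it happens to sit on an edge) and the extreme endpoints $y_1$ and $y_k$, so the gluing contributes only the startpoint or a shared endpoint. Hence $[\gamma_1,\dots,\gamma_k]$ satisfies condition (1) of \cref{defn:arc}; for condition (2), the $\gamma_i$ leave $s_0$ in the directions of the $\sigma_{y_i}$ at $\widetilde{s}_0$, which rotate monotonically through less than a half-turn as $a_i$ increases, so the $\gamma_i$ appear in the order $\gamma_1\le\cdots\le\gamma_k$, consistently with the conventions fixed for the standard collection in \cref{not: standardcollection}. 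This produces a system with $\epsilon_i(\gamma_i)=y_i$, so $(y_1,\dots,y_k)$ is in the image.

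I expect the main obstacle to be precisely the image statement, and within it the step that rules out intersections created by the covering projection $p$: this is where the hypothesis $y_k\le y_1+1$ enters, through the fundamental-domain normalization — without it the straight segments need not fit into a single fundamental domain and their projections genuinely cross, which is the ``only if'' direction alluded to in the footnote. The remaining ingredients — embeddedness of the straight segments and the bijection $\epsilon$ — are already in the proof of \cref{lem: q(Cn)=Arc1}, and the ordering discussion parallels \cref{rem: orderedarcs}.
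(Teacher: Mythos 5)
Your proposal is correct and follows essentially the same route as the paper: injectivity by composing the injection of \cref{cor:arc-into-tuple} with the bijection of \cref{lem: q(Cn)=Arc1}, and surjectivity onto the stated tuples by projecting straight segments (with pushoffs/doubling for repeated endpoints) from the universal cover, with the inequality $y_k\le y_1+1$ guaranteeing disjointness away from endpoints. Your Dehn-twist/fundamental-domain normalization is an extra (harmless) device for verifying that disjointness, which the paper handles directly by observing that the wrapped geodesics cannot cross under that inequality.
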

\begin{proof}
    By \cref{cor:arc-into-tuple} an isotopy class of arc systems is uniquely determined by its tuple of isotopy classes of arcs, and 
    by \cref{lem: q(Cn)=Arc1} these isotopy classes of arcs are in turn uniquely determined by their endpoints $\epsilon(\gamma_i)$ in the universal cover, so the map is indeed injective.
    Now suppose $(y_1,\dots, y_n) \in q(C_n)^{\times k}$ is a tuple satisfying the inequality.
    Take the geodesics (doubling if necessary) from $(1/2,1)$ to each $y_i$ and wrapping the universal cover $\widetilde{C_n}$ around $C_n$. This yields an embedding of $k$ arcs in $C_n$, and the condition $y_k< y_1+1$ ensures that they are pairwise non-intersecting except possibly at end points. This defines a system of $k$ arcs in $C_n$ as desired.
\end{proof}

\begin{lem} \label{lem: Cyc=Lambda infty}
    The functor $q\colon \Cyc \to \Lambda_\infty$ is an equivalence of $\infty$-categories.
    (In fact, both \categories{} are equivalent to $1$-categories.)
\end{lem}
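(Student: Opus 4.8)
The plan is to show that $q\colon \Cyc \to \Lambda_\infty$ is essentially surjective and fully faithful, using the two previous lemmas to reduce everything to a combinatorial statement about $\mathbb{Z}$-posets. First I would observe that essential surjectivity is immediate from \cref{const:q}: by definition every object of $\Lambda_\infty$ is isomorphic to some $(\tfrac1n\mathbb{Z}, \le, +1)$, and we already noted that $q(C_n) \cong (\tfrac1n\mathbb{Z}, \le, +1)$. So the content is full faithfulness, i.e.\ that for all $n, m \ge 1$ the map
\[
    q\colon \boxEmb^\square_{S^1}(C_n, C_m) \too \Hom_{\Lambda_\infty}(q(C_n), q(C_m))
\]
is a bijection. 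By \cref{lem: map=arc systems} the left-hand side is $\Arc_n(C_m; \pf)$ and every component is contractible, so both sides are discrete and it suffices to work with sets.

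Next I would set up the comparison on both sides via the ``endpoint in the universal cover'' maps. On the geometric side, the composite sending an arc system $[\gamma_1,\dots,\gamma_n] \in \Arc_n(C_m;\pf)$ to the tuple of endpoints of lifts lands in $q(C_m)^{\times n}$; by \cref{prop: orderedarc} (applied with the roles $k = n$, the ambient surface $C_m$, and noting that $\Arc_n(C_m;\pf) = \Arc_n(C_m;\{x_1,\dots,x_m\})$ since the free boundary of $C_m$ deformation retracts onto the midpoints) this map is injective with image exactly the tuples $(y_1,\dots,y_n)$ with $y_1 \le \dots \le y_n \le y_1 + 1$. Wait — I need to be careful that an arc in $C_m$ can end at \emph{any} of the $x_j$, which is exactly what $\Arc_n(C_m; \{x_1,\dots,x_m\})$ records, and this is what \cref{prop: orderedarc} handles (there $q(C_m)$ is the universal-cover preimage of all the midpoints). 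On the other side, a morphism $f\colon q(C_n) \to q(C_m)$ in $\Lambda_\infty$ is a non-decreasing $\mathbb{Z}$-equivariant map; since $q(C_n)$ is generated as a $\mathbb{Z}$-poset by the $n$ elements $\tilde x_1 < \dots < \tilde x_n$ lying in a fundamental domain (the lifts of $x_1,\dots,x_n$), $f$ is determined by the tuple $(f(\tilde x_1),\dots,f(\tilde x_n)) \in q(C_m)^{\times n}$, and $\mathbb{Z}$-equivariance together with monotonicity forces precisely $f(\tilde x_1) \le \dots \le f(\tilde x_n) \le f(\tilde x_1) + 1$ (the last inequality because $\tilde x_n < \tilde x_1 + 1$ in $q(C_n)$), and any such tuple extends uniquely. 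So the set of morphisms $q(C_n) \to q(C_m)$ in $\Lambda_\infty$ is identified with the same set of ``ordered tuples'' in $q(C_m)^{\times n}$.

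Finally I would check that these two identifications are compatible with the map $q$, i.e.\ that the square relating $\boxEmb^\square_{S^1}(C_n,C_m)$, $\Arc_n(C_m;\pf)$, $\Hom_{\Lambda_\infty}(q(C_n),q(C_m))$, and $q(C_m)^{\times n}$ commutes. This is essentially a diagram chase / unwinding of definitions: given an embedding $i\colon C_n \hookrightarrow C_m$, the construction of $q(i)$ in \cref{const:q} records where the lifted midpoints $\tilde x_j$ go, which is exactly the endpoint of the lift of $i\circ\beta_j$ — i.e.\ $\epsilon_j(i\circ\beta_1,\dots,i\circ\beta_n)$ — since $\beta_j$ is a path from $s_0$ to $x_j$ and lifting is functorial for the cover; here one uses that $i$ fixes $S^1\times\{1\}$ so the lift is unambiguous, and that the standard arcs $\beta_j$ under the cover go to the straight lines $\sigma_{\tilde x_j}$ as in the proof of \cref{lem: q(Cn)=Arc1}. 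The main obstacle, I expect, is bookkeeping the basepoints and fundamental domains carefully — in particular making sure that the cyclic/linear ordering conventions on the $x_j$ (counterclockwise, $\tilde x_1 < \dots < \tilde x_n < \tilde x_1 + 1$) match the poset order on $q(C_n)$ on the nose, so that ``non-decreasing $\mathbb{Z}$-map'' corresponds exactly to ``the endpoint inequalities cut out by \cref{prop: orderedarc}'' with no off-by-one error. Once the three sets $\boxEmb^\square_{S^1}(C_n,C_m)$, $\Arc_n(C_m;\pf)$, $\Hom_{\Lambda_\infty}(q(C_n),q(C_m))$ are all identified with $\{(y_1,\dots,y_n) \in q(C_m)^{\times n} : y_1 \le \dots \le y_n \le y_1 + 1\}$ compatibly, bijectivity of $q$ on mapping spaces follows, and combined with essential surjectivity this gives the equivalence; the parenthetical claim that both are $1$-categories is \cref{cor:Cyc-1cat} together with the fact that $\Lambda_\infty$ is a $1$-category by construction.
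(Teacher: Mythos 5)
Your proposal is correct and is essentially the paper's argument: reduce to discrete hom-sets via \cref{lem: map=arc systems}, compare both sides with tuples of endpoints in the universal cover, and check that $q$ is compatible with evaluation at the standard arcs via the same commutative square. One caveat: you invoke \cref{prop: orderedarc} for the claim that the image of $(\epsilon_1,\dots,\epsilon_n)$ is \emph{exactly} the set of tuples with $y_1\le\dots\le y_n\le y_1+1$, but as stated that lemma only proves the ``if'' direction (the converse is asserted in a footnote and deliberately not proved); similarly you use that every ordered tuple extends uniquely to a map of $\Zbb$-posets, which the paper never needs. The paper arranges the square so that only injectivity of the two evaluation maps, the ``if'' direction, and bijectivity of the left-hand map $(\ev_{\beta_1},\dots,\ev_{\beta_n})$ are required, deducing surjectivity of $q$ from a short chase rather than from an explicit identification of both hom-sets with the ordered tuples. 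If you keep your formulation, you should add the (easy) argument that the lifted endpoints of an arc system are weakly increasing and advance by at most one period (disjointness of the arcs plus their ordering at $s_0$), and the one-line check that any ordered tuple extends $\Zbb$-equivariantly to a monotone map; alternatively, restructure as in the paper to avoid both. The parenthetical claim is handled exactly as you say, via \cref{cor:Cyc-1cat}.
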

\begin{proof}
    The functor $q$ is essentially surjective because $q(C_n) \cong (\tfrac{1}{n}\Zbb,\le,+1)$.
    It remains to check fully faithfulness, and since $\Lambda_\infty \subset \Zbb\text{PoSet}$ is full by definition, we need to check that 
    \[
    \Emb^\partial_{S^1}(C_n,C_m)\xtoo{q}\Map_{\Lambda_\infty}(q(C_n),q(C_m)))
    \cong\Map_{\mathbb{Z}\mathrm{Poset}}(q(C_n),q(C_m)))
    \]
    is an equivalence for all $m,n$. 
    By \Cref{lem: map=arc systems}, the source is equivalent to $\Arc_n(C_m)$, where we suppress the endpoints $\{x_1,\ldots,x_m\}$ for ease of notation.     

    Let $\epsilon(\beta_1),\dots, \epsilon(\beta_n) \in q(C_n)$ be the endpoints (after lifting to the universal cover) of the collection of standard arcs $\beta_1,\dots, \beta_n$ in $C_n$.
    Consider the commutative square 
    \[
    \begin{tikzcd}
        \Emb^\partial_{S^1}(C_n, C_m)\ar[r,"q"]\ar[d,"{(\ev_{\beta_1},\dots, \ev_{\beta_n})}"',"\simeq"]&\Map_{\mathbb{Z}\mathrm{Poset}}(q(C_n), q(C_m))\ar[dl,dashed] \dar[hook, "{(\ev_{\epsilon(\beta_1)},\dots,\ev_{\epsilon(\beta_n)})}"]\\
        {\Arc_n(C_m, \{x_1,\dots,x_m\})} \rar[hook, "{(\epsilon_1,\dots,\epsilon_n)}"] & 
        q(C_m)^{\times n}
    \end{tikzcd} 
    \]
    where the top map applies the functor $q$, the bottom map takes the (lifted) endpoints of the arcs,  the right map applies a morphism $f\colon q(C_n) \to q(C_m)$ to the elements $\epsilon(\beta_1),\dots,\epsilon(\beta_n) \in q(C_m)$,
    and the left map applies the embedding to the standard arcs.
    (Here we implicitly use the canonical identification $\Arc_n(C_m, \{x_1,\dots,x_m\}) \cong \Arc_n(C_m; \pf)$, i.e.~we isotope the arcs so that they end at the midpoint of their respective disk, see \cref{prop:arc-cpx-comparison}.)
    For every $f$, these elements satisfy the inequalities
    \[
    f(\epsilon(\beta_1))\le \dots \le f(\epsilon(\beta_n)) \le f(\epsilon(\beta_1)) + 1
    \]
    because they satisfy this before applying $f$ and $f$ preserves both the partial order and the $\Zbb$-action.
    By \cref{prop: orderedarc}, this implies that the right map factors through the bottom map (which is injective) thus giving the dashed map.

    Note that the right map in the square is injective because a map of $\Zbb$-posets $q(C_n) \to q(C_m)$ is uniquely determined by where it sends the $n$ points that generate the $\Zbb$-orbits.
    Hence, the dashed map must be injective.
    The left map in the square is an equivalence by \cref{lem: q(Cn)=Arc1}, so the dashed map is also surjective, and in fact the top map must be an equivalence as claimed.
\end{proof}

\subsection{Cone and co-cone on \texorpdfstring{$\Cyc$}{ Cyc}}
In this section, we add a cone point and a co-cone point to $\Cyc$ as a full subcategory of $\boxMfd_{2,S^1}$. This will serve as a model for $\Lambda_\infty^{\lhd\rhd}$. 
\begin{notation}\label{notation:cols}
    Let \hldef{$\cyl_{\rhd}$}$\in\boxMfd_{2,S^1}$ be the cylinder $S^1\times [0,1]$ with $\pf=S^1\times\{0\}$, $\partial_+ = S^1 \times \{1\}$, and $\partial_-=\emptyset$. 
    Let \hldef{$\cyl_{\lhd}$}$\in\boxMfd_{2,S^1}$ be the cylinder $S^1\times [0,1]$ with $\pf=\emptyset$, $\partial_+ = S^1 \times \{1\}$, and $\partial_-=S^1\times\{0\}$, see \cref{fig:cone-and-cocone}.
    We let $\hldef{\Cyc^{\lhd\rhd}} \subset \boxMfd_{2,S^1}$ denote the full subcategory generated by $\cyl_\lhd$, $\cyl_\rhd$ and $C_n$ for $1\leq n<\infty$.
\end{notation}

\begin{rem}
    As bordisms, $\Phi_{S^1}(\cyl_\rhd)$ has empty ingoing boundary, outgoing boundary $S^1\times \{1\}$ and free boundary $S^1\times \{1\}$, whereas $\Phi_{S^1}(\cyl_\lhd)$ is the identity bordism with ingoing and outgoing boundaries both given by $S^1$.
\end{rem}

We would like to show that $\cyl_{\rhd}$ is terminal with respect to objects in $\Cyc$, and $\cyl_\lhd$ is initial with respect to objects in $\Cyc$. We do so by computing the following mapping spaces.

\begin{figure}[ht]
    \centering
    \def\svgwidth{.8\linewidth}
    \import{figures/}{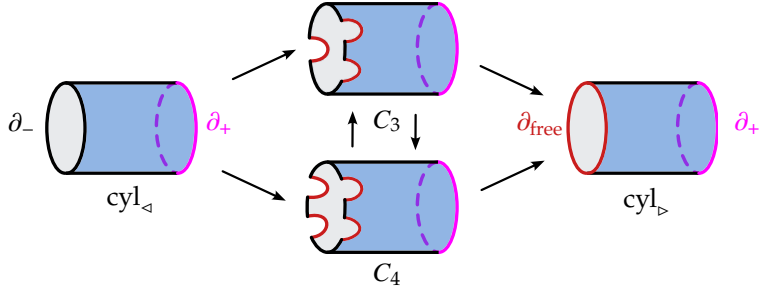}
    \caption{A sketch of (some of) the objects in $\Cyc^{\lhd\rhd}$. Morphisms only exist in the indicated directions because the embeddings must restrict to an embedding on the free boundary.}
    \label{fig:cone-and-cocone}
\end{figure}

\begin{lem}\label{lem:mappingspaceinCyc++}
\begin{enumerate}[(1)]
    \item For all $1\leq n<\infty$, $\Map_{\boxMfd_{2,S^1}}(C_n,\cyl_\rhd)\simeq *$.
    \item $\Map_{\boxMfd_{2,S^1}}(\cyl_\rhd,\cyl_\rhd)\simeq *$.
    \item $\Map_{\boxMfd_{2,S^1}}(\cyl_\lhd,W)\simeq *$ for any $W\in\boxMfd_{2,S^1}$.
\end{enumerate}
\end{lem}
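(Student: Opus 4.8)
The statement asserts three mapping-space computations in $\boxMfd_{2,S^1}$, so the plan is to reduce each one to a question about arcs or collars that has already been treated, or is elementary. For (1) I would first invoke \cref{lem: map=arc systems}, which identifies $\Emb^\square_{S^1}(C_n, \cyl_\rhd)$ with $\Arc_n(\cyl_\rhd; \pf)$ and shows each path component is contractible; it then suffices to check that $\Arc_n(\cyl_\rhd; \pf)$ is a singleton. Since $\pf\cyl_\rhd = S^1 \times \{0\}$ is the full outgoing circle and $\cyl_\rhd = S^1 \times [0,1]$ deformation retracts onto it, any arc from $s_0 = (\tfrac12,1)$ to a point of $S^1\times\{0\}$ is isotopic (rel endpoints, allowing the endpoint to slide along $\pf$) to the ``radial'' arc $\{\tfrac12\}\times[0,1]$; because $\cyl_\rhd$ is a cylinder, an $n$-tuple of such arcs issuing in the prescribed cyclic order at $s_0$ and disjoint except at $s_0$ is unique up to simultaneous isotopy — concretely one can use the universal cover argument of \cref{lem: q(Cn)=Arc1} and \cref{prop: orderedarc} with target $\cyl_\rhd$, noting that its universal cover is $\Rbb\times[0,1]$ with free boundary $\Rbb\times\{0\}$, so the ``endpoint'' data lives in a contractible set and the tuple carries no moduli. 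Hence $\Arc_n(\cyl_\rhd;\pf) = *$.

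For (2) I would run the same argument with $C_n$ replaced by $\cyl_\rhd$ itself: a morphism $\cyl_\rhd \to \cyl_\rhd$ in $\boxMfd_{2,S^1}$ is an embedding $i\colon S^1\times[0,1]\hookrightarrow S^1\times[0,1]$ fixing $S^1\times\{1\}$ pointwise, sending $S^1\times\{0\}$ into $S^1\times\{0\}$, and with empty $\partial_-$; such an embedding is isotopic rel $\partial_+$ to the identity, and by Cerf's contractibility of collars \cite{cerf1961topologie} (as used repeatedly in the proof of \cref{lem: map=arc systems} and \cref{lem:col-empty-left-adjoint}) the space of such embeddings is contractible. Alternatively, since $\cyl_\rhd = L(S^1)$ under the left adjoint of \cref{lem:col-empty-left-adjoint}, one has $\Emb^\square_{S^1}(\cyl_\rhd, \cyl_\rhd)\simeq \Emb(\pf\cyl_\rhd, \pf\cyl_\rhd) = \Emb(S^1, S^1)$ rel the appropriate constraint\,— but one must be slightly careful that $\cyl_\rhd\in\boxMfd_{2,S^1}$ rather than $\boxMfd_{2,\emptyset}$, so I would instead argue directly via collars that the embedding is determined up to contractible choice once it is the identity on $\partial_+$.

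For (3), the key observation is that $\cyl_\lhd$ has $\pf\cyl_\lhd = \emptyset$, so conditions (2) of the definition of a morphism in $\boxMfd_2$ is vacuous, and a morphism $\cyl_\lhd \to W$ is simply an embedding $i\colon S^1\times[0,1]\hookrightarrow W$ restricting to $\id$ on $\partial_+\cyl_\lhd = S^1\times\{1\} = M$, with $i|_{\partial_-}$ landing in $W\setminus\partial_-W$ and transverse to $\pf W$. But $\partial_+ W = M$ too, and an embedded collar of $\partial_+ W = M$ in $W$ is exactly such an embedding; by Cerf's theorem \cite[5.2.1, Corollaire 1]{cerf1961topologie} the space of such collars of $M$ in $W$ is contractible. (The transversality-to-$\pf W$ condition is automatically satisfiable and cuts out a contractible subspace, since pushing the collar in slightly achieves it and the space of such pushes is again contractible.) Thus $\Map_{\boxMfd_{2,S^1}}(\cyl_\lhd, W)\simeq *$, which simultaneously identifies $\cyl_\lhd$ as initial in $\Cyc^{\lhd\rhd}$.

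\textbf{Main obstacle.} The genuinely delicate point is the uniqueness-up-to-simultaneous-isotopy in part (1): one must make sure that allowing the arc endpoints to slide along the (connected!) free boundary $S^1\times\{0\}$ of $\cyl_\rhd$ does not secretly introduce a nontrivial $\pi_0$ — e.g. distinct "winding'' configurations — and that the cyclic-order constraint at $s_0$ pins down exactly one class. I expect the cleanest route is to avoid re-deriving this by hand and instead reuse the universal-cover package (\cref{lem: q(Cn)=Arc1}, \cref{prop: orderedarc}) with $C_m$ replaced by $\cyl_\rhd$, whose universal cover $\Rbb\times[0,1]$ with free boundary $\Rbb\times\{0\}$ makes the relevant poset of lifted endpoints a single $\Zbb$-orbit's worth of choices subject to $y_1\le\cdots\le y_n\le y_1+1$, but with no further moduli because $\pf$ is a single boundary component; the induced $\Arc_n(\cyl_\rhd;\pf)$ is then visibly a point.
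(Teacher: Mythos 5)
Your treatment of (1) and (3) follows the paper's route: (1) is reduced via \cref{lem: map=arc systems} to showing $\Arc_n(\cyl_\rhd;\pf)$ is a single point, which the paper gets from the injection $\Arc_n\hookrightarrow\Arc_1^{\times n}$ of \cref{cor:arc-into-tuple} together with Gramain's theorem that all arcs from $s_0$ to the free boundary circle are isotopic; your universal-cover variant is fine in outline, but note that connectivity of $p^{-1}(\pf\cyl_\rhd)=\Rbb\times\{0\}$ only gives uniqueness once you also know that the lifted endpoint (up to sliding in $\pf$) \emph{determines} the isotopy class, i.e.\ the analogue of \cref{lem: q(Cn)=Arc1} for a sliding endpoint -- this is exactly the Gramain input the paper cites, so you are re-deriving rather than avoiding it. Part (3) is the contractibility of the space of collars, as in the paper.

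The genuine gap is in (2). The space $\Emb^\square_{S^1}(\cyl_\rhd,\cyl_\rhd)$ consists of self-embeddings of $S^1\times[0,1]$ that are the identity on $S^1\times\{1\}$ and carry $S^1\times\{0\}$ into itself; any such embedding is automatically a diffeomorphism, so this is $\Diff\bigl(S^1\times[0,1]\ \mathrm{rel}\ S^1\times\{1\}\bigr)$, the pseudo-isotopy space of $S^1$. Its contractibility is \emph{not} a consequence of Cerf's contractibility of collars, which is the justification you offer; your remark that each such embedding is ``isotopic rel $\partial_+$ to the identity'' would at best address $\pi_0$ (and even that requires undoing a possible Dehn twist by rotating the free end), not the higher homotopy. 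The paper instead quotes Gramain: restriction to the free boundary gives a fibration onto $\Diff^+(S^1)\simeq S^1$ with fiber $\Diff(S^1\times[0,1]\ \mathrm{rel}\ \partial)$, whose components are contractible (Gramain) and indexed by Dehn twists, and the boundary map $\pi_1(S^1)\to\pi_0$ of the fiber is an isomorphism; this is a genuinely deeper input than collar contractibility and is missing from your argument. Your alternative via the adjunction of \cref{lem:col-empty-left-adjoint} indeed does not apply, as you suspected: $L(S^1)$ carries the marking $\partial_+=\emptyset$, and that adjunction forgets the condition of being the identity on $\partial_+$, so it would compute a circle's worth of embeddings (rotations of the free boundary), not a point.
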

\begin{proof}
    For the first claim, by \Cref{lem: map=arc systems}, it suffices to show that $\Arc_n(\cyl_\rhd)$ contains only one element. 
    By \cref{cor:arc-into-tuple} this set injects to $\Arc_1(\cyl_\rhd)^{\times n}$ and $\Arc_1(\cyl_\rhd)$ only has one element by \cite[Theorem 6]{gramain}: all arcs from $s_0 \in S^1 \times \{1\}$ to the free boundary $S^1 \times \{0\}$ are isotopic.
    (Here the isotopies are allowed to move the end-point within the free boundary.)

    For the second claim, we want to show that $\Emb^{\square}_{S^1}(\cyl_\rhd, \cyl_\rhd)\simeq *$. 
    Because half of the boundary is $\partial_+$ and the other half is $\pf$, this is the space of those diffeomorphisms of $S^1 \times [0,1]$ that fix $S^1 \times \{1\}$ pointwise.
    In other words, this is the space of pseudo-isotopies of $S^1$, which is contractible as a consequence of \cite[57]{gramain}.
    (It is the fiber of the fibration $\Diff(S^1 \times [0,1], S^1 \times \{1\}) \rightarrow \Diff(S^1 \times \{1\})$ and this map is an equivalence where both sides are equivalent to $S^1$.)

    For the third claim, note that $\Emb^{\square}_{S^1}(\cyl_\lhd, M)$ is precisely the space of collars of the boundary component $S^1$, so it is contractible by \cite[Section 5.2.1]{cerf1961topologie}. 
    Alternatively, we know that the identity bordism $\Phi_{S^1}(\cyl_\lhd)=S^1\times [0,1]$ with ingoing and outgoing boundary $S^1$ is the terminal object in $(\Bord_2^\partial)_{/S^1}$. Hence, it is the initial object in $\boxMfd_{2,S^1}\simeq \big((\Bord_2^\partial)_{/S^1}\big)^\op$ via the equivalence $\Phi_{S^1}$.
\end{proof}

Combining \Cref{cor:Cyc-1cat} and \Cref{lem:mappingspaceinCyc++}, we get the following description of $\Cyc^{\lhd\rhd}$.

\begin{cor} \label{cor: conedcyc=lambda_infty}
    The $\infty$-category $\Cyc^{\lhd\rhd}$ is obtained by adding a cone point and a co-cone point to $\Cyc$.
    Therefore, it is equivalent to the $1$-category $\Lambda_\infty^{\lhd\rhd}$ obtained by adding a cone point and co-cone point to $\Lambda_\infty$.
\end{cor}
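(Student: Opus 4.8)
The plan is to deduce \cref{cor: conedcyc=lambda_infty} directly by combining the structural results already established. From \cref{lem: Cyc=Lambda infty} we have an equivalence $q\colon \Cyc \xrightarrow{\simeq} \Lambda_\infty$ of \categories{}, both equivalent to $1$-categories. What remains is to understand how the two extra objects $\cyl_\lhd$ and $\cyl_\rhd$ of $\Cyc^{\lhd\rhd}$ sit relative to $\Cyc$, and \cref{lem:mappingspaceinCyc++} supplies exactly the mapping-space computations needed for this. Concretely, I would first observe that by construction of $\boxMfd_{2,S^1}$ (embeddings must restrict to embeddings on the free boundary, as indicated in \cref{fig:cone-and-cocone}) there are no morphisms $\cyl_\rhd \to C_n$ and no morphisms $W \to \cyl_\lhd$ for $W \in \Cyc \cup \{\cyl_\rhd\}$: an embedding into $\cyl_\lhd$ would have to hit the empty free boundary, and an embedding out of $\cyl_\rhd$ onto one of the $C_n$ or onto $\cyl_\lhd$ cannot carry the circle $\pf \cyl_\rhd = S^1\times\{0\}$ into a (smaller or empty) free boundary. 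One also checks $\Map(\cyl_\lhd,\cyl_\rhd)$ and $\Map(\cyl_\lhd,C_n)$ are each a point (these are again instances of \cref{lem:mappingspaceinCyc++}(3), or of the ``collar'' computation), and $\Map(\cyl_\lhd,\cyl_\lhd)\simeq\ast$.

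Assembling these facts: \cref{lem:mappingspaceinCyc++}(3) says $\cyl_\lhd$ maps uniquely to every object of $\Cyc^{\lhd\rhd}$, i.e.\ $\cyl_\lhd$ is initial; dually \cref{lem:mappingspaceinCyc++}(1)--(2) together with the ``no backwards morphisms'' observation say that $\cyl_\rhd$ receives a unique map from every object of $\Cyc$ and from itself, and maps only to itself, so $\cyl_\rhd$ is terminal among the objects of $\Cyc^{\lhd\rhd}$ other than $\cyl_\lhd$ — precisely the data of a co-cone point adjoined to $\Cyc$, with $\cyl_\lhd$ then an additional cone point. Here I would invoke the standard recognition criterion: a full subcategory inclusion $\Dcal \subset \Ecal$ exhibits $\Ecal$ as $\Dcal$ with a terminal object freely adjoined if and only if $\Ecal$ has exactly one more object $t$ up to equivalence, $\Map_\Ecal(t,t)\simeq\ast$, $\Map_\Ecal(d,t)\simeq\ast$ for all $d\in\Dcal$, and $\Map_\Ecal(t,d)\simeq\emptyset$ — and dually for an initial object. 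Applying this twice (first adjoining $\cyl_\rhd$ as a co-cone point to $\Cyc$ to get $\Cyc^{\rhd}$, then adjoining $\cyl_\lhd$ as a cone point to $\Cyc^{\rhd}$) gives $\Cyc^{\lhd\rhd} \simeq \Cyc^{\lhd\rhd}$ as the double cone. Transporting along $q$ of \cref{lem: Cyc=Lambda infty} then yields $\Cyc^{\lhd\rhd} \simeq \Lambda_\infty^{\lhd\rhd}$, and since $\Lambda_\infty$ is a $1$-category and cones/co-cones preserve this, $\Cyc^{\lhd\rhd}$ is a $1$-category as well.

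The only genuinely non-formal inputs are the mapping-space computations, and those are already packaged in \cref{lem:mappingspaceinCyc++}; the remaining content is the bookkeeping of which Hom-sets are empty, which is immediate from the free-boundary constraint in $\boxMfd_{2,S^1}$. I would therefore expect the main (minor) obstacle to be phrasing the ``freely adjoin a terminal/initial object'' recognition cleanly enough that the double application is unambiguous — in particular making sure that adjoining the co-cone point $\cyl_\rhd$ does not interfere with $\cyl_\lhd$ still being initial in the full category $\Cyc^{\lhd\rhd}$, which is exactly guaranteed by \cref{lem:mappingspaceinCyc++}(3) being stated for \emph{all} $W\in\boxMfd_{2,S^1}$ (hence in particular $W=\cyl_\rhd$). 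With that in hand the proof is a short paragraph citing \cref{cor:Cyc-1cat}, \cref{lem: Cyc=Lambda infty}, and \cref{lem:mappingspaceinCyc++}.
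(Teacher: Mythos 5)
Your proposal is correct and follows essentially the same route as the paper, whose proof is just the one-line observation that \cref{cor:Cyc-1cat} and \cref{lem:mappingspaceinCyc++} give the description of $\Cyc^{\lhd\rhd}$ (with the emptiness of the ``backwards'' mapping spaces, which you verify via the free-boundary constraint, left implicit in the caption of \cref{fig:cone-and-cocone}). Your extra bookkeeping — the recognition criterion for freely adjoined initial/terminal objects, applied twice, and transport along $q$ from \cref{lem: Cyc=Lambda infty} — is exactly the detail the paper suppresses, so there is nothing to add.
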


\subsection{Comparing colimit diagrams}

When proving \cref{proposition B}, we will be using the equivalence
\[
    \widetilde{\Phi}_{S^1}\colon (\boxMfd_{2,S^1})^\op \simeq (\Bord_2)_{/S^1}
\]
from \cref{thm:Phi-and-Psi}
in order to construct the cone diagram $(\Lambda_\infty^\op)^\rhd \too \OC$, for which we can show that the composite with $\Yo_\Ocal \colon \OC \too \PSh(\Ocal)$ is a colimit diagram.
The paracyclic category is self-dual, meaning that there is an equivalence $\Lambda_\infty^\op \simeq \Lambda_\infty$, so we can also think of this as a colimit of a paracyclic diagram in $\PSh(\Ocal)$.
However, to prove \cref{proposition B} we need to compute the colimit of $\Yo_\Ocal(-)$ applied to the diagram in \cref{proposition A}, and in fact we also need to make sure that the comparison map between the colimit and $\Yo_\Ocal(S^1)$ is $\Yo_\Ocal$ applied to the map from \cref{proposition A}.
This means that we need to compare the two cone-diagrams involved.
This is exactly the content of the following lemma.

Recall from \Cref{def:col} that the functor $\col(-):\Mfd_1 \too \OC_{\emptyset/}$ that sends $D^1$ to $D^1\times[0,1]$ with the corners at $\{0\}$ rounded, which defines a bordism from $\emptyset$ to $D^1$ with free boundary $D^1$; while $S^1$ is sent to the cylinder $S^1\times[0,1]$ considered as a bordism from $\emptyset$ to $S^1$ with free boundary $S^1$. 
The next lemma says that, after post-composing with the projection to $\OC$, the induced diagram $(\Disk_1)_{/S^1} \xtoo{\col(-)'} \OC$ is the colimit diagram indexing the cyclic bar construction on the $\mrm{E}_1$-algebra $D^1\in\OC$.

\begin{lem}\label{lem: comparediagram}
    The diagrams
    \[
        (\Disk_1)_{/S^1}^\rhd \xtoo{\col(-)'} \OC
        \qquad\text{and}\qquad
        (\Cyc^{\lhd\rhd})^\op \hookrightarrow (\boxMfd_{2,S^1})^\op \xtoo{\Phi_{S^1}} \OC
    \]
    are equivalent as objects of $(\Cat)_{/\OC}$.
\end{lem}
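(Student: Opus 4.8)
The plan is to produce an equivalence of the two functors $(\Disk_1)_{/S^1}^\rhd \to \OC$ by factoring both through the slice category $\OC_{/S^1}$ and showing that, after this factoring, each is the composite of a canonical equivalence of index categories with a common functor out of $\OC_{/S^1}$. Concretely, I would first observe that $\col(-)'$ factors as $(\Disk_1)_{/S^1}^\rhd \to \OC_{/S^1} \to \OC$: indeed $\col(-)\colon \Mfd_1 \to \OC_{\emptyset/}$ lands in $\OC_{\emptyset/}$ and for a disk $D \hookrightarrow S^1$ the composite bordism $\col(D)' \colon \emptyset \to D^1$ can be composed with the bordism $D^1 \to S^1$ associated to the embedding, exhibiting $\col(D)'$ canonically as an object of $\OC_{/S^1}$; the cone point $S^1 \in (\Disk_1)_{/S^1}^\rhd$ goes to the identity bordism $S^1 \to S^1$, i.e.\ the terminal object of $\OC_{/S^1}$. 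Dually, the second functor already factors through $(\boxMfd_{2,S^1})^\op \simeq (\Bord_2^\partial)_{/S^1}$ via $\widetilde{\Phi}_{S^1}$, and by \cref{lem:mappingspaceinCyc++}(3) the co-cone point $\cyl_\lhd$ is the terminal object there, matching the image of the cone point of $(\Disk_1)_{/S^1}^\rhd$; since the forgetful functor $\OC_{/S^1} \to \OC$ is the common postcomposition, it suffices to prove the two functors into $\OC_{/S^1}$ (equivalently into $(\boxMfd_{2,S^1})^\op$ via $\widetilde\Phi_{S^1}$) agree.

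\textbf{Key steps.} Having reduced to an equivalence in $(\Cat)_{/\OC_{/S^1}}$, the remaining work is to identify the two index categories and check the functors match. First I would construct a functor $(\Disk_1)_{/S^1}^\rhd \to (\Cyc^{\lhd\rhd})^\op$ and show it is an equivalence: a disk $D = \sqcup_n D^1 \hookrightarrow S^1$ determines, via the universal cover / counting of components and their cyclic position, an object of $\Lambda_\infty \simeq \Cyc$ (by \cref{lem: Cyc=Lambda infty}), and on morphisms one uses that an embedding of disks-in-$S^1$ induces a monotone $\Zbb$-equivariant map; the cone point goes to the co-cone point $\cyl_\lhd$. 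The cleanest route is to recognize this as the classical identification $(\Disk_1)_{/S^1} \simeq \Lambda_\infty^\op$ (the cyclic nerve description of $\int_{S^1}$): under the equivalence $\Disk_1 \simeq \Assoc$, the slice $\Disk_{/S^1}$ is well known to model $\Lambda_\infty^\op$, with the cone point corresponding to the terminal object of $\Lambda_\infty^\op$, i.e.\ the co-cone direction. Second, I would check that under this identification the functor $\col(-)'$, read as landing in $(\boxMfd_{2,S^1})^\op$, agrees with the inclusion $(\Cyc^{\lhd\rhd})^\op \hookrightarrow (\boxMfd_{2,S^1})^\op$: on objects, $\col(\sqcup_n D^1)'$ as a bordism over $S^1$ is (the opposite of) $C_n$ — the cylinder with $n$ half-disks removed — which one verifies by drawing the composite $\col(\sqcup_n D^1)' \cup (\text{pair-of-pants collapsing }n\text{ disks to }S^1)$ and recognizing its complement in the resulting cylinder-over-$S^1$, and on the cone point $\col(S^1)'$ is the cylinder $\cyl_\lhd$. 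On morphisms one checks the two descriptions of how an embedding of disks acts agree, which is a direct comparison of the arc-system picture (\cref{lem: map=arc systems}) with \cref{const:q}: the standard arcs $\beta_i$ correspond to the midpoints $x_i$ which correspond to the ordered generators of $q(C_n)$, exactly the structure used to build the equivalence $(\Disk_1)_{/S^1} \simeq \Lambda_\infty^\op$.

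\textbf{Main obstacle.} The genuinely non-formal point is the object-level identification of $\col(\sqcup_n D^1)'$ with $C_n$ as bordisms over $S^1$ — i.e.\ checking that $\widetilde\Phi_{S^1}^{-1}$ applied to the bordism "$\col(\sqcup_n D^1)'$ viewed in $\OC_{/S^1}$" really is the marked manifold $C_n$, with its $\partial_-$, $\pf$, and $\partial_+$ decomposition as in \cref{notn: Cn}. This requires unwinding the explicit description of $\widetilde\Phi_{S^1}$ from \cref{cor:Phi-and-Psi}/\cref{thm:Phi-and-Psi} (complement-of-embedding) together with the explicit corner-smoothing in the definition of $\col$ (\cref{def:col}, \cref{lem:col-empty-left-adjoint}), and then naturality — i.e.\ that this identification is compatible as the disk embedding varies — so that it upgrades to an equivalence of diagrams rather than a pointwise one. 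I expect this to be handled by exhibiting both functors as restrictions of a single functor $(\Disk_1)_{/S^1}^\rhd \to (\boxMfd_{2,S^1})^\op$ built directly from the geometry (removing from the cylinder $S^1\times[0,1]$ a tubular neighborhood of the arcs dual to the disks), with the two claimed descriptions then being two ways of reading off that one functor; the verification that the arc-system/monotone-map functoriality matches the embedding functoriality is the last routine-but-careful check, powered by \cref{prop: orderedarc} and \cref{lem: Cyc=Lambda infty}.
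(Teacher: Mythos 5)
Your route is genuinely different from the paper's, and the gap sits exactly where you flag the ``main obstacle'': you never supply a mechanism that produces the natural equivalence of functors, and your proposed fix relocates rather than solves the problem. The paper constructs no comparison functor and checks no naturality at all. Instead it realizes \emph{both} diagrams as full subcategories of one and the same double slice $\big((\Bord_2^\partial)_{\emptyset/}\big)_{/\col(S^1)} \simeq \big((\Bord_2^\partial)_{/S^1}\big)_{\col(S^1)/}$, whose objects are factorizations of $\col(S^1)\colon \emptyset \to S^1$: on one side $\col$ is fully faithful (\cref{lem:col-empty-left-adjoint} plus \cref{cor:Phi-and-Psi}), so $(\Mfd_1)_{/S^1}\simeq(\Disk_1)_{/S^1}^\rhd$ maps equivalently onto the full subcategory $\Dcal_{/\col(S^1)}$; on the other side $\widetilde{\Phi}_{S^1}(\cyl_\rhd)$ is initial in the image $\Ccal$ of $(\Cyc^{\lhd\rhd})^\op$ (\cref{cor: conedcyc=lambda_infty}), so $\Ccal \simeq \Ccal_{\widetilde{\Phi}_{S^1}(\cyl_\rhd)/}$, again a full subcategory of the same double slice. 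Two full subcategories with the same essential image are canonically equivalent over the ambient category, so the whole lemma reduces to an object-level table matching $\col(\emptyset)$, $\col(\sqcup_n D^1)$, $\col(S^1)$ with $\cyl_\rhd$, $C_n$, $\cyl_\lhd$. Your plan instead needs the identification $\col(\sqcup_n D^1)' \simeq \Phi_{S^1}(C_n)$ to be made natural in the disk embedding and compatible with $\widetilde{\Phi}_{S^1}$, which is not given by any point-set formula but only through the abstract pullback of \cref{prop:ArBord-pullback}/\cref{thm:Phi-and-Psi}; building ``a single geometric functor'' by deleting tubular neighbourhoods of dual arcs requires organizing those (contractible but non-canonical) choices into a functor of $\infty$-categories and then proving it agrees with the two given composites --- a coherence problem of exactly the kind the lemma is asserting, so as written this step is a hope, not an argument.

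Two smaller corrections. First, the ``well known'' identification $(\Disk_1)_{/S^1}\simeq\Lambda_\infty^\op$ is off by one object: $(\Disk_1)_{/S^1}$ contains the empty configuration $\emptyset\hookrightarrow S^1$ as an initial object, which under the lemma corresponds to $\cyl_\rhd$ and to no object of $\Cyc\simeq\Lambda_\infty$; if you drop it, $(\Disk_1)_{/S^1}^\rhd$ cannot match $(\Cyc^{\lhd\rhd})^\op$, which has both a cone and a co-cone point. Second, your phrase ``equivalently into $(\boxMfd_{2,S^1})^\op$ via $\widetilde{\Phi}_{S^1}$'' is not quite right if you insist on working in $\OC_{/S^1}$: $\widetilde{\Phi}_{S^1}$ is an equivalence onto $(\Bord_2^\partial)_{/S^1}$, and $\OC_{/S^1}\to(\Bord_2^\partial)_{/S^1}$ is not fully faithful (\cref{rem: OC_/S^1 not ff}), so the comparison should be run in the big slice, observing only afterwards that the resulting equivalence of diagrams is valued in $\OC$. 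Neither point is fatal, and your object-level picture (including sending the cone point to $\cyl_\lhd$) agrees with the paper's table; but without something like the paper's double-slice/full-subcategory device the key coherence step remains unproved.
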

\begin{proof}
    Let $\Dcal \subset (\Bord_2^\partial)_{\emptyset/}$ denote the full subcategory on the essential image of the functor $\col(-)\colon \Mfd_1^\partial \to (\Bord_2^\partial)_{\emptyset/}$.
    Let $\Ccal \subset (\Bord_2^\partial)_{/S^1}$ denote the essential image of $(\Cyc^{\lhd\rhd})^\op$ under the functor $\widetilde{\Phi}_{S^1}\colon (\boxMfd_{S^1})^\op \to (\Bord_2^\partial)_{/S^1}$.
    Then we have a commutative diagram:
%
\[\begin{tikzcd}
	&& {\Ccal_{\widetilde{\Phi}_{S^1}(\cyl_\rhd)/}} & \Ccal & {(\Cyc^{\lhd\rhd})^\op} \\
	{(\Mfd_1)_{/S^1}} & {\Dcal_{/\col(S^1)}} & \begin{array}{c} \substack{((\Bord_2^\partial)_{/S^1})_{\col(S^1)/} \\ \simeq ((\Bord_2^\partial)_{\emptyset/})_{/\col(S^1)}} \end{array} & {(\Bord_2^\partial)_{/S^1}} \\
	{\Mfd_1} & \Dcal & {(\Bord_2^\partial)_{\emptyset/}} & {\Bord_2^\partial}
	\arrow["\simeq", from=1-3, to=1-4] \arrow["\cap"{description}, draw=none, from=1-3, to=2-3] \arrow["\cap"{description}, draw=none, from=1-4, to=2-4] \arrow["{\widetilde{\Phi}_{S^1}}"', "\simeq", from=1-5, to=1-4] \arrow["\simeq", dotted, no head, from=2-1, to=2-2] \arrow[from=2-1, to=3-1] \arrow[equals, from=2-2, to=1-3] \arrow["\subset"{description}, draw=none, from=2-2, to=2-3] \arrow[from=2-2, to=3-2] \arrow[from=2-3, to=2-4] \arrow[from=2-3, to=3-3] \arrow[from=2-4, to=3-4] \arrow["{\col(-)}", "\simeq"', from=3-1, to=3-2] \arrow["\subset"{description}, draw=none, from=3-2, to=3-3] \arrow[from=3-3, to=3-4]
\end{tikzcd}\]
    
    The equivalence $(\Cyc^{\lhd\rhd})^\op \xrightarrow{\simeq}\Ccal$ is a restriction of the equivalence $\widetilde{\Phi}_{S^1}\colon (\boxMfd_{2,S^1})^\op \simeq (\Bord_2)_{/S^1}$. 
    Since $\widetilde{\Phi}_{S^1}(\cyl_\rhd)$ is the initial object in $\Ccal$ by \cref{cor: conedcyc=lambda_infty}, the projection $\Ccal_{\widetilde{\Phi}_{S^1}(\cyl_\rhd)/}\rightarrow \Ccal$ is an equivalence. 
    We now claim that $\Dcal_{/\col(S^1)}$ and $\Ccal_{\widetilde{\Phi}_{S^1}(\cyl_\rhd)/}$ are equal as full subcategories of the double-slice category $((\Bord_2^\partial)_{\emptyset/})_{/\col(S^1)}\simeq((\Bord_2^\partial)_{/S^1})_{\col(S^1)/}$,
    where objects in this double-slice are factorizations of the morphism $\col(S^1) \colon \emptyset \to S^1$.
    The claim follows by inspecting the following table. (Note that in the right column the functor $\widetilde{\Phi}_{S^1}$ is applied to morphisms in the \emph{opposite} of $\Cyc^{\lhd\rhd}$.)
    \begin{center}
    {\renewcommand{\arraystretch}{1.3}
    \begin{tabular}{ c | c | c } 
      $\Dcal_{/\col(S^1)}$ & factorization & $\Ccal_{\widetilde{\Phi}_{S^1}(\cyl_\rhd)/}$ 
      \\
      \hline
      $\col(\emptyset) \to \col(S^1)$ & $\emptyset = \emptyset \to S^1$ & $\widetilde{\Phi}_{S^1}(\cyl_\rhd \to \cyl_\rhd)$ \\
      $\col(\sqcup_n D^1) \to \col(S^1)$ & $\emptyset \to \sqcup_n D^1 \to S^1$ & $\widetilde{\Phi}_{S^1}(\cyl_\rhd \to C_n)$ \\
      $\col(S^1) \to \col(S^1)$ & $\emptyset \to S^1 = S^1$ & $\widetilde{\Phi}_{S^1}(\cyl_\rhd \to \cyl_\lhd)$ 
    \end{tabular}}
    \end{center}
    Combining the equivalences in the diagram  completes the proof.
\end{proof}

\section{Proposition B: arc complexes}\label{sec: proofB}

The goal of this section is to prove \cref{proposition B}, which essentially says that
\[
    \int_{M} \Yo_\Ocal(D^1) \simeq \Yo_\Ocal(M) \in \PSh(\Ocal)
\]
for all $M\in\OC$.

\subsection{Reduction to \texorpdfstring{$M=S^1$}{M=S1}}
To prove \cref{proposition B}, we will work one circle at a time, as this allows us to rewrite the factorization homology over $S^1$ as a colimit over $\Dop$ of a cyclic bar construction.
\begin{prop}\label{proposition B'}
    For all $M \in \Mfd_1^\partial$ the map 
    \[ 
        \colim_{D \in \Disk_{/S^1}}\Yo_\Ocal(D \sqcup M) \too \Yo_\Ocal(S^1\sqcup M)
    \]
    in $\Psh(\Ocal)$ is an equivalence. 
\end{prop}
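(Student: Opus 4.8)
The plan is to reduce the general statement to the special case $M = S^1$ stated as \cref{proposition B'}, then prove the latter by an explicit computation of the relevant colimit in $\PSh(\Ocal)$ using the model for the (co)cone on the paracyclic category developed in the previous section.

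First, I would explain the reduction. Both sides of \cref{proposition B} and of \cref{proposition B'} are functorial in $M$, and by \cref{cor:Disk1tensordisjunct} the slice $\Disk_{/M}$ is a product $\prod_j \Disk_{/M_j}$ over the connected components $M_j$ of $M$ (each being either $D^1$ or $S^1$). Since $\Yo_\Ocal(D^1)$ is, tautologically, a representable presheaf and $\OC$ is $\otimes$-disjunctive in the appropriate sense, $\Yo_\Ocal$ carries disjoint unions to the symmetric monoidal product of presheaves given by Day convolution; and because the factorization-homology colimit over a product of slices splits as an iterated colimit, the general case follows from the case of a single circle factor together with the case of a single disk factor. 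The latter is trivial: $\int_{D^1}\Yo_\Ocal(D^1)\simeq \Yo_\Ocal(D^1)$ because $D^1$ is a final object of $\Disk_{/D^1}$. This leaves exactly \cref{proposition B'}, i.e. the statement for a single circle, possibly tensored with an auxiliary manifold $M$.

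Second, the heart of the argument. By \cref{lem: comparediagram}, the colimit diagram $\Disk_{/S^1}\xrightarrow{\col(-)'}\OC$ agrees, as an object of $(\Cat)_{/\OC}$, with the diagram $(\Cyc^{\lhd\rhd})^\op\hookrightarrow(\boxMfd_{2,S^1})^\op\xrightarrow{\Phi_{S^1}}\OC$; and by \cref{cor: conedcyc=lambda_infty} this is the (co)cone $(\Lambda_\infty^{\lhd\rhd})^\op$ on the paracyclic category, i.e.\ the cyclic bar construction diagram together with its canonical cocone point $S^1$. After applying $\Yo_\Ocal$ (and tensoring with $\Yo_\Ocal(M)$), the assertion of \cref{proposition B'} becomes: the map
\[
    \colim_{\,[n]\in\Lambda_\infty^\op} \Yo_\Ocal\big(\Phi_{S^1}(C_{n+1})\sqcup M\big)\too \Yo_\Ocal(S^1\sqcup M)
\]
is an equivalence in $\PSh(\Ocal)$. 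Colimits in $\PSh(\Ocal)$ are computed objectwise, so this unravels, for each test object $\sqcup_k D^1\in\Ocal$, to the claim that
\[
    \colim_{[n]\in\Lambda_\infty^\op}\Map_{\OC}\big(\textstyle\sqcup_k D^1,\ \Phi_{S^1}(C_{n+1})\sqcup M\big)\ \too\ \Map_{\OC}\big(\textstyle\sqcup_k D^1,\ S^1\sqcup M\big)
\]
is an equivalence of spaces. Using \cref{prop: sliceunderff} to pass to the slice $\OC_{\sqcup_k D^1/}$ and then \cref{cor:Phi-and-Psi} to translate into $\boxMfd$, each mapping space becomes a space of $\square$-embeddings, which by \cref{lem: map=arc systems} is a disjoint union of contractible components indexed by systems of arcs. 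The colimit over $\Lambda_\infty^\op$ of this simplicial (paracyclic) space of arc systems is then, up to the combinatorics of the cyclic bar construction, the realization of a semisimplicial space whose geometric realization computes the arc complex of the target surface $S^1\sqcup M$ relative to its free boundary together with $\sqcup_k D^1$.

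The main obstacle — and the step I would spend the most care on — is the last one: showing that the relevant arc complexes are contractible, so that the realization of the paracyclic diagram of arc systems recovers the single space of arc systems in the target. This is where the inputs of Wahl \cite{wahl08} and Hatcher–Wahl \cite{hatcherwahl} enter: one must identify the colimit over $\Lambda_\infty^\op$ of the spaces $\Arc_{n+1}(-;\pf)$ with (a nerve-theoretic model of) an arc complex, check that the relevant complex — arcs from the boundary circles of $\sqcup_k D^1$ into the free boundary of $S^1\sqcup M$ — is contractible (equivalently highly connected for all the finite approximations, then pass to a limit), and verify that under these identifications the resulting comparison map is precisely $\Yo_\Ocal$ applied to the equivalence of \cref{proposition A}, rather than merely \emph{some} equivalence. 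Keeping track of the basepoints and the cyclic structure carefully — so that the combinatorial model of the colimit genuinely matches the bar construction and not a twisted variant — is the delicate bookkeeping that makes this step nontrivial; everything else is formal manipulation of slices, Day convolution, and the already-established dictionary between $\boxMfd$-embeddings and arc systems.
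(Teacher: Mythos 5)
Your outline reproduces the paper's skeleton (identify the index diagram with the paracyclic one via \cref{lem: comparediagram}, evaluate presheaves objectwise at test objects $\sqcup_k D^1$, convert to arc systems, conclude by contractibility of arc complexes via Wahl and Hatcher--Wahl), but the step that is supposed to produce embedding spaces is wrong as stated, and that is a genuine gap. The spaces $\Map_{\OC}(\sqcup_k D^1,\,\Phi_{S^1}(C_{n})\sqcup M)$ are moduli spaces of bordisms (disjoint unions of classifying spaces of diffeomorphism groups), not spaces of $\square$-embeddings, and passing to the under-slice $\OC_{\sqcup_k D^1/}$ via \cref{prop: sliceunderff} does not change that. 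What is needed is to use that colimits in $\Scal$ are universal: fix a bordism $W\in\Map_{\OC}(\sqcup_k D^1, S^1\sqcup M)$ and prove that the fiber of the comparison map over $W$ is contractible. That fiber is a colimit of mapping spaces in the \emph{over}-slice $\OC_{/S^1\sqcup M}$, and only after transporting these through $\widetilde{\Phi}_{S^1\sqcup M}\colon (\boxMfd_{2,S^1\sqcup M})^\op\simeq(\Bord_2^\partial)_{/S^1\sqcup M}$ (\cref{cor:Phi-and-Psi}) do they become embedding spaces $\Emb^\square_{S^1\sqcup M}(C_n\sqcup M\times[0,1], W)$. This transport is itself not automatic: unlike the under-slice, the over-slice inclusion $\OC_{/S^1\sqcup M}\hookrightarrow(\Bord_2^\partial)_{/S^1\sqcup M}$ is \emph{not} fully faithful (see the remark after \cref{prop: sliceunderff}), so one needs the geometric argument that the relevant complementary bordisms have free boundary on every component because $W$ lies in $\OC$ --- which is also exactly where the defining condition of $\OC$ enters the proof; your outline never uses it. This whole reduction is the content of \cref{lem: propBreduction} in the paper.

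Two further points. Handling the auxiliary $M$ by ``tensoring with $\Yo_\Ocal(M)$'' presupposes that the restricted Yoneda embedding $\OC\to\PSh(\Ocal)$ takes $\sqcup$ to Day convolution; this is not tautological (it is close in strength to the density statement being proved), and $\OC$ is not $\otimes$-disjunctive --- a connected bordism can hit both factors of a disjoint target --- so the disjunctivity you invoke is only available for $\Mfd_1^\partial$ and $\Disk_1$ (\cref{cor:Disk1tensordisjunct}). The paper instead carries $\sqcup M$ along and removes a collar $M\times[0,1]$ only at the level of embedding spaces, via a Serre fibration with contractible fibers (Cerf). Finally, the arc complex you describe is not the right one: after the reduction the arcs live in the bordism $W$, start at the basepoint of the distinguished circle $S^1=\partial_+W$, and end on the free boundary of $W$ (as in \cref{lem: map=arc systems} and \cref{prop:arc-cpx-comparison}); they do not run from the boundary of $\sqcup_k D^1$, nor do they live in ``$S^1\sqcup M$''. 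With the fiberwise reduction supplied, the remainder of your plan (finality of $\Delta\to\Lambda_\infty$ from \cref{thm:Ns2018cofinal}, identification of the colimit with $|\Arc(W;\pf)|$, contractibility) is indeed the paper's argument.
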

Assuming \cref{proposition B'}, the proof of \cref{proposition B} is straightforward:
\begin{proof}[Proof of \cref{proposition B}]
    The factorization homology $\int_{M} \Yo_\Ocal(D^1)$ is the colimit of the composite \[
    \psi_M\colon \Disk_{/M}\to\Mfd_1\xtoo{\col(-)}\OC\xtoo{\Yo_\Ocal}\Psh(\Ocal).
    \]
    Suppose that $M\in\OC$ is a disjoint union of $k$ circles and $l$ disks. 
    By \cref{cor:Disk1tensordisjunct} we can rewrite  $\Disk_{/M}\simeq (\Disk_{/S^1})^k\times(\Disk_{/D^1})^l$. 
    Since $(\Disk_1)_{/D^1}$ has a terminal object, the functor
    \[
    (\Disk_{/S^1})^k\simeq(\Disk_{/S^1})^k\times *^l\to(\Disk_{/S^1})^k\times(\Disk_{/D^1})^l \simeq \Disk_{/M}
    \] 
    sending each $*$ to the terminal object $\id_{D^1} \in \Disk_{/D^1}$ is final.
    Writing $N = \sqcup_l D^1$, it hence suffices to show that the map  
     \begin{equation}\label{eq:yobar}
         \colim_{D_1 \in \Disk_{/S^1}}\dots
         \colim_{D_n \in \Disk_{/S^1}} \Yo_\Ocal(D_1\sqcup \cdots \sqcup D_n \sqcup N) \too
         \Yo_\Ocal((\sqcup_k S^1)\sqcup N)
     \end{equation}
    in $\Psh(\Ocal)$ is an equivalence.
    This follows by inductively applying \cref{proposition B'}.
\end{proof}

In the rest of the section, we provide a proof of \cref{proposition B'}. First we reduce it to a statement about contractibility of certain colimits.
\begin{lem}\label{lem: propBreduction}
    To prove \cref{proposition B'} it suffices to show that 
    \[
        \colim_{C_n \in \Cyc^\op} \Map_{\boxMfd_{2,S^1}}(C_n, W)   
    \]
    is contractible for all $W \in \boxMfd_{2,S^1}$ such that every component of $W$ has free boundary.
\end{lem}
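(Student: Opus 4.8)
The plan is to unwind \cref{proposition B'} into the statement about contractibility of $\colim_{C_n \in \Cyc^\op}\Map_{\boxMfd_{2,S^1}}(C_n, W)$ by testing the map on corepresentable presheaves and using the description of slice categories from \cref{cor:Phi-and-Psi}. First I would note that a map in $\Psh(\Ocal)$ is an equivalence if and only if it induces an equivalence on evaluation at every object $P \in \Ocal$, i.e.\ on the values $\Map_{\Psh(\Ocal)}(\Yo_\Ocal(P), -)$. Evaluating the source $\colim_{D \in \Disk_{/S^1}}\Yo_\Ocal(D \sqcup M)$ at $P$ commutes with the colimit (colimits in presheaves are computed pointwise), so the claim becomes that
\[
    \colim_{D \in \Disk_{/S^1}} \Map_\OC(P, D \sqcup M) \too \Map_\OC(P, S^1 \sqcup M)
\]
is an equivalence of spaces for all $P = \sqcup_k D^1 \in \Ocal$ and all $M \in \Mfd_1^\partial$.

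Next I would rewrite these mapping spaces using the slice-category description. A map $P \to S^1 \sqcup M$ in $\OC$, together with the ``universal'' object $P$, is the same as an object of $\OC_{P/}$ lying over $S^1 \sqcup M$; equivalently, by \cref{cor:Phi-and-Psi} and \cref{prop: sliceunderff} (which guarantees $\OC_{P/} \hookrightarrow (\Bord_2^\partial)_{P/}$ is full), we may identify $\Map_\OC(P, N)$ with a moduli space of bordisms $W \colon P \to N$ in $\boxMfd_{2,P}$, and the relevant target object $S^1 \sqcup M$ picks out those $W$ whose outgoing boundary is $S^1 \sqcup M$. By \cref{cor:Disk1tensordisjunct} and the monoidal structure the factor $M$ and the disks $D$ can be handled separately, reducing to a statement purely about the $S^1$-component; and by \cref{lem: comparediagram} the diagram $\Disk_{/S^1} \to \OC$ indexing the left-hand colimit is identified with the opposite of the paracyclic diagram $\Cyc \to \boxMfd_{2,S^1}$ (minus the cone/co-cone points). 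Unwinding through the equivalence $\widetilde{\Phi}_{S^1}$, the comparison map
\[
    \colim_{D \in \Disk_{/S^1}} \Map_\OC(P, D \sqcup M) \too \Map_\OC(P, S^1 \sqcup M)
\]
becomes, after absorbing the outgoing data of $P$, precisely the statement that for each component $W$ of the bordism witnessing $P \to S^1$ the natural map $\colim_{C_n \in \Cyc^\op}\Map_{\boxMfd_{2,S^1}}(C_n, W) \to \Map_{\boxMfd_{2,S^1}}(\cyl_\rhd, W) \simeq *$ (using \cref{lem:mappingspaceinCyc++}) is an equivalence, i.e.\ that the colimit is contractible. The key point in pushing $P$ and $M$ out of the way is that the relevant bordisms $W \in \boxMfd_{2,S^1}$ that arise are exactly those all of whose components meet the free boundary, which is the hypothesis of the target statement — this is where the definition of $\OC$ (every component is hit by $M \cup \pf W$) gets used, together with the fact that the disks $D$ and the interval-components sit in the ``open'' part.

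The main obstacle I expect is the bookkeeping in the previous paragraph: carefully matching the cone diagram $(\Disk_{/S^1})^\rhd \to \OC$ with the co-cone-plus-cone diagram $(\Cyc^{\lhd\rhd})^\op \to \OC$ as objects over $\OC$ (which is \cref{lem: comparediagram}), and then checking that evaluating a colimit of corepresentables $\colim_D \Yo_\Ocal(D \sqcup M)$ at $P$ and the identification $\Map_\OC(P,-) \simeq \Map_{\boxMfd_{2,P}}(-,-)$ are compatible in a way that turns the comparison map of \cref{proposition B'} into the comparison map $\colim_{C_n}\Map(C_n, W) \to \Map(\cyl_\rhd, W)$. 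Once the indexing categories and the variance of $\widetilde{\Phi}_{S^1}$ are pinned down, the statement reduces cleanly, and the remaining geometric input — contractibility of the arc-complex colimit — is isolated as the hypothesis to be verified in \cref{sec: proofB}.
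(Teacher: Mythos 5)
Your plan follows the same overall route as the paper: evaluate the presheaf map at each $N\in\Ocal$, reduce to a statement about one bordism $W$ at a time, transport through $\widetilde{\Phi}$ and \cref{lem: comparediagram}, and isolate the contractibility of $\colim_{C_n\in\Cyc^\op}\Map_{\boxMfd_{2,S^1}}(C_n,W)$. However, several steps as written have genuine problems. First, the identification of the target with $\Map_{\boxMfd_{2,S^1}}(\cyl_\rhd,W)\simeq *$ is false: \cref{lem:mappingspaceinCyc++} gives contractibility of $\Map(C_n,\cyl_\rhd)$, of $\Map(\cyl_\rhd,\cyl_\rhd)$, and of $\Map(\cyl_\lhd,W)$, but $\Map_{\boxMfd_{2,S^1}}(\cyl_\rhd,W)$ is typically \emph{empty} (already for $W=C_1$, since the free-boundary circle of $\cyl_\rhd$ cannot embed into the free-boundary interval of $C_1$); the object playing the role of the cone point (the identity bordism on $S^1$) is $\cyl_\lhd$, not $\cyl_\rhd$. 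More importantly, the mechanism that makes the per-$W$ reduction work is never articulated: one uses that a map of spaces is an equivalence iff its homotopy fibers are contractible, and that colimits in $\Scal$ are universal, so the fiber of $\colim_{D\in\Disk_{/S^1}}\Map_{\OC}(N,D\sqcup M)\to\Map_{\OC}(N,S^1\sqcup M)$ over a point $W$ is the colimit of the fibers, i.e.\ of mapping spaces in the over-slice $\OC_{/S^1\sqcup M}$. (Your quantification ``for each component $W$ of the bordism witnessing $P\to S^1$'' is also not the right statement: one fixes a point $W$ of the target mapping space and shows the fiber over it is contractible.)

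Second, ``the factor $M$ and the disks $D$ can be handled separately by \cref{cor:Disk1tensordisjunct} and the monoidal structure'' is not a valid justification: $\Map_{\OC}(N,D\sqcup M)$ does not split along the decomposition of the target, and \cref{cor:Disk1tensordisjunct} concerns slices of $\Mfd_1$ (it is what reduces Proposition B to \cref{proposition B'}, not what removes $M$ inside this lemma). The paper strips off $M$ by an explicit argument: forgetting the embedding of $M\times[0,1]$ gives a map $\Map_{\boxMfd_{2,S^1\sqcup M}}(C_n\sqcup M\times[0,1],W)\to\Map_{\boxMfd_{2,S^1}}(C_n,W)$, natural in $C_n$, which is a Serre fibration whose fibers are spaces of collars and hence contractible. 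Third, the slice comparison you invoke is the wrong one: \cref{prop: sliceunderff} is about under-slices, whereas the fibers above live in the over-slice $\OC_{/S^1\sqcup M}$, and the inclusion $\OC_{/S^1}\to(\Bord_2^\partial)_{/S^1}$ is \emph{not} fully faithful in general (see the remark following \cref{prop: sliceunderff}). Before applying $\widetilde{\Phi}_{S^1\sqcup M}$ one must check that the particular complement bordisms occurring here (complements of the $C_n$ in $W$, and of a collar of $M$) automatically lie in $\OC$; this geometric verification, together with the free-boundary condition on $W$ that you do correctly extract from the definition of $\OC$, is an essential part of the paper's proof and is missing from your plan.
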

\begin{proof}
    Unravelling the definition of the Yoneda embedding, 
    \cref{proposition B'} says that for every $N \in \calO$ the map 
    \begin{equation}\label{eq:propB}
        \colim_{(D,D\xto{i}S^1)\in\Disk_{/S^1}} \Map_{\OC}(N, D \sqcup M)  
        \xtoo{(i\sqcup \id_M)\circ(-)} \Map_{\OC}(N, S^1 \sqcup M)
    \end{equation}
    is an equivalence.
    This is equivalent to saying that, for all $W \in \Map_{\OC}(N, S^1 \sqcup M)$, the fiber at $W$ is contractible.
    (Note that for $W$ to be a morphism in $\OC$ every connected component has to have free boundary or incoming boundary, but as every incoming boundary component is a disk, we get that every component of $W$ has free boundary.)
    This fiber at $W$ is given by
    \[
       \colim_{(D,D\xto{i} S^1)\in\Disk_{/S^1}} \Map_{\OC_{/S^1 \sqcup M}}\big((N, N\xto{W}S^1\sqcup M), (D\sqcup M,\col'(D\xto{i} S^1) \sqcup M \times [0,1])\big).
    \]
    Note that by \cref{prop: sliceunderff} we can equivalently take this mapping space in the bigger slice category $(\Bord_2^\partial)_{/S^1 \sqcup M}$, which is equivalent to $(\Mfd_{2,S^1 \sqcup M}^\partial)^\op$ by \cref{cor:Phi-and-Psi}. 
    This is because $C_n$ embeds in the path component $W_0$ of $W$ containing the fixed circle boundary so its complement bordism has nonempty free boundary; if there are more than one path component then  $W\in\OC$ forces $W \setminus W_0$ to still be in $\OC$ after further deleting a collar of the fixed boundary $M$.  
    Applying \cref{lem: comparediagram} to identify the colimit diagrams, we can hence rewrite this as
    \[
       \colim_{C_n \in \Cyc^\op} \Map_{\boxMfd_{2,S^1 \sqcup M}}(C_n \sqcup M \times [0,1], W) .
    \]
    This still differs from the diagram in the claim by $M \times [0,1]$, but forgetting this part of the embedding defines a map
    \[
        \Map_{\boxMfd_{2,S^1 \sqcup M}}(C_n \sqcup M \times [0,1], W)
        \too
        \Map_{\boxMfd_{2,S^1}}(C_n, W)
    \]
    that is natural in $C_n$, is a Serre fibration, and has contractible fibers as its fiber is the space of collars of $W \setminus i(C_n)$ \cite[3.4.2, Corollaire 1; 5.2.1, Corollaire 1]{cerf1961topologie}.
\end{proof}

It follows from \Cref{lem: map=arc systems} and \Cref{lem: Cyc=Lambda infty} that $\colim_{\Cyc^\op} \Map_{\boxMfd_{2,S^1}}(C_n, W)$ is a colimit indexed by  $\Cyc^\op\simeq\Lambda_\infty^\op$ of arc systems $\Arc_n(W)$. We will prove the contractibility of this colimit using the fact that certain arc complexes of surfaces are contractible.

\subsection{Comparison with arc complexes}
The goal of this subsection is to show that \cref{lem: propBreduction} is equivalent to the contractibility of a certain arc complex, which we define below.
\begin{defn}
    The \hldef{arc complex of $W$ rel.~$P$} to be the simplicial set with $\Arc(W;P)$ with $k$-simplices $\Arc_{k+1}(W;P)$ (\Cref{defn:arc}).
    The $i$th face map $\Arc_{k+1}(W;P)\rightarrow\Arc_{k}(W;P)$ is given by forgetting the $i$th arc in the system, and the $i$th degeneracy map is given by doubling the $i$th arc.  
\end{defn}

There is a more general definition of the arc complex, and it is contractible under mild conditions by \cite[Lemma 2.5]{wahl08} as a variation of \cite{Hatcher1991}. We state a version that is specialized to our situation, which can be found in \cite[Lemma 7.1]{hatcherwahl}.
\begin{thm}\label{thm: arccplxcontract}
    Let $W$ be a connected surface with a boundary circle $S^1 \subset \partial W$, $P \subset W \setminus S^1$ a finite \emph{non-empty} subset.
    Then the geometric realization of the arc complex $\Arc_\bullet(W; P)$ is contractible. 
\end{thm}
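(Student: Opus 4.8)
The statement is a special case of the contractibility results for arc complexes due to Hatcher \cite{Hatcher1991}, Wahl \cite[Lemma 2.5]{wahl08}, and Hatcher--Wahl \cite[Lemma 7.1]{hatcherwahl}, so the plan is to reduce to the precise form that appears there rather than to reprove it from scratch. The key point is a bookkeeping one: our arc complex $\Arc_\bullet(W;P)$ records \emph{ordered} systems of arcs emanating from a single basepoint $s_0 \in S^1$ and ending in the finite set $P$, considered up to simultaneous isotopy, whereas the references typically phrase things in terms of \emph{unordered} isotopy classes of disjointly embeddable arcs. By \cref{rem: orderedarcs} (and \cref{cor:arc-into-tuple}) these two pieces of data are canonically interchangeable: an unordered system of $k+1$ disjointly embeddable arcs carries a canonical cyclic order determined by a collar of $S^1$, and fixing $s_0$ rigidifies this to a genuine linear order; conversely, forgetting the order and remembering only the tuple of isotopy classes is injective. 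Thus the simplicial set $\Arc_\bullet(W;P)$ is, up to the standard reindexing, the same semi-simplicial (indeed simplicial, once the doubling degeneracies are included) object whose geometric realization is shown to be contractible in \cite[Lemma 7.1]{hatcherwahl}.

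Concretely, I would carry out the following steps. First, unwind the definitions to confirm that the face maps (forget the $i$-th arc) and degeneracy maps (double the $i$-th arc) match those of the arc complex in \cite{hatcherwahl} under the identification above; this is where \cref{rem: orderedarcs} does the real work, since the well-definedness of the canonical ordering is exactly what makes the face maps land in the ordered arc sets without ambiguity. Second, verify that the hypotheses of \cite[Lemma 7.1]{hatcherwahl} are met: $W$ is a connected compact oriented surface, $S^1$ is a designated boundary circle containing the basepoint, and $P$ is a \emph{finite nonempty} subset of $W \setminus S^1$ — the nonemptiness of $P$ being essential, as otherwise $\Arc_\bullet(W;P)$ is empty. (One should also note that $P$ here may be a subset of the interior, of the free boundary, or a mixture; the cited lemma is stated flexibly enough to cover this, and in our application $P = \pf W$ is a union of boundary arcs each of which can be shrunk to its midpoint, cf.\ \cref{prop:arc-cpx-comparison}.) Third, invoke the theorem of \cite{hatcherwahl} to conclude that $|\Arc_\bullet(W;P)|$ is contractible.

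The only genuine subtlety — and the step I expect to require the most care — is the passage between ordered and unordered arc systems, together with checking that the simplicial structure is respected. In particular one must be careful that ``simultaneous isotopy'' of an ordered tuple is the correct equivalence relation (as opposed to isotopy of each arc individually), and that the canonical ordering is preserved under face maps; both are handled by the adaptation of \cite[p.~552--553]{wahl2010survey} already invoked in \cref{rem: orderedarcs}. The nonemptiness hypothesis on $P$ must be tracked carefully, since it is exactly the condition under which the ``surgery/colouring'' argument of Hatcher and Wahl produces the contracting homotopy: given any arc system, one can always add the arc from $s_0$ to a chosen point of $P$ first, producing a cone-like structure on the complex. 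Granting the cited lemma, no further input is needed.
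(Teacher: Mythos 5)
Your proposal is correct and matches the paper's own treatment: the paper likewise does not reprove contractibility but cites Hatcher--Wahl \cite[Lemma 7.1]{hatcherwahl} (building on \cite{Hatcher1991,wahl08}) and reduces to it by observing, via \cref{rem: orderedarcs}, that the canonical ordering of disjointly embeddable arcs identifies the geometric realization of the ordered simplicial set $\Arc_\bullet(W;P)$ with that of their simplicial complex. The bookkeeping you flag (ordered tuples up to simultaneous isotopy versus unordered collections of isotopy classes) is exactly the content of the paper's ``Note on the theorem.''
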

\begin{proof}[Note on the theorem.]
    To be precise, Hatcher and Wahl work with a simplicial complex whose vertices are $\Arc_1(W;P)$, i.e.~isotopy classes of arcs from the base-point to $P$, and where $k+1$ distinct isotopy classes of arcs form a $k$-simplex if and only if they can be made mutually disjoint.
    By \Cref{rem: orderedarcs} there is a canonical ordering on the vertices of such a $k$-simplex, and the $(k+1)$-tuples $[\gamma_1, \dots, \gamma_{k+1}] \in \Arc_{k+1}(W;P)$ are by definition ordered with respect to this ordering.
    Therefore the geometric realization of our simplicial set is homeomorphic to the geometric realization of their simplicial complex.
\end{proof}

We will now use this theorem to show that the arc complex with endpoints in the free boundary is also contractible whenever it is non-empty.

\begin{prop}\label{prop:arc-cpx-comparison}
    Let $W \in \boxMfd_{2,S^1}$ be such that the connected component of $W$ that contains $S^1$ has non-empty free boundary.
    Then the geometric realization of $\Arc(W; \pf)$ is contractible.
\end{prop}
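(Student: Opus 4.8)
We prove this by reducing to the case of \cref{thm: arccplxcontract}, in which the arcs are required to end at a finite set of marked points rather than on all of $\pf W$. First we may assume $W$ is connected: any arc $\gamma\colon[0,1]\hookrightarrow W$ with $\gamma(0)=s_0\in S^1$ has connected image meeting $S^1$, hence lies in the component $W_0\subseteq W$ containing $S^1$, with endpoint in $\pf W\cap W_0$, and the same holds for (simultaneous) isotopies; thus $\Arc_\bullet(W;\pf W)=\Arc_\bullet(W_0;\pf W_0)$ with $\pf W_0:=\pf W\cap W_0$, which is nonempty by hypothesis, so we may replace $W$ by $W_0$.

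Now write $\pf W=I_1\sqcup\dots\sqcup I_a\sqcup Q_1\sqcup\dots\sqcup Q_b$, with the $I_r$ intervals and the $Q_s$ circles. Let $\widehat W$ be the connected surface obtained from $W$ by capping off each $Q_s$ with a disk $D_s$, and let $P\subset\widehat W\setminus S^1$ be the finite set consisting of the midpoint of each $I_r$ together with one interior point $p_s\in D_s$ for each $s$. Since $\pf W\neq\emptyset$ the set $P$ is nonempty, $\widehat W$ is a connected surface, and $S^1$ is still a boundary circle of $\widehat W$ disjoint from $P$; hence $|\Arc_\bullet(\widehat W;P)|$ is contractible by \cref{thm: arccplxcontract}. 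It remains to produce an isomorphism of simplicial sets $\Arc_\bullet(W;\pf W)\cong\Arc_\bullet(\widehat W;P)$.

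This isomorphism pushes endpoints into $P$: given a system $[\gamma_1,\dots,\gamma_{k+1}]\in\Arc_{k+1}(W;\pf W)$, we slide the endpoint of each $\gamma_i$ ending on an interval $I_r$ along $I_r$ to its midpoint (within a fixed collar rectangle of $I_r$), and extend each $\gamma_i$ ending on a circle $Q_s$ radially across $D_s$ to $p_s$; carrying this out compatibly with the linear order in which the relevant arcs meet each component keeps the system disjoint away from its (possibly now shared) endpoints and fixes the germs at $s_0$, so the ordering at $s_0$ is preserved and we land in $\Arc_{k+1}(\widehat W;P)$. The inverse is standard arc surgery: isotope a system in $\widehat W$ so that each arc meets each capping disk in a single radial sub-arc, then truncate each arc at $\partial D_s$, leaving the arcs ending at interval midpoints unchanged. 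One then checks that these assignments are well defined on isotopy classes, mutually inverse, and compatible with the face maps (deleting an arc) and degeneracies (doubling an arc), which gives the isomorphism and hence $|\Arc(W;\pf)|\simeq|\Arc_\bullet(\widehat W;P)|\simeq\ast$.

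The main obstacle will be this last step: verifying that endpoint-pushing and its arc-surgery inverse are genuinely well defined on isotopy classes and mutually inverse — in particular that the circle-to-interior extension loses no information when several arcs of the system end on the same free boundary circle, and that the order of the arcs at $s_0$ governing the simplicial structure (cf.~\cref{rem: orderedarcs} and \cref{cor:arc-into-tuple}) is unaffected throughout.
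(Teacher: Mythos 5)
Your proposal is correct and follows essentially the same route as the paper: reduce to the connected component of $S^1$, cap off the free boundary circles, mark midpoints of free intervals and interior points of the caps, and transport arc systems back and forth (pushing endpoints to marked points versus deleting small disks/truncating) to obtain an isomorphism of simplicial sets with $\Arc_\bullet(\widehat{W};P)$, then invoke \cref{thm: arccplxcontract}. The verification you flag at the end (well-definedness on isotopy classes, mutual inverseness, compatibility with faces and degeneracies, and preservation of the ordering at $s_0$) is exactly the routine check the paper also records briefly, so no gap remains.
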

\begin{proof}
    Without loss of generality, we may assume that $W$ is connected as paths can only lie in the connected component of $S^1$.
    Let $W' \in \boxMfd_{2,S^1}$ be the surface obtained from $W$ by gluing a $2$-disk to each circle in $\pf$.
    Let $y_i$ be the midpoints of these $2$-disks and $x_j$ the midpoints of the $1$-disks in $\pf W' \subset \pf W$.
    We let $P = \{x_1,\dots\} \cup \{y_1,\dots\} \subset W'$ be the resulting finite subset of $W'$.
    We define maps
    \[
        f\colon \Arc_n(W; \pf) \too \Arc_n(W'; P)
        \qquad\text{and}\qquad
        g\colon \Arc_n(W'; P) \too \Arc_n(W; \pf)
    \]
    as follows.
    For $f$ we take an arc system $(\gamma_1,\dots,\gamma_n)$ in $W$ rel.~$\pf$, isotop the arcs that end in a disk so that their endpoint is the midpoint of the disk, and extend the arcs that end in a circle by an arc to the midpoint of the newly glued in disk.
    For $g$ we take an arc system in $W'$ rel.~$P$ and remove a small disk around each point in $P$ that is in the interior of $W'$ to obtain a surface that we can identify with $W$.
    This is illustrated in \cref{fig:bijection}.
    These constructions are well-defined on isotopy classes and mutually inverse.

    Moreover, the maps $f$ and $g$ are compatible with forgetting or duplicating the $i$th arc, so they define isomorphisms of simplicial sets.
    We know that $P$ is non-empty because it is in bijection with $\pf W$, which we assumed to be non-empty.
    The simplicial set $\Arc(W'; P)$ thus has a contractible realization by \cref{thm: arccplxcontract}, and hence so does the isomorphic simplicial set $\Arc(W;\pf)$.
\end{proof}

  \begin{figure}[h]
\centering
\def\svgwidth{\linewidth}
\import{figures/}{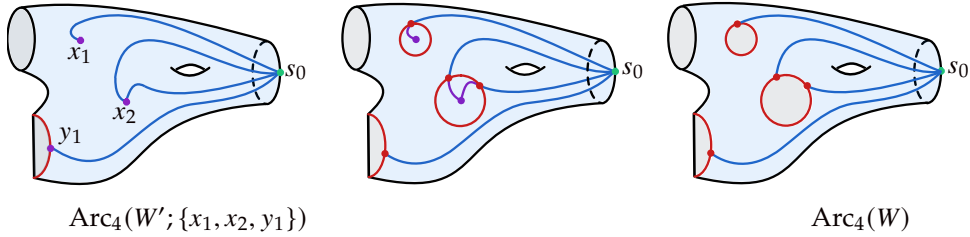}
\caption{Bijection in the proof of \cref{prop:arc-cpx-comparison}.}
\label{fig:bijection}
\end{figure}

The geometric realization of a simplicial set can be computed as its colimit in the \category{} $\Scal$. 
Our next task is to show that we can rewrite $\colim_{\Cyc^\op} \Map_{\boxMfd_{2,S^1}}(C_n, W)$ of \Cref{lem: propBreduction} as a colimit indexed over a full subcategory of $\Cyc$ that is equivalent to $\Delta$, on which it agrees with the simplicial set $\Arc(W; \pf)$.
First we recall some facts about final subcategories of $\Lambda_\infty$, which will help us identify their counterparts inside $\Cyc$.

\begin{thm}[{\cite[Theorem B.3]{NS2018}}]\label{thm:Ns2018cofinal}
    Let $j_\infty\colon \Delta\rightarrow\Lambda_\infty$ be the functor sending $[n-1]=\{0,1,\ldots, n-1\}$ to $\frac{1}{n}\mathbb{Z}\cong \mathbb{Z}\times [n-1]$ equipped with lexicographical ordering and $\mathbb{Z}$-action given by addition on the first factor. 
    This functor is final.
\end{thm}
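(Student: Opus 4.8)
The plan is to prove finality by Quillen's Theorem A in its cofinality form: $j_\infty$ is final if and only if, for every object $X \in \Lambda_\infty$, the comma category $\Delta_{X/} := \Delta \times_{\Lambda_\infty} (\Lambda_\infty)_{X/}$ has weakly contractible classifying space. Since $\Delta$ and $\Lambda_\infty$ are ordinary $1$-categories this comma category is again a $1$-category, and by the definition of $\Lambda_\infty$ it suffices to treat $X = (\tfrac1n\Zbb,\le,+1)$ for each $n \ge 1$.

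The first step is to describe $\Delta_{X/}$ concretely: an object is a pair of $[m-1] \in \Delta$ and a non-decreasing $\Zbb$-equivariant map $\phi\colon \tfrac1n\Zbb \to j_\infty[m-1] = \tfrac1m\Zbb$, and a morphism $([m-1],\phi)\to([m'-1],\phi')$ is a $\Delta$-morphism $\psi$ with $j_\infty(\psi)\circ\phi = \phi'$; one should picture $\phi$ as a $\Zbb$-periodic subdivision of the cyclically ordered point set of $X$ labelled by $[m-1]$. I would isolate two operations on this category. \emph{Reduction}: corestrict $\phi$ onto the sub-$\Zbb$-poset of $j_\infty[m-1]$ on the levels actually hit; this yields a retraction $R$ of $\Delta_{X/}$ onto the full subcategory of ``reduced'' (surjective-$\phi$) objects together with a natural transformation $R \Rightarrow \mathrm{id}$ given by level inclusions. \emph{Refinement}: for reduced $\phi_1,\phi_2$, the image of $(\phi_1,\phi_2)$ in $j_\infty[m_1-1]\times j_\infty[m_2-1]$ is a totally ordered free $\Zbb$-poset with finitely many orbits, hence isomorphic to some $j_\infty[m_3-1]$, and the coordinate projections make it a common refinement of $\phi_1$ and $\phi_2$ — a priori a diagram in $(\Lambda_\infty)_{X/}$.

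The crux — and the step I expect to be the main obstacle — is to assemble reduction and refinement into contractibility of $\Delta_{X/}$. This is exactly where the \emph{paracyclic} structure is essential: the $\Zbb$-action on objects of $\Lambda_\infty$ is free, whereas for Connes' cyclic category $\Lambda$ (whose objects carry only $\Zbb/m$-actions) the corresponding comma categories are not contractible and $B\Lambda \simeq BS^1$, so no purely formal argument can work. The delicate issue is to control the ``paracyclic shifts'' — the $\Zbb$ of automorphisms of each $j_\infty[m-1]$, equivalently the freedom in $\phi(0)$ — which a naive refinement argument does not see. I would aim to show $\Delta_{X/}$ is cofiltered: the common-predecessor axiom is refinement (once one checks the coordinate projections can be arranged to be of the form $j_\infty(\psi)$, which is where freeness of the $\Zbb$-action enters), and the coequalizing axiom follows from reduction since a surjective $\phi$ detects equality of maps out of $j_\infty[m-1]$; cofiltered categories have contractible classifying space. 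An alternative would be to build an explicit zigzag of natural transformations from $\mathrm{id}_{\Delta_{X/}}$ to a constant functor, using the reduction transformation together with a ``last vertex''-style contraction of the category of subdivisions. In either route, the substantive work is the combinatorial bookkeeping showing that the $\Zbb$-periodicity obstructs neither connectivity nor contractibility.
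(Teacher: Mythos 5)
Your cofinality criterion is stated correctly for the theorem as literally printed, but the two moves you propose already detect that the literal statement cannot hold: the comma categories $\Delta\times_{\Lambda_\infty}(\Lambda_\infty)_{X/}$ are not weakly contractible. Take $X=\mathbb{Z}=j_\infty([0])$. An object is a pair $([m-1],f)$ with $f\colon\mathbb{Z}\to\tfrac{1}{m}\mathbb{Z}$ equivariant, i.e.\ just the element $c=f(0)\in\tfrac{1}{m}\mathbb{Z}$; since $j_\infty(\psi)(k+i/m)=k+\psi(i)/m'$ preserves the fundamental domain, every morphism of this comma category preserves $\lfloor c\rfloor$, so $\pi_0$ surjects onto $\mathbb{Z}$. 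In fact your reduction step, applied here, retracts the comma category onto its full subcategory of surjective $f$, which is the \emph{discrete} category on the objects $([0],f(0)=k)$, $k\in\mathbb{Z}$; so the comma category is homotopy equivalent to a discrete $\mathbb{Z}$, not contractible. Correspondingly, the step you flagged as the crux is not bookkeeping but an impossibility: for the reduced objects $\phi_1=\mathrm{id}$ and $\phi_2=(+1)$ on $\mathbb{Z}$, the image of $(\phi_1,\phi_2)$ is $\{(k,k+1)\}\cong\mathbb{Z}$ and its two projections differ by the shift by $1$, so no choice of base point makes both of them fundamental-domain preserving (i.e.\ lie in the image of $j_\infty$); indeed these two objects lie in different path components, so no common refinement, zigzag, or cofilteredness is available. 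Freeness of the $\mathbb{Z}$-action does not rescue this direction; the $\mathbb{Z}$ of paracyclic shifts is exactly a locally constant invariant of the comma category.

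What is true, and what is actually used, is the opposite form: $\Delta^{\mathrm{op}}\to\Lambda_\infty^{\mathrm{op}}$ is final, equivalently $j_\infty$ is \emph{initial}, so the categories to contract are $\Delta\times_{\Lambda_\infty}(\Lambda_\infty)_{/X}$, whose objects are maps $j_\infty[m-1]\to X$. There the shift is no longer an obstruction: for $X=\mathbb{Z}$, the objects $([0],g(0)=0)$ and $([0],g(0)=1)$ are connected through $([1],h)$ with $h(0)=0$, $h(1/2)=1$, and a reduction/subdivision argument in the spirit of your proposal has a chance. Note that the paper gives no proof of this statement at all --- it is quoted from \cite{NS2018}, whose Theorem B.3 is precisely the op-form --- and in its applications (rewriting $\colim$ over $\Cyc^{\mathrm{op}}$ via $\Cyc_{\simp}^{\mathrm{op}}$ in the proof of Proposition B$'$, and the finality used in \cref{cor:Dopfinalinslice}) only the op-form is invoked; the printed phrase ``this functor is final'' should be read through the self-duality $\Lambda_\infty\simeq\Lambda_\infty^{\mathrm{op}}$, or replaced by ``initial''. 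So either redirect your argument to the over-slices $(\Lambda_\infty)_{/X}$ and carry out the contraction there, or simply cite \cite{NS2018} as the paper does.
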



We also record the following well-known consequence, which can be deduced from \cref{cor:Disk1tensordisjunct}, \cref{lem: comparediagram}, and \cref{thm:Ns2018cofinal}.
\begin{cor}\label{cor:DiskM-weakly-contractible}
    For every $M \in \Mfd_1^\partial$ the category $\Disk_{/M}$ is weakly contractible.
\end{cor}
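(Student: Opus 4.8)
The plan is to reduce, via the $\otimes$-disjunctive property of $\Mfd_1^\partial$, to the two building blocks $M = D^1$ and $M = S^1$, and then to identify the circle case with a co-cone on the paracyclic category, whose weak homotopy type is controlled by \cref{thm:Ns2018cofinal}.

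First I would use that a compact oriented $1$-manifold with boundary is a finite disjoint union of circles and closed intervals, say $M \cong (\sqcup_k S^1)\sqcup(\sqcup_l D^1)$, and invoke \cref{cor:Disk1tensordisjunct} to obtain an equivalence $\Disk_{/M}\simeq(\Disk_{/S^1})^{\times k}\times(\Disk_{/D^1})^{\times l}$. Since the weak homotopy type functor $\Cat\to\Scal$ preserves finite products, it then suffices to prove that $\Disk_{/S^1}$ and $\Disk_{/D^1}$ are each weakly contractible. For $D^1$ this is immediate, as $\Disk_{/D^1}$ has a terminal object, namely $\id\colon D^1\to D^1$.

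For $S^1$ I would apply \cref{lem: comparediagram}, which identifies the cone diagram $(\Disk_1)_{/S^1}^\rhd\xtoo{\col(-)'}\OC$ with $(\Cyc^{\lhd\rhd})^\op\hookrightarrow(\boxMfd_{2,S^1})^\op\xtoo{\Phi_{S^1}}\OC$ as objects of $(\Cat)_{/\OC}$. Forgetting down to $\Cat$ and deleting the cone point on each side, this identifies $\Disk_{/S^1}=(\Disk_1)_{/S^1}$ with the full subcategory of $(\Cyc^{\lhd\rhd})^\op$ on $\{\cyl_\rhd\}\cup\{C_n\}_{n\ge1}$; reading off the table in the proof of \cref{lem: comparediagram}, the formally adjoined cone point of $(\Disk_1)_{/S^1}^\rhd$ corresponds to $\cyl_\lhd$, so what remains is $(\Cyc^\rhd)^\op$, the opposite of the co-cone on $\Cyc$ (using \cref{cor: conedcyc=lambda_infty}). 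Now \cref{lem: Cyc=Lambda infty} gives $\Cyc^\rhd\simeq\Lambda_\infty^\rhd$, hence $\Disk_{/S^1}\simeq(\Lambda_\infty^\rhd)^\op$. As weak homotopy type is insensitive to passing to opposite categories, it remains to see that $\Lambda_\infty^\rhd$ is weakly contractible, which follows because $\Lambda_\infty$ already is: by \cref{thm:Ns2018cofinal} the functor $j_\infty\colon\Delta\to\Lambda_\infty$ is final, $\Delta$ is weakly contractible (it has the terminal object $[0]$), and a final functor out of a weakly contractible \category{} has weakly contractible target.

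The only step needing care is the bookkeeping of cone versus co-cone points under \cref{lem: comparediagram}: one must check that the adjoined cone point of $(\Disk_1)_{/S^1}^\rhd$ is the one matched with $\cyl_\lhd$ (which becomes a cone point after passing to the opposite), so that after deletion one is left with $(\Cyc^\rhd)^\op$ rather than $(\Cyc^\lhd)^\op$. I do not anticipate any genuine difficulty beyond this; indeed one could circumvent the entire argument by observing directly that $\emptyset\to M$ is an initial object of $\Disk_{/M}$, but the route through $\Lambda_\infty$ is the one that exhibits the statement as part of the paracyclic picture used elsewhere in the paper.
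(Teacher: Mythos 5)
Your main argument is correct and is essentially the paper's own (the paper records no proof beyond citing \cref{cor:Disk1tensordisjunct}, \cref{lem: comparediagram}, and \cref{thm:Ns2018cofinal}, which is precisely your route, and your cone/co-cone bookkeeping identifying $\Disk_{/S^1}\simeq(\Lambda_\infty^{\rhd})^{\op}$ via the table in \cref{lem: comparediagram} is accurate). Two minor remarks: at the last step the finality of $\Delta\to\Lambda_\infty$ is not actually needed, since $\Lambda_\infty^{\rhd}$ is weakly contractible simply because it has a terminal object (contractibility of $\Lambda_\infty$ itself is beside the point), and your closing observation that $\emptyset\to M$ is an initial object of $\Disk_{/M}$ is indeed a valid one-line proof of the statement.
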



Under the equivalence $\Cyc \simeq \Lambda_\infty$ from \cref{lem: Cyc=Lambda infty}, the wide subcategory $\simp \subset \Lambda_\infty$ corresponds to a certain wide subcategory $\hldef{\Cyc_{\simp}} \subset \Cyc$.
Let us give a geometric description of the path components of the space of morphisms from $C_m$ to $C_n$ in $\Cyc_\Delta$.
Under the identification $\Cyc_{\Delta}\simeq \Delta$ as restriction of \Cref{lem: Cyc=Lambda infty} to full subcategories, 
The subspace $\Map_{\Cyc_\Delta}(C_m, C_n) \subset \Emb_{S^1}^\square(C_m, C_n)$ consists of those path components corresponding to systems of $m$ arcs in $C_n$ that do not wrap around $C_n$.
In terms of the standard arcs this means that we require that $\iota\colon C_m \hookrightarrow C_n$ sends each standard arc $[\beta_i]$ in $C_m$ to a standard arc $[\iota \circ \beta_i] = [\beta_{\lambda(i)}]$.
This defines a linear order preserving map $\lambda\colon \{1,\dots,n\} \to \{1,\dots,m\}$.
%

\begin{lem}\label{lem:Arc-commutative-diagram}
For every $W \in \boxMfd_{2,S^1}$ there are $2$-cells making the diagram
    \[\begin{tikzcd}
        {\Cyc_{\simp}^\op} \rar \dar & 
        {\Cyc^\op} \ar[rr,"{\Emb_{S^1}^\square(-,W)}"] && 
        \Scal \dar[equal] \\
        {\Dop} \ar[rr,"{\Arc(W; \pf)}"] &&
        \Sets \rar[hook] & \Scal
    \end{tikzcd}\]
    commute.
\end{lem}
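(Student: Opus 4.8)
The plan is to show that, under the equivalence $\Cyc_\simp \simeq \simp$ obtained by restricting \cref{lem: Cyc=Lambda infty}, the functor $\Emb^\square_{S^1}(-,W)\colon \Cyc_\simp^\op \to \Scal$ is naturally equivalent to the simplicial set $\Arc(W;\pf)\colon \Dop \to \Sets \hookrightarrow \Scal$; this natural equivalence is the desired $2$-cell. First I would reduce to a statement about sets. Since $\pi_0\colon \Scal \to \Sets$ is left adjoint to the inclusion of discrete spaces, the unit gives a natural transformation $\Emb^\square_{S^1}(-,W) \Rightarrow \pi_0\Emb^\square_{S^1}(-,W)$ of functors $\Cyc_\simp^\op \to \Scal$, and by \cref{lem: map=arc systems} every path component of $\Emb^\square_{S^1}(C_n,W)$ is contractible, so this transformation is an equivalence. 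It thus suffices to construct a natural isomorphism $\pi_0\Emb^\square_{S^1}(-,W) \cong \Arc(W;\pf)$ of functors $\Dop \to \Sets$ and then compose with $\Sets \hookrightarrow \Scal$. (Recall from \cref{cor:Cyc-1cat} that $\Cyc$, hence $\Cyc_\simp$, is a $1$-category, so no higher coherence is involved.)

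On objects, \cref{lem: map=arc systems} supplies a bijection $e_n\colon \pi_0\Emb^\square_{S^1}(C_n,W) \xrightarrow{\cong} \Arc_n(W;\pf)$, $[\iota]\mapsto [\iota\circ\beta_1,\dots,\iota\circ\beta_n]$, where $\beta_1,\dots,\beta_n$ is the standard collection of arcs from \cref{not: standardcollection}; since $\Cyc_\simp \simeq \simp$ carries $C_n$ to $[n-1]$ and $\Arc(W;\pf)_{[n-1]} = \Arc_n(W;\pf)$ by definition of the arc complex, the objects are matched. For a morphism $\phi\colon C_m \to C_n$ in $\Cyc_\simp$, the geometric description of $\Cyc_\simp$ recalled before the statement gives that $\phi$ carries each standard arc to a standard arc, $[\phi\circ\beta_j] = [\beta_{\phi_*(j)}]$ for a uniquely determined order-preserving map $\phi_*\colon\{1,\dots,m\}\to\{1,\dots,n\}$. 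Moreover $\phi_*$ is precisely the $\simp$-morphism $[m-1]\to[n-1]$ corresponding to $\phi$: on the one hand $q(\phi)(\epsilon(\beta_j)) = \epsilon([\phi\circ\beta_j]) = \epsilon(\beta_{\phi_*(j)})$ by the construction of $q$ in \cref{const:q} and \cref{lem: q(Cn)=Arc1}, so $q(\phi)$ sends the $j$th generator of the $\Zbb$-orbit of $q(C_m)$ to the $\phi_*(j)$th generator of that of $q(C_n)$; on the other hand $j_\infty$ is defined by the analogous rule, and a $\Zbb$-poset map out of $q(C_m)$ is determined by its values on those generators.

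Chasing $e_\bullet$ through a morphism, for $[\iota]\in\pi_0\Emb^\square_{S^1}(C_n,W)$ we get
\[
  e_m(\phi^*[\iota]) \;=\; [\iota\circ\phi\circ\beta_1,\dots,\iota\circ\phi\circ\beta_m] \;=\; [\iota\circ\beta_{\phi_*(1)},\dots,\iota\circ\beta_{\phi_*(m)}].
\]
By \cref{cor:arc-into-tuple} an arc system is determined by its tuple of isotopy classes of arcs, so the right-hand side is obtained from $e_n([\iota]) = ([\iota\circ\beta_1],\dots,[\iota\circ\beta_n])$ by reindexing along $\phi_*$. Finally, every face map of $\Arc(W;\pf)$ drops one coordinate of the tuple and every degeneracy repeats one, so in tuple coordinates the simplicial structure map of $\Arc(W;\pf)$ attached to any $\simp$-morphism is exactly reindexing along the corresponding order-preserving map. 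Hence $e_\bullet$ is natural over $\Cyc_\simp \simeq \simp$, which completes the identification and produces the $2$-cell.

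I expect the main obstacle to be exactly this last piece of bookkeeping: verifying that the combinatorial datum $\phi_*$ extracted from ``standard arcs go to standard arcs'' agrees with the $\simp$-morphism assigned to $\phi$ under $\Cyc_\simp \simeq \simp$, and that the paper's face/degeneracy description of the arc complex matches reindexing of tuples of arc classes. Neither is conceptually hard, but both require care with the off-by-one between $C_n$ and $[n-1]$ and with the counterclockwise ordering conventions fixed for the standard arcs $\beta_i$; one should in particular keep track of whether $\Cyc_\simp$ is identified with $\simp$ or with $\simp^\op$ so that the variance in the diagram is correct.
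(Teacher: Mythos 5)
Your proposal is correct and takes essentially the same route as the paper: both reduce to a statement about discrete sets using the contractibility of the components of $\Emb_{S^1}^\square(C_n,W)$ from \cref{lem: map=arc systems}, and then check naturality of the evaluation-at-standard-arcs bijection using that a morphism $\phi$ in $\Cyc_\simp$ satisfies $[\phi\circ\beta_i]=[\beta_{\lambda(i)}]$ for the order-preserving map $\lambda$ it induces under $\Cyc_\simp\simeq\simp$. Your extra verification that $\lambda$ agrees with the $\simp$-morphism produced by $q$ (via $\epsilon$ and the generators of the $\Zbb$-orbits) is exactly the bookkeeping the paper leaves implicit in the paragraph preceding the lemma.
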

\begin{proof}
    We already know from \cref{lem: map=arc systems} that evaluation at the standard arcs defines for all $n$ a homotopy equivalence 
    \[
        (\ev_{\beta_1},\dots, \ev_{\beta_n})\colon 
        \Emb_{S^1}^\square(C_n, W) \xtoo{\simeq} \Arc_n(W; \pf).
    \]
    The claim of the lemma is that this map is natural in $C_n \in \Cyc_{\simp}^\op$.
    Since both functors $\Cyc_{\simp}^\op \to \Scal$ land in the full subcategory $\Sets \subset \Scal$ of discrete spaces, it will suffice to prove that the bijection
    \[
        (\ev_{\beta_1},\dots, \ev_{\beta_n})\colon 
        \pi_0\Emb_{S^1}^\square(C_n, W) \xtoo{\cong} \Arc_n(W; \pf)
    \]
    is a natural isomorphism of $1$-functors $h(\Cyc_{\simp}^\op) \to \Sets$.
    That is, we need to show that for every $\iota\colon C_n \to C_m$ in $\Cyc_{\simp}$ that induces $\lambda\colon \{1,\dots,n\} \cong [n-1] \to \{1,\dots,m\} \cong [m-1]$ in $\simp$, the square of sets
    \[\begin{tikzcd}
        {\Emb_{S^1}^\square(C_m, W)} \ar[rr,"{(\ev_{\beta_1},\dots, \ev_{\beta_n})}"] \dar["\iota^*"'] &&
        {\Arc_n(W;\pf)} \dar["\lambda^*"] \\
        {\Emb_{S^1}^\square(C_m, W)} 
        \ar[rr, "{(\ev_{\beta_1},\dots, \ev_{\beta_m})}"]  &&
        {\Arc_m(W;\pf)}
    \end{tikzcd}\]
    commutes.
    This is indeed true by how we obtained $\lambda$ from $\iota$, namely it is defined so that there are isotopies $\iota \circ \beta_i \sim \beta_{\lambda(i)}$.
\end{proof}

Now we have all the ingredients to prove \cref{proposition B'}.
\begin{proof}[Proof of \cref{proposition B'}]\label{prop:map-into-S1}
    By \Cref{lem: propBreduction}, it suffices to show that
    $\colim_{\Cyc^\op} \Map_{\boxMfd_{2,S^1}}(C_n, W)$
    is contractible for all $W\in\boxMfd_{2,S^1}$ such that every component of $W$ has non-empty free boundary.
    It follows from \cref{thm:Ns2018cofinal} and \cref{lem:Arc-commutative-diagram} that we can rewrite this colimit as
    \[
      \colim_{\Cyc^\op} \Map_{\boxMfd_{2,S^1}}(C_n, W)
      \simeq \colim_{\Cyc_{\simp}^\op} \Map_{\boxMfd_{2,S^1}}(C_n, W)
      \simeq |\Arc(W;\pf)|,
    \]
    which is indeed contractible by \cref{prop:arc-cpx-comparison} as $\pf W$ intersects every component of $W$.
\end{proof}

\section{Proof of the main theorems}
Now we are ready to prove the main results of this paper (\cref{thm:main} and \cref{thm:O-dense-in-OC-intro}) from \cref{proposition A} and \cref{proposition B}.
\subsection{Density of \texorpdfstring{$\Ocal$}{O} in \texorpdfstring{$\OC$}{OC}}
We begin by proving \cref{thm:O-dense-in-OC-intro} from the introduction, which says that $\Ocal$ is dense in $\OC$, i.e.~that the restricted Yoneda embedding
    \(
        \Yo_\Ocal\colon \OC \hookrightarrow \Psh(\Ocal) 
    \)
    is fully faithful.
\begin{proof}[Proof of \cref{thm:O-dense-in-OC-intro}]
    We would like to show that for all $M, N \in \OC$ the top map in the following map square is an equivalence
\[\begin{tikzcd}[column sep=large]
	{\Map_{\OC}(M, N)} & {\Map_{\PSh(\Ocal)}(\Yo_\Ocal(M), \Yo_\Ocal(N))} \\
	{\lim_{D \in \Disk_{/M}} \Map_{\OC}(D, N)} & {\lim_{D \in \Disk_{/M}} \Map_{\PSh(\Ocal)}(\Yo_\Ocal(D), \Yo_\Ocal(N)).}
	\arrow[from=1-1, to=1-2]
	\arrow["\simeq", from=1-1, to=2-1]
	\arrow["\simeq"', from=1-2, to=2-2]
	\arrow["\simeq", from=2-1, to=2-2]
\end{tikzcd}\]
    Here the horizontal maps are given by applying the restricted Yoneda embedding $\Yo_\Ocal$ and the vertical maps are induced by the diagram $\col'\colon \Disk_{/M} \to \OC_{/M}$ as in \cref{lem: comparediagram}.
The left vertical arrow is an equivalence by \cref{proposition A} and the right vertical arrow by \cref{proposition B}. 
The bottom arrow is an equivalence by the definition of the restricted Yoneda embedding $\Yo_\Ocal$ and the Yoneda lemma for $\Ocal$.
Therefore, the top arrow is an equivalence as desired.
\end{proof}

\subsection{The slice of \texorpdfstring{$\Ocal$}{O} over \texorpdfstring{$\OC$}{OC}}
Next we prove a consequence of \cref{proposition B}, which describes the slices one encounters when left Kan extending from $\Ocal$ to $\OC$.
For this we need the following lemma.

\begin{lem}\label{lem:Quillen-A-variant}
    Let $\Dcal$ be an \category{} and $X\colon I \to \Dcal$ a diagram. 
    Let $X = \colim_{i \in I} \Yo(X(i)) \in \Psh(\Dcal)$ be the colimit of the representable presheaves.
    Then the induced functor $I \to \Un(X)$ is final.
\end{lem}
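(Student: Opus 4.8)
The plan is to prove this by recognizing the statement as a relative version of the fact that a presheaf, viewed through its category of elements, is the colimit of representables, combined with Quillen's Theorem A in $\infty$-categorical form (the cofinality criterion of Lurie, \cite[4.1.3.1]{HTR} or \cite[02KA]{Kerodon}). Write $p\colon \Un(X) \to \Dcal$ for the left fibration classified by $X \in \Psh(\Dcal) = \Fun(\Dcal^\op, \Scal)$, so that objects of $\Un(X)$ are pairs $(d, \xi)$ with $d \in \Dcal$ and $\xi \in X(d)$, and $\Map_{\Un(X)}((d,\xi),(d',\xi'))$ sits in a fiber sequence over $\Map_\Dcal(d,d')$ with fiber the path space of $\xi'$ mapping to $\xi$. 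The diagram $X\colon I \to \Dcal$ together with the universal cocone $\Yo(X(i)) \to X$ (which is the $i$-component of the colimit cocone $\colim_i \Yo(X(i)) \simeq X$) canonically lifts $X$ to a functor $\widetilde X\colon I \to \Un(X)$ sending $i \mapsto (X(i), u_i)$ where $u_i \in X(X(i))$ is the image of $\id_{X(i)}$ under $\Map_\Dcal(X(i), X(i)) = \Yo(X(i))(X(i)) \to X(X(i))$. This $\widetilde X$ is the functor whose finality we must establish.

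By the $\infty$-categorical Theorem A, $\widetilde X$ is final if and only if for every object $(d,\xi) \in \Un(X)$ the comma $\infty$-category $I \times_{\Un(X)} \Un(X)_{(d,\xi)/}$ is weakly contractible. The first step is to identify this comma category. Since $p\colon \Un(X) \to \Dcal$ is a left fibration, slicing is compatible with it, and one gets a fiber sequence (over the point $\xi \in X(d)$ after unstraightening) identifying $I \times_{\Un(X)} \Un(X)_{(d,\xi)/}$ with an appropriate fiber of the map $I \times_\Dcal \Dcal_{d/} \to X(d)$ that sends a pair $(i, d \to X(i))$ to the image of $u_i$ under $X(d \to X(i))\colon X(X(i)) \to X(d)$; concretely the comma category is the "category of elements" of the presheaf on $I \times_\Dcal \Dcal_{d/}$ given by pulling back the point $\xi \hookrightarrow X(d)$. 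The key input is then that $X \simeq \colim_{i \in I} \Yo(X(i))$ in $\Psh(\Dcal)$: evaluating at $d$ and using that colimits in presheaves are computed pointwise gives $X(d) \simeq \colim_{i \in I} \Map_\Dcal(d, X(i)) = |I \times_\Dcal \Dcal_{d/}|$, so the total space of the left fibration $I \times_\Dcal \Dcal_{d/} \to X(d)$ has contractible fibers — equivalently, each fiber (which is exactly our comma category) is weakly contractible. I expect the main obstacle to be bookkeeping the unstraightening equivalences carefully enough to see that the comma category $I \times_{\Un(X)} \Un(X)_{(d,\xi)/}$ really is the fiber over $\xi$ of $I \times_\Dcal \Dcal_{d/} \to X(d)$, rather than something merely equivalent after a further localization; this is where one must be precise about how the canonical cocone $\Yo(X(i)) \to X$ interacts with unstraightening.

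An alternative, perhaps cleaner route I would also consider: apply the functor $I \to \Dcal \xrightarrow{\Yo} \Psh(\Dcal)$ and note that by the universal property of $\Un(X)$ as a colimit (every presheaf is the colimit of its elements, $X \simeq \colim_{\Un(X)} \Yo \circ p$), the diagram $\widetilde X\colon I \to \Un(X)$ has the property that its composite to $\Psh(\Dcal)$ has colimit $X$, which is also the colimit over all of $\Un(X)$. One can then invoke the characterization of final functors as those $f\colon I \to \Jcal$ such that restriction $\Fun(\Jcal, \Ecal) \to \Fun(I,\Ecal)$ preserves and reflects colimits, applied to the Yoneda diagram; since $\Psh(\Dcal)$ is generated under colimits by representables this should suffice to conclude. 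Either way the essential content is the pointwise colimit formula $X(d) = \colim_{I\times_\Dcal \Dcal_{d/}} \ast$ extracted from $X = \colim_i \Yo(X(i))$, so I would lead with that reduction and then cite Theorem A.
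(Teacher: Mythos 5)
Your main argument is correct, but it takes a genuinely different route from the paper. You verify Lurie's Theorem A criterion directly: identify the comma category $I\times_{\Un(X)}\Un(X)_{(d,\xi)/}$ with the fiber over $\xi$ of the functor $I\times_\Dcal \Dcal_{d/}\to X(d)$ (cleanest via the observation that both are left fibrations over $I$ classified by $i\mapsto \Map_\Dcal(d,X(i))\times_{X(d)}\{\xi\}$), and then use the pointwise formula $X(d)\simeq\colim_i\Map_\Dcal(d,X(i))\simeq|I\times_\Dcal\Dcal_{d/}|$ together with universality of colimits in $\Scal$ (so that realization commutes with passing to fibers over the space $X(d)$) to conclude that these comma categories are weakly contractible. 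The paper avoids Theorem A and comma categories altogether: it factors $X\colon I\to\Dcal$ as a final functor followed by a right fibration $p\colon\Ecal\to\Dcal$ (fibrant replacement in the contravariant model structure, whose trivial cofibrations are final), and then identifies $p$ with the unstraightening of $\colim_i\Yo_\Dcal(X(i))$ by computing $X_!(*)\simeq\colim_i X_!\Yo_I(i)\simeq\colim_i\Yo_\Dcal(X(i))$, using $\colim_i\Yo_I(i)\simeq *$ and $X_!\circ\Yo_I\simeq\Yo_\Dcal\circ X$. Your route is more hands-on and makes the geometric content (contractible fibers of the comma projection) visible, but it requires exactly the unstraightening bookkeeping you flag as the main obstacle; the paper's route trades that for the contravariant model structure and the Kan-extension formula, and gets the statement with essentially no fiberwise analysis.

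Two caveats. First, the unstraightening of a presheaf $X\in\PSh(\Dcal)$ is a right fibration over $\Dcal$ (equivalently a left fibration over $\Dcal^\op$), not a left fibration over $\Dcal$; your explicit description of its objects and mapping spaces is nonetheless the correct one, so this is only terminological. Second, your ``alternative, cleaner route'' does not work as stated: knowing that one particular diagram (the Yoneda diagram $\Un(X)\to\PSh(\Dcal)$) has the same colimit after restriction along $I\to\Un(X)$ does not imply finality, and ``generated under colimits by representables'' does not repair this. The correct single-diagram criterion is that $\colim_{i\in I}\Yo_{\Un(X)}(\widetilde X(i))$ be the terminal presheaf on $\Un(X)$, which, evaluated objectwise, is precisely the comma-category computation of your main argument.
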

\begin{proof}
    We can factor $X$ as
    \[
        I \xtoo{F} \Ecal \xtoo{p} \Dcal
    \]
    where $F$ is final and $p$ is the right fibration representing the presheaf $X_!(*)$.
    (The constant presheaf $* \in \PSh(I)$ unstraightens to the right fibration $\id_I$, and its left Kan extension along $X$ is represented by taking a fibrant replacement of $X\colon I \to \Dcal$ in the contravariant model structure, where the fibrant objects are right fibrations and every trivial cofibration is final \cite[Corollary 4.1.1.11]{HTT}.)
    It will therefore suffice to argue that $X_!(*)$ is the colimit of $\Yo_{\Dcal} \circ X$ over $I$.
    First note that $\colim_{i \in I} \Yo_I(i) \simeq *$ because its value at each $j \in I$ is $\colim_{i \in I} \Map_I(j,i) \simeq |I_{j/}| \simeq *$.
    Applying the colimit preserving functor $X_!$ yields
    \[
        X_!(*) \simeq X_!(\colim_{i \in I} \Yo_I(i)) 
        \simeq \colim_{i \in I} X_!\Yo_I(i)
        \simeq \colim_{i \in I} \Yo_\Dcal(X(i))
    \]
    where the last step uses that $X_! \circ \Yo_I \simeq \Yo_{\Dcal} \circ X$ \cite[Proposition 5.2.6.3]{HTT}.
    This shows $(p\colon \Ecal \to \Dcal)$ is indeed the unstraightening of $\colim_I \Yo_\Dcal \circ X$.
\end{proof}

Using this lemma, \cref{proposition B} allows us to describe the slices of $\Ocal \hookrightarrow \OC$ as follows.

\begin{prop}\label{prop:Disk-final-in-slice}
    For all $M \in \OC$ the functor 
    \[
        \col'_{/M}\colon \Disk_{/M} \too \Ocal \times_{\OC} \OC_{/M}
    \]
    (induced by $\col'\colon \Mfd_1^\partial \to \OC$ from \cref{def:col}) is final.
\end{prop}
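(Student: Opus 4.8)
The plan is to deduce this from \cref{lem:Quillen-A-variant} and \cref{proposition B}. Let
\[
  X\colon \Disk_{/M} \too \Disk_1 \too \Ocal
\]
be the diagram obtained by forgetting the structure map and then applying (the restriction of) $\col'$; it lands in $\Ocal$ because $\col'$ fixes the objects of $\Disk_1$, so $X(D,i)=D$ for $(D,i)\in\Disk_{/M}$. Unwinding the definition of factorization homology, \cref{proposition B} says exactly that the colimit of the associated representable presheaves is
\[
  \colim_{(D,i)\in\Disk_{/M}} \Yo_\Ocal\big(X(D,i)\big) \;=\; \int_M \Yo_\Ocal(D^1) \;\simeq\; \Yo_\Ocal(M) \;\in\; \PSh(\Ocal).
\]
Moreover, chasing through the proof of \cref{proposition B} (which identifies the indexing colimit diagram, via \cref{lem: comparediagram}, with the one from \cref{proposition A}) shows that this equivalence is the canonical comparison map; in particular its cocone is $\Yo_\Ocal$ applied leg by leg to the cocone $\col'(i)\colon X(D,i)=D\to M$.

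I would then identify the unstraightening of the restricted Yoneda presheaf $\Yo_\Ocal(M) = \Map_\OC(-,M)|_{\Ocal^\op}$. Since $\OC_{/M}\to\OC$ is the right fibration classifying $\Map_\OC(-,M)$ and unstraightening commutes with restriction along $\Ocal\hookrightarrow\OC$, we obtain a natural equivalence $\Un(\Yo_\Ocal(M)) \simeq \Ocal\times_\OC\OC_{/M}$. Feeding the diagram $X$ into \cref{lem:Quillen-A-variant} (with $\Dcal=\Ocal$) then produces a \emph{final} functor
\[
  \Disk_{/M} \too \Un\big(\Yo_\Ocal(M)\big) \simeq \Ocal\times_\OC\OC_{/M}.
\]

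It remains to identify this final functor with $\col'_{/M}$. By construction the functor of \cref{lem:Quillen-A-variant} sends $(D,i)$ to the object of the unstraightening classified by the colimit-cocone leg $\Yo_\Ocal(D)\to\Yo_\Ocal(M)$, which under the Yoneda equivalence $\Map_{\PSh(\Ocal)}(\Yo_\Ocal(D),\Yo_\Ocal(M))\simeq\Map_\OC(D,M)$ corresponds to the morphism $\col'(i)$; thus it sends $(D,i)$ to the object $\big(D\xto{\col'(i)}M\big)$ of $\Ocal\times_\OC\OC_{/M}$, which is precisely $\col'_{/M}(D,i)$, and functoriality in $(D,i)$ is governed in the same way by the functoriality of $\col'$. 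Hence $\col'_{/M}$ is final. The only genuinely non-formal ingredient is \cref{proposition B}; the step to be most careful about is this last one, matching the two a priori different descriptions of the cocone — once via $\col'$ and once via the universal cocone of the colimit in $\PSh(\Ocal)$.
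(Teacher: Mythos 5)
Your proposal is correct and follows essentially the same route as the paper: the paper's proof likewise observes that $\Ocal \times_{\OC} \OC_{/M} \to \Ocal$ is the unstraightening of $\Yo_\Ocal(M)$, invokes \cref{proposition B} to write this presheaf as $\colim_{D \in \Disk_{/M}} \Yo_\Ocal(D)$, and applies \cref{lem:Quillen-A-variant} with $I = \Disk_{/M}$. The extra care you take in matching the induced final functor with $\col'_{/M}$ (via the identification of cocones built into \cref{proposition A}, \cref{proposition B} and \cref{lem: comparediagram}) is precisely the step the paper leaves implicit, and your treatment of it is accurate.
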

\begin{proof}
    The right fibration 
    \(
        \Ocal \times_{\OC} \OC_{/M} \too \Ocal
    \)
    is by definition the unstraightening of $\Yo_\Ocal(M)$.
    By \cref{proposition B} the presheaf $\Yo_\Ocal(M)$ is the colimit of $\Yo_\Ocal(D)$ over $D \in \Disk_{/M}$, so \cref{lem:Quillen-A-variant} applied to $I = \Disk_{/M}$ yields the claim.
\end{proof}

\begin{cor}\label{cor:Dopfinalinslice}
    Suppose that $M =(\sqcup_k S^1)\sqcup (\sqcup_l D^1)\in \OC$ is a disjoint union of $k$ circles with $l$ disks where $k,l\geq 0$. 
    Then each of the functors in the composite
    \[
        \Dop \xtoo{\Delta} (\Dop)^k\times *^l 
        \to \Disk_{/\sqcup_k S^1} \times \Disk_{/\sqcup_l D^1} 
        \simeq \Disk_{/(\sqcup_k S^1)\sqcup (\sqcup_l D^1)} 
        \too \Ocal \times_{\OC} \OC_{/(\sqcup_k S^1)\sqcup (\sqcup_l D^1)}
    \]
    is final.
\end{cor}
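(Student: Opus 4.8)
The plan is to verify that each of the four functors in the displayed composite is final and then to conclude by the stability of final functors under composition. Two of the four are immediate, and two are not: the finality of $\col'_{/M}$ is exactly \cref{prop:Disk-final-in-slice}, and the finality of the functor $\Dop\to\Disk_{/S^1}$ underlying the cyclic bar construction on the $\Erm_1$-algebra $D^1\in\Ocal$ is the substantive point.

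I would first dispose of the formal factors. The equivalence $\Disk_{/\sqcup_k S^1}\times\Disk_{/\sqcup_l D^1}\simeq\Disk_{/M}$ of \cref{cor:Disk1tensordisjunct} is final, being an equivalence. The diagonal $\Delta\colon\Dop\to(\Dop)^k\times *^l=(\Dop)^k$ is final because $\Dop$ is sifted: for $k=0$ this is weak contractibility of $\Dop$ (which holds since $[0]$ is initial in $\Dop$), and for $k\ge 1$ it follows from the binary case by induction on $k$. The functor $(\Dop)^k\times *^l\to\Disk_{/\sqcup_k S^1}\times\Disk_{/\sqcup_l D^1}$ is a finite product of $k$ copies of the functor $\Dop\to\Disk_{/S^1}$ and $l$ copies of the functor $*\to\Disk_{/D^1}$ that picks out $\id_{D^1}$; since a finite product of final functors is final, it is enough to treat these. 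The functor $*\to\Disk_{/D^1}$ is final because $\id_{D^1}$ is terminal in $\Disk_{/D^1}$ — indeed already in $(\Mfd_1^\partial)_{/D^1}$ — and the inclusion of a terminal object is final.

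The crux is thus the finality of $\Dop\to\Disk_{/S^1}$. Here I would push \cref{lem: comparediagram} through the equivalences $\Cyc\simeq\Lambda_\infty$ (\cref{lem: Cyc=Lambda infty}) and $\Cyc^{\lhd\rhd}\simeq\Lambda_\infty^{\lhd\rhd}$ (\cref{cor: conedcyc=lambda_infty}) to obtain an equivalence of categories identifying $\Disk_{/S^1}$ with the paracyclic category $\Lambda_\infty$ — equivalently, by self-duality of $\Lambda_\infty$, with $\Lambda_\infty^{\op}$ — together with a single freely adjoined initial object, the new object corresponding to $(\emptyset\to S^1)\in\Disk_{/S^1}$ and the object $\tfrac{1}{n}\mathbb{Z}$ to a configuration of $n$ disks in $S^1$, and under which $\Dop\to\Disk_{/S^1}$ is identified with the composite
\[
\Dop\xtoo{j_\infty^{(\op)}}\Lambda_\infty^{(\op)}\hookrightarrow(\Lambda_\infty^{(\op)})^{\lhd}\simeq\Disk_{/S^1},
\]
where $j_\infty$ is as in \cref{thm:Ns2018cofinal}. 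The first arrow is final by \cref{thm:Ns2018cofinal} (after passing to opposites if needed), and the second, the inclusion of a category into its left cone, is final since $\Lambda_\infty$ is weakly contractible — which again follows from \cref{thm:Ns2018cofinal}, as $|\Lambda_\infty|\simeq|\Delta|\simeq *$ ($[0]$ being terminal in $\Delta$). Hence $\Dop\to\Disk_{/S^1}$ is final, and composing the four final functors establishes the corollary.

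The step I expect to require the most care is this last identification: tracking the opposites through \cref{lem: comparediagram} and its attendant equivalences, and, in particular, distinguishing the adjoined co-cone point of $(\Disk_1)_{/S^1}^{\rhd}$ from the initial object $(\emptyset\to S^1)$ already present in $\Disk_{/S^1}$ when extracting ``$\Disk_{/S^1}\simeq(\Lambda_\infty)^{\lhd}$'' from ``$(\Disk_1)_{/S^1}^{\rhd}\simeq(\Cyc^{\lhd\rhd})^{\op}$''. For the finality conclusion only the direction of the cone inclusion matters, and one may use that $j_\infty$ is in fact both initial and final (the latter being \cref{thm:Ns2018cofinal}), so that the precise handedness of the opposite is harmless. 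As an alternative that avoids this bookkeeping — but proves less — one could apply \cref{lem:Quillen-A-variant} to the cyclic bar construction of $D^1\in\Ocal$ together with \cref{proposition B} to obtain directly that $\Dop\to\Ocal\times_\OC\OC_{/M}$ is final, which does not by itself exhibit the intermediate factor $\Dop\to\Disk_{/S^1}$ as final.
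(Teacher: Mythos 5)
Your decomposition and the way you handle three of the four functors coincide with the paper's proof: siftedness of $\Dop$ for the diagonal (including the $k=0$ case via weak contractibility), the terminal object $\id_{D^1}\in\Disk_{/D^1}$ for the disk factors, the equivalence from \cref{cor:Disk1tensordisjunct}, and \cref{prop:Disk-final-in-slice} for the last arrow. For the circle factor $\Dop\to\Disk_{/S^1}$ you spell out what the paper compresses into one clause, and the extra steps are correct: the initial object $(\emptyset\to S^1)$ really is there, and the inclusion of a weakly contractible category into its left cone is final, with $|\Lambda_\infty|\simeq|\Delta|\simeq *$ available from \cref{thm:Ns2018cofinal} in either handedness.

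The one genuine problem is the escape hatch ``$j_\infty$ is in fact both initial and final'': this is false, and the handedness is exactly where it matters. At the object $\mathbb{Z}=j_\infty([0])\in\Lambda_\infty$, an object of the comma category $\Delta\times_{\Lambda_\infty}(\Lambda_\infty)_{\mathbb{Z}/}$ is a pair $([m-1],f)$ with $f\colon\mathbb{Z}\to\tfrac1m\mathbb{Z}$ determined by $f(0)$, and every morphism in the image of $j_\infty$ has the form $\id_{\mathbb{Z}}\times\phi$ under $\tfrac1m\mathbb{Z}\cong\mathbb{Z}\times[m-1]$, hence preserves $\lfloor f(0)\rfloor$; so this comma category splits into $\mathbb{Z}$ many nonempty pieces and is not weakly contractible. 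Thus $j_\infty\colon\Delta\to\Lambda_\infty$ is not final; it is initial, equivalently $j_\infty^\op\colon\Dop\to\Lambda_\infty^\op$ is final, which is the statement of Nikolaus--Scholze that is actually needed both here and in the proof of \cref{proposition B'} (the wording of \cref{thm:Ns2018cofinal} carries the same handedness slip, and the paper's use of it in this corollary, ``$\Dop\to\Lambda_\infty$ is final'', is the correct dual). Fortunately your own bookkeeping already fixes this: the identification through \cref{lem: comparediagram} and \cref{lem: Cyc=Lambda infty} exhibits the functor in question as $\Dop\simeq\Cyc_{\simp}^\op\hookrightarrow\Cyc^\op\simeq\Lambda_\infty^\op$ followed by the cone inclusion, so the input you need is finality of $j_\infty^\op$, not of $j_\infty$. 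Delete the ``both initial and final'' claim, cite the correctly oriented paracyclic statement, and your argument is complete and agrees with the paper's.
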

\begin{proof}
    The first arrow is final because $\Dop$ is sifted. (This is true even for $k=0$.)
    The second one is final because $\Dop \to \Lambda_\infty$ is final by \Cref{thm:Ns2018cofinal} and $\Disk_{/D^1}$ has a terminal object.
    The third one is the content of \cref{prop:Disk-final-in-slice}.
\end{proof}

This in particular allows show that $\Ocal$ and $\OC$ have equivalent classifying spaces.

\begin{cor}\label{cor:O-OC-initial}
    The inclusion $\Ocal \hookrightarrow \OC$ is initial
    and hence induces an equivalence $|\Ocal|\simeq |\OC|$. 
\end{cor}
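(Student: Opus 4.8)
The plan is to deduce this directly from \cref{cor:Dopfinalinslice} (or more economically from \cref{prop:Disk-final-in-slice}) together with the standard fact that an initial functor induces an equivalence on classifying spaces. To show that the inclusion $i\colon \Ocal \hookrightarrow \OC$ is initial, by the dual of Quillen's Theorem A it suffices to check that for every object $M \in \OC$ the slice \category{} $\Ocal \times_{\OC} \OC_{/M}$ is weakly contractible. By \cref{prop:Disk-final-in-slice}, the functor $\col'_{/M}\colon \Disk_{/M} \to \Ocal \times_{\OC} \OC_{/M}$ is final, so it induces an equivalence on classifying spaces $|\Disk_{/M}| \simeq |\Ocal \times_{\OC} \OC_{/M}|$. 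Hence it is enough to know that $\Disk_{/M}$ is weakly contractible, which is exactly \cref{cor:DiskM-weakly-contractible}. Therefore every slice of $i$ is weakly contractible, so $i$ is initial.

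Once initiality is established, the equivalence of classifying spaces is formal: an initial functor is in particular cofinal (i.e.\ $|\Ocal \times_{\OC} \OC_{/M}|\simeq *$ implies $|\OC_{M/}\times_{\OC}\Ocal|\simeq *$ as well, or one simply invokes that initial functors are weak homotopy equivalences), so $|\Ocal| \xrightarrow{\ \simeq\ } |\OC|$. Alternatively, one can phrase the whole argument through \cref{cor:Dopfinalinslice} with $k=1,l=0$ to identify the slice over $S^1$ with $|\Dop|\simeq *$, and over a general $M$ with a finite product of such, but the cleaner route is via \cref{prop:Disk-final-in-slice} and \cref{cor:DiskM-weakly-contractible} directly.

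I do not expect any genuine obstacle here: all the real work has been done in \cref{proposition B} and its corollaries. The only point requiring a moment's care is making sure one uses the correct variance --- the functor $\col'_{/M}$ goes \emph{into} the slice $\Ocal\times_{\OC}\OC_{/M}$, and \cref{prop:Disk-final-in-slice} asserts it is final, which feeds into Theorem A to conclude initiality of $i$ (not cofinality); since this holds for \emph{all} $M\in\OC$, and cofinality/initiality of $i$ is tested by contractibility of exactly these slices, the conclusion follows.

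\begin{proof}[Proof of \cref{cor:O-OC-initial}]
    By (the dual of) Quillen's Theorem A, it suffices to show that for every $M \in \OC$ the \category{} $\Ocal \times_{\OC} \OC_{/M}$ is weakly contractible.
    By \cref{prop:Disk-final-in-slice} the functor $\col'_{/M}\colon \Disk_{/M} \to \Ocal \times_{\OC} \OC_{/M}$ is final and hence induces an equivalence of classifying spaces
    \[
        |\Disk_{/M}| \xtoo{\simeq} |\Ocal \times_{\OC} \OC_{/M}|.
    \]
    The left-hand side is contractible by \cref{cor:DiskM-weakly-contractible}, so the right-hand side is too.
    This shows that $\Ocal \hookrightarrow \OC$ is initial, and in particular it induces an equivalence $|\Ocal| \simeq |\OC|$ on classifying spaces.
\end{proof}
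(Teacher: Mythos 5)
Your proof is correct and takes essentially the same approach as the paper: both reduce to showing the slices $\Ocal \times_{\OC} \OC_{/M}$ are weakly contractible and then invoke (the dual of) Quillen's Theorem A. The only cosmetic difference is that the paper uses \cref{cor:Dopfinalinslice} to identify the slice's classifying space with $|\Dop|$, while you route through \cref{prop:Disk-final-in-slice} and \cref{cor:DiskM-weakly-contractible} to identify it with $|\Disk_{/M}|$; you note this alternative yourself, and both are equally valid.
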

\begin{proof}
    \cref{cor:Dopfinalinslice} implies that $|\Dop| \simeq |\Ocal \times_{\OC} \OC_{/M}$, which is thus contractible.
    By Quillen's Theorem A (or rather the opposite of \cite[Theorem 4.1.3.1]{HTT}) this implies that $\Ocal \hookrightarrow \OC$ is initial and thus a weak equivalence.
\end{proof}

\subsection{Operadic Kan extensions}
Before we proceed to prove the main theorem (\cref{thm:main}), we first recall a few facts about (operadic) left Kan extensions from \cite[\S3.1]{HA} and give a criterion for when they preserve strong monoidality.

\begin{defn}
    Let $i\colon \Acal \to \Bcal$ and $F\colon \Acal \to \Ccal$ be lax symmetric monoidal functors.
    An \hldef{operadic left Kan extension} of $F$ along $i$ is a tuple $(G, \alpha)$ of a lax symmetric monoidal functor $G\colon \Bcal \to \Ccal$ and a symmetric monoidal transformation $\alpha\colon F \to G \circ i$ such that 
    for all $c \in \Ccal$ the transformation induced by $\alpha$ exhibits $G(-) \otimes c\colon \Bcal \to \Ccal$ as the pointwise left Kan extension of $F(-) \otimes c\colon \Acal \to \Ccal$ along $i$.
\end{defn}

In particular, setting $c=\unit$ we see that (as a non-monoidal functor) $G$ is the pointwise left Kan extension of $F$ along $i$.
The existence of operadic left Kan extension is guaranteed if the relevant colimits exist and are preserved by tensoring with every object.

\begin{thm}[{\cite[\S3.1]{HA}}]\label{prop:Kan-extension-existence}
    Let $i\colon \Acal \to \Bcal$ and $F\colon \Acal \to \Ccal$ be lax symmetric monoidal functors and suppose that for all $b \in \Bcal$ the diagram
    \[
        \Acal \times_\Bcal \Bcal_{/b}
        \too \Acal
        \xtoo{F} \Ccal 
    \]
    admits a colimit and that this colimit is preserved by the functor $-\otimes c\colon \Ccal \to \Ccal$ for all $c \in \Ccal$.
    Then the category
    \[
        \Fun^{\otimes\rm lax}(\Bcal, \Ccal) \times_{\Fun^{\otimes \rm lax}(\Acal, \Ccal)} \Fun^{\otimes\rm lax}(\Acal, \Ccal)_{/F}
    \]
    has an initial object, which is an operadic left Kan extension of $F$ along $i$.
\end{thm}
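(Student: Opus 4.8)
The plan is to deduce this from Lurie's theory of operadic left Kan extensions \cite[\S3.1]{HA}; essentially all that remains is to spell out the translation. First I would pass to the language of \operads{}: a symmetric monoidal \category{} is the same as a coCartesian fibration over $\Fin_*$, and a lax symmetric monoidal functor is the same as a map of \operads{} over $\Fin_*$, so $i$ and $F$ become maps of \operads{} $i^\otimes\colon\Acal^\otimes\to\Bcal^\otimes$ and $F^\otimes\colon\Acal^\otimes\to\Ccal^\otimes$ with $\Ccal^\otimes\to\Fin_*$ coCartesian, and the pair $(G,\alpha)$ we want to produce is exactly an operadic left Kan extension of $F^\otimes$ along $i^\otimes$ relative to $\Fin_*$.

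The input that Lurie's existence result \cite[\S3.1]{HA} requires is, for each object $\overline b=(b_1,\dots,b_n)$ of $\Bcal^\otimes$ over $\langle n\rangle\in\Fin_*$, that the slice diagram
\[
    \Acal^\otimes\times_{\Bcal^\otimes}\big((\Bcal^\otimes)^{\mathrm{act}}\big)_{/\overline b}\too\Acal^\otimes\xtoo{F^\otimes}\Ccal^\otimes
\]
admit an operadic colimit in $\Ccal^\otimes$, where the superscript ``$\mathrm{act}$'' restricts to active morphisms. The main step is to produce these operadic colimits from the hypothesis of the theorem. Since $\Bcal^\otimes$ comes from a genuine symmetric monoidal \category{}, an active morphism with target $b_1\oplus\dots\oplus b_n$ amounts to a partition of the source together with, for each block $k$, a morphism of $\Bcal$ with target $b_k$; the indexing \category{} above therefore decomposes, compatibly with $F^\otimes$, as a product over $k=1,\dots,n$ of the active slices over the individual $b_k$. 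By the characterization of operadic colimits in a symmetric monoidal \category{} \cite[\S3.1]{HA}, the operadic colimit over one such factor exists as soon as the ordinary diagram $\Acal\times_\Bcal\Bcal_{/b_k}\to\Acal\xtoo{F}\Ccal$ admits a colimit that is preserved by $-\otimes c$ for every $c\in\Ccal$ — which is precisely the hypothesis — and the ``preserved by tensoring'' clause is also exactly what lets the factorwise colimits be tensored together into the operadic colimit over the whole product. Hence all of the operadic colimits demanded by \cite[\S3.1]{HA} exist.

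Granting this, \cite[\S3.1]{HA} produces an operadic left Kan extension $G^\otimes\colon\Bcal^\otimes\to\Ccal^\otimes$ of $F^\otimes$ along $i^\otimes$ together with a transformation $\alpha\colon F^\otimes\to G^\otimes\circ i^\otimes$; being a map of \operads{} over $\Fin_*$, $G^\otimes$ is the same datum as a lax symmetric monoidal functor $G\colon\Bcal\to\Ccal$, and $\alpha$ is the same as a symmetric monoidal transformation. Restricting the defining property of an operadic left Kan extension to objects over $\langle 1\rangle$ says exactly that $G(-)\otimes c$ is the pointwise left Kan extension of $F(-)\otimes c$ along $i$ for every $c$; taking $c=\unit$ then recovers that the underlying functor of $G$ is the pointwise left Kan extension of the underlying functor of $F$. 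Finally, the asserted initiality of $(G,\alpha)$ in
\[
    \Fun^{\otimes\rm lax}(\Bcal, \Ccal) \times_{\Fun^{\otimes \rm lax}(\Acal, \Ccal)} \Fun^{\otimes\rm lax}(\Acal, \Ccal)_{/F}
\]
is the standard universal property of operadic left Kan extensions from \cite[\S3.1]{HA}, so no further work is needed.

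I expect the only genuinely technical point to be the reduction in the second paragraph: matching the active slices $\Acal^\otimes\times_{\Bcal^\otimes}((\Bcal^\otimes)^{\mathrm{act}})_{/\overline b}$ appearing in Lurie's set-up with products of the ordinary slices $\Acal\times_\Bcal\Bcal_{/b_k}$ from the hypothesis, and then invoking the pointwise formula for operadic colimits in a symmetric monoidal \category{} so that the $\infty$-operadic existence question becomes the stated condition that a colimit exists and is preserved by tensoring. All of this is carried out in \cite[\S3.1]{HA}, so in the write-up it reduces to assembling the correct citations.
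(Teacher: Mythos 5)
Your overall strategy is the same as the paper's: translate the statement into Lurie's framework of operadic left Kan extensions and invoke the existence criterion of \cite[\S3.1]{HA} together with the characterization of operadic colimits in a symmetric monoidal \category{} as colimits preserved by tensoring. However, there is a genuine gap at exactly the step you defer to \cite{HA}. Lurie's criterion (and his Definition 3.1.2.2) asks for operadic colimits of diagrams indexed by the \emph{active} slice
\[
    \Acal^\otimes_{\mathrm{act}} \times_{\Bcal^\otimes_{\mathrm{act}}} (\Bcal^\otimes_{\mathrm{act}})_{/b},
\]
whose objects are tuples $(a_1,\dots,a_n)$ in $\Acal^\otimes$ together with an active map $i(a_1)\otimes\dots\otimes i(a_n)\to b$; the results of \cite[\S3.1]{HA} (e.g.\ Example 3.1.1.17) convert such an operadic colimit into an ordinary colimit \emph{over that same active slice} preserved by $-\otimes c$, but they do not reduce the indexing \category{} to the ordinary slice $\Acal\times_\Bcal\Bcal_{/b}$ appearing in the hypothesis. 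That reduction is not ``carried out in \cite[\S3.1]{HA}''; it is the actual mathematical content of the proof. The paper supplies it by showing that the full inclusion $J\colon \Acal\times_\Bcal\Bcal_{/b}\hookrightarrow \Acal^\otimes_{\mathrm{act}} \times_{\Bcal^\otimes_{\mathrm{act}}} (\Bcal^\otimes_{\mathrm{act}})_{/b}$ admits a left adjoint, given by the cocartesian pushforward along the active map $\langle n\rangle\to\langle1\rangle$ (i.e.\ tensoring the tuple together in $\Acal$, which uses that $\Acal$ is a genuine symmetric monoidal \category{} and not merely an \operad{}); being a right adjoint, $J$ is final, so the operadic colimit condition can be tested on the ordinary slice, which is exactly what the hypothesis provides. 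Without this finality argument your second paragraph does not go through, and the same comparison is also needed for your claim that restricting the operadic Kan extension property to objects over $\langle1\rangle$ recovers the pointwise condition ``$G(-)\otimes c$ is the left Kan extension of $F(-)\otimes c$'' (the paper spends the first half of its proof verifying precisely this agreement of definitions).

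Two smaller points. First, you quantify the existence condition over all objects $\overline b=(b_1,\dots,b_n)$ of $\Bcal^\otimes$ and then propose to decompose the active slice as a product and ``tensor the factorwise colimits together''; this is unnecessary (the criterion of \cite[Proposition 3.1.3.3]{HA} only needs to be checked at objects $b$ of the underlying \category{} $\Bcal$) and introduces further unproven steps of the same kind as above. Second, note that the general compatibility corollaries in \cite[\S3.1.3]{HA} assume $\Ccal$ is compatible with \emph{all} (small enough) colimits, which is stronger than the stated hypothesis; this is another reason the slice-finality argument cannot simply be outsourced to a citation.
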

\begin{proof}
    We first need to check that our definition agrees with the one in \cite[Definition 3.1.2.2]{HA}.
    In \cite{HA}, $(G,\alpha)$ is an operadic left Kan extension of $F$, if and only if for all $b$ the transformation $\alpha$ exhibits $G(b)$ as the operadic colimit of the diagram
    \[
        D_b\colon \Acal_{\act}^\otimes \times_{\Bcal_\act^\otimes} (\Bcal_\act^\otimes)_{/b} \too 
        \Acal_{\act}^\otimes \xtoo{F} \Vcal_\act^\otimes.
    \]
    Consider the full inclusion
    \[
        J\colon \Acal \times_\Bcal \Bcal_{/b} \hookrightarrow 
        \Acal_{\act}^\otimes \times_{\Bcal_\act^\otimes} (\Bcal_\act^\otimes)_{/b} 
    \]
    where $\Acal = (\Acal^\otimes)_{\langle1\rangle}$ denotes the underlying (non-monoidal) category as usual.
    This functor admits a left-adjoint defined by using the cocartesian lifts of the unique active morphism $\langle n \rangle \to \langle 1 \rangle$.
    Therefore, $J$ is a right-adjoint and in particular final.
    It follows that $G(b)$ is the operadic colimit of $D_b$ if and only if it is the operadic colimit of $D_b \circ J$ (see \cite[Remark 3.1.1.4]{HA}).
    But, as pointed out in \cite[Example 3.1.1.17]{HA}, this is exactly the case if $G(b) \otimes c$ is the colimit of $(D_b\circ J(-))\otimes c$ for all $c \in \Ccal$.
    Quantifying over all $b \in \Bcal$ this is exactly our definition of operadic left Kan extension.

    By \cite[Proposition 3.1.3.3]{HA} the operadic left Kan extension of $F$ exists, if and only if for all $b \in \Bcal$ the diagram
    \[
        D_b\colon \Acal_{\act}^\otimes \times_{\Bcal_\act^\otimes} (\Bcal_\act^\otimes)_{/b} \too 
        \Acal_{\act}^\otimes \xtoo{F} \Vcal_\act^\otimes
    \]
    admits an operadic colimit lifting the relevant map to $\Fin_*^\act$.
    As above, we can rewrite this as a colimit over $\Acal \times_\Bcal \Bcal_{/b}$, which we assumed to exist.
\end{proof}

\begin{lem}\label{prop:disjunctive-Kan-extension}
    Let $i\colon \Acal \to \Bcal$ and $F\colon \Acal \to \Ccal$ be (strong) symmetric monoidal functors and suppose moreover that for all $b_1, b_2 \in \Bcal$ the functor
    \[
        \otimes\colon (\Acal \times_\Bcal \Bcal_{/b_1}) \times (\Acal \times_\Bcal \Bcal_{/b_1})
        \too \Acal \times_\Bcal \Bcal_{/b_1\otimes b_2} 
    \]
    is final. 
    If the operadic left Kan extension $(G,\alpha)$ of $F$ along $i$ exits, the lax symmetric monoidal functor $G$ is in fact strong symmetric monoidal.
\end{lem}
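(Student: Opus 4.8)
The plan is to exploit the pointwise formula for the operadic left Kan extension together with the finality hypothesis, reducing the strong monoidality of $G$ to a comparison of colimit diagrams. First I would recall that, since $(G,\alpha)$ is an operadic left Kan extension, for every $c\in\Ccal$ the functor $G(-)\otimes c$ is the pointwise left Kan extension of $F(-)\otimes c$ along $i$. Writing $\Acal_{/b} \coloneq \Acal\times_\Bcal\Bcal_{/b}$ and $p_b\colon \Acal_{/b}\to\Acal$ for the projection, this gives natural equivalences $G(b)\otimes c\simeq\colim_{\Acal_{/b}}\big((F\circ p_b)\otimes c\big)$, and in particular (taking $c=\unit$) $G(b)\simeq\colim_{\Acal_{/b}}F\circ p_b$. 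The crucial consequence is that tensoring with an arbitrary object of $\Ccal$ preserves each of these specific colimits, even though $\Ccal$ need not have this property in general.

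Next I would recall that $G$ is strong symmetric monoidal as soon as its binary lax structure maps $\mu_{b_1,b_2}\colon G(b_1)\otimes G(b_2)\to G(b_1\otimes b_2)$ and its unit map $\unit\to G(\unit)$ are equivalences. For the binary maps, the key computation chains the formula above with itself: applying it with $c=G(b_2)$, then inside the colimit with $c=F(a_1)$ (using symmetry of $\otimes$), and then commuting the two colimits yields
\[
    G(b_1)\otimes G(b_2)\;\simeq\;\colim_{(a_1,\,i(a_1)\to b_1)}\ \colim_{(a_2,\,i(a_2)\to b_2)} F(a_1)\otimes F(a_2)\;\simeq\;\colim_{\Acal_{/b_1}\times\Acal_{/b_2}}\big(F(a_1)\otimes F(a_2)\big).
\]
Since $F$ is strong monoidal, $F(a_1)\otimes F(a_2)\simeq F(a_1\otimes a_2)$ naturally; and since $i$ is strong monoidal the arrows $i(a_j)\to b_j$ induce $i(a_1\otimes a_2)\simeq i(a_1)\otimes i(a_2)\to b_1\otimes b_2$, so the diagram being colimited is the composite of the functor $\otimes\colon\Acal_{/b_1}\times\Acal_{/b_2}\to\Acal_{/b_1\otimes b_2}$ from the hypothesis with $F\circ p_{b_1\otimes b_2}$. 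Finality of $\otimes$ then identifies this colimit with $\colim_{\Acal_{/b_1\otimes b_2}}F\circ p_{b_1\otimes b_2}\simeq G(b_1\otimes b_2)$. The unit map is the arity‑$0$ instance of exactly this scheme (the relevant $0$‑ary finality statement being automatic in, or added to the hypotheses for, the situations of interest), so the same argument applies there.

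The step I expect to be the main obstacle is not the colimit manipulation itself but the verification that the composite equivalence $G(b_1)\otimes G(b_2)\simeq G(b_1\otimes b_2)$ produced above really is the canonical lax structure map $\mu_{b_1,b_2}$, rather than merely some equivalence between the two objects. Handling this requires carefully tracing through the construction of the lax symmetric monoidal structure on an operadic left Kan extension in \cite[\S3.1]{HA}: one must check that $\mu_{b_1,b_2}$ is, by its very construction, the map on colimits induced by the functor $\otimes$ together with the monoidality constraint of $F$ — exactly the map we have been computing. Once that bookkeeping is in place, finality of $\otimes$ gives the claim directly, and hence $G$ is strong symmetric monoidal.
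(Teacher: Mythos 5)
Your colimit computation is the same chain of reductions as the paper's: use the operadic Kan extension property with $c=G(b_2)$ and $c=F(a_1)$ to write $G(b_1)\otimes G(b_2)$ as an iterated colimit of $F(a_1)\otimes F(a_2)$, use strong monoidality of $F$ to pass to $F(a_1\otimes a_2)$, and use the finality hypothesis plus the pointwise Kan extension formula to identify the result with $G(b_1\otimes b_2)$. The obstacle you single out at the end — checking that the resulting abstract equivalence is the canonical lax structure map $\mu_{b_1,b_2}$ — is indeed the one real issue, and as written your argument leaves it open: an equivalence of objects does not show the lax structure map is invertible, and "trace through the construction in \cite[\S3.1]{HA}" is exactly the bookkeeping you have not done.

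The paper avoids this bookkeeping entirely by reorganizing the argument so that $\mu_{b_1,b_2}$ is present from the start rather than recovered at the end. Since $\alpha\colon F\to i^*G$ is a map of \emph{lax symmetric monoidal} functors, for each $(a_j, f_j\colon i(a_j)\to b_j)$ there is a commutative rectangle, functorial in $(a_1,a_2)$, whose rows are $F(a_1)\otimes F(a_2)\to i^*G(a_1)\otimes i^*G(a_2)\to G(b_1)\otimes G(b_2)$ and $F(a_1\otimes a_2)\to i^*G(a_1\otimes a_2)\to G(b_1\otimes b_2)$, and whose right-hand vertical edge is literally $\mu_{b_1,b_2}$ (constant in $(a_1,a_2)$). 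Taking the colimit over $(a_1,a_2)\in(\Acal\times_\Bcal\Bcal_{/b_1})\times(\Acal\times_\Bcal\Bcal_{/b_2})$ of this rectangle, the top composite becomes an equivalence by the operadic Kan extension property (first with $c=F(a_1)$, then $c=G(b_2)$), the left vertical map is an equivalence because $F$ is strong monoidal, and the bottom composite is an equivalence by finality of $\otimes$ followed by the pointwise Kan extension formula for $G(b_1\otimes b_2)$; hence $\mu_{b_1,b_2}$ is an equivalence with no further identification needed. I recommend recasting your proof in this form; it turns your flagged "main obstacle" into a non-issue. (One further small point, which the paper also glosses over: strong monoidality also requires the unit map $\unit_\Ccal\to G(\unit_\Bcal)$ to be an equivalence; your parenthetical about the nullary case should either be made into an explicit hypothesis or verified in the application, where it holds because $\unit_\Bcal$ lies in the image of $i$ and the relevant slice has a terminal object.)
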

\begin{proof}
    Let $b_1, b_2 \in \Bcal$ and $(a_j,f_j\colon i(a_j) \to b_j) \in \Acal \times_\Bcal \Bcal_{/b_j}$.
    Then because $\alpha\colon F \to i^* G$ is a map of lax symmetric monoidal functors, we get a commutative diagram
\[\begin{tikzcd}
	{F(a_1) \otimes F(a_2)} && {i^*G(a_1)\otimes i^*G(a_2)} && {G(b_1) \otimes G(b_2)} \\
	{F(a_1\otimes a_2)} && {i^*G(a_1\otimes a_2)} && {G(b_1\otimes b_2)}
	\arrow["{\alpha_{a_1} \otimes \alpha_{a_2}}", from=1-1, to=1-3]
	\arrow[from=1-1, to=2-1]
	\arrow["{G(f_1) \otimes G(f_2)}", from=1-3, to=1-5]
	\arrow[from=1-3, to=2-3]
	\arrow[from=1-5, to=2-5]
	\arrow["{\alpha_{a_1 \otimes a_2}}", from=2-1, to=2-3]
	\arrow["{G(f_1 \otimes f_2)}", from=2-3, to=2-5]
\end{tikzcd}\]
    functorially in the $(a_j,f_j)$.
    We now consider the composite rectangle and by taking the colimit over both $a_1$ and $a_2$ (with $f_j$ implicit) we obtain the composite rectangle in the diagram
\[\begin{tikzcd}[column sep=small]
	{\colim\limits_{a_1 \in \Acal \times_\Bcal \Bcal_{/b_1}} \colim\limits_{a_2 \in \Acal \times_\Bcal \Bcal_{/b_2}} F(a_1) \otimes  F(a_2)} &
    {\colim\limits_{a_1 \in \Acal \times_\Bcal \Bcal_{/b_1}} F(a_1) \otimes G(b_2)} & 
    {G(b_1) \otimes G(b_2)} \\
	{\colim\limits_{a_1 \in \Acal \times_\Bcal \Bcal_{/b_1}} \colim\limits_{a_2 \in \Acal \times_\Bcal \Bcal_{/b_2}} F(a_1\otimes a_2)} & {\colim\limits_{a \in \Acal \times_\Bcal \Bcal_{/b_1 \otimes b_2}} F(a)} & {G(b_1\otimes b_2)}
	\arrow["\simeq", from=1-1, to=1-2]
	\arrow[from=1-1, to=2-1]
	\arrow["\simeq", from=1-2, to=1-3]
	\arrow[from=1-3, to=2-3]
	\arrow[from=2-1, to=2-2]
	\arrow["\simeq", from=2-2, to=2-3]
\end{tikzcd}\]
    where the bottom right horizontal map is an equivalence because we know that $\alpha\colon F \to i^*G$ exhibits $G$ as the pointwise left Kan extension of $F$ along $i$.
    The two maps in the top are equivalences because $\alpha$ exhibits $G \otimes c$ as the pointwise left Kan extension of $F$ along $i$.
    (Here we use the case of $c = F(a_1)$ for the left map and $c = G(b_2)$ for the right map.)
    The left arrow is an equivalence because we assumed $F$ to be strong symmetric monoidal, and the bottom left arrow is an equivalence because we assumed that the functor 
    \[
        \otimes\colon (\Acal \times_\Bcal \Bcal_{/b_1}) \times (\Acal \times_\Bcal \Bcal_{/b_1})
        \too \Acal \times_\Bcal \Bcal_{/b_1\otimes b_2} 
    \]
    is final. 
    It follows that the right vertical map is an equivalence, which shows that $G$ is strong symmetric monoidal.
\end{proof}

\subsection{Main theorem}
Now we are ready to prove the main theorem of this paper in its utmost generality, which in particular implies \cref{thm:main} from the introduction.

\begin{thm}\label{thm:extending-from-O-to-OC}
    Let $\Vcal \in \SM$ be a symmetric monoidal \category{} and $F\colon \Ocal \too \Vcal$ a symmetric monoidal functor such that the diagram
    \[
        \Disk_{/S^1} \xtoo{\col'} \Ocal \xtoo{F} \Vcal
    \]
    admits a colimit and this colimit is preserved by $-\otimes v\colon \Vcal \to \Vcal$ for all $v \in \Vcal$.
    Then the operadic Kan extension $i_!F$ of $F$ along $i$ exists and is strong symmetric monoidal.
    Moreover, $i_!F$ is the \emph{unique} symmetric monoidal functor $i_!F\colon \OC \to \Vcal$ with $i^*i_!F \simeq F$ such that the canonical map
    \begin{equation}\label{eq: goodTFT}
        \int_{S^1} F(D^1) \too i_!F(S^1) 
    \end{equation}
    induced by the diagram in \cref{def:col} is an equivalence.
\end{thm}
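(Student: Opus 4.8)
The plan is to produce $i_!F$ as an operadic left Kan extension along $i\colon\Ocal\hookrightarrow\OC$, using \cref{prop:Kan-extension-existence} and \cref{prop:disjunctive-Kan-extension}, and then to read off its value at $S^1$ and its uniqueness directly from the finality statement \cref{prop:Disk-final-in-slice} (itself a consequence of \cref{proposition B}). First I would verify the hypothesis of \cref{prop:Kan-extension-existence}: for every $b\in\OC$ the diagram $\Ocal\times_{\OC}\OC_{/b}\to\Ocal\xtoo{F}\Vcal$ must admit a colimit preserved by $-\otimes v$ for all $v\in\Vcal$. Since $\col'_{/b}\colon\Disk_{/b}\to\Ocal\times_{\OC}\OC_{/b}$ is final by \cref{prop:Disk-final-in-slice}, it is equivalent to check this for $\Disk_{/b}\xtoo{\col'}\Ocal\xtoo{F}\Vcal$. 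Writing $b\simeq(\sqcup_kS^1)\sqcup(\sqcup_lD^1)$, the $\otimes$-disjunctivity of \cref{cor:Disk1tensordisjunct} together with the terminal object of $\Disk_{/D^1}$ lets me rewrite, using that $F$ is symmetric monoidal, this colimit as an iterated colimit over copies of $\Disk_{/S^1}$; each stage exists and commutes with tensoring by an arbitrary object by the hypothesis on $\colim(\Disk_{/S^1}\xtoo{F\circ\col'}\Vcal)$, so the colimit exists, is preserved by $-\otimes v$, and equals $(\int_{S^1}F(D^1))^{\otimes k}\otimes F(D^1)^{\otimes l}$. \cref{prop:Kan-extension-existence} then yields the operadic left Kan extension $(i_!F,\alpha\colon F\to i^*i_!F)$, a lax symmetric monoidal functor whose underlying functor is the pointwise left Kan extension; since $i$ is fully faithful, $\alpha$ is an equivalence.

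To upgrade $i_!F$ to a strong symmetric monoidal functor I would apply \cref{prop:disjunctive-Kan-extension}, whose only remaining hypothesis is that $\otimes\colon(\Ocal\times_{\OC}\OC_{/b_1})\times(\Ocal\times_{\OC}\OC_{/b_2})\to\Ocal\times_{\OC}\OC_{/b_1\sqcup b_2}$ is final. Since $\col'$ is symmetric monoidal, this functor fits into a commuting square whose top edge is the equivalence $\Disk_{/b_1}\times\Disk_{/b_2}\xtoo{\sqcup}\Disk_{/b_1\sqcup b_2}$ of \cref{cor:Disk1tensordisjunct} and whose two vertical edges are final — the left one as a product of the final functors $\col'_{/b_i}$ of \cref{prop:Disk-final-in-slice}, the right one being $\col'_{/b_1\sqcup b_2}$; right cancellation for final functors then forces the bottom edge to be final. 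Next, since $i_!F$ is the pointwise left Kan extension, $i_!F(S^1)\simeq\colim(\Ocal\times_{\OC}\OC_{/S^1}\to\Ocal\xtoo{F}\Vcal)$, which by the finality of $\col'_{/S^1}$ is computed by the subdiagram $\Disk_{/S^1}\xtoo{\col'}\Ocal\xtoo{F}\Vcal$, i.e.\ $i_!F(S^1)\simeq\int_{S^1}F(D^1)$; chasing the colimit cone shows that this equivalence is exactly the map \cref{eq: goodTFT} attached to $i_!F$.

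For uniqueness I would use the universal property from \cref{prop:Kan-extension-existence}: any symmetric monoidal $F'\colon\OC\to\Vcal$ together with a chosen equivalence $e\colon F\xtoo{\simeq}i^*F'$ defines an object of the comma category in which $(i_!F,\alpha)$ is initial, hence receives a symmetric monoidal transformation $\phi\colon i_!F\to F'$ with $i^*\phi\circ\alpha\simeq e$. It then suffices to show $\phi$ is a pointwise equivalence. As $\phi$ is a transformation of strong symmetric monoidal functors and every object of $\OC$ is equivalent to some $(\sqcup_kS^1)\sqcup(\sqcup_lD^1)$, this reduces to $\phi_{D^1}$ and $\phi_{S^1}$: the former equals $e_{D^1}\circ\alpha_{D^1}^{-1}$ and so is invertible since both $e$ and $\alpha$ are; and for the latter, precomposing $\phi_{S^1}$ with the equivalence $\int_{S^1}F(D^1)\xtoo{\simeq}i_!F(S^1)$ from the previous paragraph recovers, by a cone-chase using naturality of $\phi$ and the identity $i^*\phi\circ\alpha\simeq e$, precisely the analogous canonical map $\int_{S^1}F(D^1)\to F'(S^1)$, which is an equivalence by hypothesis — so $\phi_{S^1}$ is too. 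Hence $\phi$ is an equivalence and $F'\simeq i_!F$.

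Most of this is assembly: the genuinely non-formal input is already isolated in \cref{proposition B} (equivalently, the denseness of $\Ocal$ in $\OC$), from which the finality statements used above derive and without which the argument would collapse. Within the present proof the two places I would be most careful are (i) the verification of the finality hypothesis of \cref{prop:disjunctive-Kan-extension} through the right-cancellation square — this is exactly where $\otimes$-disjunctivity of the slices of $\OC$, and not merely of $\Mfd_1^\partial$, enters — and (ii) the identification of the two a priori distinct canonical maps into $i_!F(S^1)$ and $F'(S^1)$, which is what makes the $2$-out-of-$3$ argument for $\phi_{S^1}$ legitimate.
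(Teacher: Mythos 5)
Your proposal is correct and follows essentially the same route as the paper's proof: existence via \cref{prop:Kan-extension-existence} after reducing the slice colimits to iterated $\Disk_{/S^1}$-colimits using \cref{prop:Disk-final-in-slice} and $\otimes$-disjunctivity, strong monoidality via the same finality square feeding \cref{prop:disjunctive-Kan-extension}, and uniqueness by producing the comparison map from the universal property and checking it on $D^1$ and $S^1$ before invoking monoidal generation of $\OC$.
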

\begin{proof}
    For $M, N \in \OC$ the symmetric monoidal functor $\col'\colon \Mfd_1^\partial \to \OC$ induces a commutative square
    \[\begin{tikzcd}
        {\Disk_{/M} \times \Disk_{/N}} \ar[r] \ar[d,"\sqcup"] & 
        (\Ocal \times_{\OC} \OC_{/M}) \times (\Ocal \times_{\OC} \OC_{/N})\ar[d,"\sqcup"]\\
        {\Disk_{/M \sqcup N}} \ar[r] & 
        \Ocal \times_{\OC} \OC_{/M\sqcup N}
    \end{tikzcd}\]
    where the left functor is an equivalence because $\Mfd_1^\partial$ is $\otimes$-disjunctive by \cref{cor:Disk1tensordisjunct}.
    The horizontal functors are final by \cref{prop:Disk-final-in-slice} and hence the right functor must also be final.
    Therefore, \cref{prop:disjunctive-Kan-extension} shows that the left Kan extension is strong symmetric monoidal if it exists.

    To see that the operadic Kan extension exists it suffices, by \cref{prop:Kan-extension-existence} and \cref{prop:Disk-final-in-slice}, to check that for every $M \in \OC$ the diagram
    \[
        \Disk_{/M} \xtoo{\col'} \Ocal \times_{\OC} \OC_{/M} \too \Ocal \xtoo{F} \Vcal
    \]
    admits a colimit that is moreover preserved by tensoring with any object $v \in \Vcal$.
    This is clear when $M$ is in $\Ocal$ as then $\Disk_{/M}$ has $(M, \id_M)$ as a terminal object.
    We may thus induct over the number of circles in $M$ and assume that $M = N \sqcup S^1$ and that the claim holds for $N$.
    Then the colimit can be computed as
    \[
        \colim_{D \in \Disk_{/M}} F(M) 
        \simeq \colim_{D_1 \in \Disk_{/N}} \colim_{D_2 \in \Disk_{/S^1}} F(D_1 \sqcup D_2) 
        \simeq \colim_{D_1 \in \Disk_{/N}} \colim_{D_2 \in \Disk_{/S^1}} (F(D_1) \otimes F(D_2)).
    \]
    This colimit exists and is preserved by $-\otimes v$ because $\colim_{D_2 \in \Disk_{/S^1}} F(D_2)$ exists and is preserved by $F(D_1) \otimes - \otimes v$ (by assumption of the theorem), and because $\colim_{D_1 \in \Disk_{/N}} F(D_1)$ exists and is preserved by $- \otimes \colim_{D_2 \in \Disk_{/S^1}} F(D_2) \otimes v$ (by induction hypothesis).
    
    Finally, we need to argue that $i_!F$ is the unique extension for which \cref{eq: goodTFT} is an equivalence.
    Firstly, note that \cref{eq: goodTFT} is indeed an equivalence for $i_!F$ because the map
    \[
        \colim_{D \in \Ocal \times_{\OC} \OC_{/S^1}} F(D) \too i_!F(S^1)
    \]
    is an equivalence and \cref{prop:Disk-final-in-slice} allows us to rewrite the colimit as a colimit over $\Disk_{/S^1}$, which computes the factorization homology.
    Now suppose that $G$ is some other symmetric monoidal extension of $F$, i.e.~with $F \simeq i^*G$.
    By adjunction, we get a map $\alpha\colon i_!F \to G$ and this is an equivalence when restricted to $\Ocal$.
    If $G$ satisfies that \cref{eq: goodTFT} is an equivalence, then the commutative diagram induced by $\alpha$ shows that $\alpha_{S^1}\colon (i_!F)(S^1) \to G(S^1)$ is also an equivalence.
    Because $i_!F$ and $G$ are (strong) symmetric monoidal and every object in $\OC$ is a disjoint union of disks and circles, it follows that $\alpha$ is an equivalence.
\end{proof}

Often we can just assume that all the colimits of the desired shape exist and are preserved by the tensor product.

\begin{defn}[{\cite[Definition 3.1.1.18]{HA}}]
    Let $\Vcal$ be a symmetric monoidal \category{} and $\Kcal$ a collection of \categories{}.
    We say that $\Vcal$ is \hldef{compatible with $\Kcal$-indexed colimits} if $\Vcal$ admits colimits of shape $K$ for all $K \in \Kcal$ and the functor $x\otimes -\colon \Vcal \to \Vcal$ preserves colimits of shape $K$ for all $K \in \Kcal$ and $x \in \Vcal$.
    When $\Kcal = \{\Dop\}$ we say that $\Vcal$ is \hldef{compatible with geometric realizations}.
\end{defn}

Using this notation, \cref{prop:Kan-extension-existence} gives us the following.
If $i\colon \Acal \to \Bcal$ is a lax symmetric monoidal functor, and $\Vcal$ a symmetric monoidal \category{} compatible with $\{\Acal \times_\Bcal \Bcal_{/b}\}_{b \in \Bcal}$-indexed colimits,
then there is an adjunction
\[
    i_!\colon \Fun^{\otimes,\rm lax}(\Acal, \Vcal) \adj  \Fun^{\otimes,\rm lax}(\Bcal, \Vcal) \cocolon i^*
\]
such that for every lax $F\colon \Acal \to \Vcal$ the unit map $F \to i^*i_!F$ exhibits $i_!F$ as the operadic left Kan extension of $F$.

If we further assume that $i$ is strong symmetric monoidal and satisfies the finality conditions on slices as in \cref{prop:disjunctive-Kan-extension}, then the above adjunction restricts to an adjunction 
\[
    i_!\colon \Fun^{\otimes}(\Acal, \Vcal) \adj  \Fun^{\otimes}(\Bcal, \Vcal) \cocolon i^*
\]
on \categories{} of strong symmetric monoidal functors.

\begin{cor}\label{cor:extending-O-to-OCwithgeomreal}
    Let $\Vcal \in \SM$ be a symmetric monoidal \category{} compatible with geometric realizations.
    Then there is an adjunction
    \[
        i_!\colon \Fun^\otimes(\Ocal, \Vcal) \adj \Fun^\otimes(\OC, \Vcal) \cocolon i^\ast,
    \]
    where $i_!$ is fully faithful, i.e.~every symmetric monoidal functor $F\colon \Ocal \to \Vcal$ can be extended to $\OC$ and $i_!F$ is initial among such extensions.
    Moreover, a symmetric monoidal functor $G\colon \OC \to \Vcal$ with $i^*G \simeq F$ is the left Kan extension if and only if the canonical map
    \begin{equation}\label{eq: goodTFTcor}
        \int_{S^1} F(D^1) \too G(S^1) 
    \end{equation}
    induced by the diagram in \cref{def:col} is an equivalence.
\end{cor}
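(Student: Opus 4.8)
\textit{Proof proposal.}
The plan is to deduce the corollary from \cref{thm:extending-from-O-to-OC} together with the general operadic Kan extension formalism recalled immediately before the corollary. The first task is to check that the compatibility hypothesis of \cref{thm:extending-from-O-to-OC} is automatic for \emph{every} symmetric monoidal $F\colon \Ocal \to \Vcal$ once $\Vcal$ is compatible with geometric realizations. By \cref{cor:Dopfinalinslice} (applied with $k=1$, $l=0$) the canonical functor $\Dop \to \Disk_{/S^1}$ is final, so the colimit of $\Disk_{/S^1} \xto{\col'} \Ocal \xto{F} \Vcal$ is computed as the geometric realization of the cyclic bar construction on $F(D^1)$; this colimit therefore exists and is preserved by $-\otimes v$ for all $v\in\Vcal$ precisely because $\Vcal$ is compatible with geometric realizations. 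Hence \cref{thm:extending-from-O-to-OC} applies to each such $F$ and yields that $i_!F$ exists, is strong symmetric monoidal, and is the \emph{unique} symmetric monoidal functor extending $F$ for which \cref{eq: goodTFTcor} is an equivalence. In particular the final ``if and only if'' clause of the corollary is exactly a restatement of this uniqueness.

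For the adjunction I would invoke the general statement recalled just before the corollary: it suffices that $i\colon \Ocal \hookrightarrow \OC$ be strong symmetric monoidal (clear), that the slice-finality conditions of \cref{prop:disjunctive-Kan-extension} hold (these were established in the proof of \cref{thm:extending-from-O-to-OC}, using that $\Mfd_1^\partial$ is $\otimes$-disjunctive by \cref{cor:Disk1tensordisjunct} and that $\Disk_{/M}\to\Ocal\times_\OC\OC_{/M}$ is final by \cref{prop:Disk-final-in-slice}), and that $\Vcal$ be compatible with $\{\Ocal\times_\OC\OC_{/M}\}_{M\in\OC}$-indexed colimits. For the last point I would again reduce along final functors: $\Disk_{/M}\to\Ocal\times_\OC\OC_{/M}$ is final and $\Disk_{/M}\simeq(\Disk_{/S^1})^{\times k}\times(\Disk_{/D^1})^{\times l}$ by \cref{cor:Disk1tensordisjunct}, where $\Disk_{/D^1}$ has a terminal object and $\Dop\to\Disk_{/S^1}$ is final; hence any colimit of shape $\Ocal\times_\OC\OC_{/M}$ is an iterated geometric realization, and one checks by induction on $k$ that iterated geometric realizations exist in $\Vcal$ and are preserved by $-\otimes v$ because a single geometric realization is. This gives the adjunction $i_!\dashv i^*$ on \categories{} of symmetric monoidal functors.

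Finally, $i_!$ is fully faithful because $\Ocal\subset\OC$ is a full subcategory (\cref{defn:OC}), so the pointwise left Kan extension underlying $i_!F$ restricts back to $F$ along $i$, i.e.~the unit $F\to i^*i_!F$ is an equivalence; and for any symmetric monoidal $G$ with $i^*G\simeq F$ the map $i_!F\to G$ produced by the adjunction exhibits $i_!F$ as the initial object of the \category{} of symmetric monoidal extensions of $F$. I expect the only point requiring care to be the bookkeeping in the middle paragraph, namely organizing the reduction of the $\Ocal\times_\OC\OC_{/M}$-indexed colimits to iterated geometric realizations so that ``compatible with geometric realizations'' genuinely implies ``compatible with $\{\Ocal\times_\OC\OC_{/M}\}$-indexed colimits''; everything else is a direct appeal to \cref{thm:extending-from-O-to-OC} and the general Kan-extension machinery.
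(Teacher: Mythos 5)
Your proposal is correct and follows essentially the same route the paper intends: verify the hypothesis of \cref{thm:extending-from-O-to-OC} via the finality of $\Dop \to \Disk_{/S^1}$ from \cref{cor:Dopfinalinslice}, invoke the general operadic Kan extension formalism (\cref{prop:Kan-extension-existence} and \cref{prop:disjunctive-Kan-extension} together with the finality conditions already established in the proof of \cref{thm:extending-from-O-to-OC}) for the adjunction, and use fullness of $\Ocal \subset \OC$ for the full faithfulness of $i_!$. The only minor looseness is the phrase ``any colimit of shape $\Ocal\times_\OC\OC_{/M}$ is an iterated geometric realization''; more precisely, \cref{cor:Dopfinalinslice} gives a final functor $\Dop \to \Ocal\times_\OC\OC_{/M}$, so compatibility with $\Dop$-indexed colimits directly implies compatibility with $\{\Ocal\times_\OC\OC_{/M}\}_M$-indexed colimits without needing an iterated-realization bookkeeping argument, though the induction you describe (which mirrors the one in the proof of \cref{thm:extending-from-O-to-OC}) is equally valid.
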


In other words, the left Kan extension $i_!F$ of a symmetric monoidal functor $F\colon \calO\calC \to \Vcal$ canonically admits a symmetric monoidal structure.
The functor
    \[
        i_!\colon \Fun^\otimes(\Ocal, \Vcal) \hookrightarrow \Fun^\otimes(\OC, \Vcal)
    \]
is fully faithful, and its essential image consists of those symmetric monoidal functors for which \cref{eq: goodTFTcor} is an equivalence.
In the next section, we will see that in general $i_!$ is far from being an equivalence.

\section{Applications and examples}\label{sec:app}
In this section, we survey some applications of \cref{thm:extending-from-O-to-OC}. The first collection of examples arises from $\Erm_\infty$-Calabi--Yau algebras, including cochains on manifolds and finite Galois extensions. Then we look at some concrete examples of $\Erm_1$-Calabi--Yau algebras in the context of topological field theories with values in vector spaces and linear categories over a field $k$.  
Finally, we explain how our results are relevant to a variant of the oriented cobordism hypothesis in dimension $2$.

\subsection{\texorpdfstring{$\Erm_\infty$}{E-infinity}-Calabi--Yau algebras}
A rich source of $\Erm_1$-Calabi--Yau algebras are $\Erm_\infty$-Calabi--Yau algebras, which we recall below.
\begin{defn}
    An \hldef{$\Erm_\infty$-Calabi--Yau algebra} in a symmetric monoidal $\infty$-category $\Ccal$ is a pair $(A, \tau)$ of an $\Erm_\infty$-algebra $A$ in $\Ccal$ and a map $\tau\colon A \to \unit$ such that the composite
    \[
        A \otimes A \xtoo{\text{multiply}} A \xtoo{\tau} \unit
    \]
    is a non-degenerate pairing exhibiting $A$ as its own dual.
    We say that $\tau$ is a non-degenerate trace for $A$.
\end{defn}

\begin{obs}\label{obs: cyctr}
    For every $\Erm_\infty$-Calabi--Yau algebra $(A,\tau)$ we can construct an $\Erm_1$-Calabi--Yau algebra structure, as noted in \cite[Remark 4.6.5.10]{HA}.
    The argument in \cite{HA} does not explicitly deal with the $\SO(2)$-invariance, so we recall the argument and briefly explain how this can be achieved.
    Indeed, the factorization homology $\int_{S^1} A \simeq \mrm{THH}(A)$ can be computed as $\colim_{S^1} A$ where this (constant) colimit is taken in the \category{} of $\Erm_\infty$-algebras in $\calC$.
    The map $S^1 \to *$ thus induces an $\SO(2)$-invariant retraction $\int_{S^1} A \to A$.
    Using this we can factor the trace $\tau$ as
    \[
        \tau\colon A \too \int_{S^1} A \xtoo{} A \xtoo{\tau} \unit
    \]
    and then the composite of the latter two morphisms gives the desired $\SO(2)$-invariant trace.
\end{obs}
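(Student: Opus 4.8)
The plan is to produce the three pieces of data in the definition of an $\Erm_1$-Calabi--Yau algebra — an $\Erm_1$-algebra structure on $A$, an $\SO(2)$-invariant trace $\tau'\colon \int_{S^1}A\to\unit$, and non-degeneracy of the induced pairing — directly from the $\Erm_\infty$-structure on $A$. The underlying $\Erm_1$-algebra is obtained by restriction along the map of $\infty$-operads $\Erm_1\to\Erm_\infty$, so all of the content is in the trace.

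The key input is the standard identification, for an $\Erm_\infty$-algebra $A$, of the factorization homology of its underlying $\Erm_1$-algebra over a $1$-manifold $M$ with the tensoring $\int_M A\simeq |M|\otimes A$ of $\Erm_\infty$-algebras over spaces — equivalently the constant colimit $\colim_{|M|}A$ computed in $\Alg_{\Erm_\infty}(\Ccal)$ — an identification that is natural in $M$ along \emph{all} maps of underlying homotopy types and, for $M=S^1$, is $\SO(2)$-equivariant for the rotation action on the circle (see e.g.\ \cite{afmgr}). Applying $-\otimes A$ to the collapse map $S^1\to\ast$ then yields an $\SO(2)$-equivariant retraction $r\colon \int_{S^1}A\simeq S^1\otimes A\to\ast\otimes A=A$, where $A$ carries the trivial $\SO(2)$-action. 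I would then set $\tau'\coloneq\tau\circ r$. Since $\tau\colon A\to\unit$ is a map between objects with trivial $\SO(2)$-action it is canonically $\SO(2)$-equivariant, hence so is the composite $\tau'$; this supplies the $\SO(2)$-invariance that the argument in \cite{HA} leaves implicit.

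It then remains to identify the resulting pairing $A\otimes A\simeq \int_{D^1\sqcup D^1}A\xtoo{p}\int_{S^1}A\xtoo{\tau'}\unit$ with the given non-degenerate pairing $A\otimes A\xtoo{\mathrm{mult}}A\xtoo{\tau}\unit$. By construction this pairing equals $\tau\circ(r\circ p)$, so it suffices to show that $r\circ p\colon A\otimes A\to A$ is the multiplication. Under the comparison above, $p$ is induced by the inclusion of two disjoint intervals into the circle, which on underlying homotopy types is a pointing $\ast\sqcup\ast\to S^1$, so $r\circ p$ is obtained by applying $-\otimes A$ to $\ast\sqcup\ast\to S^1\to\ast$, i.e.\ to the unique map $\ast\sqcup\ast\to\ast$; the identification $(\ast\sqcup\ast)\otimes A\simeq A\otimes A$ carries this to the fold map $A\otimes A\to A$ of the $\Erm_\infty$-algebra $A$ (the coproduct in $\Alg_{\Erm_\infty}(\Ccal)$ being $\otimes$), whose underlying morphism is precisely the multiplication, and non-degeneracy then follows from the hypothesis on $\tau$. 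I expect the main obstacle to be the bookkeeping in this last step: making the naturality of $\int_M A\simeq|M|\otimes A$ in $M\in\Mfd_1^\partial$ precise enough that the embedding $D^1\sqcup D^1\hookrightarrow S^1$ really corresponds to $\ast\sqcup\ast\to S^1$ on underlying spaces, and likewise pinning down that the circle action on $\int_{S^1}A$ coming from $\mathrm{Diff}^+(S^1)$ agrees with the rotation action on $S^1\otimes A$; once these compatibilities are in hand, the rest is formal.
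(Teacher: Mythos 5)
Your proposal is correct and takes essentially the same route as the paper: identify $\int_{S^1}A$ with the constant colimit $S^1\otimes A$ taken in $\Erm_\infty$-algebras, use the collapse map $S^1\to *$ to produce an $\SO(2)$-invariant retraction $\int_{S^1}A\to A$, and post-compose with $\tau$. The only difference is that you also spell out the identification of the induced pairing with $\tau\circ\mathrm{mult}$ (via the fold map, since $\otimes$ is the coproduct of $\Erm_\infty$-algebras), a verification the paper delegates to \cite[Remark 4.6.5.10]{HA}.
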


In particular, we get for every $\Erm_\infty$-Calabi--Yau algebra $(A,\tau)$ an action of the surface operad on $\int_{S^1} A$.
\begin{rem}
    Since our construction of the action of the surface operad only uses the $\Erm_1$-Calabi--Yau structure, it seems likely that in the case where $A$ is $\Erm_\infty$-Calabi--Yau this action of the surface operad on $\int_{S^1} A$ factors through another operad, possibly trivializing some of the structure and adding other structure.
    It would be interesting to determine the $\Erm_\infty$-analogue of \cref{thm:extending-from-O-to-OC}.
\end{rem}

\begin{example}\label{ex:cochainsM}
    Let $R$ be an even-periodic $\Erm_\infty$-ring spectrum. Suppose that $M$ is an $R$-oriented even-dimensional closed manifold. Since the cochain algebra $C^*(M;R)$ is an $\Erm_\infty$-object in $\mathrm{Mod}_{R}$, the Poincar\'{e} duality pairing factors through $\int_{s^1}C^*(M;R)$ via \cref{obs: cyctr}, thereby giving rise to a cyclic trace and endowing $C^*(M;R)$ with the structure of an $\Erm_1$-Calabi--Yau object. \cref{thm:extending-from-O-to-OC} states that the associated open TFT canonically extends to an open-closed TFT $F'\colon \OC\rightarrow \mathrm{Mod}_{R}$ with $F'(S^1)=\int_{S^1}C^*(M;R)$.
\end{example}
In order to give a concrete description of the value at $S^1$, we consider the following variant:
\begin{example}\label{ex:LS}
   Suppose that $R$ is the Eilenberg--MacLane spectrum of a commutative ring and $M$ is an $R$-oriented even-dimensional closed manifold.
   We can periodize $R[t^{\pm1}]$ by adjoining an invertible generator in degree 2 to obtain an $\Erm_\infty$ ring spectrum. 
   Then $C^*(M;R)[t^{\pm1}] \simeq C^*(M;R[t^{\pm1}])$ is an $\Erm_\infty$-object in $\mathrm{Mod}_{R[t^{\pm1}]}$ and thus has the structure of an $\Erm_1$-Calabi--Yau object. Hence, we obtain an open-closed TFT $F'\colon \OC\rightarrow \mathrm{Mod}_{R[t^{\pm1}]}$ with $F'(S^1)=\int_{S^1}C^*(M;R)[t^{\pm1}]$.

    Suppose further that $M$ is simply-connected. Then $\mathrm{Mod}_{R[t^{\pm1}]}$ is equivalent to the derived category of chain complexes over $R[t^{\pm1}]$, and we can further identify 
    \[
      F'(S^1)=\int_{S^1}C^*(M;R)[t^{\pm1}]=C^*(\mathcal{L}M;R)[t^{\pm1}]
     \]
     as the cochain algebra on the free loop space on $M$ \cite[Proposition 5.3]{ayala2015factorization}\cite{jones1987cyclic,ungheretti2017free}.
\end{example}

\begin{example}\label{ex:H*(LM)notcenter}
Continuing with the previous example, upon taking homology, which passes to the homotopy category $\Vcal=\mathrm{Vect}_{R[t^{\pm1}]}$, the symmetric monoidal functor $H^*(F'(-);R)[t^{\pm1}]$ defines an open-closed TFT valued in $\Vcal$. But its value on the circle $H^*(\mathcal{L}M;R)[t^{\pm1}]$ is infinite-dimensional as a module over $R[t^{\pm1}]$, and thus cannot be the centre of $H^*(M;R)[t^{\pm1}]$.
\end{example}

\begin{example}
    Let $G$ be a finite group. 
    A map of $\Erm_\infty$-ring spectra $A\rightarrow B$ is a \hldef{$G$-Galois extension} of $A$ in the sense of Rognes \cite{rognes2008galois} if there is an $A$-linear $G$-action on $B$ and the canonical maps $A\to B^{hG}$ and $B\otimes_A B\to \prod_G B$ are both equivalences. 
    The dualizing sphere $\mathbb{S}^{\mathrm{ad}G}\simeq\mathbb{S}$ is trivial, and thus the canonical map $\delta\colon  B\simeq B\otimes \mathbb{S}^{\mathrm{ad}G}\to D_A(B)$ from the $A$-module $B$ to its $A$-linear dual is an equivalence of $A$-modules. 
    Furthermore, there is a trace map $\lambda\colon B\to A$ such that the pairing $B\otimes_A B\xto{\mu}B\xto{\lambda}A$ is nondegenerate and exhibits $B$ as its $A$-linear dual in the sense that it is left adjoint to $\delta$  \cite[\S 6.4]{rognes2008galois}. 
    Since $\mathrm{CAlg}_{A/}(\mathrm{Sp})\simeq \mathrm{CAlg}(\mathrm{Mod}_A)$, this equips $B$ with the structure of an $\mrm{E}_\infty$-Calabi--Yau object in $\mathrm{Mod}_A$. 
    It then follows from \cref{obs: cyctr} that this pairing factors through $\int_{S^1}B=\mathrm{THH}(B/A)$, which is the relative THH of $B$ taken inside $\mathrm{Mod}_A$, thereby endowing $B$ with the structure of an $\Erm_1$-Calabi--Yau object in $\mathrm{Mod}_A$. 
    
    The ring $\pi_0(B\otimes_A B)\cong \mathrm{Map}(G, \pi_0(B))$ contains a function $\chi_e$ that evaluates to 1 at the identity element $e\in G$ and 0 everywhere else. The map $B\otimes_A B\simeq \prod_G B\to B$ picks out the copy of $B$ indexed by $e$, so $B=(\prod_G B)[\chi_e^{-1}]$. 
    Therefore, we have
    \[
     \mathrm{THH}(B/A)=B\otimes_{B\otimes_A B}B\simeq B\otimes_{\prod_G B}B=B\otimes_{\prod_G B}(\prod_G B)[\chi_e^{-1}]\simeq B[1^{-1}]=B,
    \]
    so \cref{thm:extending-from-O-to-OC} tells us in particular that the $A$-module $B$ is also an algebra over the surface operad.
\end{example}

\subsection{Examples from topological field theory}
\begin{defn}
    Let $\Vcal \in \SM$ be a symmetric monoidal \category{}. 
    We define a (2-dimensional) \hldef{open topological field theory (open TFT)} to be a symmetric monoidal functor $\Ocal\rightarrow\Vcal$, and a (2-dimensional) \hldef{open-closed topological field theory (open-closed TFT)} to be a symmetric monoidal functor $\OC\rightarrow\Vcal$.
\end{defn}

\begin{rem}
    Note the definition of open-closed TFTs is not agreed upon across the literature. 
    For instance, some define it to be a symmetric monoidal functor out of $\Bord_2^\partial$ (for instance \cite{lauda2007state,lauda2008open}). Our notion agrees with that of Costello \cite{costello2007topological} in the context of topological conformal field theories. 
\end{rem}
\cref{thm:extending-from-O-to-OC} then says that if $\Vcal$ admits colimits indexed by $\Disk_{/S^1}$  and the tensor product preserves such colimits in each variable, then every open TFT valued in $\Vcal$ has a canonical extension to an open-closed TFT whose value at $S^1$ is the Hochschild homology of its value at $D^1$. 

One might ask if all open-closed TFTs arise this way. 
The answer is no in general. 
Below we will first see two examples where \cref{thm:extending-from-O-to-OC} applies, and then record examples in those contexts of open-closed TFTs that are not canonically extended from open TFTs.
\begin{example}[Knowledgeable Frobenius algebras]\label{ex:knowledgeableFrob}
    In \cite{lauda2008open}, Lauda and Pfeiffer provided a classification of symmetric monoidal functors $\Bord_2^\partial\rightarrow\Vcal$ where $\Vcal$ is a 1-category in terms of \hldef{knowledgeable Frobenius algebras}. 
    A knowledgeable Frobenius algebra consists of a tuple $(A,C,\iota,\iota^*)$, where $A$ is a symmetric Frobenius algebra, $C$ is a commutative Frobenius algebra, as well as an algebra morphism $\iota\colon C\rightarrow A$ and a coalgebra morphism $\iota^*\colon A\rightarrow C$ that satisfy certain compatibility conditions. 
    Under the equivalence between knowledgeable Frobenius algebras in $\Vcal$ and symmetrical monoidal functors $F\colon \Bord_2^\partial\rightarrow\Vcal$, $F$ is sent to a knowledgeable Frobenius algebra with $A=F(D^1)$, $C=F(S^1)$, and $\iota\colon  C\rightarrow A$ is given by the value of $F$ on the bordism from $S^1$ to $D^1$ that is the reverse to $C_1$ (\Cref{notn: Cn}).

    Here we are working with the restriction to the non-full subcategory $\OC\subset\Bord_2^\partial$, in which case we no longer have the bordism $\emptyset\rightarrow S^1$ given by the 2-disk that records the unit of the algebra $C=F(S^1)$. 
    However, it is true that if two unital algebras are isomorphic as non-unital algebras, then they are also isomorphic as unital algebras. On the other hand, any commutative Frobenius algebra is dualizable by definition, whereas the bordism $\emptyset\rightarrow S^1\sqcup S^1$ corresponding to the coevaluation map of $C=F(S^1)$ is not in $\OC$, so there is no requirement that $C$ needs to be dualizable.
\end{example}

\begin{example}[Pivotal $k$-linear categories]
Consider the symmetric monoidal bicategory
$\mathrm{Lex}^f$ of finite $k$-linear 1-categories over an algebraically closed field $k$,  whose 1-morphisms are left exact functors and the
2-morphisms are linear natural transformations.
In \cite{muller2024categorified}, M\"{u}ller and Woike showed that the 2-groupoid of open TFTs valued in $\mathrm{Lex}^f$ is equivalent to the $2$-groupoid of pivotal Grothendieck-Verdier categories in
$\mathrm{Lex}^f$. In particular, if an open TFT $F\colon \calO\rightarrow \mathrm{Lex}^f$ sends $D^1$ to a  pivotal finite tensor category $\mathcal{C}$ in the sense of  \cite{etingof2004finite}(this implies in particular that the monoidal product on $\calC$ is rigid), then there is a canonical extension of $F$ to an open-closed TFT $\bar{F}$ that sends $S^1$ to the Drinfeld centre $Z(\mathcal{C})$ of $\mathcal{C}$ \cite[Theorem 4.3]{muller2024categorified}. Note that the Drinfeld centre is the Hochschild cohomology of $\mathcal{C}$, and in the case where $\mathcal{C}$ is a pivotal finite tensor category this is canonically isomorphic to its $k$-linear dual, which is the Hochschild homology of $\mathcal{C}$ by \cite[Theorem 5.9]{muller2023lyubashenko}. We thus expect this canonical extension to be exactly the left Kan extension along the inclusion along $\Ocal\hookrightarrow \OC$, although this was not explicitly proved in \cite{muller2024categorified}. 
\end{example}

Our goal now is to produce examples of open-closed TFTs valued in $\Vect_k$ such that the value at $S^1$ is not the centre of the value at $D^1$ in a suitable sense. \cref{ex:H*(LM)notcenter} provides a manifold-theoretic counterexample. More counterexamples can be found in the world of knowledgeable Frobenius algebra in $\Vcal=\mrm{Vect}_k$ (\cref{ex:knowledgeableFrob}), the 1-category of vector spaces over an algebraically closed field $k$, such that $C$ is not the centre of $A$ in the classical sense.

\begin{example}
    Lauda and Pfeiffer showed in \cite{lauda2007state} that for $A$ a strongly separable algebra over $k$, there exists a commutative Frobenius algebra structure on its centre $Z(A)$ and the inclusion $\iota\colon Z(A)\rightarrow A$ makes $(A,Z(A),\iota,\iota^*)$ a knowledgeable Frobenius algebra in $\mathrm{Vect}_k$.
    However, not all knowledgeable Frobenius algebra are of this form. 
    In \cite[Example 2.19]{lauda2007state} they construct a knowledgeable Frobenius algebra $(A,C, \iota,\iota^*)$ over $k$ (under certain assumptions, which are satisfied for $k=\Cbb$) where $A$ is the algebra of $n$-by-$n$-matrices over $k$ and $C = k[X]/(X^2-1)$, which is not equivalent to the centre $Z(A) = k$.
\end{example}

We can also construct more trivial counter-examples by applying the following lemma to any closed TFT, given by some (non-zero) commutative Frobenius algebra $C$.
This yields an open-closed TFT $\bar{F}\colon \OC \subset \Bord_2^\partial \to \Vect_k$ with $\bar{F}(D^1) = 0$ but $\bar{F}(S^1) = C$, which is not $Z(0)=0$.

\begin{lem}\label{lem:extending-by-0}
    Every closed TFT $F\colon \Bord_2 \to \Vect_k$ can be extended (uniquely) to a functor $\bar{F}\colon \Bord_2^\partial \to \Vect_k$ satisfying $\bar{F}(D^1) = 0$.
\end{lem}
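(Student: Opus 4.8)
The plan is to write down $\bar F$ explicitly, observe that symmetric monoidality together with $\bar F(D^1)=0$ and $\bar F|_{\Bord_2}\simeq F$ leaves no freedom, and then check that the resulting assignment really is a symmetric monoidal functor. On objects one is forced to put $\bar F\big((\sqcup_a S^1)\sqcup(\sqcup_b D^1)\big)\coloneq F(\sqcup_a S^1)$ when $b=0$ and $\bar F\big((\sqcup_a S^1)\sqcup(\sqcup_b D^1)\big)\coloneq 0$ when $b\ge 1$, since any tensor product with a factor $\bar F(D^1)=0$ vanishes. On morphisms there are three cases for a bordism $W\colon M\to N$. If $M$ or $N$ contains an interval component, then $\bar F(M)=0$ or $\bar F(N)=0$, so $\bar F(W)$ is forced to be the unique zero map. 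If $M$ and $N$ are disjoint unions of circles and $\pf W=\emptyset$, then $W$ is a morphism of $\Bord_2$ and we must set $\bar F(W)\coloneq F(W)$. Finally, if $M$ and $N$ are disjoint unions of circles and $\pf W\ne\emptyset$, we set $\bar F(W)$ to be the zero map $F(M)\to F(N)$; the only remaining point is that this choice is forced too.

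That it is forced is the one geometric input. I claim every bordism $W\colon M\to N$ between disjoint unions of circles with $\pf W\ne\emptyset$ factors in $\Bord_2^\partial$ as $W\simeq W_2\circ W_1$ through a compact $1$-manifold $P$ with at least one arc component, so that $P$ contains a $D^1$. Granting this, any symmetric monoidal $\bar F$ sends $W$ to $\bar F(W_2)\circ\bar F(W_1)$, which factors through $\bar F(P)$; but $\bar F(P)$ has $\bar F(D^1)=0$ as a tensor factor and hence vanishes, forcing $\bar F(W)$ to be the zero map. To build the factorization, view $W$ as a compact surface carrying a time function $\tau\colon W\to[0,1]$ with $\tau^{-1}(0)=M$, $\tau^{-1}(1)=N$. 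After choosing a representative of the morphism class for which $\tau|_{\pf W}$ is Morse on the union of circles $\pf W$ --- permissible since $h(\Bord_2^\partial)$ only records the diffeomorphism class --- the image of $\tau|_{\pf W}$ contains a nondegenerate interval $J\subset(0,1)$ (it lies in $(0,1)$ because, $M$ and $N$ being closed, $\pf W$ is disjoint from them). Choosing $c\in J$ a regular value of both $\tau$ and $\tau|_{\pf W}$ (possible by Sard's theorem), the level set $P\coloneq\tau^{-1}(c)$ is a compact $1$-manifold with $\partial P=P\cap\pf W\ne\emptyset$, hence with an arc component, and $W=\tau^{-1}([c,1])\circ\tau^{-1}([0,c])$ is the desired factorization through $P$.

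For existence it remains to check that the assignment above is a symmetric monoidal functor $h(\Bord_2^\partial)\to\Vect_k$, equivalently one out of $\Bord_2^\partial$ since $\Vect_k$ is a $1$-category. Compatibility with composition is a short case analysis: when both bordisms being composed are $\pf=\emptyset$ bordisms between circle objects it is functoriality of $F$, and in every other case the composite --- or the intermediate object --- involves an interval component or nonempty free boundary, so both sides of the composition identity are zero maps. Compatibility with $\otimes$ is similar: a disjoint union of two bordisms lies in $\Bord_2$ exactly when both summands do, where it reduces to the monoidality of $F$, and otherwise both sides vanish. Finally $\bar F(\emptyset)=F(\emptyset)\simeq\unit$, the restriction of $\bar F$ along $\Bord_2\hookrightarrow\Bord_2^\partial$ is $F$, and $\bar F(D^1)=0$ by construction; uniqueness is the content of the first two paragraphs.

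The main obstacle is the geometric factorization claim of the second paragraph. The subtlety is that a level set $\tau^{-1}(c)$ need not meet $\pf W$ at all --- for example when $\pf W$ lies at a single height that is not a regular value of $\tau$ --- so one genuinely has to perturb $W$ within its diffeomorphism class to bring $\pf W$ into general position with respect to the time function before the slicing argument produces a factorization through an interval object.
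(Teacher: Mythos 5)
Your proof is correct, and the overall architecture (use monoidality plus $\bar F(D^1)=0$ to pin down $\bar F$ on objects; argue that $\bar F$ is forced to vanish on bordisms with free boundary by exhibiting a factorization through an object containing an arc; verify the resulting assignment is functorial because ``has free boundary or an arc in the source/target'' is an ideal under composition and $\sqcup$) is the same as the paper's. Where you diverge is in the geometric input that produces the factorization. The paper observes that any $W\colon M \to N$ with $\pf W \ne \emptyset$ factors as $M \to M \sqcup D^1 \to N$ simply by carving a small half-disk out of $W$ along the free boundary: the first piece is the cylinder on $M$ disjoint union with that half-disk viewed as a bordism $\emptyset \to D^1$, and the second is $W$ with the half-disk removed. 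This is a one-line construction requiring no genericity. You instead slice $W$ at a regular height $c$ of a Morse-perturbed time function, so the intermediate object $P=\tau^{-1}(c)$ is whatever compact $1$-manifold the slice happens to produce; you then need Sard's theorem and a transversality argument (correctly identified as the delicate point) to guarantee the slice meets $\pf W$. Both arguments are valid, and your slicing lemma is more general than what the problem requires, but for this application the paper's ``pull out a tab'' factorization is more elementary, avoids the genericity discussion entirely, and has the virtue of producing a factorization through the fixed object $M \sqcup D^1$ rather than a slice-dependent $P$. You may want to keep the simpler factorization in mind, since the same trick is used implicitly elsewhere in this circle of ideas (it is exactly the geometric content behind the ``whistle'' bordism being able to absorb free boundary).
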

\begin{proof}
    Because $\bar{F}$ is symmetric monoidal we must have $\bar{F}(M) = 0$ whenever $M$ is a $1$-manifold that is not closed.
    For any bordism $W \colon M \to N$ in $\Bord_2^\partial$ such that $W$ has free boundary we can find a factorization as $M \to M \sqcup D^1 \to N$.
    Evaluating $\bar{F}$ on this factorization we find that 
    \[
        \bar{F}(W)\colon \bar{F}(M) \too \bar{F}(M\sqcup D^1) = 0 \too \bar{F}(N)
    \]
    must be the $0$-map.
    We have therefore shown that the extension $\bar{F}$ is uniquely determined on objects and morphisms as
    \begin{align*}
        \bar{F}(M) &= \begin{cases}
            F(M) & \text{ if $M$ is closed,} \\
            0 & \text{ if $M$ has boundary,}
        \end{cases} 
        \quad \text{and} \\
        \bar{F}(W\colon M \to N) &= \begin{cases}
            F(W) & \text{ if $\partial W = M \sqcup N$,} \\
            0 & \text{ if $W$ has free boundary.}
        \end{cases} 
    \end{align*}
    It remains to check that this always is a well-defined symmetric monoidal functor.
    This is indeed the case because ``manifolds with free boundary'' behaves like an ideal:
    composing a bordism that has free boundary with an arbitrary bordism results in a bordism that has free boundary, and the same holds for disjoint union.
\end{proof}

\subsection{Relation to the non-compact cobordism hypothesis}\label{subsec:noncompact}
In this section, we explain how \cref{thm:extending-from-O-to-OC} serves as input to a proof of a variant of the (oriented) cobordism hypothesis in dimension 2, following some of the ideas briefly sketched in \cite[Section 4.2]{lurie2008classification}.
The (framed) cobordism hypothesis (in dimension $n$) was first proposed in \cite{baez1995higher}. Subsequently, there have been many works on the cobordism hypothesis in various generality (in particular variations of the tangential structure), including \cite{lurie2008classification,schommer2009classification,Harpaz-1d-cobhyp,ayala2017cobordism,grady2021geometric}. 

While the variant of the cobordism hypothesis relevant to our work is not an instance of a tangential structure, to provide context we start by quickly summarizing the (oriented) cobordism hypothesis in dimension 2 following \cite{lurie2008classification}. 
Let \hldef{$\Bord^{\mrm{or}}_{012}$} be the following symmetric monoidal $(\infty,2)$-category: 
\begin{enumerate}
    \item Objects are oriented $0$-manifolds;
    \item $1$-morphisms from $A$ to $B$ are oriented $1$-bordisms from $A$ to $B$;
    \item $2$-morphisms between $1$-morphisms $M,N\colon A\to B$ are oriented $2$-bordisms with corners $W\colon M\to N$ that restricts to trivial $1$-bordisms along $A$ and $B$;
    \item Higher morphisms are given by orientation-preserving diffeomorphisms, isotopies, etc., encoding the homotopy type of moduli spaces $B\Diff_{M\sqcup N}(W)$ of $2$-morphisms.
\end{enumerate}
The symmetric monoidal product on $\Bord^{\mrm{or}}_{012}$ is given by disjoint union.

In \cite{lurie2008classification}, Lurie provided a detailed sketch of the proof of the following thesis \cite[Theorem 4.2.26]{lurie2008classification}:
For $(\Ccal,\otimes,\mathbf{1})$ a symmetric monoidal $(\infty,2)$-category, there is an equivalence of $\infty$-groupoids 
\[
\mathrm{Fun}^\otimes(\Bord^{\mrm{or}}_{012},\Ccal)\simeq ((\Ccal^{\mathrm{fd}})^\simeq)^{h\mrm{SO}(2)}.
\] 
That is, symmetric monoidal functors $\Bord^{\mrm{or}}_{012}\to\Ccal$ are classified as the homotopy fixed points of a certain $\mrm{SO}(2)$-action on the $\infty$-groupoid of fully dualisable objects in $\Ccal$.  

In practice, it is often hard to understand concretely this $\mrm{SO}(2)$-action. 
Without it, one gets a classification of symmetric monoidal functors out of the extended \emph{framed} $2$-bordism category as fully dualisable objects of $\Ccal$. 

Alternatively, Lurie proposed that this can be achieved by relaxing the condition of fully dualisability and restricting to a wide but non-full subcategory of $\Bord^{\mrm{or}}_{012}$ on the left-hand side. 
More precisely, let $\hldef{\Bord_{012}^\nc}\subset\Bord_{012}^{\rm or}$ be the subcategory with the same objects and $1$-morphisms, and that contains precisely those $2$-morphism $W$ between $M,N\colon A\to B$ such that every path component of $W$ has nonempty intersection with $M$.

\begin{defn}[{\cite[Definition 4.2.6]{lurie2008classification}}]
For $(\Ccal,\otimes,\mathbf{1})$ a symmetric monoidal $(\infty,2)$-category, a \hldef{Calabi--Yau object} is a dualisable object $X\in\Ccal$ together with a morphism $\eta\colon  \mrm{ev}_X\circ\mrm{coev}_X\to\mrm{id}$ in $\Map_\Ccal(\mathbf{1},\mathbf{1})$ that is $\mrm{SO}(2)$-equivariant (for the canonical action on $\mrm{ev}_X\circ\mrm{coev}_X$ and the trivial action on $\id$) and is the counit for an adjunction between $\mrm{ev}_X$ and $\mrm{coev}_X$.
\end{defn}
Without loss of generality, we will assume that $\Ccal$ has duals from here on. The following \hldef{non-compact cobordism hypothesis} is proposed in \cite[Theorem 4.2.11]{lurie2008classification}:
\begin{proto}\label{protothm}
Let $(\Ccal,\otimes,\mathbf{1})$ be a symmetric monoidal $(\infty,2)$-category.
Then the $(\infty,2)$-category of symmetric monoidal functors $\Bord_{012}^\nc\to\Ccal$ is equivalent to the $\infty$-groupoid of Calabi--Yau objects of $\Ccal$. The equivalence is implemented by evaluating at the object $*$.
\end{proto}

The first observation is that $\OC$ is the lax slice $\big(\Bord_{012}^\nc\big)_{\emptyset\sslash}$.
Hence, the $(\infty,2)$-category $\Bord_{012}^\nc$ is equivalent to the weak categorical chain complex of length two given by the symmetric monoidal cocartesian fibration $\pf\colon \OC\to \Bord_1^{\mrm{or}}$.
Similarly, one can unfold $\Ccal$ to a symmetric monoidal cocartesian fibration $\Ccal_{\mathbf{1}\sslash}\to\Ccal_0$, where $\Ccal_0$ is the $(\infty,1)$-category obtained by discarding the non-invertible $2$-morphisms in $\Ccal$.%
\footnote{The equivalence between $(\infty,2)$-categories with duals and weak categorical chain complex of length two is stated in \cite[Proposition 3.3.30]{lurie2008classification}, together with a sketch of a proof.
There is ongoing work of Haugseng and Nikolaus that aims to provide a complete proof of this statement.} 
Then \cref{protothm} is equivalent to a classification of symmetric monoidal functors $\OC\to \Ccal_{\mathbf{1}\sslash}$ that preserve cocartesian fibrations in terms of  Calabi--Yau objects in $\Ccal$.
A brief sketch of the proposed proof strategy to this statement can be found in \cite[96]{lurie2008classification}.
The input to the strategy are relative versions of  the space-level refinements of \cite[Theorem A]{costello2007topological}, which can be deduced from the argument for \cref{thm:extending-from-O-to-OC}.

\appendix
\section{Slices of the bordism category}\label{subsec:Bord-pullback-proof}
The goal of this appendix is to prove the following result that we have been using to interact with the bordism \category:
\begin{thm}\label{prop:ArBord-pullback}
    There is a pullback square of symmetric monoidal \categories{}
    \[\begin{tikzcd}
        \boxMfd_d \ar[r] \ar[d,"\partial_+"'] \ar[dr, phantom, very near start, "\lrcorner"] & \Ar(\Bord_d^\partial) \ar[d, "\ev_0"] \\
        \Mfd_{d-1}^{\partial,\cong} \ar[r] & \Bord_d^\partial.
    \end{tikzcd}\]
\end{thm}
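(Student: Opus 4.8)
\emph{Strategy.} Write $\mathcal{G} \coloneq \Mfd_{d-1}^{\partial,\cong}$. Recall that $\ev_0\colon \Ar(\Bord_d^\partial) \to \Bord_d^\partial$ is a cartesian fibration classifying, via its cartesian structure, the assignment $c \mapsto (\Bord_d^\partial)_{c/}$; pulling back along $\mathcal{G} \hookrightarrow \Bord_d^\partial$ and using that $\mathcal{G}$ is a groupoid therefore exhibits the pullback $\mathcal{G} \times_{\Bord_d^\partial} \Ar(\Bord_d^\partial)$ as the cocartesian fibration over $\mathcal{G}$ classifying the functor $M \mapsto (\Bord_d^\partial)_{M/}$. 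On the other hand, $\partial_+\colon \boxMfd_d \to \mathcal{G}$ is a cocartesian fibration classifying $M \mapsto \boxMfd_{d,M}$: a diffeomorphism $\phi\colon M \to M'$ admits a cocartesian lift at a marked manifold $W$ obtained by transporting the $\partial_+$-marking along $\phi$ through a collar of $\partial_+ W$ and leaving $W$ untouched elsewhere (well-defined up to contractible choice by contractibility of collars \cite{cerf1961topologie}), and the fibre over $M$ --- marked manifolds with $\partial_+ W = M$ together with embeddings restricting to $\id_M$ on $\partial_+$ --- is precisely $\boxMfd_{d,M}$. The square commutes up to a canonical equivalence since both composites $\boxMfd_d \to \Bord_d^\partial$ send $W$ to $\partial_+ W$. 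Since an objectwise equivalence of functors to $\Cat$ induces an equivalence of unstraightenings, and since all four categories are symmetric monoidal under $\sqcup$ with every functor preserving this strictly, the theorem reduces to constructing a natural transformation $\boxMfd_{d,-} \to (\Bord_d^\partial)_{-/}$ of functors on $\mathcal{G}$ and showing it is an equivalence on each fibre.

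\emph{The comparison.} The natural transformation sends a marked $W$ with $\partial_+ W = M$ to the bordism $W \colon M \to \partial_- W$ with free boundary $\pf W$ (tautologically a morphism of $\Bord_d^\partial$ once one chooses an embedding of $W$ into $\Rbb \times \Rbb^\infty$ and a generic collapse of the $\partial_\pm$-collars onto two walls), and sends an embedding $i\colon W \hookrightarrow V$ fixing $\partial_+ = M$ to the morphism of $(\Bord_d^\partial)_{M/}$ represented by the complement bordism $\overline{V \setminus i(W)} \colon \partial_- W \to \partial_- V$, with free boundary $\overline{\pf V \setminus i(\pf W)}$; condition (3) in the definition of $\boxMfd_d$ is exactly what makes this complement a manifold with the expected corner structure. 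Making ``complement of an embedding'' into a functor of $\infty$-categories, and checking naturality in $M$ and in $\phi \in \mathcal{G}$ (where transport on the $\boxMfd$-side corresponds to precomposition with a mapping cylinder on the bordism side), is done at the level of semi-simplicial Segal spaces, using collars to manufacture the requisite gluing data, in the style of \cite{KrannichKupers-Disc, modular}. The quasi-unit formalism of \cite{Haugseng-semicategories}, together with the identification of the groupoid core $\iota\,\Bord_d^\partial$ with $\mathcal{G}$ (invertible bordisms are mapping cylinders of diffeomorphisms), does the bookkeeping.

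\emph{The fibrewise equivalence, and the main obstacle.} For fixed $M$ the comparison $\boxMfd_{d,M} \to (\Bord_d^\partial)_{M/}$ is an equivalence on objects, both sides being ``bordisms out of $M$ with free boundary''. On mapping spaces one must identify
\[
    \Emb_M^\square(W, W') \;\xtoo{\;\simeq\;}\; \mathrm{fib}_{W'}\!\Big( \Map_{\Bord_d^\partial}(\partial_- W, \partial_- W') \too \Map_{\Bord_d^\partial}(M, \partial_- W') \Big),
\]
the right-hand side being by definition the mapping space in $(\Bord_d^\partial)_{M/}$, with basepoint $W'$ viewed as a bordism $M \to \partial_- W'$ and the map being precomposition with $W \colon M \to \partial_- W$. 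As mapping spaces in $\Bord_d^\partial$ are moduli spaces of bordisms, this map is, after unwinding, ``glue $W$ on'', and its homotopy fibre at $W'$ is the space of codimension-zero submanifolds of $W'$ that are diffeomorphic rel $M$ to $W$ and whose complement has free boundary $\pf W'$ off of $W$ --- that is, $\Emb_M^\square(W, W')$ --- provided one knows that the relevant restriction and gluing maps of these moduli spaces are Serre fibrations with fibres spaces of collars and tubular neighbourhoods, which are contractible \cite{cerf1961topologie}. The main obstacle is exactly this step: packaging the collar and tubular-neighbourhood contractions into a genuine homotopy equivalence of the moduli spaces in play, coherently enough to be natural in $W$, $W'$ and $M$, which is what forces one out of a purely formal argument and into the explicit semi-simplicial model of $\Bord_d^\partial$. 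Granting it, every fibre of the comparison is an equivalence, hence so is the comparison, which is the asserted pullback square; taking fibres over a single $M$ recovers \cref{cor:Phi-and-Psi}.
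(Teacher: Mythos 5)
Your overall plan---straighten both sides over the groupoid $\Mfd_{d-1}^{\partial,\cong}$ and compare fibrewise---is reasonable in outline, but as written it has a genuine gap, and you have located it yourself: the two steps you ``grant'' are exactly the mathematical content of the theorem. First, the comparison itself: a functor $\boxMfd_{d}\to\Ar(\Bord_d^\partial)$ (or the natural transformation $\boxMfd_{d,-}\to(\Bord_d^\partial)_{-/}$ of functors on the groupoid) given by ``take the complement of the embedding'' is not something one can wave into existence; composition of embeddings corresponds to gluing of complements, and producing this coherently is precisely the difficulty. Second, the fibrewise statement that the homotopy fibre of the ``glue $W$ on'' map $\Map_{\Bord_d^\partial}(\partial_-W,\partial_-W')\to\Map_{\Bord_d^\partial}(M,\partial_-W')$ is $\Emb^\square_M(W,W')$ is, up to repackaging, \cref{cor:Phi-and-Psi}, i.e.\ the very statement being proved; deferring it to ``collar and tubular-neighbourhood contractions, coherently in $W$, $W'$, $M$'' leaves the proof without its core. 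In addition, the unstraightening step needs either a cocartesian-edge-preserving functor over $\Mfd_{d-1}^{\partial,\cong}$ or an honest (coherent) natural transformation of the classified functors, which is again the unconstructed datum; an objectwise recipe does not suffice.

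It is worth noting that the paper's proof is organized precisely to avoid ever building a ``complement'' functor. It splits the statement into (i) a purely formal lemma valid for any Segal space $X_\bullet$, identifying $\ac(X_{1+\bullet})$ with the pullback $X_0\times_{\ac(X_\bullet)}\Ar(\ac(X_\bullet))$ (this plays the role of your straightening step, but at the level of the d\'ecalage, so no fibration-classification or groupoid trick is needed), and (ii) a geometric identification $\boxMfd_d\simeq\ac(\Bord_d^\partial[1+\bullet])$, proved by a zigzag of Dwyer--Kan equivalences $\xN(\boxMfd_d)\to X_\bullet\leftarrow\Bord_d^\partial[1+\bullet]$ through an intermediate topological Rezk nerve $X_\bullet$ whose $n$-simplices are chains of embeddings together with diffeomorphisms. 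Both comparison maps go in the ``easy'' direction: a walled bordism is sent to its chain of initial segments (restriction, strictly functorial), and $\xN(\boxMfd_d)$ includes as the objects of $X_\bullet$; all the homotopical work is then isolated in checking that certain restriction/target maps are Serre fibrations (via contractibility of collars and locally retractile actions) so that Segal's realization lemma applies. If you want to salvage your route, you would have to supply exactly this kind of point-set model to make the complement construction and the fibre identification coherent---at which point you are essentially reproducing the paper's argument in a less convenient direction.
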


As a direct consequence we get the following description of slice categories.
\begin{cor}\label{thm:Phi-and-Psi}
    For every compact oriented $(d-1)$-manifold $M$ there are equivalences
    \[
        \hldef{\widetilde{\Psi}_S} \colon \boxMfd_{d,S} \xtoo{\simeq} (\Bord_d^\partial)_{S/}
        \qquad \text{and} \qquad
        \hldef{\widetilde{\Phi}_S} \colon (\boxMfd_{d,S})^\op \xtoo{\simeq} (\Bord_d^\partial)_{/S}
    \] 
    and for $S = \emptyset$ these equivalences are symmetric monoidal.
\end{cor}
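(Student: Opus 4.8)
The plan is to read off both equivalences from the pullback square of \cref{prop:ArBord-pullback} by passing to fibers over $M$, using only the general behaviour of arrow \categories{} under slicing and the standard self-duality of the bordism category. For the undercategory, I would first recall that for any \category{} $\Ccal$ and object $x$, the evaluation $\ev_0\colon \Ar(\Ccal)\too\Ccal$ at the source exhibits its fiber over $x$ as the undercategory $\Ccal_{x/}$ (the ``fat'' model of the slice, canonically equivalent to $\Ccal_{x/}$; see \cite{HTT}). Pulling the square of \cref{prop:ArBord-pullback} back along $\{M\}\hookrightarrow\Mfd_{d-1}^{\partial,\cong}$ and pasting pullback squares then gives
\[
    \{M\}\times_{\Mfd_{d-1}^{\partial,\cong}}\boxMfd_d
    \;\simeq\;
    \{M\}\times_{\Bord_d^\partial}\Ar(\Bord_d^\partial)
    \;\simeq\;
    (\Bord_d^\partial)_{M/},
\]
where the middle pullback is along $\ev_0$. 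It then remains to identify the strict fiber $\boxMfd_{d,M}$ of $\partial_+\colon\boxMfd_d\to\Mfd_{d-1}^{\partial,\cong}$ with the homotopy fiber on the left; this holds because the restriction maps $\Emb^\square(W,V)\to\Diff(M)$ (for $\partial_+W=\partial_+V=M$) are Serre fibrations by parametrized isotopy extension, which is precisely why one writes $\Emb^\square_M(W,V)=\{\id_M\}\times_{\Diff(M)}\Emb^\square(W,V)$. Unwinding the top horizontal functor of the square shows that the resulting equivalence $\widetilde{\Psi}_M$ sends a marked manifold $W$ to the bordism $W\colon M=\partial_+W\to\partial_-W$ with free boundary $\pf W$.

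For the overcategory I would bootstrap from $\widetilde{\Psi}$ via orientation reversal. First, $\Bord_d^\partial$ is self-dual: reflecting the $\Rbb$-coordinate of $\Rbb\times\Rbb^\infty$ and reversing orientations is a symmetric monoidal equivalence $r\colon\Bord_d^\partial\xtoo{\simeq}(\Bord_d^\partial)^\op$ with $r(N)=\overline{N}$ (implemented on the $[n]$-walled model by the $\Rbb$-reflection together with the order-reversal $[n]\mapsto[n]$). Using $(\Ccal^\op)_{/x}=(\Ccal_{x/})^\op$ and then the undercategory case already proven,
\[
    (\Bord_d^\partial)_{/M}
    \;\simeq\;
    \big((\Bord_d^\partial)^\op\big)_{/\overline{M}}
    \;=\;
    \big((\Bord_d^\partial)_{\overline{M}/}\big)^\op
    \;\simeq\;
    (\boxMfd_{d,\overline{M}})^\op .
\]
Second, reversing orientations while keeping the boundary decomposition $(\partial_+,\partial_-,\pf)$ fixed is a symmetric monoidal autoequivalence of $\boxMfd_d$ — the morphism conditions only see the decomposition and transversality, both orientation-insensitive — and it restricts to an equivalence $\boxMfd_{d,M}\simeq\boxMfd_{d,\overline{M}}$. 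Composing yields the desired $\widetilde{\Phi}_M\colon(\boxMfd_{d,M})^\op\xtoo{\simeq}(\Bord_d^\partial)_{/M}$. For $M=\emptyset$ the square of \cref{prop:ArBord-pullback} is one of symmetric monoidal \categories{} and $\emptyset$ is its monoidal unit, so the fiber is symmetric monoidal and $\widetilde{\Psi}_\emptyset$ is a symmetric monoidal equivalence (the undercategory of the monoidal unit carries its canonical symmetric monoidal structure, matching the one on the fiber of $\ev_0$); and since $r$ and the orientation-reversing autoequivalence are symmetric monoidal and $\overline{\emptyset}=\emptyset$ requires no reindexing, $\widetilde{\Phi}_\emptyset$ is symmetric monoidal as well.

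I expect the main friction to lie in the overcategory case: getting the variances to match so that one lands on $(\boxMfd_{d,M})^\op$ rather than on $(\boxMfd_{d,\overline{M}})^\op$ is exactly what forces us to invoke both the self-duality of $\Bord_d^\partial$ and the orientation-reversing autoequivalence of $\boxMfd_d$. The only other non-formal point is the small but genuine verification that $\boxMfd_{d,M}$, as literally defined, computes the homotopy fiber of $\partial_+$, which rests on the isotopy-extension fibrancy statement above; everything else is a formal manipulation of pullbacks and arrow \categories{}.
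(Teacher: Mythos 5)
Your proposal is correct and follows essentially the same route as the paper: pull back the square of \cref{prop:ArBord-pullback} along $\{M\}\hookrightarrow\Mfd_{d-1}^{\partial,\cong}$, identify the strict fiber $\boxMfd_{d,M}$ with the homotopy fiber of $\partial_+$ using that $\Emb^\square(W,V)\to\Diff(M)$ is a Serre fibration, recognize the fiber of $\ev_0$ as the undercategory, and obtain $\widetilde{\Phi}_S$ from $\widetilde{\Psi}_S$ via the bordism-reversing anti-equivalence of $\Bord_d^\partial$. The only difference is cosmetic: you track the orientation reversal explicitly (correcting with an orientation-reversing autoequivalence of $\boxMfd_d$), whereas the paper simply invokes the anti-equivalence that reverses bordisms.
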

\begin{proof}
We start with the following observation:
    As morphisms in $\boxMfd_d$ by definition restrict to diffeomorphisms on the $\partial_+$-boundary, sending $W \mapsto \partial_+ W$ is a well-defined functor into the topologically enriched groupoid $\Mfd_{d-1}^{\partial, \cong}$ of compact oriented $(d-1)$-manifolds and diffeomorphisms between them.
    We thus have a pullback square of topologically enriched categories
    \[\begin{tikzcd}
        \boxMfd_{d,M} \ar[r] \dar \ar[dr, phantom, very near start, "\lrcorner"] & 
        \boxMfd_d \ar[d,"\partial_+"]  \\
        \{M\} \rar & \Mfd_{d-1}^{\partial,\cong} .
    \end{tikzcd}\]
    On mapping spaces this corresponds to the fiber sequence
    \[
        \Emb_M^\square(W, V) \too \Emb^\square(W, V) \too \Diff(M).
    \]
    The right map is a fibration (see \cite{cerf1961topologie}) and therefore we still have a fiber sequence in $\Scal$.
    This shows that the above square is also a pullback square of \categories{}.

    Pasting the above pullback square and \cref{prop:ArBord-pullback} yields a pullback square
    \[\begin{tikzcd}
        \boxMfd_{d,M} \ar[r] \ar[d] \ar[dr, phantom, very near start, "\lrcorner"] & \Ar(\Bord_d^\partial) \ar[d, "\ev_+"] \\
        \{M\} \ar[r] & \Bord_d^\partial
    \end{tikzcd}\]
    of \categories{}.
    By definition the slice $(\Bord_d^\partial)_{M/}$ is the pullback, so we get $\widetilde{\Psi}_S$ by comparing pullbacks.
    When $S = \emptyset$, this is a square of symmetric monoidal categories as in this case the functors in the first square preserve disjoint union.
    To get $\widetilde{\Phi}_S$ we combine $\widetilde{\Psi}_S$ with the anti-equivalence $(\Bord_d^\partial)^\op \cong \Bord_d^\partial$ that reverses bordisms.
\end{proof}

Before considering the pullback square in \cref{prop:ArBord-pullback}, which is specific to $\Bord_d^\partial$, we show that for a general Segal space $X_\bullet$, the simplicial nerve of a pullback as in \cref{prop:ArBord-pullback} is always given by the d\'ecalage of $X_\bullet$.
Recall that the d\'ecalage $X_{1+\bullet}$ is defined by precomposing $X\colon \Dop \to \Scal$ with the functor $(1+\bullet)\colon \Dop \to \Dop$ that adjoins a new initial object.
Restricting to this new initial object induces a map $X_{1+\bullet} \to X_0$ from the d\'ecalage to the constant simplicial space on $X_0$.

\begin{lem}
    For every Segal space $X_\bullet$ there is a natural pullback square
    \begin{equation}\label{eqn:ac-pullback}
    \begin{tikzcd}
        \ac(X_{1+\bullet}) \ar[r] \ar[d,""] \ar[dr, phantom, very near start, "\lrcorner"] & \Ar(\ac(X_\bullet)) \ar[d, "\ev_0"] \\
        X_0 \ar[r] & \ac(X_\bullet).
    \end{tikzcd}\end{equation}
\end{lem}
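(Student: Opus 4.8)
The plan is to construct the comparison functor into the asserted pullback and then verify it is an equivalence one fibre of $X_0$ at a time, where it reduces to the standard identification of the d\'ecalage of a Segal space over a point with a coslice $\infty$-category.

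First I would produce the comparison functor. Collapsing the order ideal $[n]\times\{0\}\subseteq[n]\times[1]$ gives an order‑preserving surjection $c_n\colon [n]\times[1]\twoheadrightarrow [1+n]$, natural in $[n]$; it realises the d\'ecalage functor, sending $[n]\times\{0\}$ to the bottom vertex and $[n]\times\{1\}$ isomorphically onto the top copy of $[n]$ in $[1+n]$. Precomposing $X$ with $N(c_\bullet)$ produces a natural map of simplicial spaces $X_{1+\bullet}\to\underline{\Map}(\Delta^1,X)$, where $\underline{\Map}(\Delta^1,X)_n=\Map_{\mathrm{sSpace}}(\Delta^n\times\Delta^1,X)$ is the internal mapping object, and composing with the two evaluations $\ev_0,\ev_1\colon\underline{\Map}(\Delta^1,X)\to X$ recovers respectively the composite $X_{1+\bullet}\xrightarrow{\epsilon}\mathrm{const}(X_0)\to X$ and the forgetful map $X_{1+\bullet}\to X$. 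Since $\ac$ is a localization of simplicial spaces forming an exponential ideal — internal mapping objects into complete Segal spaces are again complete Segal spaces — it carries $\underline{\Map}(\Delta^1,X)$ to $\Fun([1],\ac X)=\Ar(\ac X)$ with $\ev_0$ going to evaluation at the source, and it carries $\mathrm{const}(X_0)$ to the $\infty$-groupoid $X_0$. Applying $\ac$ thus yields a functor $\ac(X_{1+\bullet})\to\Ar(\ac X)$ together with $\bar\epsilon\colon\ac(X_{1+\bullet})\to X_0$, compatibly over $\ac X$, and these assemble into a comparison functor $\Theta\colon\ac(X_{1+\bullet})\to X_0\times_{\ac X}\Ar(\ac X)$ filling out a commuting square of the shape \cref{eqn:ac-pullback}. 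Naturality in $X$ is manifest from the construction.

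It remains to show $\Theta$ is an equivalence, which I would do fibrewise over $X_0$. Because $X_0$ is a space, both $\bar\epsilon$ and the projection $X_0\times_{\ac X}\Ar(\ac X)\to X_0$ are cocartesian fibrations, and $\Cat_{/X_0}\simeq\Fun(X_0,\Cat)$, so it suffices to check that $\Theta$ is an equivalence on the fibre over each $x\in X_0$. The fibre of the right‑hand projection is $\{x\}\times_{\ac X}\Ar(\ac X)\simeq(\ac X)_{\bar x/}$, with forgetful functor $\ev_1$ to $\ac X$, where $\bar x$ denotes the image of $x$. For the left‑hand side: the category of simplicial spaces is an $\infty$-topos, so $\epsilon$ exhibits $X_{1+\bullet}$ as the colimit over $x\in X_0$ of its fibres $\epsilon^{-1}(x)$; since $\ac$ preserves colimits and a colimit over a space is an unstraightening, $\bar\epsilon$ is the cocartesian fibration whose fibre over $x$ is $\ac(\epsilon^{-1}(x))$. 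Now $\epsilon^{-1}(x)$ is, level by level, $\{x\}\times_{X_0}X_{1+n}$ along the bottom‑vertex map $X_{1+n}\to X_0$ — that is, it is the standard coslice Segal space $x\backslash X$ — and by the standard description of coslices of Segal spaces one has $\ac(x\backslash X)\simeq(\ac X)_{\bar x/}$, with the forgetful map to $X$ going to the forgetful functor to $\ac X$. Under these identifications $\Theta$ restricts on each fibre to the canonical comparison between the d\'ecalage model and the arrow‑category model of the coslice $\infty$-category, which is an equivalence. Hence $\Theta$ is an equivalence and \cref{eqn:ac-pullback} is a pullback square.

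The main obstacle is the very last step — the identification $\ac(x\backslash X)\simeq(\ac X)_{\bar x/}$, i.e.\ that the d\'ecalage‑over‑a‑point of a Segal space computes the coslice — done functorially enough to match $\Theta$. This is the one genuinely non‑formal input, and I would not reprove it but rather cite the standard theory of slices of (complete) Segal spaces, or deduce it from the corresponding quasicategorical statement via the equivalence between quasicategories and complete Segal spaces. The two remaining points to handle with some care are harmless once phrased correctly: that $\ac$ is an exponential ideal (so $\ac(\underline{\Map}(\Delta^1,X))=\Ar(\ac X)$), and that over the space $X_0$ the functor $\ac$ turns the fibres of the simplicial‑space family $\epsilon$ into the fibres of the resulting cocartesian fibration — the latter using that $\mathrm{sSpace}$ has descent over spaces and that $\ac$ preserves colimits.
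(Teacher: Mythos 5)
Your overall strategy is sound and your comparison functor is built from the same collapse map $\Delta^1\times\Delta^n\to\Delta^{1+n}$ that the paper uses, but your verification goes a genuinely different route. The paper stays at the level of Segal spaces: it factors the square so that only one square remains to be checked, observes that both vertical maps there are left fibrations of Segal spaces (the d\'ecalage over $X_0\times X$, and $(\ev_0,\ev_1)$ on the arrow category), compares fibers over pairs of $0$-simplices, where both sides compute $\Map_{\ac(X)}(x,y)$, and finally invokes completeness of the right-hand column so that $\ac(-)$ preserves the pullback. You instead pass to $\Cat$ immediately and check the comparison fibrewise over the space $X_0$ alone, using descent in simplicial spaces plus the fact that $\ac$ preserves colimits to identify the fibers of the left-hand side with $\ac$ of the coslice Segal spaces $X_{x/}$, and then cite the identification $\ac(X_{x/})\simeq(\ac X)_{x/}$. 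That last citation is legitimate (it is in the Segal-space literature), but be aware that it is essentially the fibrewise form of the lemma itself, so your argument is really a reduction of the family statement to the pointwise one; as you note yourself, you must also make sure the cited equivalence is induced by (or can be matched with) the restriction of your $\Theta$, e.g.\ by checking that any map of corepresentable left fibrations over $\ac X$ preserving the tautological point is an equivalence. The paper's route is more self-contained relative to its references; yours is conceptually cleaner once the coslice comparison is granted.

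One justification in your write-up is genuinely wrong as stated: the claim that, because complete Segal spaces form an exponential ideal, the localization $\ac$ carries $\underline{\Map}(\Delta^1,X)$ to $\Ar(\ac X)$. An exponential ideal guarantees that $\underline{\Map}(\Delta^1,W)$ is local when $W$ is, and that $L$ preserves finite products, but it does \emph{not} imply $L(Y^A)\simeq (LY)^A$: for instance $\pi_0\colon\Scal\to\mathrm{Set}$ has the $0$-truncated objects as an exponential ideal, yet $\pi_0\Map(S^1,S^1)\cong\Zbb$ while $\Map(S^1,\pi_0S^1)\simeq *$. The conclusion you want is in fact true for Segal $X$, but it needs the non-formal input that the completion $X\to\xN^r(\ac X)$ is a Dwyer--Kan equivalence which is preserved by $\underline{\Map}(\Delta^1,-)$ on Segal spaces. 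Alternatively, you can sidestep the issue entirely, as the paper does: you never need $\ac(\underline{\Map}(\Delta^1,X))\simeq\Ar(\ac X)$ to \emph{construct} $\Theta$, since the map $X_{1+\bullet}\to\underline{\Map}(\Delta^1,X)\to\underline{\Map}(\Delta^1,\xN^r(\ac X))\simeq\xN^r(\Ar(\ac X))$ is adjoint to a functor $\ac(X_{1+\bullet})\to\Ar(\ac X)$ compatible with $\ev_0$, which is all your fibrewise argument requires. With that repair, and with the coslice citation pinned down to the correct comparison map, your proof goes through.
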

\begin{proof}
    By definition, $(X_{1+\bullet})_n = \Map(\Delta^0 \ast \Delta^n, X)$.
    We have a natural map $\Delta^1 \times \Delta^n \to \Delta^0 \ast \Delta^n = \Delta^{1+n}$ that sends $\{0\} \times \Delta^n$ to $\Delta^0$ and $\{1\} \times \Delta^n$ identically to $\Delta^n$.
    Mapping the square below on the left (which is natural in $\Delta^\bullet$) into $X$ and then applying $\ac(-)$ yields the square on the right
    \[\begin{tikzcd}
        \Delta^0 \ast \Delta^\bullet & \Delta^1 \times \Delta^\bullet \lar \\
        \Delta^0 \uar & \Delta^\bullet \lar \uar
    \end{tikzcd}
    \xtoo{\ac(\Map(-,X))}
    \begin{tikzcd}
        \ac(X_{1+\bullet}) \dar \rar & \ac(X^{\Delta^1}_\bullet)\dar \\
        X_0 \rar & \ac(X_\bullet).
    \end{tikzcd} \]
    The square in \cref{eqn:ac-pullback} is then obtained by using the map $\ac(X^{\Delta^1}_\bullet) \to \Ar(\ac(X_\bullet))$ that is adjoint to the map $(X_\bullet)^{\Delta^1} \to (\xN^r_\bullet \ac(X))^{\Delta^1} \simeq \xN^r_\bullet \Ar(\ac(X_\bullet))$.

    We can factor the square from \cref{eqn:ac-pullback} as the commutative diagram of \categories{}
\[\begin{tikzcd}
	{\ac(X_{1+\bullet})} & {\ac(X)^\simeq \times_{\ac(X)} \Ar(\ac(X))} & {\Ar(\ac(X))} \\
	{X_0\times \ac(X)} & {\ac(X)^\simeq\times \ac(X)} & {\ac(X) \times \ac(X)}\\
	{X_0} & {\ac(X)^\simeq} & {\ac(X)}
	\arrow[from=1-1, to=1-2]
	\arrow[from=1-1, to=2-1]
	\arrow[hook, from=1-2, to=1-3]
	\arrow[from=1-2, to=2-2]
	\arrow["\lrcorner"{anchor=center, pos=0.125}, draw=none, from=1-2, to=2-3]
	\arrow["\lrcorner"{anchor=center, pos=0.125}, draw=none, from=2-2, to=3-3]
	\arrow["{(\ev_0, \ev_1)}", from=1-3, to=2-3]
	\arrow["\pr_{\rm left}", from=2-3, to=3-3]
	\arrow[from=2-2, to=2-3]
	\arrow[from=2-1, to=2-2]
	\arrow[from=2-1, to=3-1]
	\arrow["\lrcorner"{anchor=center, pos=0.125}, draw=none, from=2-1, to=3-2]
	\arrow[from=2-2, to=3-2]
	\arrow[from=3-1, to=3-2]
	\arrow[hook, from=3-2, to=3-3]
\end{tikzcd}\]
    where the three squares labelled by ``$\lrcorner$'' are cartesian by construction.
    By pullback pasting it will thus suffice to show that the top left square is cartesian.
    This top left square can be obtained by applying $\ac(-)$ to the square
\[\begin{tikzcd}
	{X_{1+\bullet}} & {\xN^r(\ac(X)^\simeq \times_{\ac(X)} \Ar(\ac(X)))} \\
	{X_0\times X} & {\xN^r(\ac(X)^\simeq\times \ac(X))}
	\arrow[from=1-1, to=1-2]
	\arrow[from=1-1, to=2-1]
	\arrow[from=1-2, to=2-2]
	\arrow[from=2-1, to=2-2]
\end{tikzcd}\]
    The functor 
    \[
        (\ev_0,\ev_1)\colon \ac(X)^\simeq \times_{\ac(X)} \Ar(\ac(X)) \too \ac(X)^\simeq \times \ac(X)
    \]
    is a left fibration and thus applying the Rezk nerve, it results in a left fibration of Segal spaces.
    (See \cite[\S2.2]{HK22} for a definition.)
    The left map in the square $X_{1+\bullet} \to X_0 \times X_\bullet$ is a left fibration of Segal spaces because $X$ is Segal \cite[Lemma 2.10 (3)]{GCKT18}.
    As both vertical maps in the square are left fibrations it suffices to show that the square induces an equivalence on vertical fibers on $0$-simplices:
    this is indeed the case as both vertical fibers at some $(x,y) \in X_0 \times X_0$ compute the mapping space $\Map_{\ac(X)}(x,y)$ \cite[Corollary 3.15]{Rezk-nerve-25}.
    Moreover, this cartesian square is preserved by $\ac(-)$ by \cite[Proposition A.14]{HK22} as the two Segal spaces on the right are complete.
\end{proof}

To prove \cref{prop:ArBord-pullback} we thus have to show that the associated category of the d\'ecalage of the Segal space $\Bord_d^\partial[\bullet]$ agrees with the \category{} obtained from the (quasi-unital) topological category $\boxMfd_d$.

\begin{lem}
    There is an equivalence of \categories{}
    \[
        \boxMfd_d \simeq \ac(\Bord_d^\partial[1+\bullet]).
    \]
\end{lem}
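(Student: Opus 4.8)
The plan is to compare the two \categories{} by comparing the Segal spaces underlying them. Both sides are of the form $\ac(Y_\bullet)$ for a Segal space $Y_\bullet$: for $\ac(\Bord_d^\partial[1+\bullet])$ this is the d\'ecalage of the Segal space $\Bord_d^\partial[\bullet]$, which is again Segal; and $\boxMfd_d$ is by construction $\ac$ of the (quasi-unital) Segal space associated to the topologically enriched category $\boxMfd_d$, whose level-$n$ space is the moduli space of $n$-fold composable chains of $\square$-embeddings. By the faithful-and-replete property of $\Seg(\Dop;\Scal)\to\Fun(\Dop_\inj,\Scal)$ recalled in Section 2, it suffices to exhibit a map of semisimplicial spaces between the two that is a levelwise equivalence: such a map automatically upgrades to an equivalence of Segal spaces and hence, after applying $\ac$, to an equivalence of \categories{}.

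The comparison map $\Phi_\bullet\colon \Bord_d^\partial[1+\bullet]\to N(\boxMfd_d)_\bullet$ sends an $[1+n]$-walled bordism $(W,\mu\colon[n+1]\hookrightarrow\Rbb)$ to the chain
\[
    W_{[\mu(0),\mu(1)]}\hookrightarrow W_{[\mu(0),\mu(2)]}\hookrightarrow\cdots\hookrightarrow W_{[\mu(0),\mu(n+1)]},
\]
where $W_{[\mu(0),\mu(k)]}$ carries the marking $\partial_+=W_{\mu(0)}$, $\partial_-=W_{\mu(k)}$, $\pf=(\partial W)_{[\mu(0),\mu(k)]}$, and the maps are the evident inclusions. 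First I would check this is well defined: that each marked manifold in the chain really is an object of $\boxMfd_d$ and each inclusion a morphism --- the conditions on corners and the transversality of $W_{\mu(k)}$ to the free boundary all follow from $\mu(k)$ being a regular value of $\pr_1|_W$ and $\pr_1|_{\partial W}$. Then I would check that $\Phi_\bullet$ is simplicial for the face maps: deleting a non-extremal wall $\mu(j)$ deletes the object $W_{[\mu(0),\mu(j)]}$ from the chain, and for $1\le j\le n$ composes the two adjacent inclusions, exactly matching the face maps of the nerve; and that $\Phi_\bullet$ is continuous, i.e.\ induced by a map of the underlying moduli spaces, since a diffeomorphism of $[1+n]$-walled bordisms restricts to an isomorphism of chains (for $n\ge1$ it preserves the wall $\mu(0)$, hence the $\partial_+$-markings, while the orientation conventions on the slices are dictated by $\pr_1$ and are preserved).

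Finally I would show each $\Phi_n$ is an equivalence. Since both $\Bord_d^\partial[1+\bullet]$ and $N(\boxMfd_d)_\bullet$ satisfy the Segal condition and $\Phi_\bullet$ is compatible with it, this reduces to $n=0$ and $n=1$. For $n=0$ the statement is that the moduli space of $[1]$-walled bordisms is equivalent to the moduli space of marked $d$-manifolds: every $W\in\boxMfd_d$ is realised as $W_{[\mu(0),\mu(1)]}$ for some $[1]$-walled bordism by choosing a proper smooth function $W\to[\mu(0),\mu(1)]$ whose regular level sets over the endpoints are $\partial_+W$ and $\partial_-W$ and under which $\pf W$ maps ``sideways'', then embedding $W$ into $\Rbb\times\Rbb^\infty$; and the space of such wallings of a fixed $W$ (together with an embedding) is contractible by the contractibility of spaces of collars \`a la Cerf. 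For $n=1$ one unwinds that the fiber of $\Phi_1$ over a $\square$-embedding $i\colon W\hookrightarrow V$ is the space of ways to present $i$ as the inclusion of a sub-slab $W_{[\mu(0),\mu(1)]}\subset V=W_{[\mu(0),\mu(2)]}$ of some $[2]$-walled bordism --- i.e.\ the space of wallings of $V$ for which $\mu(1)$ is a regular value cutting off exactly $i(W)$ --- which is again contractible by the same collar argument applied to the complement $V\setminus i(W)$ and to a tubular neighbourhood of $i(\partial_-W)$. The main obstacle is this last point: carefully identifying the relevant spaces of compatible wallings and collarings, keeping track of the corner and orientation data, so that Cerf's contractibility theorem applies; this is precisely the kind of geometric bookkeeping that the contractibility-of-collars inputs used elsewhere in the paper are designed to streamline.
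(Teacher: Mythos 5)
Your comparison map $\Phi_\bullet\colon \Bord_d^\partial[1+\bullet]\to \xN(\boxMfd_d)_\bullet$ has a real problem: it is not a well-defined map of semi-simplicial spaces. Recall from \S2.1 that the objects of the topological groupoid $\Bord_d^\partial[1+n]$ (the $[1+n]$-walled bordisms) form a \emph{discrete} set, and only the morphisms (diffeomorphisms) are topologised; the space $\Bord_d^\partial[1+n]$ appearing in the semi-simplicial space is the realization of this topological groupoid. Your formula defines $\Phi_n$ only on the discrete object set. To get a continuous map on the realization you would also have to say where the simplices glued in for each diffeomorphism go, and there is no natural way to send a diffeomorphism $\varphi\colon (W,\mu)\to(W',\mu')$ to a path in $\xN_n(\boxMfd_d)$: the two chains of slabs lie in different summands of $\xN_n(\boxMfd_d) = \coprod_{M_0,\dots,M_n}\Emb^\square(M_0,M_1)\times\dots\times\Emb^\square(M_{n-1},M_n)$, since the abstract marked manifolds $W_{[\mu(0),\mu(k)]}$ and $W'_{[\mu'(0),\mu'(k)]}$ are distinct objects. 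Writing ``$\Phi_\bullet$ is continuous since a diffeomorphism restricts to an isomorphism of chains'' does not address this --- an isomorphism of chains is extra groupoid data, not a point or path of $\xN_n(\boxMfd_d)$.

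This is precisely why the paper's proof introduces an intermediate semi-simplicial \emph{topological groupoid} $X_\bullet$ (a thick, Rezk-style nerve of $\boxMfd_d$) whose objects at level $n$ are your chains and whose morphisms are the tuples of diffeomorphisms you would need. Your map is then essentially the paper's functor $C\colon\Bord_d^\partial[1+\bullet]\to X_\bullet$, which really is a functor of topological groupoids. But the zigzag forces a second, non-trivial step that your plan omits: showing that the inclusion $I\colon \xN_\bullet(\boxMfd_d)\to X_\bullet$ of the object spaces becomes an equivalence after realization. That step is handled in the paper via a fibration criterion (\cref{lem:eqf-pullback}, using Segal's Proposition 1.6) together with locally retractile $\Diff$-actions, and it is where the diffeomorphism groups --- which contribute $B\Diff$-type factors to $\lvert\Bord_d^\partial[1+n]\rvert$ --- get matched against the embedding spaces. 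Your surjectivity-plus-contractibility-of-fibers argument for $n=0,1$ is in the right spirit for the essential surjectivity of $C$ (and the Cerf collar inputs you cite do appear there), but it does not, as written, produce the needed equivalence of spaces; in particular it never engages with the action of $\Diff$ on walled bordisms, which is the heart of the matter.
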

\begin{proof}
    Let $\xN(\boxMfd_d)$ be the topologically enriched nerve of $\boxMfd_d$, which is a semi-simplicial Segal space.
    As $\boxMfd_d$ is quasi-unital, this is quasi-unital as a semi-simplicial Segal space and thus uniquely extends to a simplicial space.
    The \category{} associated to $\boxMfd_d$ (which we, by abuse of notation, also denote $\boxMfd_d$) is $\ac(\xN(\boxMfd_d))$.
    
    Recall that a map of Segal spaces $f\colon Y_\bullet \to Z_\bullet$ is a Dwyer--Kan equivalence if it is essentially surjective (i.e.~$Y_0 \to Z_0$ is surjective up to isomorphism in the homotopy category of $Z_\bullet$) and the map $((d_1,d_0), f_1) \colon Y_1 \to Y_0^{\times 2} \times_{Z_0^{\times 2}} Z_1$ is an equivalence.
    Note that a sufficient condition for essential surjectivity is that $f_0\colon Y_0 \to Z_0$ hits all path components.

    We will now construct a zigzag of Dwyer--Kan equivalences of semi-simplicial Segal spaces
    \[
        \xN(\boxMfd_d) \xtoo{I} X_\bullet \xleftarrow[\quad]{C} \Bord_d^\partial[1+\bullet].
    \]
    In fact, $C$ will be a (level-wise) equivalence of semi-simplicial spaces.
    Each of these semi-simplicial Segal space has quasi-units in the sense of \cite{Haugseng-semicategories} and the two functors preserve them.
    Thus, this uniquely extends to a zigzag of Dwyer--Kan equivalences of (simplicial) Segal spaces.
    The functor $\ac(-)$ inverts Dwyer--Kan equivalence, so applying it yields the desired equivalence.

    To construct the zigzag we work with semi-simplicial objects in the $1$-category $\mrm{TopGpd}$ of topological groupoids (where both objects and morphisms are topologised).
    We can then obtain the semi-simplicial spaces by composing with the realization functor $|-|\colon \mrm{TopGpd} \too \Scal$.

    The functor $X_\bullet \colon \Dop_\inj \too \mrm{TopGpd}$ will essentially be a topological version of the Rezk nerve.
    The objects of $X_n$ are $n$-tuples of embeddings $M_0 \hookrightarrow \dots \hookrightarrow M_n$ in $\boxMfd_d$.
    The morphisms of $X_n$ are $(n+1)$-tuples of diffeomorphisms making the diagram
\[\begin{tikzcd}
	{M_0} & {M_1} & \dots & {M_n} \\
	{N_0} & {N_1} & \dots & {N_n}
	\arrow["{i_1}", hook, from=1-1, to=1-2]
	\arrow["{\varphi_0}"', from=1-1, to=2-1]
	\arrow["{i_2}", hook, from=1-2, to=1-3]
	\arrow["{\varphi_1}"', from=1-2, to=2-2]
	\arrow["{i_n}", hook, from=1-3, to=1-4]
	\arrow["{\varphi_n}"', from=1-4, to=2-4]
	\arrow["{j_2}"', hook, from=2-1, to=2-2]
	\arrow["{j_2}"', hook, from=2-2, to=2-3]
	\arrow["{j_n}"', hook, from=2-3, to=2-4]
\end{tikzcd}\]
    commute.
    Both the space of objects and the space of morphisms is topologised by letting the embeddings and diffeomorphisms vary in the Whitney $\Ccal^\infty$-topology.
    The space of objects thus is the topological space $\xN_n(\boxMfd_d)$.
    Let $I\colon \xN_n(\boxMfd_d) \too X_n$ denote the inclusion, 
    where we think of $\xN_n(\boxMfd_d)$ as a topological groupoid where the only morphisms $(\varphi_i)$ are the identity morphisms.

    Next, we define a functor of topological groupoids
    \begin{align*}
        C\colon \Bord_d^\partial[1+n] & \too X_n \\
        (W, \mu) & \longmapsto \left( W_{[\mu(0),\mu(1)]} \hookrightarrow W_{[\mu(0),\mu(2)]} \hookrightarrow \dots \hookrightarrow W_{[\mu(0),\mu(n+1)]} \right),
    \end{align*}
    i.e.~a $[1+n]$-walled bordism $(W,t)$ in $\Rbb^\infty$ is mapped to the sequence of embeddings $M_0 \hookrightarrow \dots \hookrightarrow M_n$ where $M_i = W_{[\mu(0), \mu(i+1)]}$ is the part of $W$ between the $0$th and $(i+1)$st walls, and all the embeddings are the identity.
    Here we think of $W_{[\mu(0), \mu(i+1)]}$ as an object of $\boxMfd_d$ by setting $\partial_+ = W_{\mu(0)}$ and $\partial_- = W_{\mu(i+1)}$.
    On morphisms this functor is defined by restricting a diffeomorphism $W_{[\mu(0), \mu(n+1)]} \cong V_{[\mu'(0), \mu'(n+1)]}$ to each of the pieces.
    \\
    
    \textbf{Claim 1:}
    For all $n$ the map $C\colon \Bord_d^\partial[1+n] \to X_n$ of topological groupoids induces an equivalence of spaces after realization, and thus $C$ is an equivalence of semi-simplicial spaces. 

    We will deal with the case of $n=1$; the general case is similar.
    We will show that the topological functor $C_1\colon \Bord_d^\partial[1+1] \to X_1$ is a Dwyer--Kan equivalence and thus applying $|-|$ yields an equivalence of spaces.
    The functor is essentially surjective because for any object $(i\colon M_0 \hookrightarrow M_1)$ in $X_1$ we can find an embedding $j\colon M_1 \hookrightarrow [0,2]\times \Rbb^N$ such that $j(i(M_0)) = j(M_1) \cap [0,1] \times \Rbb^N$.
    Then $(j(M_1), \mu = (0,1,2)) \in \Bord_2^\partial[2]$ is a well-defined $[2]$-walled manifold and $C_1$ sends it to an object that is isomorphic (via $j$) to the one we started with.
    To establish fully faithfulness we need to show that the square
    \[\begin{tikzcd}
        {\mrm{Mor}(\Bord_d^\partial[2])} \dar \rar & 
        {\mrm{Mor}(X_1)} \dar["{(s,t)}"] \\
        {\mrm{Obj}(\Bord_d^\partial[2])^{\times 2}} \rar &
        {\mrm{Obj}(X_1)^{\times 2}}
    \end{tikzcd}\]
    is a homotopy pullback square.
    The bottom left space is discrete because $\Bord_d^\partial[n]$ was defined as a topologically enriched groupoid.
    The right vertical map is a Serre fibration.
    To see this we can decompose the right side of the diagram as a disjoint union and write
    \[
        \mrm{Obj}(X_1)^{\times 2} 
        \cong \left(\coprod_{M_0, M_1} \Emb^\square(M_0, M_1)\right)^{\times 2}
        \cong \coprod_{M_0, M_1, N_0, N_1} \Emb^\square(M_0, M_1) \times \Emb^\square(N_0, N_1)
    \]
    If we fix a choice of $M_i$ and $N_i$, this space has a locally retractile action of $\Diff(M_1) \times \Diff(N_1)$, see \cite[\S2]{CRW-retractile} for a discussion of locally retractile actions and how to use them to prove that maps are Serre fibrations.
    This group also acts on $\mrm{Mor}(X_1)$ by acting on the embeddings $M_0 \hookrightarrow M_1$ and $N_0 \hookrightarrow N_1$ and conjugating the diffeomorphism $M_1 \cong N_1$.
    Because the map $(s,t)$ is equivariant for the action and the base is locally retractile, it follows that $(s,t)$ is a Serre fibration.
    Thus, to show that the square is a homotopy pullback square it will suffice to compare the vertical fibers.
    Given two $[2]$-walled bordisms $(W,\mu)$ and $(V,\mu')$ the fiber on the left is the space of diffeomorphisms $W_{[\mu(0),\mu(2)]} \cong V_{[\mu'(0), \mu'(2)]}$ that are compatible with the walls.
    This maps to the fiber on the right, which is the space of dashed diffeomorphisms making the following diagram commute
\[\begin{tikzcd}
	{W_{[\mu(0),\mu(1)]}} \dar[dashed] \rar[hook] & {W_{[\mu(0),\mu(2)]}} \dar[dashed] \\
	{V_{[\mu(0),\mu(1)]}} \rar[hook] & {V_{[\mu'(0),\mu'(2)]}} .
\end{tikzcd}\]
    Here the diffeomorphism on the right determines a unique diffeomorphism on the left if and only if it is compatible with the walls.
    Thus, the square is a homotopy pullback, and we conclude that $C_n \colon \Bord_d^\partial[1+n] \to X_n$ is an equivalence for $n=1$ and thus for all $n$, proving claim 1.
    \\ 
    
    \textbf{Claim 2:}
    The semi-simplicial map $I\colon \xN_\bullet(\boxMfd_d) \to X_\bullet$ is a Dwyer--Kan  equivalence.
    
    Level-wise this map includes $\xN_n(\boxMfd_d)$ as the space of objects of the topological groupoid $X_n$.
    In particular, it is $\pi_0$ surjective and thus $I$ is essentially surjective.
    It hence remains to check that the square of topological groupoids
    \[\begin{tikzcd}
        {\xN_1(\boxMfd_d)} \ar[d] \ar[r] & {X_1} \ar[d] \\
        {\xN_0(\boxMfd_d)}^{\times 2} \ar[r] & {X_0}^{\times 2}
    \end{tikzcd}\]
    yields a pullback square of spaces after applying $|-|$.
    The spaces in the left column are exactly the object spaces of the topological groupoids to their right, so in order to use \cref{lem:eqf-pullback} to prove this, we have to check that the square
    \[\begin{tikzcd}
        {\mrm{Mor}(X_1)} \ar[d] \ar[r, "t"] & {\mrm{Obj}(X_1)} \ar[d] \\
        {\mrm{Mor}(X_0^{\times 2})} \ar[r, "t"] & {\mrm{Obj}(X_0^{\times 2})}
    \end{tikzcd}\]
    satisfies the assumptions therein. The top map is a Serre fibration as was shown in the proof of Claim 1. (In fact, we even know that $(s,t)$ is a Serre fibration.)
    The bottom map is a Serre fibration because its target is discrete.
    To see that the square is a pullback we spell out definitions:
    a point in the top left is a morphism $(\varphi_0, \varphi_1) \colon (i_1\colon M_0 \hookrightarrow M_1) \to (j_1\colon N_0 \hookrightarrow N_1)$ and a point in the pullback has the same data, except that the embedding $j_1$ is not specified.
    But $j_1$ is uniquely determined as $\varphi_1 \circ i_1 \circ \varphi_0^{-1}$, so this is indeed a pullback.
    Hence, \cref{lem:eqf-pullback} applies, completing the proof of Claim 2.
    
    Claim 1 and 2 imply that after applying $\ac(-)$ the maps $I$ and $C$ induce the desired equivalences
    \[
        \ac(\xN_\bullet(\boxMfd_d)) \xrightarrow[\simeq]{\ac(I)} X_\bullet \xleftarrow[\simeq]{\ac(C)} \ac(\Bord_d^\partial[1+\bullet]).
        \qedhere
    \]
\end{proof}

We still need to prove the lemma that we used in the above proof.
\begin{lem}\label{lem:eqf-pullback}
    Let $F\colon \Ccal \to \Dcal$ be a functor of topological groupoids such that
    \[\begin{tikzcd}
        {\mrm{Mor}(\Ccal)} \rar["t"] \dar["F"'] & {\mrm{Obj}(\Ccal)} \dar["F"] \\
        {\mrm{Mor}(\Dcal)} \rar["t"] & {\mrm{Obj}(\Dcal)}
    \end{tikzcd}\]
    is a pullback square of topological spaces and the horizontal maps (defined by sending a morphism to its target) are Serre fibrations.
    Then
    \[\begin{tikzcd}
        {\mrm{Obj}(\Ccal)} \rar \dar["F"'] & {|\Ccal|} \dar["{|F|}"] \\
        {\mrm{Obj}(\Dcal)} \rar & {|\Dcal|}
    \end{tikzcd}\]
    is a pullback square in $\Scal$.
\end{lem}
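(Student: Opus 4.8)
The plan is to recognize both classifying spaces as colimits of groupoid objects in the $\infty$-topos $\Scal$ and to exploit descent. Write $X_\bullet \in \Fun(\Dop,\Scal)$ for the simplicial space obtained from the topological nerve $\xN_\bullet(\Ccal)$ by composing with $\Top \to \Scal$, and likewise $Y_\bullet$ for $\Dcal$, so that $X \coloneq |\Ccal| \simeq \colim_{\Dop} X_\bullet$, $Y \coloneq |\Dcal| \simeq \colim_{\Dop} Y_\bullet$, and $F$ induces a map $X_\bullet \to Y_\bullet$. Since the target map $\xN_1(\Dcal) = \mrm{Mor}(\Dcal) \to \mrm{Obj}(\Dcal) = \xN_0(\Dcal)$ is a Serre fibration, the point-set iterated fibre products defining $\xN_n(\Dcal)$ are homotopy pullbacks, so $Y_\bullet$ satisfies the Segal condition in $\Scal$; invertibility of the morphisms of $\Dcal$ upgrades this to a groupoid object, and the same holds for $X_\bullet$. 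By the theory of groupoid objects in an $\infty$-topos (\cite[\S6.1.2--6.1.3]{HTT}), the augmentation $X_0 \to X$ is then an effective epimorphism whose \v{C}ech nerve is $X_\bullet$; in particular $X_1 \simeq X_0 \times_X X_0$ compatibly with the face maps, the target map $X_1 \to X_0$ becoming a projection, and similarly $Y_0 \to Y$ is an effective epimorphism with \v{C}ech nerve $Y_\bullet$.

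Next I would restate the goal: the square in question is a homotopy pullback in $\Scal$ if and only if the canonical map $g\colon X_0 \to X \times_Y Y_0$ (a morphism in $\Scal_{/X}$) is an equivalence. The two hypotheses on $F$ combine to say precisely that the square
\[\begin{tikzcd}
    X_1 \rar["t"] \dar["F"'] & X_0 \dar["F"] \\
    Y_1 \rar["t"] & Y_0
\end{tikzcd}\]
is a homotopy pullback: the underlying point-set square is cartesian by the second hypothesis, and it is homotopy cartesian because $t\colon \mrm{Mor}(\Dcal) \to \mrm{Obj}(\Dcal)$ is a Serre fibration. Now base-change $g$ along the effective epimorphism $X_0 \to X$. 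The base change of $(X_0 \to X)$ is the target face map $t\colon X_1 \simeq X_0 \times_X X_0 \to X_0$, the base change of $(X \times_Y Y_0 \to X)$ is $X_0 \times_Y Y_0 \to X_0$, and — using $Y_1 \simeq Y_0 \times_Y Y_0$ to rewrite $X_0 \times_Y Y_0 \simeq X_0 \times_{Y_0} Y_1$ along the target leg — the base change of $g$ is identified with the comparison map $X_1 \to X_0 \times_{Y_0} Y_1$ of the displayed square. That comparison map is an equivalence by the previous sentence, and since base change along an effective epimorphism in an $\infty$-topos is conservative, $g$ is an equivalence. This is exactly the assertion of the lemma.

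The steps that require (routine) care are: checking that the topological nerve of a topological groupoid with Serre-fibrant target map indeed presents a groupoid object in $\Scal$ (the Segal maps become equivalences because the defining pullbacks are along fibrations, and invertibility of morphisms supplies the remaining groupoid-object condition), and checking that the \v{C}ech-nerve base-change identifications in the last step fit together — both are formal. The one genuinely load-bearing point, and the only place both hypotheses on $F$ are used at once, is the identification of the base change of $g$ with the comparison map $X_1 \to X_0 \times_{Y_0} Y_1$; everything else follows formally from descent for groupoid objects. I expect no serious obstacle here, but one must be scrupulous about source/target ($d_1$ versus $d_0$) conventions, since the whole argument hinges on the \emph{target} leg being simultaneously the leg appearing in the hypothesis and the \v{C}ech-nerve projection used for the base change.
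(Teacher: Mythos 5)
Your argument is correct, but it takes a genuinely different route from the paper. The paper applies Segal's realization criterion (Proposition 1.6 of \emph{Categories and cohomology theories}) to the map of nerves $\xN_\bullet(\Ccal)\to\xN_\bullet(\Dcal)$: by pullback pasting and the Segal condition (available in $\Scal$ because the target maps are Serre fibrations) the verification reduces to the two squares associated to $[1]\leftarrow[0]$, of which the target square is the hypothesis and the source square follows by composing with inversion of morphisms. You instead internalize the whole reduction into $\infty$-topos descent: the nerves define groupoid objects in $\Scal$ (same Serre-fibration input making the strict Segal pullbacks homotopy pullbacks), groupoid objects in $\Scal$ are effective, so $X_0\to|\Ccal|$ is an effective epimorphism with \v{C}ech nerve $X_\bullet$, and the comparison map $g\colon X_0\to |\Ccal|\times_{|\Dcal|}Y_0$ becomes, after base change along this effective epimorphism, exactly the comparison map $X_1\to X_0\times_{Y_0}Y_1$ of the hypothesis square; conservativity of base change along effective epimorphisms (a consequence of universality of colimits) finishes the proof. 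The convention point you flag does resolve: with the target-leg \v{C}ech projection the base change of $g$ is literally the hypothesis comparison map, and with the other convention one invokes inversion, exactly as in the paper's last sentence. What each approach buys: the paper's is shorter and leans on a single classical citation; yours is model-independent and makes the role of the two hypotheses transparent, at the cost of needing the identification of $|\Ccal|$ (the realization of the topological nerve) with the $\infty$-categorical colimit of $X_\bullet$ — the same point-set goodness issue implicitly handled when invoking Segal — plus the standard but citable fact about conservativity of pullback along effective epimorphisms.
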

\begin{proof}
    We want to apply \cite[Proposition 1.6]{Segal1974-Cat-and-coh} to the map of simplicial spaces $\xN(\Ccal) \to \xN(\Dcal)$.
    This gives the desired conclusion, once we have checked that for all $d\colon [n] \leftarrow [m]$ the square
    \[\begin{tikzcd}
        {\xN_n(\Ccal)} \rar["d^*"] \dar & {\xN_m(\Ccal)} \dar \\
        {\xN_n(\Dcal)} \rar["d^*"] & {\xN_m(\Dcal)}
    \end{tikzcd}\]
    is homotopy cartesian. 
    Using pullback-pasting we can reduce this to the case of $m=0$ and using the Segal condition on $\xN_\bullet \Ccal$ and $\xN_\bullet \Dcal$ (which they satisfy in $\Scal$ by the assumption that the target maps are fibrations) we can further reduce to $n=1$.
    There are two morphisms $[1] \leftarrow [0]$:
    for one of them the square is homotopy cartesian by assumption, and for the other one the square is isomorphic to the former via the map that sends every morphism to its inverse.
\end{proof}

\printbibliography

\end{document}